\lstdefinelanguage{GAP}{
 	basicstyle=\ttfamily,
 	keywords={true, false, function, return, fail, if, in, while, do, od, else, elif, fi, break, continue},
 	keywordstyle=\color{blue}\bfseries,
 	otherkeywords={
 		>, <, ==
 	},
 	identifierstyle=\color{black},
 	sensitive=True,
 	comment=[l]{\#},
 	commentstyle=\color{cyan},
 	stringstyle=\color{red},
 	morestring=[b]',
 	morestring=[b]"
 }
\newcommand\cyr{\renewcommand\rmdefault{wncyr}\renewcommand\sfdefault{wncyss}\renewcommand\encodingdefault{OT2}\normalfont\selectfont}
\DeclareTextFontCommand{\textcyr}{\cyr}
\DeclareFontFamily{U}{wncy}{}
\DeclareFontShape{U}{wncy}{m}{it}{<->wncyi10}{}
\DeclareSymbolFont{UWCyr}{U}{wncy}{m}{it}
\DeclareMathSymbol{\Bcyr}{\mathalpha}{UWCyr}{"6}
\def\jj{{\mathfrak j}}
\def\ii{{\mathfrak i}}
\def\noi{\noindent}
\def\im{\text{\rm Im}}
\def\ker{\text{\rm Ker\,}}
\providecommand{\U}[1]{\protect\rule{.1in}{.1in}}
\newcolumntype{Y}{>{\raggedleft\arraybackslash}X}
\def\itemc{\itemindent=10pt\labelsep=8pt\labelwidth10pt\itemsep=4pt}
\def\bc{{\mathbb{C}}}
\def\Ker{\text{\rm Ker\;}}
\def\Im{\text{\rm Im\;}}
\def\bn{{\mathbb{N}}}
\def\br{{\mathbb{R}}}
\def\bz{{\mathbb{Z}}}
\def\br{\mathbb R}
\def\wt{\widetilde}
\def\wh{\widehat}
\def\re{\text{\rm Re\,}}
\def\bcyr{\text{\cyr b}}
\def\vs{\vskip.3cm}
\def\noi{\noindent}
\def\gdeg{G\text{\rm -deg}}
\def\s1deg{S^1\text{deg}}
\def\Om{\Omega}
\def\id{\text{\rm Id\,}}
\def\one{^{-1}}
\def\vp{\varphi}
\def\ve{\varepsilon}
\def\cV{\mathcal V}
\def\cU{\mathcal U}
\def\cW{\mathcal W}
\def\L{\mathscr L} 
\def\zhong{\bm\Sigma}
\definecolor{mygreen}{rgb}{0,.66,.05}
\definecolor{lightyellow}{rgb}{1,1,.80}
\newtheorem{theorem}{Theorem}[section]
\newtheorem{proposition}[theorem]{Proposition}
\newtheorem{lemma}[theorem]{Lemma}
\newtheorem{definition}[theorem]{Definition}
\newtheorem{remark}[theorem]{Remark}
\newtheorem{example}[theorem]{Example}
\newtheorem{remark-definition}[theorem]{Remark and Definition}
\journal{J. Differential Equations}
\begin{document}
\begin{frontmatter}


\title{Equivariant Global Hopf Bifurcation in Abstract Nonlinear Parabolic Equations}
 \author[label1]{Zalman Balanov}
 \author[label1]{Wies\l aw Krawcewicz}
 \ead{wieslaw@utdallas.edu}
 \author[label1]{Arnaja Mitra\corref{cor1}}
 \ead{arnaja.mitra@utdallas.edu}
 \author[label1]{Dmitrii  Rachinskii}
 \ead{Dmitry.Rachinskiy@utdallas.edu}
 \affiliation[label1]{organization={Department of Mathematical Sciences, The University of Texas at Dallas},
             city={Richardson},
             postcode={75080},
             state={TX},
             country={USA}}

\cortext[cor1]{Corresponding author at: Department of Mathematical Sciences, The University of Texas at Dallas, Richardson, 75080, TX, USA.}  




\begin{abstract}
In this paper we study local and global  symmetric Hopf bifurcation in abstract parabolic systems by means of the twisted equivariant degree.
\end{abstract}

\begin{keyword}
Partial differential equations; Global Hopf Bifurcation; Equivariant degree theory
\end{keyword}
\end{frontmatter}


\section{Introduction} 
For a given parameterized family of dynamical systems, the Hopf bifurcation is the phenomenon of the occurrence of small amplitude periodic solutions from an equilibrium state, when the parameter crosses some critical value (for which the linearization of the dynamical system admits a purely imaginary eigenvalue). In order to study the Hopf bifurcation in symmetric systems of parabolic partial differential equations (PDEs), one uses appropriate
equivariant analysis tools
\cite{Fied, Gol, MM, Kiel-book, Wu, GW, F, RL, MN, KWX, Kiel2}. In this paper, we propose a general framework based on  the twisted equivariant degree theory.
\vs
Symmetric systems of parabolic PDEs are commonly used to model physical phenomena.  Studying the behavior of these systems in the vicinity of a Hopf bifurcation gives important insights into how they respond to perturbations and how they transition from one state to another. Moreover, a Hopf bifurcation can provide means for classifying the behavior of symmetric periodic solutions \cite{HWZ} in applications to control 
\cite{BF1, BF2, BF3} and for predicting the behavior of complex physical systems \cite{BKR, MM, CI, CGK, L, GL, D, JMWB, GM}. The  study of Hopf bifurcation in symmetric systems of parabolic equations is important  in   many practical from the application point of view ares, including biology (to model population dynamics \cite{LMT}, and the spread of diseases \cite{HTH},
physics (in fluid dynamics, to understand the dynamics of vortices \cite{L, AK}, engineering (to design  control systems  and to prevent instability-induced failures \cite{BHK}, medicine (to model certain heart conditions, such as atrial fibrillation \cite{TK,KSM}, environmental science (to model the spread of pollutants and to understand the dynamics of ecosystems \cite{S,ZZWW}, economics (to study the stability of economic systems \cite{LV},  neuroscience (to understand  dynamics of neural networks and the synchronization of neural oscillations \cite{W2,Wu}, etc.
\vs
The presence of symmetries (the so-called equivariance) results in the following paradigmatic question: {\it what is the impact of symmetries of a dynamical system on its actual dynamics, for example, on the existence, multiplicity (the number of bifurcating branches), the global behavior of the branches, and symmetric properties of periodic solutions?} 
\vs
In this paper we present a new self-contained treatment of the global symmetric Hopf bifurcation problem of the type
\begin{equation}\label{eq:Hopf-equation}
\dot u(t)+Au(t)=\mathcal F(\alpha,u(t)), \quad {\text a.e. } \;\;t\in \br,
\end{equation}
where $A:{\mathfrak{D}}(A)\subset H\to H$ is an accretive self-adjoint equivariant operator on a Hilbert representation of a compact Lie group $\Gamma$ and $\mathcal F:\br\times {\mathfrak{D}}(A)\to H$ is a family of completely continuous equivariant maps (with respect to ${\mathfrak{D}}(A)$ equipped with the graph topology).  
\vs
Our main goal is to detect unbounded branches of non-constant periodic solutions bifurcating from an equilibrium and provide a comprehensive description of their symmetric properties. Under additional assumption that $\mathcal F$ is odd with respect to second variable, it is possible to reduce the Fuller space for \eqref{eq:Hopf-equation} to a subspace where the only equilibria to \eqref{eq:Hopf-equation} are $(\alpha,\beta,0)$ and in this context, one can  provide sufficient conditions for the existence of unbounded branches. 
\vs

We use the method based on the twisted  equivariant degree theory \cite{AED,KW}. In a nutshell, the twisted equivariant degree is a topological tool, which allows ``counting" orbits of solutions to symmetric equations 
according to symmetric properties of the solutions. As such, it is a counterpart of the usual Brouwer degree, which ``counts" solutions to non-symmetric equations.  
For a given dynamical system respecting some spatial symmetries $\Gamma$,  spatio-temporal symmetries of its non-constant periodic solutions are subgroups of the group $G:=\Gamma\times S^1$ of the type
\[
K^{\theta,l} :=\{(\gamma,z)\in K\times S^1:  \theta(\gamma)=z^l\},
\]
where $K$ is a subgroup of $\Gamma$ and $\theta: K\to S^1$ is a group homomorphism  (with $l\in\{0,1,2,\dots\}$), which are called {\it twisted subgroups} of $G$.  As a matter of fact, the twisted degree can detect non-constant periodic solutions with twisted spatio-temporal symmetries $K^{\theta,l}  \le G$ (when  $N_\Gamma(K)/K$ is a finite group). To be more precise, problem \eqref{eq:Hopf-equation} can be reformulated in appropriate Banach $G$-representation $\mathscr E$ as a two parameter equation 
\begin{equation}\label{eq:two-parameter}
\mathscr F(\alpha,\beta,u)=0, \quad u\in \mathscr E,
\end{equation}
where $\beta>0$ represents an unknown frequency of a periodic solution, and  $\mathscr F:\br\times \br_+\times \mathscr E\to \mathscr E$ is a $G$-equivariant completely continuous field.
Then, with a  bifurcation point $(\alpha,\beta,u_o)$, one can associate a local $G$-equivariant  bifurcation invariant 
$\omega_G(\alpha,\beta,u_o)$
provided by the twisted equivariant degree of the map $\mathscr F$ (complemented by an auxiliary function $\vp$) in a neighborhood of $(\alpha,\beta,u_o)$, containing equivariant topological information on the symmetries of bifurcating from $(\alpha,\beta,u_o)$ branches of non-constant periodic solutions. The invariant $\omega_G(\alpha,\beta,u_o)$ is represented as a formal sum (indexed by the conjugacy classes  $(H_j)$ of twisted subgroups in $G$) of the form
\[
\omega_G(\alpha,\beta,0)=n_1(H_1)+n_2(H_2)+\dots +n_m(H_m), \quad n_j\in \bz.
\]
Then, knowing that $n_j\not=0$ allows us to predict the existence of a branch of non-constant periodic solutions, bifurcating from $(\alpha,\beta,u_o)$ with the symmetries at least $H_j$.
\vs

\section{Two Parameter $G$-Equivariant Bifurcation} \label{sec:equiv-bif-Twisted}
\subsection{Abstract results}
Assume that $G:=\Gamma\times S^1$, where $\Gamma$ is a compact Lie group. In this subsection, we present a general setting for a two-parameter 
$G$-symmetric bifurcation problem and explain how the twisted $G$-equivariant degree theory (see Appendix \ref{appendix:Twist}) can be used to study the occurrence of the bifurcation and symmetric properties of the bifurcating branches of solutions (both local and global settings are treated). To be more precise, assume that $\mathbb E$ is an isometric Banach $G$-representation and take  the $G$-representation  $\br^2\times \mathbb E$, where $G$ acts trivially on the parameter space  $\br^2$. Let $\mathscr F:\br^2\times \mathbb E\to \mathbb E$  satisfy the following assumptions:
\vs
\begin{enumerate}[label=(${\mathscr B}_\arabic*$)]\itemc
\item\label{bif0} $\mathscr F$ is a completely continuous $G$-equivariant field;
\item\label{bif1} $\mathscr F(\alpha,\beta,0)=0$ for every $(\alpha,\beta)\in \br^2$; 
\item\label{bif2} for  all $(\alpha,\beta)\in \br^2$,  the derivative $\mathscr A(\alpha,
\beta):=D_v\mathscr F(\alpha,\beta,0):\mathbb E\to \mathbb E$ exists, depends continuously on  $(\alpha,\beta)$ and, for all $(\alpha_o,\beta_o) \in \mathbb R^2$, one has:  
\begin{equation}\label{eq:A-twisted}
\lim_{(\alpha,\beta,v) \to (\alpha_o,\beta_o,0)} \frac{\|\mathscr F(\alpha,\beta,v)-\mathscr A(\alpha,\beta)v\|}{\|v\|} = 0.
\end{equation}
\end{enumerate}
\vs
Our objective is to study symmetric properties of the solution set to the equation 
\begin{equation}\label{eq:operator-eq}
\mathscr F(\alpha,\beta,v)=0, \quad (\alpha,\beta,v)\in \br^2\times \mathbb E.
\end{equation} 
We introduce the following notation. Put
\begin{equation}\label{eq:trivialM-twisted}
M:=\{(\alpha,\beta,0): (\alpha,\beta,0)\in \br^2\times \mathbb E\}
\end{equation}
and call its elements {\it trivial solutions} to \eqref{eq:operator-eq}. Also, denote by 
$\mathscr S$ the set of all  {\it nontrivial solutions} to  \eqref{eq:operator-eq}, i.e.
\begin{equation}\label{eq:nontr-twisted}
\mathscr S:=\{(\alpha,\beta,v)\in \br^2\times \mathbb E: \mathscr F(\alpha,\beta,v)=0 \;\text{ and }\; v\not=0\}.
\end{equation}
\vs

\begin{definition}\label{def:branch-1}\rm

A set $\mathcal C\subset \mathscr S$ is called a {\it branch} of nontrivial solutions to  \eqref{eq:operator-eq}  if $\overline{\mathcal C}$ is a connected component of $\overline{\mathscr S}$ containing a nontrivial continuum. Moreover,  if $(\alpha_o,\beta_o,0)\in \overline{\mathcal C}$, then we say that $\mathcal C$  {\it bifurcates} from   $(\alpha_o, \beta_o,0)$ and  $(\alpha_o,\beta_o,0)$ is called a {\it bifurcation point}  for \eqref{eq:operator-eq}.
\end{definition}
\vs 
Let $\mathcal C\subset \overline{\mathscr S}$ be a branch of nontrivial solutions bifurcating from a given trivial solution $(\alpha_o,\beta_o,0)\in M$. Then, one can easily show that
$\mathscr A(\alpha_o,\beta_o):\mathbb E\to \mathbb E$ cannot be an isomorphism, in which case,   $(\alpha_o,\beta_o,0)$ is called a {\it critical point} for  \eqref{eq:operator-eq}, and we put
\begin{equation}\label{eq:critical-twisted}
\Lambda:=\{(\alpha_o,\beta_o,0)\in M: \mathscr A(\alpha_o,\beta_o):\mathbb E\to \mathbb E \;\text{ is not an isomorphism}\}.
\end{equation} 
The set $\Lambda$ is called the {\it critical set} for \eqref{eq:operator-eq}.

%

\vs
\subsubsection{Local bifurcation results}\label{sec:local-twisted}
One can identify $\br^2$ with $\bc$ and write  $\lambda:=\alpha+i\beta$. 
Assume that the following condition is satisfied:
\begin{enumerate}[label=(${\mathscr B}_\arabic*$)]\setcounter{enumi}{3}\itemc
\item\label{bif3} $(\lambda_o,0)$ is an isolated point in $ \Lambda$, i.e. there exists $\delta > 0$
such that 
\begin{equation}\label{eq:delta-twisted}
\overline {B_{\delta}}\cap \Lambda=\{(\lambda_o,0)\},
\end{equation} 
where 
$B_{\delta}:=\{(\lambda,0)\in \bc \times \mathbb E: |\lambda-\lambda_o| < \delta\}$.
\end{enumerate}

\medskip
\noindent
Take $\ve>0$ and define the set
\begin{equation}\label{eq:O-bif}
\mathfrak O:=\left\{(\lambda,v)\in \bc \times \mathbb E: |\lambda-\lambda_o| < \delta, \; \|v\|<\ve\right\}.
\end{equation}
Under the assumptions \ref{bif0}--\ref{bif3}, the standard compactness argument yields the existence of $\delta$ and $\ve>0$ so small that the following property is satisfied:
\begin{equation}\label{eq:bif-comp-admiss}
\{(\lambda,v)\in \overline{\mathfrak O}: \mathscr F(\lambda,v)=0,\; |\lambda-\lambda_o|=\delta,\; \|v\|\le \ve \}\cap \overline{\mathscr S}=\emptyset 
\end{equation}
(cf. \eqref{eq:nontr-twisted}).
Using an equivariant analog of Tietze-Dugundji Theorem, one can 
construct an auxiliary $G$-invariant (continuous) function $\eta:\bc\times \mathbb E\to \br$ satisfying the condition 
\begin{equation}\label{eq:auxil}
\begin{cases} \eta(\lambda,0)<0 &\text{ if } \; |\lambda-\lambda_o|  = \delta,\\
\eta(\lambda,v)>0 &\text{ if } \; |\lambda-\lambda_o|\le \delta \text{ and }\; \|v\|=\ve.\\
\end{cases}
\end{equation}
In particular (cf. \eqref{eq:bif-comp-admiss} and \eqref{eq:auxil}), $\eta$ is an auxiliary function
for $\mathscr F$ in $\mathfrak O$.  
Define the map $\mathscr F_\eta:\bc\times \mathbb E\to \br\times \mathbb E$ by
\[
\mathscr F_\eta(\lambda,v) := (\eta(\lambda,v),\mathscr F(\lambda,v)), \quad (\lambda,v)\in \bc\times \mathbb E.
\]
Then, 
$(\mathscr F_\eta,\mathfrak O)$ is an admissible $G$-pair and one can define
\begin{equation}\label{eq:bif-inv-twisted}
\omega_G(\lambda_o):=\gdeg(\mathscr F_\eta,\mathfrak O).
\end{equation}
One can verify that $\omega_G(\lambda_o)$ is an element of the $A(\Gamma$)-module $A_1^t(G)$
generated by conjugacy classes  $\Phi_1^t(G)$ (see Appendix \ref{appendix:Twist} for more details) of twisted subgroups in  $G$, and that $\omega_G(\lambda_o)$ is independent of the choice of $\delta>0$, $\ve>0$ and the auxiliary function $\eta$. We will call the element $\omega_G(\lambda_o)$ the {\it local bifurcation invariant} for \eqref{eq:operator-eq}. We are now in a position to formulate the local bifurcation result for equation \eqref{eq:operator-eq}.

\vs

\begin{theorem}\label{th:bif-cont-1} Assume that $\mathscr F : \mathbb C \times \mathbb E\to \mathbb E$  satisfies \ref{bif0}--\ref{bif2}, and let $(\lambda_o,0)$ satisfy \ref{bif3}. Take $\delta>0$  and $\ve>0$ so small that condition  \eqref{eq:bif-comp-admiss} is satisfied. Suppose that $\mathfrak O$ is given by \eqref{eq:O-bif}, and let $\eta$ be a $G$-invariant auxiliary function for $\mathscr F$ in  $\mathfrak O$ satisfying 
\eqref{eq:auxil}. Assume that $\omega_G(\lambda_o)$ (given by \eqref{eq:bif-inv-twisted}) is different from zero.    
Then, there exists a branch $\mathcal C$ of nontrivial solutions bifurcating from $(\lambda_o,0)$. Moreover, if
\[
\text{\rm coeff}^{\mathcal H}(\omega_G(\lambda_o))\not=0,\;\; 
\]
for $(\mathcal H)\in \Phi_1^t(G)$, then there exists a branch $\mathcal C'$ of nontrivial solutions with symmetries at least $\mathcal H$ bifurcating from $(\lambda_o,0)$.
\end{theorem}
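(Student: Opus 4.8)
The plan is to argue by contradiction, turning the non-vanishing of $\omega_G(\lambda_o)$ (given by \eqref{eq:bif-inv-twisted}) into a zero of the completing map $\mathscr F_\eta$ that cannot exist. Assume that $(\lambda_o,0)$ is \emph{not} a bifurcation point for \eqref{eq:operator-eq}. By \ref{bif0} together with \eqref{eq:bif-comp-admiss}, the set $\mathscr K:=\overline{\mathscr S}\cap\overline{\mathfrak O}$ is compact, and, since $\mathscr A(\lambda)$ is an isomorphism for every $\lambda\ne\lambda_o$ with $|\lambda-\lambda_o|\le\delta$ (by \ref{bif3}), the implicit function theorem applied to $\mathscr F$ near each such $(\lambda,0)$ gives $\mathscr K\cap M\subseteq\{(\lambda_o,0)\}$. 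If $(\lambda_o,0)\in\mathscr K$, then the connected component of $(\lambda_o,0)$ in $\mathscr K$ must be the singleton $\{(\lambda_o,0)\}$: otherwise this component, being a compact connected set with more than one point, i.e. a nontrivial continuum through $(\lambda_o,0)$ lying in $\overline{\mathscr S}$, would be contained in a connected component of $\overline{\mathscr S}$ of the form $\overline{\mathcal C}$ for a branch $\mathcal C$ bifurcating from $(\lambda_o,0)$, against our assumption.

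Next I would invoke a Whyburn-type separation lemma for compact metric spaces. With the disjoint closed subsets $A:=\{(\lambda_o,0)\}$ and $B:=\mathscr K\cap\partial\mathfrak O$ of $\mathscr K$ — where, by \eqref{eq:bif-comp-admiss}, $B\subseteq\{\|v\|=\ve\}$, so $B\cap M=\emptyset$ — no connected component of $\mathscr K$ meets both $A$ and $B$ (the component of $A$ being $\{(\lambda_o,0)\}$), hence $\mathscr K=\mathscr K_A\sqcup\mathscr K_B$ for disjoint compact clopen sets with $A\subseteq\mathscr K_A$ and $B\subseteq\mathscr K_B$; in particular $\mathscr K_A\cap\partial\mathfrak O=\emptyset$, $\mathscr K_B\cap M=\emptyset$, and $\mathscr K_A\cap M\subseteq\{(\lambda_o,0)\}$. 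Since $G$ is compact and $\mathscr F$, $\mathscr S$, $\mathscr K$ are $G$-invariant, $\mathscr K_A$ and $\mathscr K_B$ may be taken $G$-invariant. Using that $\omega_G(\lambda_o)$ does not depend on $\delta,\ve$ (so $\mathfrak O$ may be shrunk), possibly after a $\gdeg$-preserving $G$-homotopy replacing $\mathscr F$ near $(\lambda_o,0)$ by its linearization $\mathscr A(\lambda)v$ (admissible for small $\ve$ by \eqref{eq:A-twisted} and \ref{bif3}), and the equivariant Tietze--Dugundji theorem, I would construct a $G$-invariant $\eta$ satisfying \eqref{eq:auxil} which is in addition strictly negative on a $G$-invariant neighbourhood of the trivial segment $\{(\lambda,0):|\lambda-\lambda_o|\le\delta\}$ and strictly positive on $G$-invariant neighbourhoods of $\mathscr K_B$ and of $\mathscr K_A\setminus\{(\lambda_o,0)\}$; compatibility of these prescriptions rests on the separation just obtained (modulo the degenerate situation discussed below). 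For such an $\eta$ the map $\mathscr F_\eta=(\eta,\mathscr F)$ has no zeros in $\mathfrak O$ at all, since any zero forces $\mathscr F=0$ and thus lies on the trivial segment or in $\mathscr K$, where $\eta\ne0$. By the existence property of the twisted degree (a map without zeros in $\mathfrak O$ has vanishing degree), $\omega_G(\lambda_o)=\gdeg(\mathscr F_\eta,\mathfrak O)=0$, a contradiction; hence a branch $\mathcal C$ bifurcates from $(\lambda_o,0)$.

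For the refined statement, fix a twisted subgroup $\mathcal H$ with $(\mathcal H)\in\Phi_1^t(G)$ and $\text{\rm coeff}^{\mathcal H}(\omega_G(\lambda_o))\ne0$, and run the above argument inside the fixed-point Banach representation $\mathbb E^{\mathcal H}$. By equivariance $\mathscr F$ restricts to a completely continuous $N_G(\mathcal H)/\mathcal H$-equivariant field on $\mathbb C\times\mathbb E^{\mathcal H}$ satisfying \ref{bif0}--\ref{bif2} and \ref{bif3} (the last because the critical set of the restricted problem is contained in $\Lambda$), so it has its own local bifurcation invariant $\omega^{\mathcal H}(\lambda_o)$; by the reduction formula for the twisted degree recorded in Appendix \ref{appendix:Twist}, $\text{\rm coeff}^{\mathcal H}(\omega_G(\lambda_o))$ equals, up to a nonzero factor, a coefficient of $\omega^{\mathcal H}(\lambda_o)$, whence $\omega^{\mathcal H}(\lambda_o)\ne0$. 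Applying the first part of the theorem in $\mathbb E^{\mathcal H}$ produces a branch $\mathcal C'$ of nontrivial solutions of $\mathscr F=0$ contained in $\mathbb C\times\mathbb E^{\mathcal H}$ and bifurcating from $(\lambda_o,0)$; since every point of $\mathbb E^{\mathcal H}$ has isotropy containing $\mathcal H$, the solutions on $\mathcal C'$ have symmetries at least $\mathcal H$.

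The step I expect to be the main obstacle is making the construction of $\eta$ rigorous, i.e. genuinely converting ``$(\lambda_o,0)$ is not a bifurcation point'' into ``$\mathscr F_\eta$ is zero-free''. The delicate case is when nontrivial solutions accumulate at $(\lambda_o,0)$ along a sequence of mutually disjoint compact pieces, none of which joins $(\lambda_o,0)$ to a continuum; the Whyburn separation, the $\delta,\ve$-independence of $\omega_G(\lambda_o)$, and the preliminary linearization homotopy are exactly the tools meant to exclude this, but the accompanying $G$-invariant bookkeeping — together with the verification of the reduction formula used in the refinement — is where the real work lies.
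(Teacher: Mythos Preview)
Your separation argument and the overall contradiction strategy are on the right track, but the specific target you set for the auxiliary function --- making $\mathscr F_\eta$ \emph{zero-free} in $\mathfrak O$ --- cannot be reached, and this is not merely a bookkeeping difficulty. If $\omega_G(\lambda_o)\neq 0$, then for \emph{every} admissible auxiliary function $\eta$ the existence property forces $\mathscr F_\eta$ to have a zero in $\mathfrak O$; so there is no $\eta$ with the properties you list. Concretely, your prescription requires $\eta<0$ on a neighbourhood of the trivial disc (hence $\eta(\lambda_o,0)<0$) while simultaneously $\eta>0$ on $\mathscr K_A\setminus\{(\lambda_o,0)\}$; when nontrivial zeros accumulate at $(\lambda_o,0)$ these are incompatible by continuity. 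The ``linearization fix'' does not rescue this: after the homotopy to $\mathscr A(\lambda)v$ one has the entire ray $\{(\lambda_o,v):v\in\ker\mathscr A(\lambda_o)\}$ of zeros through the origin, so the same accumulation obstruction reappears, and moreover the linearized problem \emph{does} have a branch, so the ``no branch'' hypothesis does not transfer.

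The paper resolves this by abandoning zero-freeness and instead designing the auxiliary function so that the \emph{guaranteed} zero produces a contradiction with the separation. It separates not $\{(\lambda_o,0)\}$ from $\mathscr K\cap\partial\mathfrak O$, but the whole trivial disc $B_0=(\mathbb C\times\{0\})\cap\overline{\mathfrak O^{\mathcal K}}$ from the top $B_1=\{\|v\|=\ve\}$ inside the full zero set $K=\mathscr F^{-1}(0)\cap\overline{\mathfrak O^{\mathcal K}}$ (working already in the fixed-point space $\mathfrak O^{\mathcal K}$ for some $(\mathcal K)\ge(\mathcal H)$ with nonvanishing cumulative $S^1$-degree, extracted from the recurrence \eqref{eq:H-twisted}). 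With Urysohn function $\mu$ equal to $1$ on $B_0\cup K_0$ and $0$ on $B_1\cup K_1$, one sets $\tau(\lambda,v)=\|v\|-\mu(\lambda,v)\ve$. Then $\tau$ is an auxiliary function, and a zero $(\lambda_*,v_*)$ of $(\tau,\mathscr F)$ lies in $K_0$ or $K_1$; in $K_0$ one gets $\|v_*\|=\ve$ (forcing it into $B_1\subset\mathscr U_1$), in $K_1$ one gets $v_*=0$ (forcing it into $B_0\subset\mathscr U_0$) --- both impossible. Your ``reduction formula'' remark also needs sharpening: $\text{coeff}^{\mathcal H}(\omega_G(\lambda_o))\neq 0$ does not give a nonzero invariant on $\mathbb E^{\mathcal H}$ directly, but via \eqref{eq:H-twisted} yields $\deg_{S^1}(\mathscr F_\eta^{\mathcal K},\mathfrak O^{\mathcal K})\neq 0$ for some $(\mathcal K)\ge(\mathcal H)$, which is why the paper runs the whole argument in $\mathfrak O^{\mathcal K}$.
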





\begin{proof} 
Assume for contradiction, that there is no  branch $\mathcal C$  bifurcating from  $(\lambda_o,0)$ and such that   $\mathcal C^{\mathcal H} $ contains a non-trivial  continuum containing $(\lambda_o,0)$. By  \eqref{eq:H-twisted} and \eqref{eq:twisted-def}, one has
\begin{align*}
\text{\rm coeff}^{\mathcal H}(\omega_G(\lambda_o))&=d_{\mathcal H}(\mathscr F_\eta,\mathfrak O)\\
&=\frac{\displaystyle \deg_{S^1}(\mathscr F_\eta^{\mathcal H},\mathfrak O^{\mathcal H})-\sum_{(\mathcal K) > (\mathcal H)}d_{\mathcal K}(\mathscr F_\eta,\mathfrak O)\, n(\mathcal H,\mathcal K)\, 
|W(\mathcal K)/S^1|}{|W(\mathcal H)/S^1|}.
\end{align*}
Hence, by assumption $d_{\mathcal H}(\mathscr F_\eta,\mathfrak O)\not=0$, it follows that there exists a twisted orbit type $(\mathcal K)\ge (\mathcal H)$ such that $\deg_{S^1}(\mathscr F_\eta^{\mathcal K},\mathfrak O^{\mathcal K})\not=0$. 
Put $K:=\mathscr F^{-1}(0)\cap  \overline{\mathfrak O}$,  
 $B_0:=(\mathbb C \times \{0\}) \cap \overline{\mathfrak O^{\mathcal K}}$ and $B_1:=\{(\lambda,v)\in \overline{\mathfrak O^{\mathcal K}}: \|v\|=\ve\}$,  and consider $K^{\mathcal K} = \mathscr F^{-1}(0)\cap  \overline{\mathfrak O^{\mathcal K}}$. By the equivariant Kuratowski's Lemma (see Theorem \ref{th:G-Kur}), there exist disjoint open $S^1$-invariant sets $\mathscr U_0$ and $\mathscr U_1$ in $\mathbb C \times \mathbb E$ such that 
\begin{gather*}
B_0\subset \mathscr U_0, \quad B_1\subset \mathscr U_1,\\
B_0\cup  B_1\cup K^{\mathcal K}\subset \mathscr U_0\cup \mathscr U_1.
\end{gather*} 
Put $K_0^{\mathcal K} := K^{\mathcal K} \cap \mathscr U_0$ and $K_1^{\mathcal K} := K^{\mathcal K} \cap \mathscr U_1$. Clearly, $\mathscr K_0^{\mathcal K}$ and $\mathscr K_1^{\mathcal K}$ are compact $S^1$-invariant sets. Take a continuous $S^1$-invariant Urysohn function $\mu : \mathbb C \times \mathbb E^{\mathcal K} \to [0,1]$ separating $K_0^{\mathcal K} \cup B_0$ and 
$K_1^{\mathcal K} \cup B_1$, i.e.
\begin{equation}\label{eq:twisted-Uryshon}
\mu(\lambda,v)=
\begin{cases}
1 &\text{ if } \; (\lambda,v)\in K^{\mathcal K}_0\cup B_0,\\
0 &\text{ if } \; (\lambda,v)\in K^{\mathcal K}_1 \cup B_1.
\end{cases}
\end{equation}
Put 
\[
\tau(\lambda,v):=\|v\|-\mu(\lambda,v)\ve.
\]
Clearly, $\tau$ is an auxiliary $S^1$-invariant function for $\mathscr F$ in 
$\mathfrak O^{\mathcal K}$. Since the local bifurcation invariant is independent of
the choice of an invariant auxiliary function, one has the following equality for the cumulative $S^1$-degrees:
 \begin{equation}\label{eq:equal-cum-deg}
 \deg_{S^1}(\mathscr F_\tau^{\mathcal K},\mathfrak O^{\mathcal K})=\deg_{S^1}(\mathscr F_\eta^{\mathcal  K},\mathfrak O^{\mathcal  K}).
 \end{equation}
By assumption and \eqref{eq:equal-cum-deg}, 
\[
\deg_{S^1}(\mathscr F_\tau^{\mathcal K},\mathfrak O^K)\not=0.
\]
Hence, by the existence property of the cumulative $S^1$-degree, there exists $(\lambda_*,v_*)\in \mathfrak O^{\mathcal K}$ such that
\begin{equation}\label{eq:twisted-K0K1-s1}
\mathscr F^{\mathcal K}_\tau (\lambda_*,v_*)=(0,0)\;\;\; \Leftrightarrow\;\;\; \begin{cases} 
\mathscr F^{\mathcal K} (\lambda_*,v_*)=0,\\
\|v_*\|-\mu(\lambda_*,v_*)\ve=0.
\end{cases}
\end{equation}
Clearly, $(\lambda_*,v_*)\in K^{\mathcal K}$ and since 
$K^{\mathcal K} = K^{\mathcal K}_0 \cup K^{\mathcal K}_1$, 
$K^{\mathcal K}_0 \cap K^{\mathcal K}_1 = \emptyset$, one has that either $(\lambda_*,v_*)\in K^{\mathcal K}_0$ or $(\lambda_*,v_*)\in K^{\mathcal K}_1$.
However,
if $(\lambda_*,v_*)\in K^{\mathcal K}_0$, then, by \eqref{eq:twisted-K0K1-s1}, $\|v_*\|=\ve$, and one obtains  that $(\lambda_*,v_*)\in K^{\mathcal K}_1$, which is a contradiction. Similarly, if $(\lambda_*,v_*)\in K^{\mathcal K}_1$, then, by \eqref{eq:twisted-K0K1-s1}, $\|v_*\|=0$, and one obtains  that $(\lambda_*,v_*) \in K^{\mathcal K}_0$, which is again a  contradiction.
\end{proof}

\vs
In order to effectively use Theorem \ref{th:bif-cont-1}, one needs a method for the computation of the local bifurcation invariant $\omega_G(\lambda_o)$. The following proposition (allowing a passage to the linearization on an appropriate annulus-like domain) is the first step in this direction.
\vs
\begin{proposition}\label{pro:lin-bif-invariant-twisted}
Under the assumptions of Theorem  \ref{th:bif-cont-1}, take the linearization 
$\mathscr A(\lambda)$ of $\mathscr F(\lambda)$  and define the map $\mathfrak f_{\mathscr A} : \mathbb C \times \mathbb E\to \mathbb E$ by
\begin{equation}\label{eq:fA-bif}
\mathfrak f_{\mathscr A}(\lambda,v):=\left(\tfrac{\delta}{2} - |\lambda-\lambda_o|, \mathscr A(\lambda)v\right), \quad (\lambda,v)\in \bc\times \mathbb E.
\end{equation}
Put
\begin{equation}\label{eq:frak-O-o}
\mathfrak O_o := \Big\{(\lambda,v) \in \mathfrak O \, : \, {\delta \over 4} < 
|\lambda - \lambda_o| < \delta\Big\}.  
\end{equation}
Then, $\mathfrak f_{\mathscr A}$ is a completely continuous $\mathfrak O_o$-admissible 
$G$-equivariant field and 
\begin{equation}\label{eq:lin-bif-invariant-twisted}
\omega_G(\lambda_o)=\gdeg(\mathfrak f_{\mathscr A},\mathfrak O_o).
\end{equation}
\end{proposition}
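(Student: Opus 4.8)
The plan is to show that the two $G$-pairs $(\mathscr F_\eta,\mathfrak O)$ and $(\mathfrak f_{\mathscr A},\mathfrak O_o)$ define the same twisted equivariant degree by constructing an explicit admissible $G$-homotopy between (a suitable representative of) $\mathscr F_\eta$ and $\mathfrak f_{\mathscr A}$, and then invoking the homotopy and excision properties of $\gdeg$. First I would verify the claim that $\mathfrak f_{\mathscr A}$ is a completely continuous $G$-equivariant field on $\mathfrak O_o$: the $G$-equivariance and complete continuity of $v\mapsto\mathscr A(\lambda)v$ follow from \ref{bif0}, \ref{bif2} (the derivative of a completely continuous field is compact, and depends continuously on $\lambda$), and the first component $\tfrac\delta2-|\lambda-\lambda_o|$ is $G$-invariant since $G$ acts trivially on $\bc$. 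The key point is $\mathfrak O_o$-admissibility: on $\partial\mathfrak O_o$ either $|\lambda-\lambda_o|=\delta/4$ or $|\lambda-\lambda_o|=\delta$, in which case the first component equals $\pm\delta/4\neq0$; or $\|v\|=\ve$ with $\delta/4<|\lambda-\lambda_o|<\delta$, which forces $\mathscr A(\lambda)v=0$ with $v\neq0$, i.e. $(\lambda,0)\in\Lambda$ — impossible by \ref{bif3} since $|\lambda-\lambda_o|>\delta/4>0$ (shrinking $\delta$ if necessary so that $\overline{B_\delta}\cap\Lambda=\{(\lambda_o,0)\}$). Here one also uses that $\mathscr A(\lambda)$ is an isomorphism for $0<|\lambda-\lambda_o|\le\delta$ together with estimate \eqref{eq:A-twisted} to pass from the nonlinear to the linear problem near $M$.

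Next I would reduce the bifurcation invariant to an annular domain. Using \eqref{eq:A-twisted} and the fact that $\mathscr A(\lambda)$ is an isomorphism on the punctured disc $0<|\lambda-\lambda_o|\le\delta$, a standard a priori estimate shows that for $\ve$ small enough, $\mathscr F(\lambda,v)=0$ with $(\lambda,v)\in\overline{\mathfrak O}$ and $\|v\|\le\ve$ forces $\lambda=\lambda_o$; hence the nontrivial solution set meeting $\overline{\mathfrak O}$ with small norm is confined to the slice $|\lambda-\lambda_o|<\delta/4$ together with $v=0$. I would then build the auxiliary function $\eta$ concretely so that it depends only on $|\lambda-\lambda_o|$ and $\|v\|$, is negative on $|\lambda-\lambda_o|=\delta$, positive on $\|v\|=\ve$, and additionally (this is the freedom we have, since $\omega_G(\lambda_o)$ is independent of $\eta$) is negative on the region $|\lambda-\lambda_o|\le\delta/4$. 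With this choice, $\mathscr F_\eta$ has no zeros in $\overline{\mathfrak O}\setminus\overline{\mathfrak O_o}$: on $\{|\lambda-\lambda_o|\le\delta/4\}$ either $v\neq 0$ and $\mathscr F(\lambda,v)\neq0$, or $v=0$ and $\eta<0$. Excision then gives $\gdeg(\mathscr F_\eta,\mathfrak O)=\gdeg(\mathscr F_\eta,\mathfrak O_o)$.

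Finally, on $\mathfrak O_o$ I would connect $\mathscr F_\eta$ to $\mathfrak f_{\mathscr A}$ by a two-stage $G$-homotopy. The first stage linearizes the second component, $h_t(\lambda,v)=(\eta(\lambda,v),\,(1-t)\mathscr F(\lambda,v)+t\,\mathscr A(\lambda)v)$; admissibility on $\overline{\mathfrak O_o}$ for all $t$ follows because $|\lambda-\lambda_o|\ge\delta/4>0$ makes $\mathscr A(\lambda)$ an isomorphism, so a zero would need $v=0$, whence the first component is $\eta(\lambda,0)$; one checks (adjusting $\eta$ so that $\eta(\lambda,0)<0$ on $\delta/4\le|\lambda-\lambda_o|\le\delta$, consistent with the previous paragraph) that this is nonzero — for the case $(1-t)\mathscr F+t\mathscr A v$ with $v\ne 0$ one uses \eqref{eq:A-twisted} and the isomorphism property to rule out zeros after shrinking $\ve$. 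The second stage deforms the first component, replacing $\eta$ by the affine function $\tfrac\delta2-|\lambda-\lambda_o|$ via the straight-line homotopy while keeping the second component $\mathscr A(\lambda)v$ fixed; here admissibility is immediate since any zero again requires $v=0$, and then both first components are strictly negative at $|\lambda-\lambda_o|=\delta$, strictly positive at $|\lambda-\lambda_o|=\delta/4$, and the convex combination shares the same signs, while inside we use that $\mathscr A(\lambda)$ invertible forbids $v\ne0$ zeros. By the homotopy invariance of $\gdeg$, the chain of equalities $\gdeg(\mathscr F_\eta,\mathfrak O)=\gdeg(\mathscr F_\eta,\mathfrak O_o)=\gdeg(\mathfrak f_{\mathscr A},\mathfrak O_o)$ yields \eqref{eq:lin-bif-invariant-twisted}. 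The main obstacle I anticipate is the bookkeeping needed to choose $\eta$, $\delta$, and $\ve$ simultaneously so that all three properties (the boundary conditions \eqref{eq:auxil}, negativity on the inner disc for excision, and admissibility of both homotopy stages) hold at once; this is where one must invoke \eqref{eq:A-twisted} quantitatively and repeatedly shrink the constants, rather than any deep new idea.
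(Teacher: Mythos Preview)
There is a genuine gap in your excision step. You correctly argue that, after shrinking $\ve$, every nontrivial zero of $\mathscr F$ in $\overline{\mathfrak O}$ must lie in the inner region $\{|\lambda-\lambda_o|<\delta/4\}$; but then, two sentences later, you assert that on $\{|\lambda-\lambda_o|\le\delta/4\}$ one has ``$v\neq 0$ and $\mathscr F(\lambda,v)\neq 0$'', which flatly contradicts the previous claim. In fact the inner disc is precisely where a bifurcating branch (if one exists) produces nontrivial zeros of $\mathscr F$, and since any auxiliary $\eta$ satisfying \eqref{eq:auxil} must pass from negative values near $v=0$ to positive values at $\|v\|=\ve$, $\eta$ will vanish somewhere along such a branch. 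Hence $\mathscr F_\eta$ can have zeros in $\mathfrak O\setminus\mathfrak O_o$, and the excision $\gdeg(\mathscr F_\eta,\mathfrak O)=\gdeg(\mathscr F_\eta,\mathfrak O_o)$ is not justified. (Note also that your proposed side condition ``$\eta<0$ on $|\lambda-\lambda_o|\le\delta/4$'' is incompatible with $\eta>0$ on $\|v\|=\ve$ unless it is meant only at $v=0$, in which case it does not help with the nontrivial zeros.)

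The repair is simply to reverse the order of your steps, which is what the paper does: first run the linearization homotopy $h_t(\lambda,v)=(\eta(\lambda,v),(1-t)\mathscr F(\lambda,v)+t\mathscr A(\lambda)v)$ on the \emph{full} set $\mathfrak O$. Admissibility on $\partial\mathfrak O$ needs only the boundary conditions \eqref{eq:auxil} and the isomorphism property of $\mathscr A(\lambda)$ at $|\lambda-\lambda_o|=\delta$, and is established by the very compactness/subsequence argument you sketch. After linearization, the second component $\mathscr A(\lambda)v$ vanishes only at $v=0$ (for $\lambda\neq\lambda_o$), so the remaining zeros are confined to the trivial set and to the fibre over $\lambda_o$; \emph{now} the straight-line homotopy of auxiliary functions from $\eta$ to $\tfrac{\delta}{2}-|\lambda-\lambda_o|$ is admissible on $\mathfrak O$, and only \emph{after} this deformation do all zeros of $\mathfrak f_{\mathscr A}$ sit on the circle $|\lambda-\lambda_o|=\delta/2\subset\mathfrak O_o$, making the final excision to $\mathfrak O_o$ legitimate. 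Your ingredients are correct; only the order ``excise, then linearize'' must become ``linearize, then excise''.
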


\begin{proof} Clearly, $\mathscr A \in \text{L}_c(\mathbb E)$ (see, for example, \cite{KrasZab}, Theorem 17.1), from which the complete continuity of  the field $\mathfrak f_{\mathscr A}$ follows immediately. Also, since $G_{(\lambda,0)}= G$, the $G$-equivariance of  $\mathscr A$ follows from the $G$-equivariance of $\mathscr F$. 

\vs
Define the $G$-equivariant homotopy
\[
\mathfrak h_\eta(t,\lambda,v):=\Big(\eta(\lambda,v), (1-t)\mathscr F(\lambda,v)+t\mathscr A(\lambda)v)\Big),
\]
where $(\lambda,v)\in \bc\times \mathbb E$. We claim that $\mathfrak h_{\eta}$ is an $\mathfrak O$-admissible homotopy. Assume, for contradiction, that there exists a sequence $(t_n,\lambda_n,v_n)\in [0,1]\times \bc\times \mathbb E$ such that 
\begin{equation}\label{eq:lambda-delta}
|\lambda_n-\lambda_o|=\delta
\end{equation}
 and $0<\|v_n\|\le \ve$ satisfies  
\[
v_n \to 0 \;\; {\rm as} \;\; n\to \infty\;\; \;\; {\rm and} \;\;\; \; \mathfrak h_\eta(t_n,\lambda_n,v_n)=0
\]
(recall that, by definition, the auxiliary function $\eta$ is positive on the set $\{(\lambda,v) \in 
\partial \mathfrak O \, : \, \|v\| = \varepsilon\}$). 
Without loss of generality, one can assume that $t_n\to t_o$ and $\lambda_n\to \lambda_*$ as $n\to \infty$. Then, put $r(\lambda_n,v_n):=\mathscr F(\lambda_n,v_n)-\mathscr A(\lambda_n)v_n$ and notice that for any $n \in \mathbb N$, one has
\begin{equation}\label{eq:YTM223}
0 = t_n\mathscr F(\lambda_n,v_n)+(1-t_n)\mathscr A(\lambda_n)v_n 
=\mathscr A(\lambda_n)v_n +t_nr(\lambda_n,v_n).
\end{equation}
Combining \eqref{eq:YTM223} with $\|v_n \| \not=0$, one obtains
\[
0=\mathscr A(\lambda_n)\tfrac{v_n}{\|v_n\|}+ t_n\frac{r(\lambda_n,v_n)}{\|v_n\|}.
\]
Since, by \ref{bif2}, 
\[ \lim_{n\to\infty} \frac{r(\lambda_n,v_n)}{\|v_n\|}=0,\]
it follows that 
\begin{equation}\label{eq:A-lamda-0}
\lim_{n\to \infty} \mathscr A(\lambda_n)\tfrac{v_n}{\|v_n\|}=0.
\end{equation}
Put $w_n:=\tfrac{v_n}{\|v_n\|}$ and $\mathscr K(\lambda_n) := \id - \mathscr A(\lambda_n)$. 
By \eqref{eq:A-lamda-0} combined with compactness of $\mathscr K(\lambda_n)$ and $\|w_n\| =1$, there exists a subsequence $\{w_{n_k}\}$ such that 
\[
\lim_{k\to \infty} \mathscr K(\lambda_{n_k})w_{n_k} = \lim_{k\to \infty}w_{n_k} = w_o, \quad 
\|w_o\| = 1.
\]
Therefore (cf. \eqref{eq:lambda-delta}), 
\[
0=\lim_{k\to \infty}  \mathscr A(\lambda_{n_k})w_{n_k}=\mathscr A(\lambda_*)w_{o}, \quad \|w_o\|=1, \quad |\lambda_* -  \lambda_o| = \delta,
\]
which contradicts the choice of $\delta$ and condition \ref{bif3}.

\vs

Finally, take the homotopy 
$\mathfrak k : [0,1]\times \bc\times \mathbb E\to \mathbb E$ given by
\begin{equation}\label{eq:eta-t}
\mathfrak k(t,\lambda,v):=\Big(\eta_t(\lambda,v), \mathscr A(\lambda)v\Big), 
\end{equation}
where 
\[
\eta_t(\lambda,v)=(1-t)\eta(\lambda,v)+t\left(\tfrac{\delta}{2} - |\lambda-\lambda_o|\right).
\]
Now, if $\mathfrak k(t,\lambda,v) = 0$, then either $\|v\| = \ve$ and $\lambda = \lambda_o$
(in which case, $\eta_t(\lambda,v) > 0$), or $|\lambda - \lambda_o| = \delta$ and $v = 0$ 
(in which case, $\eta_t(\lambda,0) < 0$). Hence, by the excision property ($\mathfrak T_5$) of the degree (see Appendix, section \ref{appendix:Twist}), 
\[
\omega_G(\lambda_o) = \gdeg(f_{\mathscr A}, \mathfrak O) = \gdeg(f_{\mathscr A}, \mathfrak O_o),
\]
and the  result follows.
\end{proof}

\vs
In order to compute the degree $\gdeg(\mathfrak f_{\mathscr A},\mathfrak O_o)$, one can apply the  finite-dimensional reduction. With Proposition \ref{pro:lin-bif-invariant-twisted} supplemented by Proposition \ref{prop:a-map-finite} (see section \ref{sec:main}), one can give an effective computational formula for $\omega_G(\lambda_o)$. Namely, take the map 
$\mathscr A: \bc\to \text{\rm L}_c^G(\mathbb E)$ (see condition \ref{bif2}) and put 
$K:=\{\lambda\in \bc \, : \, {\delta \over 4} \leq   | \lambda-\lambda_o| \leq \delta\}$ (cf. \eqref{eq:frak-O-o}). 
Then, by Proposition \ref{prop:a-map-finite}, 
there exists a $G$-invariant decomposition $\mathbb E:=W\oplus\mathbb E^o$ with $\text{\rm dim}(W) < \infty$ \, and a homotopy $h : [0,1] \times K \to  \text{\rm GL}_c^G(\mathbb E)$  between $\mathscr A : K \to  \text{\rm GL}_c^G(\mathbb E)$   and a continuous map $a: K \to  \text{\rm GL}_c^G(\mathbb E)$ \big($ a(\lambda)  :=  h(1,\lambda)$\big) such that 
$a(\lambda)(W)\subset W$, $a(\lambda)(\mathbb E^o)\subset \mathbb E^o$, and  $a|_{\mathbb E^o}=\id_{\mathbb E^o}$ for all 
 $\lambda\in K$. 
 Then, by homotopy and suspension properties \ref{T2} and \ref{T9} of the degree (see Appendix, section \ref{appendix:Twist}) combined with Proposition \ref{pro:lin-bif-invariant-twisted}, one obtains
 \begin{equation}\label{eq:twisted-omega-reduction}
 \omega_G(\lambda_o)=\gdeg(\mathscr F_\eta,\mathfrak O)=\gdeg(\mathfrak f_{\theta,a},\mathfrak O^{\prime}_o),
 \end{equation}
 where 
 \begin{align*}
 \theta(\lambda,v) &:=\tfrac{\delta}{2} - |\lambda-\lambda_o|,\\
 \mathfrak O^{\prime}_o&:=\mathfrak O_o \cap (\bc\times W),\\
 \mathfrak f_{\theta,a}(\lambda,v)&:=\left(\theta(\lambda,v),a\left(\frac \lambda{|\lambda|}\right)v \right).
 \end{align*}
 \vs
 \noindent
By combining this with  the standard rescaling argument and Theorem \ref{th:comp-twisted},  one obtains 
 \begin{equation}\label{eq:omega-twisted-final}
 \omega_G(\lambda_o)=\prod_l {\bm \delta_l}\cdot \mathop{\sum_{k,l}}\limits_{k>0} d_{k,l}\deg_{\cV_{k,l}},
 \end{equation}
where $d_{k,l}$ is given by \eqref{eq:dkl}
and ${\bm \delta_l}$ is given by \eqref{eq:factor-j-twisted}.
\vs

\subsection{Global bifurcation results}\label{sec:global-twisted} 

To formulate the global bifurcation result for equation \eqref{eq:operator-eq}, we make the following assumption (cf. \eqref{eq:critical-twisted}):

\begin{enumerate}[label=(${\mathscr B}_\arabic*$)]\setcounter{enumi}{4}\itemc
\item\label{bif4} the set $\Lambda\subset \br^2$ is discrete.
\end{enumerate}

For our further exposition,
assume, in addition, that $\Gamma:=\Gamma_o\times \bz_2$, where the action of $\bz_2$  on $\mathbb E$ is antipodal, and put 
\begin{equation}\label{eq:def-G-o-H}
\bz_2^d:=\{(1,1),(-1,-1)\}\le \bz_2\times S^1 \qquad \text{and} \qquad {\bm H} := \{{\bm e}\} \times \bz_2^d \le \Gamma_o \times \bz_2\times S^1
\end{equation} 
(here ${\bm e}$ stands for the unity in $\Gamma_o$). It is convenient to study the global behavior of branches of solutions to \eqref{eq:operator-eq} on which $S^1$-acts nontrivially, by taking the restriction to the $\bm H$-fixed subspace, i.e. consider the equation
\begin{equation}\label{eq:operator-bif-H}
\mathscr F^{\bm H}(\lambda,v)=0, \quad (\lambda,v)\in \bc\times \mathbb E^{\bm H}.
\end{equation}
\vs
\noindent
Since
\begin{equation}\label{eq:G-o-def}
{G \over {\bm H}} = {\Gamma_o \times \mathbb Z_2 \times S^1 \over {\bm H}} \simeq \Gamma_o \times {\mathbb Z_2 \times S^1 \over \mathbb Z_2^d} \simeq \Gamma_o \times S^1 =: G_o,
\end{equation}
the space $ \bc\times \mathbb E^{\bm H}$ is an isometric Banach $G_o$-representation. { To describe $\Gamma_0$-isotypic  decomposition of $\mathbb E^{\bm H}$, consider first the $\Gamma$-isotypic decomposition  
\begin{equation}\label{eq:Gamma-isot-E}
\mathbb E = \overline{\bigoplus_{l=0}^{\infty} V_l},
\end{equation}
where the component $V_l$ is modeled on the irreducible $\Gamma$-representation  $\cV_l$ (which is the irreducible $\Gamma_0$-representation  $\cV_l$ with antipodal $\bz_2$ action)}. The irreducible $\Gamma\times S^1$-representations can be easily described. Assume that $\{\cV_l\}$ is a complete list of irreducible $\Gamma$-representations. Then, for each $\Gamma$-irreducible representation $\cV_l$ and $k=0,1,2,\dots$, we define the $\Gamma\times S^1$-representation $\cV_{k,l}$ as follows:
\begin{itemize} 
\item if $\cV_l$ is of real type, then $\cV_{k,l}$ is the complexification $\cV_l^c$ of $\cV_l$ with the $S^1$-action given by $zv:=z^k\cdot v$, where $z\in S^1$, $v\in \cV_l^c$ and `$\cdot$' denotes the complex multiplication;

\item if $\cV_l$ is of complex type, then $\cV_l$  admits a natural complex structure and one can define  $\cV_{k,l}$ to be  $\cV_l$ with the $S^1$-action given by $zv:=z^k\cdot v$, where $z\in S^1$, $v\in \cV_l$ and `$\cdot$' denotes the complex multiplication in $\cV_l$.
\end{itemize} 

\vs
Let $\{\mathcal V_{k,l}\}$, $k,l = 0,1,2,...$, be the collection of all irreducible real $G = \Gamma \times S^1$-representations. Then,  \eqref{eq:Gamma-isot-E} suggests the following
$G$-isotypic decomposition:
\begin{equation}\label{eq:Gamma-S1-isot-E}
\mathbb E = \overline{\bigoplus_{l=0}^{\infty}\bigoplus_{k=0}^\infty V_{k,l}},
\end{equation}
where $V_{k,l}$ denotes the $G$-isotypic component of $\mathbb E$ modeled on $\cV_{k,l}$ (some of $V_{k,l}$ may be trivial).
Recall that for $k>0$, the space $V_{k,l}$ admits the natural complex structure, and
 the element 
$({\bm e},-1,-1) \in G = {\Gamma_0} \times \mathbb Z_2 \times S^1$ acts on $V_{k,l}$ by
\[
({\bm e},-1,-1)v = - e^{i\pi}v = 
\begin{cases}
- e^{ik\pi} \cdot v, \quad \rm{if} \quad k > 0;\\
-v, \qquad\quad   \rm{if} \quad k = 0,
\end{cases}
\] 
where ``$\cdot$" stands for the complex multiplication. Thus, for $k > 0$, one has
\begin{equation}\label{eq:non-trivial-S1-action}
({\bm e},-1,-1)v = v \quad \Leftrightarrow \quad {\bm e}^{ik\pi} = -1 \quad \Leftrightarrow \quad \text{$k$ is odd}.
\end{equation}
By combining \eqref{eq:Gamma-S1-isot-E} with \eqref{eq:non-trivial-S1-action}, one obtains the following $G_o$-isotypic decomposition of $\mathbb E^{\bm H}$:
\begin{equation}\label{eq:Go-iso-Banach}
 \mathbb E^{\bm H}=\overline{\bigoplus_{l=0}^{\infty}\bigoplus_{k'=0}^\infty V_{2k'+1,l}}.
\end{equation}

\vs
To pass from equation \eqref{eq:operator-eq}   to equation \eqref{eq:operator-bif-H} and to formulate the corresponding global result, 
one needs to redefine the set of critical points for \eqref{eq:operator-bif-H}, i.e. put
\begin{equation}\label{eq:Lambda-o-def}
\Lambda_o:=\{(\lambda_o,0)\in \bc\times \mathbb E^{\bm H}: \mathscr A(\lambda_o)|_{\mathbb E^{\bm H}} : \mathbb E^{\bm H} \to \mathbb E^{\bm H} \;  \;\text{ is not an isomorphism}\}.
\end{equation}
Clearly, $\Lambda_o\subset \Lambda$, thus assumption \ref{bif4} implies that the set $\Lambda_o$ is also discrete. By following the same construction as in subsection 
\ref{sec:local-twisted}, 
one can associate the local bifurcation invariant $\omega_{G_o}(\lambda_o)\in A_1^t(G_o)$ to each critical point $(\lambda_o,0)\in \Lambda_o$. Notice that in such a case,
\begin{equation}\label{eq:omega-G-o-def}
\omega_{G_o}(\lambda_o)=  \sum_{k',l}d_{2k'+1,l}\deg_{\cV_{2k'+1,l}}.
\end{equation}
In addition, put
\begin{equation}\label{eq:matscr-S-o-def}
\mathscr S_o:=\{(\lambda,v)\in \bc\times \mathbb E^{\bm H}: \mathscr F^{\bm H}(\lambda,v)=0 \; \text { and } \; v\not=0\}.
\end{equation}
\vs
The following result can be proved using the Kuratowski's Lemma and the same standard argument applied in the classical Rabinowitz result combined with cosmetic modifications related 
to $G$-equivariance and infinite dimensions (see the proof of Theorem \ref{th:Rab} in section \ref{sec:main}).
\vs
\begin{theorem} \label{th:global-bif-twisted}
Let  $G:=\Gamma_o\times \bz_2\times S^1$, where $\Gamma_o$ is a compact Lie group, and let  $\mathbb E$ be a Banach $G$-representation with the antipodal $\mathbb Z_2$-action. 
Let $\mathscr F:\br^2\times \mathbb E\to \mathbb E$ satisfy conditions  
\ref{bif0}--\ref{bif2} and \ref{bif4}. Suppose that $\mathcal C\subset \overline{\mathscr S_o}$ is a bounded branch of solution to equation \eqref{eq:operator-bif-H} ({where $\bm H$ is given by} \eqref{eq:def-G-o-H} and {$\mathscr S_o$ is given by} \eqref{eq:matscr-S-o-def}). Then,
\[ \mathcal C\cap \Lambda_o=:\{(\lambda_1,0),(\lambda_2,0), \dots,(\lambda_N,0)\}\] {(see \eqref{eq:G-o-def}, \eqref{eq:Lambda-o-def})}, and 
\begin{equation}\label{eq:omega-sum-twisted}
\sum_{s=1}^N \omega_{G_o}(\lambda_s)=0
\end{equation}
(see \eqref{eq:omega-G-o-def}).
\end{theorem}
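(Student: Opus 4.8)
The plan is to reduce the statement to the classical Rabinowitz-type global alternative, adapted to the $G_o$-equivariant and infinite-dimensional setting, exactly as in the (forthcoming) proof of Theorem~\ref{th:Rab}. The key point is that a bounded branch $\mathcal C$ of nontrivial solutions to \eqref{eq:operator-bif-H} must accumulate only on the trivial solution set $M$, and in fact only at critical points in $\Lambda_o$ (since near a noncritical trivial solution the implicit function theorem forbids bifurcation), so $\mathcal C\cap\Lambda_o$ is a finite set by the discreteness assumption \ref{bif4} together with the boundedness of $\overline{\mathcal C}$ and complete continuity of $\mathscr F$.

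First, I would establish that $\overline{\mathcal C}\cap M = \mathcal C\cap\Lambda_o$ is finite: by \ref{bif0}, the closure of the solution set intersected with any bounded set is compact, so $\overline{\mathcal C}$ is compact; the noncritical trivial solutions admit neighborhoods in $\bc\times\mathbb E^{\bm H}$ containing no nontrivial solutions (implicit function theorem applied to $\id-(\id-\mathscr F^{\bm H})$ using \ref{bif2}), hence $\overline{\mathcal C}$ can meet $M$ only inside $\Lambda_o$, and by \ref{bif4} this is a discrete subset of a compact set, thus finite; relabel it $\{(\lambda_1,0),\dots,(\lambda_N,0)\}$. Second, around each $(\lambda_s,0)$ choose disjoint small ``annular'' isolating neighborhoods $\mathfrak O_s$ of the form \eqref{eq:O-bif} (scaled down so the $\overline{\mathfrak O_s}$ are pairwise disjoint and meet $\overline{\mathcal C}$ only in the prescribed way), and for each $s$ take a $G_o$-invariant auxiliary function $\eta_s$ as in \eqref{eq:auxil}; then $\omega_{G_o}(\lambda_s)=\gdeg(\mathscr F^{\bm H}_{\eta_s},\mathfrak O_s)$.

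Third — and this is the crux — I would use the equivariant Kuratowski Lemma (Theorem~\ref{th:G-Kur}) to separate the compact set $\overline{\mathcal C}$ from the rest of the solution set: since $\mathcal C$ is a bounded branch, $\overline{\mathcal C}$ is a connected component of $\overline{\mathscr S_o}\cup(\overline{\mathcal C}\cap M)$, which is compact, so it admits an open $G_o$-invariant isolating neighborhood $\mathcal N$ whose boundary contains no solutions with $v\neq 0$ and such that $\mathcal N\cap M\subset\bigcup_s\mathfrak O_s$. Build a single global $G_o$-invariant auxiliary function $\eta$ on $\mathcal N$ which restricts to (a homotopic copy of) $\eta_s$ near each $(\lambda_s,0)$ and is strictly positive on $\partial\mathcal N$; then $(\mathscr F^{\bm H}_\eta,\mathcal N)$ is an admissible $G_o$-pair and $\gdeg(\mathscr F^{\bm H}_\eta,\mathcal N)$ is defined. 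By the additivity/excision property of the twisted degree (properties \ref{T2}, \ref{T9} and the excision property quoted in the proof of Proposition~\ref{pro:lin-bif-invariant-twisted}), $\gdeg(\mathscr F^{\bm H}_\eta,\mathcal N)=\sum_{s=1}^N\gdeg(\mathscr F^{\bm H}_{\eta_s},\mathfrak O_s)=\sum_{s=1}^N\omega_{G_o}(\lambda_s)$. On the other hand, since $\mathcal N$ is a bounded isolating neighborhood with \emph{no} nontrivial solutions on its boundary and the only trivial solutions inside are isolated critical points, one can push $v$ off the solution set: deform $\eta$ to a function that is positive on all of $M\cap\overline{\mathcal N}$ (using that $\mathcal C$ is bounded, so the $v$-component stays bounded, and the homotopy $\mathscr F^{\bm H}_\eta\rightsquigarrow(\eta - (\text{large positive const}),\mathscr F^{\bm H})$ or a contraction of $\mathcal N$ in the $\lambda$-direction off $\bigcup_s\overline{B_{\delta_s}}$ is admissible), making the pair solution-free and hence the degree zero. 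Combining the two evaluations yields \eqref{eq:omega-sum-twisted}.

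The main obstacle I anticipate is the construction of the single admissible global auxiliary function $\eta$ and verifying the admissibility of the homotopy that trivializes $\gdeg(\mathscr F^{\bm H}_\eta,\mathcal N)$: one must be careful that no solution branch escapes through $\partial\mathcal N$ during the homotopy and that the boundedness of $\mathcal C$ is used correctly (this is precisely where the hypothesis ``bounded branch'' enters and where the argument would fail for an unbounded branch, giving instead the global alternative). The equivariant Tietze–Dugundji/Urysohn machinery guarantees $\eta$ exists with the required $G_o$-invariance; the compactness of $\overline{\mathcal C}$ and complete continuity of $\mathscr F$ guarantee the isolating neighborhood $\mathcal N$ exists; and the additivity of the twisted degree over disjoint pieces does the bookkeeping. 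All remaining steps are the ``cosmetic modifications related to $G$-equivariance and infinite dimensions'' of the classical Rabinowitz argument referred to in the statement.
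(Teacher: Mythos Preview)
Your proposal follows essentially the same Rabinowitz-type argument the paper invokes (via the proof of Theorem~\ref{th:Rab}): Kuratowski separation of the bounded branch from the rest of the solution set, construction of an isolating neighborhood carrying a global auxiliary function, additivity/excision to express the total twisted degree as $\sum_{s}\omega_{G_o}(\lambda_s)$, and a homotopy of the auxiliary function that trivializes the degree.

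One correction is needed in your description of $\eta$. You require $\eta>0$ on all of $\partial\mathcal N$, but $\partial\mathcal N$ necessarily meets $M$ (along the ``side walls'' $|\lambda-\lambda_s|=\delta_s$, $v=0$, of the boxes $\mathfrak O_s$), and there $\eta$ must agree with $\eta_s$, which is \emph{negative} by \eqref{eq:auxil}. With $\eta>0$ on $\partial\mathcal N\cap M$ your homotopy $\eta\rightsquigarrow\eta-C$ would fail admissibility precisely at those trivial zeros of $\mathscr F^{\bm H}$. The fix is exactly what the paper does in the proof of Theorem~\ref{th:Rab}: take $\eta<0$ on $\partial\mathcal N\cap M$ and $\eta>0$ on the remainder of $\partial\mathcal N$ (concretely, $\eta=\|v\|-\tfrac{\ve}{2}$ inside each $\mathfrak O_s$ and a positive constant outside, after excising a thin tube around $M$). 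Then the linear homotopy $(1-t)\eta-t$ stays negative at all boundary zeros of $\mathscr F^{\bm H}$ and ends at the constant $-1$, giving degree zero. You anticipated this was the delicate point; this is the precise repair.
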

\vs

\subsection{Relaxing conditions \ref{bif3} and \ref{bif4}}\label{sec:cluster-bif} In this section, we adapt the  results 
presented in subsections \ref{sec:local-twisted}  and \ref{sec:global-twisted} to the settings with relaxed conditions  \ref{bif3} and \ref{bif4}. 
\vs
Consider a family $\mathscr F:\br^2\times \mathbb E\to \mathbb E$  
satisfying conditions \ref{bif0}--\ref{bif2}, and 
introduce the following concept (cf. \eqref{eq:trivialM-twisted} and  \eqref{eq:critical-twisted}).
\vs

\begin{definition}\label{ded:cluster-twisted}\rm
A compact set $K\subset \Lambda$ is called a {\it cluster of critical points} (or simply {\it cluster}) for \eqref{eq:operator-eq}, if there exists a diffeomorphism $\xi: \bc\to M$ such that:
\begin{itemize}
\item[(i)] $\xi$ preserves the standard orientations of $\bc$ and $M$;
\item[(ii)] $K\subset \mathcal U:=\xi(B_1(0))$, where $B_1(0):=\{\lambda\in \bc:|\lambda|<1\}$; 
\item[(iii)] $\overline {\mathcal U} \cap \Lambda=K$.
\end{itemize}
In such a case, we also say that $(K,\mathcal U)$ is a {\it cluster pair} and call $\mathcal U$ a {\it cluster isolating} neighborhood. 
\end{definition}

\vs
An illustration of a  cluster pair is shown on Figure \ref{fig:cluster-twisted}.
\vs

\begin{figure}[h]
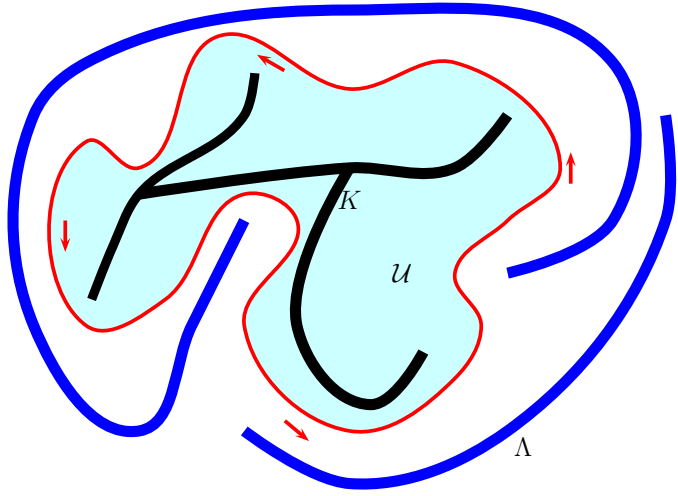

\vskip5cm\hskip8cm
\scalebox{.7}{\psccurve[fillcolor=lightblue,fillstyle=solid,linecolor=red,linewidth=2pt](0,-2)(2,-1)(2.5,0)(2,1)(3,2)(4,3)(2,5)(0,4.5)(-2.5,5.5)(-4,3)(-5,3.5)(-5,0)(-3.5,0.5)(-2,2.5)(-1,2)(-2,0)
\pscurve[linewidth=5pt](-4.9,3)(-4.9,.5)
\pscurve[linewidth=5pt](-4.9,.5)(-4.5,1.5)(-4,2.5)(-2,4)(-1.8,4.8)
\pscurve[linewidth=6pt](-4,2.5)(0,3)(2,3)(3,4)
\pscurve[linewidth=6pt](0,3)(-1,0)(0.4,-1.5)(1.4,-.5)
\rput{90}(4.2,3){\psline[arrows=->,linecolor=red,linewidth=2pt](-.3,0)(.3,0)}
\rput(0,2.4){\Large $K$}
\rput(1,1){\Large $\mathcal U$}
\pscurve[linecolor=blue,linewidth=6pt](3,1)(5,2)(5,5)(0,6)(-5,5)(-6,4)(-6,0)(-4,-2)(-3,0)(-2,2)
\pscurve[linecolor=blue,linewidth=6pt](-2,-2)(0,-3)(3,-2)(6,2)(6,4)
\rput(3.3,-2.3){\Large $\Lambda$}
\rput{150}(-1.5,5.){\psline[arrows=->,linecolor=red,linewidth=2pt](-.3,0)(.3,0)}
\rput{-90}(-5.4,1.7){\psline[arrows=->,linecolor=red,linewidth=2pt](-.3,0)(.3,0)}
\rput{-40}(-1,-2){\psline[arrows=->,linecolor=red,linewidth=2pt](-.3,0)(.3,0)}
}
\vskip3cm
\caption{\small Points of the critical set $\Lambda$ are indicated by black and dark blue colors; $K$
is indicated by black color; a cluster isolating neighborhood $\mathcal U$ for $K$ is marked in light blue color.}\label{fig:cluster-twisted}
\end{figure}

\vs
Take a cluster pair $(K,\mathcal U)$ with a diffeomorphism $\xi :\overline{B_1(0)}
\to \overline {\mathcal U}$. Since $\partial \mathcal U\cap \Lambda=\emptyset$, there exists a sufficiently small $\ve>0$ such that 
\begin{equation}\label{eq:epsilon-cluster}
\{(\lambda,v)\in \mathbb C \times \mathbb E : \mathscr F(\lambda,v)=0,\;   
 \lambda\in \partial \mathcal U, \; \|v\|\le \ve \}\cap \overline{\mathscr S}=\emptyset 
\end{equation}
(cf. \eqref{eq:bif-comp-admiss}).
Put
\begin{equation}\label{eq:cluster-Om}
\Om:=\{(\lambda,v)\in \bc\times \mathbb E: \lambda\in \mathcal U, \; \|v\|<\ve\}
\end{equation}
and take an auxiliary function  $\theta:\bc\times \mathbb E\to \br$ for $\mathscr F$ on $\Om$ 
satisfying
\begin{equation}\label{eq:cluster-theta}
\begin{cases}
\theta(\lambda,0)<0 & \text { for }\;\; \lambda\in \partial \mathcal U,\\
\theta(\lambda,v)>0 &\text{ for } \;\; \lambda\in \overline{\mathcal U}\; \text{ and }\; \|v\|=\ve.
\end{cases}
\end{equation}
Clearly, the map $\mathscr F_\theta (\lambda,v):=\big(\theta(\lambda,v),\mathscr F(\lambda,v)\big)$, $(\lambda,v)\in \bc\times \mathbb E$, is an $\Omega$-admissible $G$-equivariant completely continuous field, thus the twisted $G$-degree $\gdeg(\mathscr F_\theta,\Om)$ is well-defined and is independent of the choice of a cluster isolating neighborhood $\mathcal U$ and a number $\ve>0$. Therefore, one can associate to the cluster pair
 $(K,\mathcal U)$  the element 
 \begin{equation}\label{eq:K-loc-bif}
 \omega_G(K):=\gdeg(\mathscr F_\theta,\Om) \in A_1^t(G),
 \end{equation}
and call it the {\it local bifurcation invariant} for the cluster set $K$ of critical points. 
\vs

\begin{remark}\label{rem:smooth-xi}\rm
(i) The construction of invariant \eqref{eq:K-loc-bif} can be similarly performed 
if one replaces $B_1(0)$ in Definition \ref{ded:cluster-twisted} with finitely many disjoined discs
in $\mathbb C$.

(ii) To define  \eqref{eq:K-loc-bif}, it is enough to require that $\xi$ in Definition \ref{ded:cluster-twisted} is just a homeomorphism. However, we require $\xi$ to be differentiable in order to use the linearization techniques relevant to the computation of the twisted degree.
\end{remark}

Following the same lines as in the proof of  Theorem \ref{th:bif-cont-1}, 
one can establish the following statement.
%
\vs
\begin{theorem}\label{th:bif-cont-2} Let  $\mathscr F:\br^2\times \mathbb E\to \mathbb E$  
satisfy \ref{bif0}--\ref{bif2}, and let
$(K,\mathcal U)$ be a cluster pair for \eqref{eq:operator-eq} (see Definition \ref{ded:cluster-twisted}).
Assume that $\ve>0$ satisfies \eqref{eq:epsilon-cluster}, $\Om$ is given by \eqref{eq:cluster-Om} and   $  \theta$ is an auxiliary function  for $\mathscr F$ on $\Om$ satisfying   \eqref{eq:cluster-theta}.  Assume that for some $(\mathcal H)\in \Phi_1^t(G)$,  the local bifurcation invariant  $\omega_G(K)$ (given by \eqref{eq:K-loc-bif})  satisfies 
$\text{\rm coeff}^{\mathcal H}(\omega_G(K))\not=0$. Then, there exists a branch $\mathcal C\subset \mathscr S$ of nontrivial solutions such that  for some $(\lambda_o,0)\in K$, one has $(\lambda_o,0)\in 
\mathcal C^{\mathcal H}$.
 
In particular, there exists a non-trivial continuum $\bm C \subset \mathcal C$ with 
$(\lambda_o,0) \in \bm C$ such that for any $ (\lambda,v) \in \bm C$, $v \not= 0$, one has 
$G_{(\lambda,v)} \geq {\mathcal H}$. 
\end{theorem}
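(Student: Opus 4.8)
The plan is to mimic, essentially verbatim, the contradiction argument used in the proof of Theorem \ref{th:bif-cont-1}, replacing the single isolated critical point $(\lambda_o,0)$ by the cluster $K$ and the neighborhood $\mathfrak O$ by $\Omega$. Suppose, for contradiction, that there is no branch $\mathcal C \subset \mathscr S$ such that some $(\lambda_o,0)\in K$ lies in $\overline{\mathcal C^{\mathcal H}}$ together with a nontrivial continuum attached to it. By the definition \eqref{eq:K-loc-bif} of $\omega_G(K)$ and the recursive formula \eqref{eq:H-twisted}--\eqref{eq:twisted-def} for the twisted coefficients, the hypothesis $\text{\rm coeff}^{\mathcal H}(\omega_G(K)) = d_{\mathcal H}(\mathscr F_\theta,\Omega)\neq 0$ forces, by downward induction on the twisted orbit types, the existence of some $(\mathcal K)\ge(\mathcal H)$ with $\deg_{S^1}(\mathscr F_\theta^{\mathcal K},\Omega^{\mathcal K})\neq 0$.

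Next I would pass to the fixed-point space level exactly as before. Set $K_{\mathrm{sol}} := \mathscr F^{-1}(0)\cap\overline\Omega$ and consider $K_{\mathrm{sol}}^{\mathcal K} = \mathscr F^{-1}(0)\cap\overline{\Omega^{\mathcal K}}$, which is compact by complete continuity of $\mathscr F$ and admissibility \eqref{eq:epsilon-cluster}. Let $B_0 := (\mathbb C\times\{0\})\cap\overline{\Omega^{\mathcal K}}$ — which, under the standing contradiction assumption, is separated from the rest of $K_{\mathrm{sol}}^{\mathcal K}$ by the absence of a connecting continuum — and $B_1 := \{(\lambda,v)\in\overline{\Omega^{\mathcal K}} : \|v\|=\ve\}$. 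Because no branch connects a point of $K$ to the ``$\|v\|=\ve$'' boundary inside $\Omega^{\mathcal K}$, the equivariant Kuratowski Lemma (Theorem \ref{th:G-Kur}) yields disjoint open $S^1$-invariant sets $\mathscr U_0\supset B_0$, $\mathscr U_1\supset B_1$ covering $B_0\cup B_1\cup K_{\mathrm{sol}}^{\mathcal K}$. Put $K_i^{\mathcal K} := K_{\mathrm{sol}}^{\mathcal K}\cap\mathscr U_i$, take an $S^1$-invariant Urysohn function $\mu$ separating $K_0^{\mathcal K}\cup B_0$ from $K_1^{\mathcal K}\cup B_1$, and form the auxiliary function $\tau(\lambda,v):=\|v\|-\mu(\lambda,v)\ve$. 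Since the local cluster invariant does not depend on the choice of the invariant auxiliary function (independence noted right after \eqref{eq:K-loc-bif}), one gets $\deg_{S^1}(\mathscr F_\tau^{\mathcal K},\Omega^{\mathcal K}) = \deg_{S^1}(\mathscr F_\theta^{\mathcal K},\Omega^{\mathcal K})\neq 0$, so by the existence property of the cumulative $S^1$-degree there is a zero $(\lambda_*,v_*)\in\Omega^{\mathcal K}$ of $\mathscr F_\tau^{\mathcal K}$; chasing the two cases $(\lambda_*,v_*)\in K_0^{\mathcal K}$ and $\in K_1^{\mathcal K}$ gives $\|v_*\|=\ve$ resp. $\|v_*\|=0$ and hence membership in the opposite set — a contradiction. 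This produces the branch $\mathcal C$ and the point $(\lambda_o,0)\in K\cap\overline{\mathcal C^{\mathcal H}}$; the final ``in particular'' clause follows because the connecting continuum $\bm C\subset\mathcal C^{\mathcal H}$ consists of solutions whose isotropy contains a conjugate of $\mathcal H$, i.e. $G_{(\lambda,v)}\ge\mathcal H$ for $v\neq 0$ on $\bm C$.

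The only genuine subtlety, beyond bookkeeping, is verifying that $\Omega$ (with $\lambda$ ranging over the cluster-isolating neighborhood $\mathcal U = \xi(B_1(0))$ rather than over a round annulus) still supports all the properties used: namely that $(\mathscr F_\theta,\Omega)$ is an admissible $G$-pair and that the excision/independence statements behind \eqref{eq:K-loc-bif} apply. This is exactly what the construction preceding \eqref{eq:K-loc-bif} guarantees — the diffeomorphism $\xi$ (required smooth precisely for this, cf. Remark \ref{rem:smooth-xi}(ii)) lets one transport the round-disc arguments to $\mathcal U$ — so the only real work is to check the Kuratowski separation step is legitimate, i.e. that the negation of the conclusion is exactly the statement ``$B_0$ is not connected within $K_{\mathrm{sol}}^{\mathcal K}$ to anything outside a small neighborhood, in particular not to $B_1$,'' which is where the topology of components of $\overline{\mathscr S}$ (Definition \ref{def:branch-1}) enters. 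I expect this bridging between ``no branch of the required symmetry bifurcates'' and ``the Kuratowski hypothesis holds'' to be the main obstacle to write cleanly, but it is identical in structure to the corresponding step in Theorem \ref{th:bif-cont-1}, so no new idea is needed.
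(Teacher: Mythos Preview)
Your proposal is correct and takes essentially the same approach as the paper, which does not give a separate proof but simply states that Theorem \ref{th:bif-cont-2} follows ``the same lines as in the proof of Theorem \ref{th:bif-cont-1}.'' Your write-up in fact supplies the details the paper omits, including the correct identification of the only nontrivial step (the Kuratowski separation in $\Omega^{\mathcal K}$) and the observation that the diffeomorphism $\xi$ transports the admissibility and independence arguments from the round-disc setting to the cluster-isolating neighborhood $\mathcal U$.
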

\vs
In order to compute   $\omega_G(K)$, one can combine  the linearization argument used in the proof of Proposition \ref{pro:lin-bif-invariant-twisted} with the finite-dimensional reduction provided by Proposition \ref{prop:a-map-finite} (see Appendix \ref{appendix:Ref}). To be more specific, 
take  $\mathscr A:\bc\to \text{L}_c^G(\mathbb E)$ provided by  condition \ref{bif2}, and define  the map $\mathfrak f_{\mathscr A}:\bc \times \mathbb E\to\br\times  \mathbb E$  by
\[
\mathfrak f_{\mathscr A}(\lambda,v):=\big(\theta(\lambda,v),\mathscr A(\lambda)v\big), \quad (\lambda,v)\in \bc \times \mathbb E.
\]
By definition, $\mathfrak f_{\mathscr A}$  is a completely continuous $G$-equivariant field. For a sufficiently small $\ve>0$, the map $\mathfrak f_{\mathscr A}$ is $\Om$-admissibly $G$-equivariantly homotopic (by a linear homotopy) to $\mathscr F_\theta$. Hence, by homotopy property \ref{T2} of the degree, one has
\[
\omega_G(K)=\gdeg(\mathscr F_\theta,\Om)=\gdeg(\mathfrak f_{\mathscr A},\Om).
\]
Define the map $\wt{\mathfrak f_{\mathscr A}}:\bc \times \mathbb E\to\br\times  \mathbb E$ by
\[
\wt{\mathfrak f_{\mathscr A}}(\lambda,v):=\big(\theta(\xi(\lambda),v),\mathscr A(\xi(\lambda))v\big), \quad (\lambda,v)\in \bc \times \mathbb E,
\]
and put
\[
\Om':=\{(\lambda,v)\in \bc \times \mathbb E: |\lambda|<1, \; \|v\|<\ve\}.
\]
Clearly, $\wt{\mathfrak f_{\mathscr A}}$ is an $\Om'$-admissible $G$-equivariant completely continuous field and, since $\xi$ is a diffeomorphism preserving the orientation, one has
\begin{equation}\label{eq:cluster-inv1}
\gdeg(\mathfrak f_{\mathscr A},\Om)=\gdeg(\wt{\mathfrak f_{\mathscr A}},\Om').
\end{equation}
Next, take $1>r_o>0$ such that  $\xi\one (K)\subset B_{r_o}(0):=
\{\lambda\in \bc:|\lambda|<r_o\}$, 
put 
\[
\Om'':=\left\{(\lambda,v)\in \bc \times \mathbb E:\frac{r_o}2< |\lambda|<1, \; \|v\|<\ve\right\}
\]
and define
${\mathfrak f_{a}}:\overline{\Om''}\to\br\times  \mathbb E$ by
\[
{\mathfrak f_{a}}(\lambda,v)=\left( r_o-|\lambda|, a(\lambda)v\right),
\]
where 
\[
a(\lambda) := \mathscr A\left(\xi\left(r_o\frac \lambda{|\lambda|}\right)\right)v.
\]
By applying the same linear homotopy/excision property argument as at the end of the proof of Proposition \ref{pro:lin-bif-invariant-twisted}, one obtains
\begin{equation}\label{eq:Om-Om}
\gdeg(\wt{\mathfrak f_{\mathscr A}},\Om')=\gdeg(\wt{\mathfrak f_{a}},\Om'').
\end{equation}
Finally, by combining \eqref{eq:Om-Om} with the finite-dimensional reduction provided by Proposition \ref{prop:a-map-finite}, one arrives 
\begin{equation}\label{eq:omega-twisted-final}
 \omega_G(K)=\prod_l {\bm \delta_l}\cdot \mathop{\sum_{k,l}}\limits_{k>0} d_{k,l}\deg_{\cV_{k,l}},
 \end{equation}
where $d_{k,l}$ is given by \eqref{eq:dkl}
and ${\bm \delta_l}$ is given by \eqref{eq:factor-j-twisted}.
\vs

\section{Equivariant Hopf Bifurcation}
Consider a compact Lie group $\Gamma$, and 
let $H$ be an isometric Hilbert $\Gamma$-representation with the norm  $|\cdot|$ and the  inner product $\langle\cdot,\cdot\rangle$, in $H$. We denote by  $H^c$ the complexification of $H$. 
 \vs
 
Consider  a real unbounded linear operator  $A:{\mathfrak{D}}(A)\subset  H\to  H$ (here ${\mathfrak{D}}(A)$ is $\Gamma$-invariant) and assume $A$ satisfies the following condition: 

\vs
\begin{enumerate}[label=($A$)]\setcounter{enumi}{0}

\item\label{i}  $A:{\mathfrak{D}}(A)\subset  H\to  H$ is a self-adjoint  unbounded $\Gamma$-equivariant operator satisfying
\[ \exists_{\boldsymbol{\delta}>0}\;\forall_{x\in {\mathfrak{D}}(A)} \;\;\; \langle Ax,x\rangle \ge \boldsymbol{\delta}|x|^2.\]

\end{enumerate}
\vs
Notice that under the condition \ref{i}, the operator $A$ is accretive\footnote{According to the terminology used by Brezis \cite{Brezis}, an accretive operator  is called a {\it monotone} operator.}  and  for every $\lambda\in \bc$, $\re \lambda>  -\boldsymbol{\delta}$, one has
\begin{equation}\label{eq:resolv-delta}
\left\|(A+\lambda \id)^{-1}\right\|\le \frac{1}{\boldsymbol{\delta}+\re \lambda}.
\end{equation}

\vs
Under the assumption \ref{i}, the domain  ${\mathfrak{D}}(A)$  equipped with the graph norm, which will be denoted by $V$, is a Hilbert $\Gamma$-representation.  

\vs
 \subsection{Statement of the problem} 
Let  $W$ be a Banach $\Gamma$-representation and  $ \ii : V\to W$  a $\Gamma$-equivariant embedding.  Assume that
\begin{enumerate}[label=($F$)]
\item\label{c0}   $F:\br\times W\to H$  is a continuous $\Gamma$-equivariant map.
\end{enumerate}
The map $F$ can be  viewed as a  family of nonlinear operators $F(\alpha,\cdot ):W\to H$ parametrized by $\alpha\in \br$. 
\vs
We consider the following problem
\begin{equation}\label{eq:Hopf}
\begin{cases}
\dot u(t)+Au(t)=F(\alpha,\mathfrak \ii(u(t))), \;\; \text{ for a.e. } t\in \br,\; u(t)\in V,\\
u(t)=u(t+p),
\end{cases} 
\end{equation}
for some (unknown)  $p>0$.
\vs
In addition, we make the following assumptions:
\vs
\begin{enumerate}[label=($E_\arabic*$)]
\item\label{c1} There exists an interval 
${\mathcal I}:=(\alpha_1,\alpha_2)$ such that 
\begin{equation}\label{eq:equilibrium}
F(\alpha,0)=0, \qquad \alpha\in {\mathcal I}.
\end{equation}

\item\label{c2} For any $\alpha\in {\mathcal I}$, there exists $B(\alpha):=D_uF(\alpha,0): W \to  H$ which is a bounded operator, here $D_u$ stands for the Frechet derivative with respect to $u$. Moreover, the map 
$\alpha\mapsto B(\alpha)$ is continuous and 
\begin{equation}\label{eq:Edik}
\lim_{(\alpha',w)\to (\alpha,0)}\frac{\|F(\alpha',w)-B(\alpha)w\|}{\|w\|}=0, \qquad \alpha\in {\mathcal I}.
\end{equation}
\item\label{c3} For any $\alpha\in{\mathcal I}$, the equilibrium $(\alpha,0)$ is {\it non-degenerate}, i.e., 
\begin{equation}\label{eq:non-deg}
\ker\left(A-B(\alpha)\circ \ii \right)=\{0\}.
\end{equation}
\end{enumerate} 
\begin{itemize}
\item[$(C)$] The embedding $\mathfrak i:V\hookrightarrow W$  is a compact operator. 
\end{itemize}
\vs

Constant solutions to  \eqref{eq:Hopf} can be easily identified. More precisely, if a point  $u_o\in V$ satisfies 
\begin{equation}\label{eq:equilibrium-o}
Au_o=F(\alpha_o,\ii(u_o))
\end{equation}
for some $\alpha_o\in \br$, then  $(\alpha_o, u_o)$ is called an {\it equilibrium} of \eqref{eq:Hopf}. It is clear that the constant function $u(t)\equiv u_o$ satisfies    \eqref{eq:Hopf}  for $\alpha=\alpha_o$. 
In what follows, we will use the notation
\begin{equation}\label{eq:equi-set}
\Xi_o:=\{(\alpha_o,u_o)\in \br\times V: Au_o=F(\alpha_o,\ii(u_o))\}
\end{equation}
to denote the set of all equilibria for \eqref{eq:Hopf}. In particular, $(\alpha,0)$ is an equilibrium for every $\alpha\in {\mathcal I}$ according to $(E_1)$. 
\vs
Below, we consider the so-called {\it symmetric Hopf bifurcation} for \eqref{eq:Hopf}, namely  we are interested in the  existence and symmetric properties (local and global) of branches (see subsection \ref{sec:branch}) of {\it non-constant} solutions to \eqref{eq:Hopf} appearing near the equilibrium  
 $(\alpha_o,u_o)$  as the parameter $\alpha$ varies.  
\vs

\begin{remark}\label{rem:minimal-condit} 
\rm (i) Condition \ref{c2} is required for the existence of the linearization at the equilibrium points $(\alpha,0)$ in order to investigate a possible change of stability of the equilibrium. At the same time, condition  \ref{c3} prevents the bifurcation of steady-state solutions. 

\smallskip
\noi
(ii) Suppose $\Gamma$ contains an element $\gamma_o$ acting antipodally on $V$, 
i.e. $\gamma_ov=-v$ for all $v\in V$. Then, $(\alpha,0)$ is the  only equilibrium  in $V^\Gamma$ for each $\alpha\in \br$, and condition \ref{c1} is clearly satisfied.

\smallskip \noi
(iii) Let us stress that the topological nature of our argument does not require differentiability of $F$ in a deleted neighborhood of  $(\alpha_o,u_o)$. From this viewpoint, conditions \ref{c0} and \ref{c2} indeed seem to be minimal regularity conditions.  
\end{remark}
\vs

\begin{example}\label{ex:parab}\rm Let $\Gamma_1$ and $\Gamma_2$ be compact Lie groups acting orthogonally on $\mathbb R^N$ and $\mathbb R^m$, respectively.
Let $\Omega\subset \br^N$  be  an open  bounded $\Gamma_1$-invariant set with $C^1$-smooth boundary. Put $\Gamma:= \Gamma_1 \times \Gamma_2$. Then, the space
$H:=L^2(\Omega;\br^m)$ admits the structure of an  isometric  Hilbert $\Gamma$-representation given by the formula
\begin{equation}\label{eq:Hilbert-represent}
(\gamma u)(x):= (\gamma_2 u)(\gamma_1^{-1}x) ,\quad x \in \Omega, \; u \in H, \; \gamma := (\gamma_1,\gamma_2)  \in \Gamma. 
\end{equation} 
In general, $\Gamma_1$ (resp. $\Gamma_2$) can act trivially on $\mathbb R^N$ (resp. $\mathbb R^m$). Also, given any reasonable Hilbert (resp. Banach) space $X$ of $\mathbb R^k$-valued  functions on $\Omega \subset \mathbb R^N$, one can use a similar to \eqref{eq:Hilbert-represent} 
formula to define on $X$ a structure of isometric Hilbert (resp. Banach) $\Gamma$-representation. 
 
Put ${\mathfrak{D}}(A):=H^1_o(\Omega;\br^m)\cap H^2(\Omega;\br^m)\subset H$ and $Au=-\bm{ \triangle} u$ for $u\in {\mathfrak{D}}(A)$ (here $\bm{\triangle}$ stands for the classical vector Laplace operator)\footnote{In fact, one could consider an arbitrary self-adjoint strongly elliptic second order differential operator on $\Omega$ with Dirichlet or other type of boundary conditions.}. Then, $A$ satisfies condition \ref{i}. 

\vs
Next, define on $\Omega \times \mathbb R^m \times (\mathbb R^m)^N$ the $\Gamma$-action by 
\begin{equation}\label{eq:action-diagonal}
\gamma(x, v_0,v_1,...,v_N) : = (\gamma_1 x, \gamma_2 v_0,\gamma_2 v_1...,\gamma_2 v_N), \qquad x \in \Omega,\; v_i  \in \mathbb R^m, 
\gamma = (\gamma_1,\gamma_2) \in \Gamma,
\end{equation}
and consider a number ${\mathfrak p}>1$ satisfying the following condition (provided that $N>2$):  ${\mathfrak p}\in [1,{\mathfrak p}^*)$, where
${\mathfrak p}^*:=\frac{2N}{N-2}$. If $N = 1,2$, there is no additional condition needed for $\mathfrak p$. Then, one has the 
compact embedding  $\ii : {\mathfrak{D}}(A)\to L^{\mathfrak p}\big(\Omega; \mathbb R^m \times (\br^{m})^N\big)$ given by 
\begin{equation}\label{eq:cemb}
\ii(u)(x)=(u(x),Du(x)), \quad x\in \Omega
\end{equation}
(cf. \cite{Brezis}, Theorem 9.16); here $Du(x)$ stands for the Jacobian matrix of $u$ at $x$.
Put $W:= L^{\mathfrak p}\big(\Omega;\mathbb R^m \times (\br^m)^N\big)$ 
and consider $V:={\mathfrak{D}}(A)$ equipped with the Sobolev norm. Clearly, $\ii:V\to W$ 
satisfies  condition (C).

\vs
Furthermore, consider a (continuously) parameterized family of Carath\'eodory  functions $f_\alpha: \Omega \times \mathbb R^m  
\times (\br^m)^N \to \br^m$,
$\alpha \in \mathbb R$, satisfying the condition
\begin{equation}\label{eq:f-equiv}
f_\alpha (\gamma (x, v)) = \gamma_2 f_\alpha(x, v), \qquad x \in \Omega, \; v := (v_0,v_1,...,v_N) \in \mathbb R^m  \times (\mathbb R^m)^N.
\end{equation}
Assume that for any $\alpha \in \mathbb R$, 
there exist real $a_\alpha$, $b_\alpha>0$  such that   for some $\mathfrak p>1$, one has:
\begin{equation}\label{eq:growth-condition}
\forall_{(x,v)\in \Omega \times \br^m \times (\br^m)^N} \;\; |f_\alpha(x,v)|\le a_\alpha+ b_\alpha |v|^{\frac {\mathfrak p}2}.
\end{equation}
Then, the formula  
\[
F(\alpha,v)(x):= f_\alpha(x,v(x)), \quad  x\in \Omega,
\]
defines a continuous $\Gamma$-equivariant map $F:\br \times L^{\mathfrak p}(\Omega; \mathbb R^m \times (\br^m)^N)\to L^2(\Omega;\br^m)$ (see \cite{Kra}). Thus, $F:\br\times W\to H$ satisfies condition \ref{c0}.

\vs
Assume, in addition,  that for any $\alpha \in \mathbb R$, one has $f_\alpha(x,-v) = -f_\alpha(x,v)$ for any $x \in \Omega$ and 
$v \in  \mathbb R^m \times (\mathbb R^m)^N$. Since, in this case,
$f_{\alpha}(x,0) = 0$ for all $x \in \Omega$, condition \ref{c1} is satisfied with $u_o = 0$. 

\medskip

Finally, suppose that under the above assumptions, $D_vf_\alpha(x,0)$ exists and depends continuously on $\alpha$. Then, condition
\ref{c2} is satisfied.
\end{example}
\vs

\subsection{Characteristic equation}\label{sec:char-eq}
Consider the linearized equation of \eqref{eq:Hopf} at $(\alpha,0)\in \br\times  V^\Gamma$ in the space\footnote{For simplicity, we denote the complexified operator $A$ by the same symbol.} $H^c$, which is
\begin{equation}\label{eq:Linearized}
\dot u(t) +Au(t)=B(\alpha)\mathfrak \ii(u(t)), \quad \text{ a.e. } t\in \br.
\end{equation}
Substituting $u(t):=e^{\lambda t} v$, $\lambda \in \bc$, $v\in D(A)^c$, into \eqref{eq:Linearized}, one obtains the equation
\begin{equation}\label{eq:char1}
\Big(\lambda \id + A  \Big)v=B(\alpha) \ii(v), \quad v\in  D(A)^c.
\end{equation}
Put $\mathbb C_+:=\{z\in \mathbb C: \text{Re\,}z>0\}$ and  $\overline{\mathbb C}_+:=\{z\in \mathbb C: \text{Re\,}z\ge0\}$. Then, for $\lambda\in \overline{\mathbb C}_+$ (see Remark \ref{rem:analyt} below), 
equation \eqref{eq:char1}  can be written as 
\begin{equation}\label{eq:Characteristic}
\triangle_\alpha(\lambda)v:= v-(A+\lambda\id)^{-1} B(\alpha)\ii(v)=0, \qquad v \in V^c.
\end{equation}
Equation \eqref{eq:Characteristic} is called the {\it characteristic equation} for equation \eqref{eq:Hopf} at the equilibrium point $(\alpha,0)$, and $\triangle_\alpha(\lambda)\in L(V^c)$ is  the {\it characteristic operator} at this point. Also, $\lambda_o\in \bc$ is called a {\it characteristic value} for \eqref{eq:Linearized}  at $\alpha=\alpha_o$ if $\ker \triangle_{\alpha_o}(\lambda_o)\not=\{0\}$. 
Clearly, 
$\triangle_\alpha(\lambda)$ is a $\Gamma$-equivariant Fredholm operator of index zero, and for every characteristic value $\lambda_o$ the    {\it characteristic space} 
\[
E(\lambda_o):= \bigcup_{k=1}^\infty \ker (\triangle_{\alpha_o}(\lambda_o))^k
\]
is a finite-dimensional $\Gamma$-invariant subspace of $V^c$. 
Put
\[
m(\lambda_o):=\text{\rm dim\,} (E(\lambda_o))
\]
and call  it the (algebraic)   {\it multiplicity}  of the characteristic value $\lambda_o$ at $\alpha=\alpha_o$. 
\vs 
Consider the $\Gamma$-isotypic decomposition of the Hilbert $\Gamma$-representation $V$, i.e.
\begin{equation}\label{eq:H-isotypic1}
V=V_0\oplus V_1\oplus \dots \oplus V_r,
\end{equation}
where the isotypic component $V_j$ is modeled on the irreducible $\Gamma$-representation $\cV_j$ (here we assume that $\{\cV_j\}_{j=0}^r$ is a  list of distinct real irreducible $\Gamma$-representations with $\cV_0$ being the trivial representation).  
 Since, for every $j=0,1,\dots, r$, the subspace $E_j(\lambda_o):=E(\lambda_o)\cap V_j$ is a finite-dimensional $\Gamma$-subrepresentation of $V$, the integer
 \begin{equation}\label{eq:isot-multi}
 m_j(\lambda_o):=\dim_\bc E_j(\lambda)/\dim_\bc \cV^c_j 
 \end{equation}
 is well defined. We call $m_j(\lambda_o)$ the {\it $\cV_j$-isotypic multiplicity} of the characteristic value $\lambda_o$. 

\vs
Since the map $\triangle_{\alpha_o} :\overline {\mathbb C}_+\to L(V^c)$ is  analytic for every $j=0,1,\dots, r$, the map
\begin{equation}\label{eq:i-char}
\triangle^j_{\alpha_o}(\lambda):=\triangle_{\alpha_o}(\lambda)|_{V_j^c} :\overline {\mathbb C}_+\to L(V_j^c), \quad \lambda\in \overline {\mathbb C}_+,
\end{equation}
is also analytic.

\vs
\begin{definition}\label{def:center}\rm 
(i) Under the assumptions \ref{i},  \ref{c1}--\ref{c3}, \ref{c0} and {\it ($C$)}, an equilibrium $(\alpha_o, 0)\in \br\times V$ is called a {\it center} for \eqref{eq:Hopf} if equation \eqref{eq:Linearized} admits a purely imaginary characteristic value $\lambda_o=i\beta_o$,
$\beta_o>0$, i.e. $ \ker \triangle_{\alpha_o}(i\beta_o)\not=\{0\}$.
\vskip.1cm
\noi(ii)  $(\alpha_o, 0)$ is called an  {\it isolated  center} for \eqref{eq:Hopf}  if   $(\alpha_o, u_o)$ is a center and there exists $\delta>0$  such that for any $\alpha\in [\alpha_o-\delta,\alpha_o+\delta]$, $\alpha\not=\alpha_o$, the equilibrium $(\alpha,0)$ is not a center. 
\end{definition}

\vs
Assume that
\vs
\begin{enumerate}[label=($E_\arabic*$)]\setcounter{enumi}{3}
\item\label{c4} $(\alpha_o, 0)$ is an isolated center for \eqref{eq:Hopf}.
\end{enumerate}

\vs
\begin{lemma}\label{lem:iso-cent}
(i) Suppose that conditions  \ref{i}, \ref{c1}--\ref{c2}, \ref{c0}, {\it ($C$)} are satisfied and the operator $B(\alpha_o)\ii $ has a continuous extension to $H$  (denoted by $\mathscr B(\alpha_o)$). 
Then, for $\alpha_o\in{\mathcal I}$, the set 
\begin{equation}\label{eq:set-beta-alpha-o}
\bm b(\alpha_o):=\big\{\beta>0: 
 \ker \triangle_{\alpha_o}(i\beta)\not=\{0\}\big\}
\end{equation}
is  bounded.  

\medskip\noi
(ii) If, in addition,  $(\alpha_o,0)$  is non-degenerate, then 
 the set $\bm b(\alpha_o)$ is  compact in $\br$.
\end{lemma}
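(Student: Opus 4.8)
The plan is to exploit the resolvent estimate \eqref{eq:resolv-delta} together with the compactness of the embedding $\ii$ to control the size of $\bm b(\alpha_o)$, and then to upgrade boundedness to compactness in part (ii) using a normal-family/analyticity argument on the isotypic blocks $\triangle^j_{\alpha_o}$.

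For part (i), first I would rewrite the characteristic equation $\triangle_{\alpha_o}(i\beta)v=0$, i.e. $v=(A+i\beta\,\id)^{-1}\mathscr B(\alpha_o)v$, using the hypothesis that $B(\alpha_o)\ii$ extends continuously to $H$. The key is the estimate $\|(A+i\beta\,\id)^{-1}\|\le 1/\boldsymbol{\delta}$ coming from \eqref{eq:resolv-delta} with $\re\lambda=0$; actually one gets a better bound by noting that $A$ is self-adjoint and accretive, so by the spectral theorem $\|(A+i\beta\,\id)^{-1}\|\le 1/\sqrt{\boldsymbol{\delta}^2+\beta^2}$. Hence if $v\ne 0$ solves the characteristic equation, then $\|v\|\le \|\mathscr B(\alpha_o)\|\,\|v\|/\sqrt{\boldsymbol{\delta}^2+\beta^2}$, which forces $\sqrt{\boldsymbol{\delta}^2+\beta^2}\le \|\mathscr B(\alpha_o)\|$, and therefore $\beta^2\le \|\mathscr B(\alpha_o)\|^2-\boldsymbol{\delta}^2$. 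This bounds $\bm b(\alpha_o)$; in particular it is empty unless $\|\mathscr B(\alpha_o)\|>\boldsymbol{\delta}$. (One must check the norm is computed in the right space — the estimate \eqref{eq:resolv-delta} is for the operator on $H$, while $\mathscr B(\alpha_o):H\to H$, so the composition is on $H$ and the argument is consistent once one observes that a characteristic solution $v\in V^c$ satisfies the fixed-point equation in $H^c$ as well.)

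For part (ii), it remains to show $\bm b(\alpha_o)$ is closed. Suppose $\beta_n\in \bm b(\alpha_o)$ with $\beta_n\to\beta_*\ge 0$; by non-degeneracy (condition \ref{c3} applied at $\alpha_o$, which says $\ker(A-\mathscr B(\alpha_o))=\{0\}$) we have $\beta_*\ne 0$, so $\beta_*>0$, provided we rule out $\beta_*=0$ — and $\beta_n\to 0$ with $\triangle_{\alpha_o}(i\beta_n)v_n=0$, $\|v_n\|=1$, would by continuity of $\lambda\mapsto(A+\lambda\id)^{-1}$ in operator norm on $\overline{\bc}_+$ and compactness of $\ii$ produce a nonzero element of $\ker\triangle_{\alpha_o}(0)=\ker(A-\mathscr B(\alpha_o))$, contradiction. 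For $\beta_*>0$: pick unit $v_n\in\ker\triangle_{\alpha_o}(i\beta_n)\subset V^c$, so $v_n=(A+i\beta_n\id)^{-1}B(\alpha_o)\ii(v_n)$. Since $\ii$ is compact and $\{v_n\}$ is bounded in $V^c$, $\{\ii(v_n)\}$ has a convergent subsequence in $W^c$; then $B(\alpha_o)\ii(v_n)$ converges in $H^c$, and since $(A+i\beta_n\id)^{-1}\to(A+i\beta_*\id)^{-1}$ in $L(H^c)$ (continuity of the resolvent in $\beta$, again from \eqref{eq:resolv-delta}-type estimates), the right-hand side converges in $H^c$; a bootstrap using that $(A+i\beta_*\id)^{-1}$ maps $H^c$ continuously into $V^c$ (graph norm) shows $v_n\to v_*$ in $V^c$ with $\|v_*\|=1$ and $\triangle_{\alpha_o}(i\beta_*)v_*=0$, so $\beta_*\in\bm b(\alpha_o)$. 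Combined with (i), $\bm b(\alpha_o)$ is closed and bounded, hence compact.

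I expect the main obstacle to be the bookkeeping around which space each operator acts on — $\ii:V\to W$, $B(\alpha_o):W\to H$, and the resolvent on $H$ — and making the limiting argument in (ii) rigorous with the graph-norm topology on $V$; concretely, justifying that $v_n$ converges in $V^c$ (not merely in $H^c$) requires invoking that $(A+\lambda\id)^{-1}:H\to \mathfrak D(A)=V$ is bounded in the graph norm, which follows from the closed graph theorem or directly from $A(A+\lambda\id)^{-1}=\id-\lambda(A+\lambda\id)^{-1}$ together with \eqref{eq:resolv-delta}. The rest is a routine normal-family/compactness argument.
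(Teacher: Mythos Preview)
Your proposal is correct and follows essentially the same route as the paper. For (i) the paper invokes Kato's distance-to-spectrum perturbation criterion to get $\beta\le\|\mathscr B(\alpha_o)\|$, whereas you use the equivalent (and slightly sharper) spectral-theorem resolvent bound $\|(A+i\beta\,\id)^{-1}\|\le 1/\sqrt{\boldsymbol{\delta}^2+\beta^2}$; for (ii) both arguments are the identical compactness-plus-limit argument, with your write-up being a bit more explicit about the $V^c$ graph-norm bookkeeping.
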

\begin{proof} 
(i) By Remark \ref{rem:analyt} (see Appendix, section \ref{sec:isya}) and condition (C), the operator $ A^{-1}\in L(H)$ is bounded and self-adjoint, which implies that 
$\sigma(A)\subset \br.$ 
Then, for every $\lambda\in \bc$ such that 
\begin{equation}\label{eq:spectrum-far}
\text{dist\,}(-\lambda,\sigma(A))>\|\mathscr B(\alpha_o)\|,
\end{equation}
one has that $(A+\lambda \id-\mathscr B(\alpha_o) )^{-1}:H^c\to H^c$ is well-defined (see Kato \cite{Kato}, Chapter V, section 4, Problem 4.8), which implies 
that $A+\lambda \id-\mathscr B(\alpha_o) $ is injective. On the other hand, 
\[
\beta\in \bm b(\alpha_o) \;\;\Leftrightarrow\;\;  \ker \triangle_{\alpha_o}(i\beta)\not=\{0\}\;\; \Leftrightarrow\;\; \ker(A+\lambda \id-\mathscr B(\alpha_o))\not=\{0\},
\]
hence, if  $\beta\in \bm  b(\alpha_o) $,  then  $A+\lambda \id-\mathscr B(\alpha_o)$ is not invertible. Consequently (cf.\eqref{eq:spectrum-far}), if  
$\beta\in \bm b(\alpha_o) $, then
$\text{dist\,}(-i\beta,\sigma(A))\le \|\mathscr B(\alpha_o)\|$, which implies
  $\beta\le \|\mathscr B(\alpha_o)\|$, so the set $\bm b(\alpha_o)$ is bounded.

\vs
(ii)
 Let us show that $\bm b (\alpha_o)$ is also closed. Indeed, assume that $\{\beta_n\}\subset \bm b (\alpha_o)$ and $\beta_n\to \beta_o$ as $n\to\infty$.  Suppose, without loss of generality, that $v_n\in D(A)$, $|v_n|=1$, is such that 
\[
i\beta_nv_n -Av_n=B(\alpha_o)\ii(v_n).
\]
Since $\ii$ is a compact operator, we can assume without loss of generality that $\ii(v_n)\to w_o$ as $n\to \infty$, therefore 
\[
v_n=(i\beta_n  \id-A)^{-1} B(\alpha_o)\ii(v_n)\to v_o:=(i\beta_o \id - A)^{-1}B(\alpha_o) w_o,
\]
which implies that 
\[
i\beta_ov_o-Av_o=B(\alpha_o)\ii(v_o), \quad |v_o|=1.
\]
Since $(\alpha_o,0)$ is non-degenerate, one has $0 \not=\beta_o \in \bm b (\alpha)$
and the statement follows.
\end{proof}

\vs
Motivated by Lemma \ref{lem:iso-cent}, we introduce the following condition:
\begin{itemize}
\item[($\mathscr B$)] The set $ \bm b(\alpha_o)$, given by \eqref{eq:set-beta-alpha-o}, is bounded.
\end{itemize}

\vs
\begin{definition}\label{def:crit-freq} \rm 
Under the assumptions  \ref{i}, \ref{c1}--\ref{c2}, \ref{c0}  and ($C$), elements of
$\bm b(\alpha_o)$ are called {\it critical frequencies}. Also, we say that two different critical frequencies  in $\bm b(\alpha_o)$ are {\it resonant} if one is an integer multiple of the another. 
\end{definition}

\vs
\begin{lemma}\label{lem:co-finite}
Let the assumptions   \ref{i}, \ref{c1}--\ref{c2}, \ref{c0}, ($C$) hold.  
If for $\alpha_o \in {\mathcal I}$ condition   $(\mathscr B)$ is satisfied, then  the set $\boldsymbol{\bm b }(\alpha_o)$ (see \eqref{eq:set-beta-alpha-o})
is finite.
\end{lemma}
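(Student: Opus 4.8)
The plan is to recognize the characteristic operator as an analytic family of compact perturbations of the identity and then to invoke the analytic Fredholm alternative. First I would fix the connected open set $D:=\{\lambda\in\bc:\re\lambda>-\boldsymbol{\delta}\}$, which by \eqref{eq:resolv-delta} is exactly the region on which $(A+\lambda\id)^{-1}\in L(H^c)$ is well defined, and which contains the whole imaginary axis in its interior. On $D$ the resolvent $\lambda\mapsto(A+\lambda\id)^{-1}$ is analytic (with values in $L(H^c)$, and also in $L(H^c,V^c)$, since $A(A+\lambda\id)^{-1}$ is bounded), hence so is $\lambda\mapsto K(\lambda):=(A+\lambda\id)^{-1}B(\alpha_o)\ii\in L(V^c)$. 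Moreover, since $\ii:V\to W$ is compact by $(C)$ while $B(\alpha_o):W\to H$ and $(A+\lambda\id)^{-1}:H\to V$ are bounded, every $K(\lambda)$ is a compact operator on the Hilbert space $V^c$, so that $\triangle_{\alpha_o}(\lambda)=\id-K(\lambda)$ is an analytic family of index-zero Fredholm operators on $D$.

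Next I would apply the analytic Fredholm theorem to the family $\{\triangle_{\alpha_o}(\lambda)\}_{\lambda\in D}$ on the connected set $D$: the set
\[
Z:=\{\lambda\in D:\ker\triangle_{\alpha_o}(\lambda)\neq\{0\}\}=\{\lambda\in D:\triangle_{\alpha_o}(\lambda)\text{ is not invertible}\}
\]
is either all of $D$ or a discrete subset of $D$ (closed in $D$, with no accumulation points in $D$). The first alternative is incompatible with $(\mathscr B)$: if $Z=D$, then $i\br\subset Z$, whence $\bm b(\alpha_o)=\{\beta>0:i\beta\in Z\}=(0,\infty)$ would be unbounded. Therefore $Z$ is discrete in $D$.

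Finally, by $(\mathscr B)$ the set $\bm b(\alpha_o)$ is bounded, say $\bm b(\alpha_o)\subset(0,M]$; then $\{i\beta:\beta\in\bm b(\alpha_o)\}$ is contained in $Z\cap\{it:0\le t\le M\}$, a discrete subset of a compact set, hence finite, and so $\bm b(\alpha_o)$ is finite. The only point that requires care is ensuring that analyticity and compactness of $K(\lambda)$ are available on a genuine neighbourhood of the imaginary axis — not merely on $\overline{\mathbb C}_+$ — which is precisely what the dissipativity estimate \eqref{eq:resolv-delta} provides by letting us work on $D=\{\re\lambda>-\boldsymbol{\delta}\}$, together with the correct identification of the compact factor $\ii$ coming from $(C)$; once these are in place, the conclusion is a direct consequence of the analytic Fredholm alternative, and no further estimates are needed.
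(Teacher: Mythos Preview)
Your proof is correct. Both your argument and the paper's rest on the analyticity of $\lambda\mapsto\triangle_{\alpha_o}(\lambda)$ on a complex neighbourhood of the imaginary axis, but the packaging differs. You invoke the analytic Fredholm alternative as a black box: since $K(\lambda)$ is compact-valued and analytic on the connected open set $D=\{\re\lambda>-\boldsymbol{\delta}\}$, the singular set $Z$ is either all of $D$ or discrete in $D$, and condition~$(\mathscr B)$ rules out the first alternative. The paper instead reproves a local version of this by hand: for each $i\beta$ with $\beta\in\bm b(\alpha_o)$ it performs a Lyapunov--Schmidt reduction, passing to a finite-dimensional analytic determinant $\phi(\lambda)$ and observing that $i\beta$ is an isolated zero of $\phi$; it then appeals to Lemma~\ref{lem:iso-cent}(ii) to conclude. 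Your route is slightly more economical: discreteness of $Z$ in the open set $D$ (which contains $0$) automatically excludes accumulation at $\beta=0$, so you need no separate closedness argument. The paper's reduction, on the other hand, is more explicit and self-contained, and the finite-dimensional determinant it produces is reused later when multiplicities and crossing numbers are computed.
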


\begin{proof} First, notice that  $\triangle_{\alpha_o}(\lambda)$ is analytic with respect to $\lambda$ with $\im(\lambda) > -\boldsymbol{\delta}$
(see Remark \ref{rem:analyt}). 
Take $\beta\in \boldsymbol{\bm b }(\alpha_o)$. Since  $\triangle_{\alpha_o}(i\beta)$ is a Fredholm operator, $\ker \triangle_{\alpha_o}(i\beta)$ is finite-dimensional. 
Put $W_o:=\ker \triangle_{\alpha_o}(i\beta)$ and   $V_o:= \big(W_o)^\perp$  and take the orthogonal projection $P:V\to V$ onto $V_o$. Thus $P\circ  \triangle_{\alpha_o}(i\beta):V_o\to V_o$ is an isomorphism. Then, by continuity of   $\triangle_{\alpha_o}(\cdot)$,   there exists a neighborhood $\mathcal O\subset \overline{\bc}_+$ of $i\beta$ such that for all $\lambda\in \mathcal O$,  $P \circ  \triangle_{\alpha_o}(\lambda):V_o\to V_o$ is also an isomorphism.    Since for $\lambda\in \mathcal O$, the equation 
$\triangle_{\alpha_o}(\lambda)v=0$, $v\in V$, can be written (by taking $v=v_o+w_o\in V_o\oplus W_o$) as
\[
\begin{cases}
P\circ \triangle_{\alpha_o}(\lambda)v_o=-P\circ \triangle_{\alpha_o}(\lambda)w_o\;\;\Leftrightarrow\;\; v_o:=-(P\circ \triangle_{\alpha_o}(\lambda))^{-1}\circ P\circ \triangle_{\alpha_o}(\lambda)w_o,\\
(\id-P)\circ \triangle_{\alpha_o}(\lambda)(v_o+w_o)=0,
\end{cases}
\]
one obtains that $\lambda\in \mathcal O$ is a characteristic value if and only if there exists a non-zero vector $w_o\in W_o$ such that 
\[
(\id-P)\circ \triangle_{\alpha_o}(\lambda)\big(w_o-(P\circ \triangle_{\alpha_o}(\lambda))^{-1}\circ P\circ \triangle_{\alpha_o}(\lambda)w_o\big)=0.
\]
Choose a unitary basis in $W_o$ and  identify $W_o$ with a complex space $\bc^N$ (for some $N\in \bn$), thus  the operator  $ (\id-P)\circ \triangle_{\alpha_o}(\lambda)\big(\id-(P\circ \triangle_{\alpha_o}(\lambda))^{-1}\circ P\circ \triangle_{\alpha_o}(\lambda))\big)$ can be represented as an $N\times N$-matrix. Consequently, one has that $\lambda\in \mathcal O$ is a characteristic value if and only if 
\[ \phi(\lambda):={\det}_\bc \left[  (\id-P)\circ \triangle_{\alpha_o}(\lambda)\big(\id-(P\circ \triangle_{\alpha_o}(\lambda))^{-1}\circ P\circ \triangle_{\alpha_o}(\lambda))\big) \right]=0.
\]
Since $\triangle_{\alpha_o}(\cdot)$ is analytic  and $\phi(i\beta)=0$, it follows that $i\beta$ is an isolated zero of $\phi$. Therefore, the conclusion follows from Lemma \ref{lem:iso-cent}(ii).
\end{proof} 
\vs

\subsection{Setting in functional spaces} Put $G := \Gamma \times S^1$, where $S^1:= \{e^{i\theta} \in \mathbb C \, : \, \theta \in \mathbb R\}$, and
\begin{equation}\label{eq:spaces2}
\mathscr E := H^1_{2\pi}(\mathbb R; H)\cap  L^2_{2\pi}(\mathbb R; V),  \quad \mathscr H := L^2_{2\pi}(\mathbb R; H), \quad \mathscr W :=C_{2\pi}(\br;W)
\end{equation}
(here the lower index $2\pi$ is omitted for simplicity).
Each of the spaces $\mathscr E$, $\mathscr H$ and $\mathscr W$ is an isometric $G$-representation with the actions given by
\begin{equation}\label{eq:Gamma-times-S1}
\big((\gamma, e^{i\theta})v\big)(t) := \gamma v(t + \theta), \quad \gamma \in \Gamma, \quad e^{i\theta} \in S^1. 
\end{equation}  
By the Arzela-Ascoli theorem (see also condition (C)), the natural embedding 
\begin{equation}\label{eq:embedding-j}
	\bm{j} : \mathscr E \to \mathscr H
\end{equation}
is a compact operator.

\vs
By using the standard time rescaling and the substitution $\beta:= {2\pi \over p}$, one arrives at the two-parameter problem
\begin{equation}\label{eq:Hopf1}
\begin{cases}
\dot{u} + {1 \over \beta} Au(t) = {1 \over \beta}F(\alpha,\ii(u(t))), \;\; \text{ for a.e. } t \in \br,\\
u(t) = u(t + 2\pi),
\end{cases}
\end{equation}
equivalent to \eqref{eq:Hopf}. Then, using the Isomorphism Theorem  (see Appendix, Theorem \ref{th:iso-thm})  and the functional spaces $\mathscr E$, $\mathscr W$  and 
$\mathscr H$, one can reformulate problem \eqref{eq:Hopf1} as an operator equation in the functional space $\mathscr{E}$. First, define the operator
$\L_{\beta}:  \mathscr{E}   \to \mathscr H$, $\beta>0$,   by 
\begin{equation}\label{eq:L-beta}
\L_{\beta}u := \frac{d}{dt} u + {1 \over \beta} Au 
\end{equation}
and the operator $N_F:\br\times \mathscr W\to \mathscr H$ by
\begin{equation}\label{eq:NF}
N_F(\alpha,v)(t):= F(\alpha,v(t)), \quad t\in \br,\; v\in \mathscr W.
\end{equation}
Then, problem \eqref{eq:Hopf1} can be written as
\begin{equation}\label{eq:operat1}
\L_{\beta}u = {1 \over \beta} N_F(\alpha,\bm{j}(u)),  \quad u\in \mathscr{E},
\end{equation}
where both operators $\L_\beta$ and $N_F$ are $G$-equivariant.
Since, by the Isomorphism Theorem, the operator $\L_{\beta}$ 
is an isomorphism, one obtains that \eqref{eq:operat1} is equivalent to
\begin{equation}\label{eq:operat2}
u-\frac1 \beta  \L_{\beta}^{-1}N_F(\alpha,\bm{j}(u))=0, \quad u\in \mathscr{E}.
\end{equation}

\vs
We put $\br^2_+:=\br\times \br_+$, where $\br_+:=\{t\in \br: t>0\}$, and consider it as a metric space equipped with the metric
\begin{equation}\label{eq:metric-R2+} d((\alpha_1\beta_1),(\alpha_2,\beta_2)):= \sqrt{(\alpha_2-\alpha_1)^2+\ln^2\frac{\beta_1}{\beta_2}}.\end{equation}
The map 
 $\mathscr F: \br_+^2\times \mathscr E\to \mathscr E$, given by
\begin{equation}\label{eq:F}
\mathscr F(\alpha,\beta, u):= u-\frac1 \beta \L_{\beta}^{-1} N_F(\alpha,\bm{j}(u)), \quad u\in \mathscr E, 
\end{equation}
is a $G$-equivariant completely continuous field (recall 
$\bm{j}$ is compact).
Consequently, problem \eqref{eq:Hopf} is equivalent to the $G$-equivariant operator equation
\begin{equation}\label{eq:Hopf2}
\mathscr F(\alpha,\beta,u)=0, \quad (\alpha,\beta,u)\in \br_+^2\times \mathscr E.
\end{equation}
 
 \vs
 Under the condition \ref{c1}, for all $(\alpha,\beta)\in {\mathcal I} \times \br_+$,  one has 
\[
\mathscr F(\alpha,\beta,0)=0,
\]
i.e. the set 
\[
\mathscr M:=\{(\alpha,\beta,0):  \;\alpha\in {\mathcal I}, \;  \; \beta>0\}
\]
 is contained in the solution set of equation \eqref{eq:Hopf2}.  We call  points from $\mathscr M$ {\it trivial solutions} to \eqref{eq:Hopf2}. 

\vs
Consider a point $(\alpha,\beta,0)\in \mathscr M$. Then,  $\mathscr A(\alpha,\beta):=D_u\mathscr  F(\alpha,\beta, 0):\mathscr E\to \mathscr E$ exists and 
\begin{equation}\label{eq:DerF}
D_u\mathscr  F(\alpha,\beta, 0)v= v-\frac1 \beta \L_{\beta}^{-1} N_{ B(\alpha)}(\bm{j}(v)), \quad v\in \mathscr E,
\end{equation}  
where $N_{B(\alpha)}: \mathscr W\to \mathscr H$ is defined by 
\[
N_{B(\alpha)}(v)(t)= B(\alpha)v(t), \quad v \in \mathscr W.
\]
Since 
$G_{0}=G$, the linear operator $\mathscr A(\alpha, \beta)$ is $G$-equivariant.  
Consider the $\Gamma$-isotopic decomposition \eqref{eq:H-isotypic1}.
Using the standard Fourier modes decomposition of the space  $\mathscr E$, one has the following $G$-isotypic decomposition:
\begin{equation}\label{eq:iso-E} 
\mathscr E=\overline{\bigoplus_{k=0}^\infty \bigoplus_{j=0}^r\mathscr E_{k,j}}, 
\end{equation}
where $\mathscr E_{0,j}:=V_j$ stands for the subspace of  constant $V_j$-valued functions and 
\begin{equation}\label{eq:iso-E-kj}
\mathscr E_{k,j}:=\left\{\vp:\br\to V_j\;: \;\vp(t)= \cos(kt)u+\sin(kt)v ,\; u,\, v\in V_j, \; t\in \br  \right\}.
\end{equation}
Notice that  for $k>0$, 
the  component  $\mathscr E_{k,j}$ 
is modeled on the (real) irreducible $G$-representation $\cV_{j,k}$ which is the complexification $\cV^c_j$ of $\cV_j$ with the $S^1$-action given by $k$-folding, i.e. 
\begin{equation}\label{eq:Vjk}
e^{i\theta}w:=e^{ik\theta}\cdot w, \quad e^{i\theta}\in S^1, \; w\in \cV_j^c
\end{equation}
(here ``$\cdot$" stands for the complex multiplication).
Moreover, 
the subspace $\mathscr E_{k,j}$  ($k>0$) is $G$-equivariantly isomorphic to the space 
\[\mathfrak V_j:=\{\psi:\br\to V_j^c\; :\; \psi(t):=e^{ikt}w,\;\; w\in V_j^c, \; t\in \br\},\]
 with the $G$-action given by
\[
(\gamma,e^{i\theta})(e^{ikt}w)=e^{ikt}\gamma (e^{-ik\theta}\cdot w), \quad (\gamma,e^{i\theta})\in \Gamma\times S^1.
\]
Indeed, define the $\Gamma$-equivariant operator $\vp:\mathfrak V_j\to\mathscr E_{k,j}$ by 
\begin{equation}\label{eq:complex-str}
\vp(e^{ikt}(u+iv))=\cos(kt)u+\sin(kt)v.
\end{equation}
From the relations 
\begin{align*}
\vp(e^{i\theta}e^{ikt}(u+iv))&=\vp(e^{ikt}(\cos(k\theta)-i\sin(k\theta))(u+iv))\\
&=\vp(e^{ikt}(\cos(k\theta)u+\sin(k\theta)v)+i(\cos(k\theta)v-\sin(k\theta)u))\\
&=\cos(kt)(\cos(k\theta)u+\sin(k\theta)v)+\sin(kt)(\cos(k\theta)v-\sin(k\theta)u)\\
&=(\cos(kt)\cos(k\theta)-\sin(kt)\sin(k\theta))u+(\cos(kt)\sin(k\theta)+\sin(kt)\cos(k\theta))v\\
&= \cos(k(t+\theta))u+\sin(k(t+\theta))v\\
&=e^{i\theta}(\cos(kt)u+\sin(kt)v)\\
&=e^{i\theta}\vp(e^{ikt}(u+iv)),
\end{align*}
it follows that $\vp$ is a $G$-equivariant isomorphism. One can easily notice that $\mathfrak V_j$ is equivalent to the $G$-representation $V_j^c$ equipped with the $k$-folded $S^1$-action, i.e. $e^{i\theta}w:=e^{-ik\theta}\cdot w$, $w\in V^c_j$. 

\vs
Since $\mathscr A(\alpha,\beta)$ is $G$-equivariant, one has $\mathscr A(\alpha, \beta)(\mathscr E_{k,j})\subset \mathscr E_{k,j}$, $k=0,1,2,\dots$, $j=0,1,\dots,r$,  so one can introduce the following notation:
\begin{equation}\label{eq:Ak}
\mathscr A_{k,j}(\alpha,\beta):=\mathscr A(\alpha, \beta)|_{\mathscr E_{k,j}}:\mathscr E_{k,j}\to \mathscr E_{k,j}.
\end{equation}
\vs

\begin{lemma}\label{lem:A_k} Under the assumptions \ref{i}, ($C$), \ref{c0}, \ref{c1}---\ref{c3}, one has for all $k=1,2,\dots$\ and $j=0,1,\dots, r$ (see \eqref{eq:i-char}):
\begin{equation}\label{eq:Ch-k}
\mathscr A_{k,j}(\alpha,\beta)=\triangle^j_{\alpha}(ik\beta):V_j^c\to V_j^c, \quad \alpha\in \br, \; \beta\in \br_+.
\end{equation}
Moreover, for $k=0$, 
\[
\mathscr A_{0,j}(\alpha,\beta)= \id-A^{-1}B(\alpha)\ii|_{V_j}:V_j\to V_j, \quad j=0,1,2,\dots,r.
\]
\end{lemma}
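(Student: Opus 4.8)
\emph{Plan.} I would reduce the whole statement to a mode-by-mode computation. The operator $\mathscr A(\alpha,\beta) = D_u\mathscr F(\alpha,\beta,0)$ is $G$-equivariant (as observed just before the statement), hence preserves every summand $\mathscr E_{k,j}$ of the isotypic decomposition \eqref{eq:iso-E}; this is what makes $\mathscr A_{k,j}(\alpha,\beta)$ well defined, and it remains to identify it. The single structural ingredient I would use is the action of $\L_\beta^{-1}$ on Fourier modes: since $\L_\beta$ is a $G$-equivariant isomorphism (Isomorphism Theorem), it too preserves each mode, and on the $k$-th complex mode $\{t\mapsto e^{ikt}h : h\in H^c\}$ it acts on values by the symbol $\tfrac1\beta(A + ik\beta\,\id)$; because $\re(ik\beta) = 0 > -\boldsymbol{\delta}$, the resolvent bound \eqref{eq:resolv-delta} shows this symbol is a bounded isomorphism $H^c\to V^c$, so on that mode $\L_\beta^{-1}$ is $e^{ikt}h\mapsto \beta\,e^{ikt}(A + ik\beta\,\id)^{-1}h$ (and for $k=0$ condition \ref{i} gives $A^{-1}\in L(H)$ — this is \eqref{eq:resolv-delta} at $\lambda=0$ — so $\L_\beta^{-1}$ sends a constant function $h$ to the constant function $\beta A^{-1}h$).

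For $k\ge1$ I would then fix $w\in V_j^c$ (note $V_j\subset\mathfrak D(A)$ by \eqref{eq:H-isotypic1}) and substitute the ansatz $\psi(t):=e^{ikt}w$ — which, via \eqref{eq:complex-str}, represents a generic element of (the complexification of) $\mathscr E_{k,j}$ — into \eqref{eq:DerF}. From $\dot\psi = ik\,e^{ikt}w$ and $(\bm j\psi)(t) = e^{ikt}\ii(w)$ one gets $N_{B(\alpha)}(\bm j\psi)(t) = e^{ikt}B(\alpha)\ii(w)$, and the formula for $\L_\beta^{-1}$ on the $k$-th mode then yields
\[
\mathscr A(\alpha,\beta)\psi \;=\; \psi - \tfrac1\beta\,\L_\beta^{-1}N_{B(\alpha)}(\bm j\psi) \;=\; e^{ikt}\bigl(w - (A + ik\beta\,\id)^{-1}B(\alpha)\ii(w)\bigr) \;=\; e^{ikt}\,\triangle^j_\alpha(ik\beta)\,w ,
\]
which says exactly $\mathscr A_{k,j}(\alpha,\beta) = \triangle^j_\alpha(ik\beta)$ on $V_j^c$; since $\triangle^j_\alpha(ik\beta)$ commutes with the complex structure of $V_j^c$, it is genuinely the complex-linear operator corresponding to the real operator $\mathscr A_{k,j}(\alpha,\beta)$. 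For $k=0$ I would run the same computation with the constant function $u(t)\equiv w$, $w\in V_j\subset\mathfrak D(A)$: now $\dot u = 0$, so $\L_\beta u = \tfrac1\beta Aw$ is again constant and $\L_\beta^{-1}$ sends a constant $h$ to $\beta A^{-1}h$; with $N_{B(\alpha)}(\bm j u)\equiv B(\alpha)\ii(w)$, \eqref{eq:DerF} gives $\mathscr A(\alpha,\beta)u = w - A^{-1}B(\alpha)\ii(w)$, i.e. $\mathscr A_{0,j}(\alpha,\beta) = \id - A^{-1}B(\alpha)\ii|_{V_j}$.

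The individual computations are routine; the step I expect to need the most care — really the only one — is the reduction described in the first paragraph: that the \emph{global} isomorphism $\L_\beta^{-1}:\mathscr H\to\mathscr E$ acts mode-by-mode as the inverse of the symbol $\tfrac1\beta(A + ik\beta\,\id)$. This is exactly where the hypotheses are used — $G$-equivariance of $\L_\beta$ forces it to respect the isotypic (equivalently, Fourier) decomposition \eqref{eq:iso-E}, while accretivity of $A$, via \eqref{eq:resolv-delta}, guarantees each symbol is boundedly invertible from $H^c$ into $V^c$ — so that inverting $\L_\beta$ amounts to inverting the symbols; after that, matching the outcome with the characteristic operator $\triangle^j_\alpha$ of \eqref{eq:Characteristic} is immediate.
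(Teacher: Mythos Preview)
Your proposal is correct and follows essentially the same route as the paper: substitute the Fourier ansatz $\psi(t)=e^{ikt}w$ into \eqref{eq:DerF}, compute $\L_\beta^{-1}$ on that mode via the symbol $\tfrac1\beta(A+ik\beta\,\id)$, and read off $\triangle^j_\alpha(ik\beta)$. The paper's proof is terser (it just writes out the chain of equalities and declares the $k=0$ case ``obvious''), whereas you spell out more carefully why $\L_\beta^{-1}$ respects the mode decomposition and why the symbol is invertible, but the argument is the same.
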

\vs

\begin{proof} Notice that for $\psi \in \mathscr E_{k,j}$, i.e. $\psi(t)=e^{ikt}w$, $w\in V_j^c$, 
\begin{align*}
\mathscr A_{k,j}(\alpha,\beta)\left( \psi \right)(t)&= \psi(t)-\frac1 \beta \L_{\beta}^{-1}  B(\alpha)\ii(\psi)(t)\\
&= e^{ikt}w-\frac1 \beta \L_{\beta}^{-1} B(\alpha)\ii e^{ikt}w\\
&=e^{ikt}w-\frac1 \beta \left(\frac{d}{dt}+\frac 1\beta A\right)^{-1} B(\alpha)\ii e^{ikt}w\\
&=e^{ikt}\left(w-\frac1 \beta \left(ik\id  +\frac 1\beta A\right)^{-1} B(\alpha)\ii w\right)\\
&=e^{ikt}\left(w-\left(ik\beta\id +A\right)^{-1} B(\alpha)\ii w\right)\\
&=e^{ikt} \triangle_{\alpha}^j(ik\beta)w =\triangle^j_{\alpha}(ik\beta)(\psi)(t) .
\end{align*} 
Therefore, under the identification $\mathscr E_{k,j}=V_j^c$, one obtains that $\mathscr  A_{k,j}(\alpha,\beta)=\triangle^j_{\alpha}(ik\beta)$. The case $k=0$ is obvious.
\end{proof}
\vs

If $(\alpha_o,0)$  is non-degenerate, then  Lemma \ref{lem:A_k} implies that $\mathscr A(\alpha,\beta)$ is not an isomorphism if and only if $(\alpha,0)$ is a center with a critical frequency $\beta$ (see Definition \ref{def:crit-freq}).
\vs

\subsection{Branches of non-constant periodic solutions}\label{sec:branch}
Below we 
adapt Definition \ref{def:branch-1} of a branch of non-trivial solutions and a bifurcation point.

\vs
Given problem \eqref{eq:Hopf} (see also \eqref{eq:Hopf2}), define the set 
\begin{equation}\label{eq:setS}
\mathscr P:=\{ (\alpha,\beta,u)\in \br_+^2\times \mathscr E: \mathscr F(\alpha,\beta,u)=0, \; u\notin V\}.
\end{equation}
The set $\mathscr P$ consists of non-constant solutions to \eqref{eq:Hopf2}. 
\vs
We denote by $\overline A$ the closure  of a set $A\subset  \br_+^2 \times \mathscr E$, where $\br_+^2$ is considered to be equipped with the metric $d$ given by \eqref{eq:metric-R2+}.
\vs


\vs
\begin{definition}\label{def:Hopf}\rm Under the assumptions  \ref{i}, ($C$),   \ref{c0}, \ref{c1}---\ref{c3},
	a set $\mathscr C\subset \mathscr P$ (cf. \eqref{eq:setS}) is called a {\it branch} of non-constant solutions to  \eqref{eq:operator-eq}  if $\overline{\mathscr C}$ is a connected component of $\overline{\mathscr P}$ containing a nontrivial continuum. Moreover, if $(\alpha_o,\beta_o,0)\in \overline{\mathscr C}\cap \mathscr M$,
	 we say that system  \eqref{eq:Hopf} undergoes a {\it Hopf bifurcation}  
at the equilibrium  $(\alpha_o,0)$ with the {\it limit frequency}  $\beta_o>0$ and that  $(\alpha_o,\beta_o,0) $ is  a {\it Hopf bifurcation point} for \eqref{eq:Hopf2}.
\end{definition}
\vs
The following statement  is sometimes called the {\it necessary condition for Hopf bifurcation}.
\vs

\begin{proposition}\label{pro:necessary}
Suppose conditions  \ref{c0}, ($C$), \ref{i}, \ref{c1}---\ref{c3} are satisfied and $(\alpha_o,\beta_o,0)$ ($\beta_o>0$) is a  Hopf bifurcation point for \eqref{eq:Hopf2}.
 Then, $\mathscr A(\alpha_o,\beta_o):\mathscr E\to \mathscr E$ is not an isomorphism, and consequently, $(\alpha_o,0)$ is a center for  \eqref{eq:Hopf} with a characteristic value $i\beta_o k$ for some $k\in \bn$.
\end{proposition}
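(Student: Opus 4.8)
The plan is to argue by contraposition using the local bifurcation theory already developed (Theorem~\ref{th:bif-cont-1} and its underlying machinery), together with the linearization result Proposition~\ref{pro:lin-bif-invariant-twisted} and the identification of $\mathscr A(\alpha,\beta)$ with the characteristic operator furnished by Lemma~\ref{lem:A_k}. First I would set $\lambda_o := \alpha_o + i\beta_o$ (after identifying the relevant parameter plane with $\mathbb C$ as in Section~\ref{sec:local-twisted}) and suppose, for contradiction, that $\mathscr A(\alpha_o,\beta_o):\mathscr E\to\mathscr E$ \emph{is} an isomorphism. Since $\mathscr A$ depends continuously on $(\alpha,\beta)$ (condition~\ref{bif2}) and $\mathrm{GL}_c(\mathscr E)$ is open in $\mathrm{L}_c(\mathscr E)$, there is a neighborhood $U$ of $(\alpha_o,\beta_o)$ in $\mathbb R^2_+$ on which $\mathscr A(\alpha,\beta)$ remains an isomorphism; in particular $(\alpha_o,\beta_o,0)$ is an isolated point of the critical set $\Lambda$ restricted to $U$, so condition~\ref{bif3} holds locally and all the constructions of Section~\ref{sec:local-twisted} apply in $U$.

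Next I would show that, under this isomorphism assumption, the local bifurcation invariant $\omega_G(\lambda_o)$ vanishes. By Proposition~\ref{pro:lin-bif-invariant-twisted}, $\omega_G(\lambda_o) = \gdeg(\mathfrak f_{\mathscr A},\mathfrak O_o)$, where $\mathfrak f_{\mathscr A}(\lambda,v) = \bigl(\tfrac{\delta}{2} - |\lambda-\lambda_o|, \mathscr A(\lambda)v\bigr)$ on the annular region $\mathfrak O_o$. Shrinking $\delta$ so that $\overline{B_\delta}$ projects into $U$, the map $v\mapsto \mathscr A(\lambda)v$ is an isomorphism for every $\lambda$ in the relevant range, hence $\mathfrak f_{\mathscr A}$ has no zeros at all on $\overline{\mathfrak O_o}$: indeed $\mathscr A(\lambda)v = 0$ forces $v=0$, and then $\tfrac{\delta}{2} - |\lambda-\lambda_o| = 0$ is incompatible with $(\lambda,v)\in\mathfrak O_o$ only away from the circle $|\lambda-\lambda_o| = \delta/2$ — so I would instead note directly that $\mathfrak f_{\mathscr A}$ is admissibly $G$-homotopic to a map with empty zero set (e.g.\ push the first coordinate to a strictly positive constant while keeping $\mathscr A(\lambda)v$), whence $\gdeg(\mathfrak f_{\mathscr A},\mathfrak O_o) = 0$ by the existence property of the twisted degree. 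Therefore $\omega_G(\lambda_o) = 0$. But the contrapositive of Theorem~\ref{th:bif-cont-1} (more precisely, the standard fact that a nonzero bifurcation invariant is \emph{necessary} for bifurcation, which is exactly the argument in its proof run in reverse) shows that if $(\alpha_o,\beta_o,0)$ is a Hopf bifurcation point in the sense of Definition~\ref{def:Hopf}, i.e.\ it lies in $\overline{\mathscr C}\cap\mathscr M$ for a branch $\mathscr C$, then $(\alpha_o,\beta_o,0)$ must be a critical point, i.e.\ $\mathscr A(\alpha_o,\beta_o)$ is not an isomorphism. This contradicts our assumption and proves the first assertion.

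Finally, for the ``consequently'' clause I would invoke Lemma~\ref{lem:A_k}: the $G$-isotypic decomposition \eqref{eq:iso-E} gives $\mathscr A(\alpha_o,\beta_o) = \bigoplus_{k,j}\mathscr A_{k,j}(\alpha_o,\beta_o)$, and $\mathscr A(\alpha_o,\beta_o)$ fails to be an isomorphism iff some block $\mathscr A_{k,j}(\alpha_o,\beta_o)$ fails to be an isomorphism on the finite-dimensional space $V_j^c$. For $k\ge 1$ this block equals $\triangle^j_{\alpha_o}(ik\beta_o)$ by \eqref{eq:Ch-k}, so a nontrivial kernel there means $\ker\triangle_{\alpha_o}(ik\beta_o)\ne\{0\}$, i.e.\ $ik\beta_o$ is a characteristic value and $(\alpha_o,0)$ is a center with critical frequency $k\beta_o$. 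The case $k=0$ is excluded: $\mathscr A_{0,j}(\alpha_o,\beta_o) = \id - A^{-1}B(\alpha_o)\ii|_{V_j}$, and a nontrivial kernel there would give $\ker(A - B(\alpha_o)\circ\ii)\ne\{0\}$, contradicting the non-degeneracy condition~\ref{c3}. Hence the offending block has $k\in\mathbb N$, which is the claim.

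\medskip
\noindent\emph{Main obstacle.} The routine linear-algebra and homotopy steps are straightforward; the real content is establishing the ``necessary condition'' direction cleanly. The proof of Theorem~\ref{th:bif-cont-1} as written gives only the sufficient direction, so I would need to extract from Proposition~\ref{pro:lin-bif-invariant-twisted} (which expresses $\omega_G(\lambda_o)$ purely in terms of the linearization on an annulus) the observation that an isomorphic linearization makes the relevant degree vanish, and then combine this with the elementary fact that a bifurcation point with $\omega_G(\lambda_o)=0$ cannot be ruled out by degree theory \emph{but} a bifurcation point must nonetheless be critical — this last step is the genuinely topological one and is precisely the place where the continuation/Kuratowski argument (as in the proof of Theorem~\ref{th:bif-cont-1}) is used in the form: if $\mathscr A(\alpha_o,\beta_o)$ were an isomorphism, then by the implicit function theorem applied to $\mathscr F$ near $(\alpha_o,\beta_o,0)$ the only solutions near that point would be trivial, so no branch could accumulate there, contradicting $(\alpha_o,\beta_o,0)\in\overline{\mathscr C}$.
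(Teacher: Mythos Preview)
Your proposal eventually reaches the right argument, but the degree-theoretic detour in your first two paragraphs is both unnecessary and logically inert. Showing $\omega_G(\lambda_o)=0$ does \emph{not} preclude bifurcation: Theorem~\ref{th:bif-cont-1} gives only a sufficient condition, and your parenthetical claim that ``a nonzero bifurcation invariant is \emph{necessary} for bifurcation'' is simply false (there are well-known examples of bifurcation with vanishing invariant). So nothing in those two paragraphs contributes to the proof.

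The actual argument is the elementary one you give in your final ``Main obstacle'' paragraph, and this is exactly what the paper does. The paper's proof is two lines: (i) if $(\alpha_o,\beta_o,0)$ is a bifurcation point then $\mathscr A(\alpha_o,\beta_o)$ cannot be an isomorphism --- this is the standard remark already made in Section~\ref{sec:equiv-bif-Twisted} right after \eqref{eq:critical-twisted}, and it follows immediately from condition~\ref{bif2}/\ref{c2}: if $\mathscr A(\alpha_o,\beta_o)$ were an isomorphism, continuity plus \eqref{eq:A-twisted} give $\|\mathscr F(\alpha,\beta,v)\|\ge \tfrac{c}{2}\|v\|>0$ for all $(\alpha,\beta,v)$ sufficiently close to $(\alpha_o,\beta_o,0)$ with $v\ne 0$, so no branch can accumulate there; (ii) by the isotypic decomposition some block $\mathscr A_{k,j}(\alpha_o,\beta_o)$ fails to be an isomorphism, and Lemma~\ref{lem:A_k} identifies that block with $\triangle^j_{\alpha_o}(ik\beta_o)$ for $k\ge 1$ (the case $k=0$ being excluded by~\ref{c3}, exactly as you say).

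Your handling of the ``consequently'' clause is correct and matches the paper verbatim. In short: discard the degree machinery, keep the implicit-function/linearization estimate and the Lemma~\ref{lem:A_k} step, and you have the paper's proof.
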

\begin{proof} From Definition \ref{def:Hopf}, one can deduce that if  $(\alpha_o,\beta_o,0)$ is a Hopf bifurcation point, then  the operator $\mathscr A(\alpha_o,\beta_o)=D_u\mathscr F(\alpha_o,\beta_o,0):\mathscr E\to \mathscr E$ cannot be an isomorphism. Thus,  for some $k\in \bn$  and $j=0,1,\dots, r$, the operator $\mathscr A_{k,j}(\alpha_o,\beta_o)$ is not an isomorphism, i.e. 
\[\ker  \mathscr A_{k,j}(\alpha_o,\beta_o)=\ker \triangle^j_{\alpha_o}(ik\beta_o)\not=\{0\},
\]
which implies that $(\alpha_o, 0)$ is a center with the characteristic value $ik\beta_o$.
\end{proof} 
\vs

Let $k\in\bn$ and assume that $(\alpha,\beta,u)\in \mathscr P$. Define $u^k(t):=u(kt)$. Then,  $u^k\in \mathscr E$ ($u^k$ could be called a {\it shadow}  of $u$) and $(\alpha,\frac \beta k, u^k)\in \mathscr P$. Therefore, one can observe that if $(\alpha_o,\beta_o,0)$ is a Hopf bifurcation point, then for every $k\in \bn$, the point $(\alpha_o,\frac{\beta_o}k,0)$ is also a Hopf bifurcation point. 
\vs

For $k\in \bn$, define the map
$\Bcyr_k:\br_+^2\times \mathscr E\to \br_+^2\times \mathscr E$ by
\begin{equation}\label{eq:Bcyr-map}
\Bcyr_k(\alpha,\beta,u):= \left(\alpha,\tfrac \beta k,u^k\right), \quad u^k(\cdot):=u(k\cdot).
\end{equation}
We also put $ \bcyr _k(u):=u^k$. Clearly, $\Bcyr_k$ is a continuous map.
\vs

\begin{lemma}\label{lem:fold} Suppose that $u\in \mathscr E$ is a non-constant $2\pi$-periodic function. Then,  
\[
\lim_{k\to \infty} \|\bcyr _k(u)\|=\infty.
\]
\end{lemma}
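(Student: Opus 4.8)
The plan is to exploit the fact that the norm of $\mathscr E = H^1_{2\pi}(\mathbb R;H)\cap L^2_{2\pi}(\mathbb R;V)$ (see \eqref{eq:spaces2}) dominates the $L^2$-norm of the time derivative, together with the elementary observation that $k$-folding multiplies this derivative by $k$ pointwise. First I would note that for every $w\in\mathscr E$ one has $\|w\|\ge \|\dot w\|_{L^2_{2\pi}(\mathbb R;H)}$, since the $H^1_{2\pi}(\mathbb R;H)$-component of the norm already controls $\|\dot w\|_{L^2_{2\pi}(\mathbb R;H)}$. In particular, recalling that $\bcyr_k(u)=u^k\in\mathscr E$ (as observed just before the statement), it suffices to show $\|\tfrac{d}{dt}u^k\|_{L^2_{2\pi}(\mathbb R;H)}\to\infty$.

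Next, differentiating $u^k(t)=u(kt)$ by the chain rule gives $\tfrac{d}{dt}u^k(t)=k\,\dot u(kt)$ for a.e.\ $t$. Using that $\dot u$ is $2\pi$-periodic and the substitution $s=kt$, one computes
\[
\Big\|\tfrac{d}{dt}u^k\Big\|_{L^2_{2\pi}(\mathbb R;H)}^2=k^2\int_0^{2\pi}|\dot u(kt)|^2\,dt=k^2\cdot\frac1k\int_0^{2k\pi}|\dot u(s)|^2\,ds=k^2\,\|\dot u\|_{L^2_{2\pi}(\mathbb R;H)}^2,
\]
where the last equality uses $\int_0^{2k\pi}|\dot u(s)|^2\,ds=k\int_0^{2\pi}|\dot u(s)|^2\,ds$ by periodicity.

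Finally, since $u$ is non-constant, $\dot u$ is not identically zero, so $c:=\|\dot u\|_{L^2_{2\pi}(\mathbb R;H)}>0$; combining with the previous two steps yields $\|\bcyr_k(u)\|\ge k\,c\to\infty$ as $k\to\infty$, which is the claim. (Alternatively, one may argue via Fourier series: writing $u(t)=\sum_{n\in\mathbb Z}a_ne^{int}$ with $a_n\in H^c$ one has $u^k(t)=\sum_n a_ne^{iknt}$, and since $u$ is non-constant some $a_{n_0}\ne0$ with $n_0\ne0$, so the $H^1$-norm of $u^k$ is bounded below by $|kn_0|\,|a_{n_0}|_H\to\infty$.) There is essentially no serious obstacle here; the only points deserving a word are the identification $\bcyr_k(u)=u^k\in\mathscr E$ and the normalization of the $\mathscr E$-norm, both of which are already in place.
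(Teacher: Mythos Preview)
Your proof is correct and follows essentially the same approach as the paper: both compute $\|\tfrac{d}{dt}u^k\|_{L^2_{2\pi}(\mathbb R;H)}^2=k^2\|\dot u\|_{L^2_{2\pi}(\mathbb R;H)}^2$ via the substitution $s=kt$ and periodicity, and then use that $u$ non-constant forces $\|\dot u\|_{L^2}>0$. The only difference is cosmetic: the paper also records that the $L^2$-part of the norm stays constant and writes out the resulting expression for $\|\bcyr_k(u)\|$, whereas you pass directly to the lower bound $\|\bcyr_k(u)\|\ge k\,\|\dot u\|_{L^2}$, which is all that is needed.
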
 
\begin{proof}
One has
\begin{align*}
 \|\bcyr _k(u)\|^2_{L^2} & = \int_0^{2\pi} |u(kt)|^2dt= \frac 1k \int_0^{k2\pi}|u(s)|^2ds=\int_0^{2\pi}|u(s)|^2ds,\\
  \left\|\frac d{dt} \bcyr _k(u)\right\|^2_{L^2} & =\int_0^{2\pi} k^2|\dot u(kt)|^2dt= k \int_0^{k2\pi}|\dot u(s)|^2ds=k^2\int_0^{2\pi}|\dot u(s)|^2ds,
\end{align*}
which implies
\[
 \|\bcyr _k(u)\|=\sqrt{\|u\|^2_{L^2}+k^2\|\dot u\|^2_{L^2}}.
\]
Since $u$ is non-constant, $\|\dot u\|_{L^2}\not=0$, and $k^2\|\dot u\|^2_{L^2}\to \infty$ as $k\to \infty$, thus the conclusion follows.

\end{proof} 
\vs

\begin{proposition} Suppose that conditions  \ref{i}, ($C$), \ref{c0},  \ref{c1}---\ref{c3} are satisfied and assume that $(\alpha_o,\beta_o,0)\in \mathscr C$, where $\mathscr C$ is a branch. If 
\begin{equation}\label{eq:reson-branch} 
\Bcyr_k(\overline{\mathscr C})\cap \overline{\mathscr C}\not=\emptyset\quad \text{ for some $k>1$}
\end{equation}
then:\\
(i) $\Bcyr_{k^m}(\overline{\mathscr C})\subset \overline{\mathscr C} $ for all $m\in \bn$, \\
(ii) $(\alpha_o,0,0)$ belongs to the closure  of $\mathscr C$ in $\br\times \br\times \mathscr E$ (see Figure \ref{fig:fold}), and\\
(iii) the branch $\mathscr C $ is unbounded in $\br^2_+\times \mathscr E$.
\end{proposition}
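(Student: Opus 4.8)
The plan is to exploit the folding maps $\Bcyr_k$ of \eqref{eq:Bcyr-map}, relying on three elementary facts: $\Bcyr_k$ is continuous; $\Bcyr_k(\mathscr P)\subset\mathscr P$ (as observed just before \eqref{eq:Bcyr-map}), so by continuity $\Bcyr_k(\overline{\mathscr P})\subset\overline{\mathscr P}$; and $\Bcyr_k\circ\Bcyr_k=\Bcyr_{k^2}$ because $(u^k)^k(t)=u(k^2t)=u^{k^2}(t)$, whence by induction $\Bcyr_{k^m}=(\Bcyr_k)^m$ for every $m\in\bn$.

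For (i), since $\overline{\mathscr C}$ is connected and $\Bcyr_k$ is continuous, $\Bcyr_k(\overline{\mathscr C})$ is connected, and it is contained in $\overline{\mathscr P}$ (because $\overline{\mathscr C}\subset\overline{\mathscr P}$ and $\Bcyr_k(\overline{\mathscr P})\subset\overline{\mathscr P}$). The resonance hypothesis \eqref{eq:reson-branch} says it meets $\overline{\mathscr C}$, which is a connected component of $\overline{\mathscr P}$ (Definition \ref{def:Hopf}); a connected set meeting one of the connected components of $\overline{\mathscr P}$ must be contained in that component, so $\Bcyr_k(\overline{\mathscr C})\subset\overline{\mathscr C}$. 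Applying $\Bcyr_k$ repeatedly and using $\Bcyr_{k^m}=(\Bcyr_k)^m$ then gives $\Bcyr_{k^m}(\overline{\mathscr C})\subset\overline{\mathscr C}$ for all $m\in\bn$.

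For (ii), I would insert the point $(\alpha_o,\beta_o,0)$ (which lies in $\overline{\mathscr C}$ by hypothesis) into the inclusions of (i): since $0^{k^m}\equiv 0$, one obtains $(\alpha_o,\beta_o/k^m,0)=\Bcyr_{k^m}(\alpha_o,\beta_o,0)\in\overline{\mathscr C}$ for every $m$, and $k>1$ forces $\beta_o/k^m\to 0$. This sequence does not converge in $(\br^2_+,d)$ — under the metric \eqref{eq:metric-R2+} consecutive terms remain a fixed distance $\ln k$ apart — but it converges to $(\alpha_o,0,0)$ in the larger space $\br\times\br\times\mathscr E$, and since each of its terms lies in $\overline{\mathscr C}$, hence in the closure of $\mathscr C$ taken in $\br\times\br\times\mathscr E$, so does the limit. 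For (iii), I would instead insert into (i) any point $(\alpha_*,\beta_*,u_*)\in\mathscr C$ (such a point exists because $\mathscr C$ contains a nontrivial continuum, and then $u_*\notin V$, i.e.\ $u_*$ is non-constant and $\dot u_*\neq 0$): it follows that $(\alpha_*,\beta_*/k^m,u_*^{k^m})\in\overline{\mathscr C}$ for all $m$, while Lemma \ref{lem:fold} (whose proof yields $\|\bcyr_{k^m}(u_*)\|=\sqrt{\|u_*\|^2_{L^2}+k^{2m}\|\dot u_*\|^2_{L^2}}$) shows $\|\bcyr_{k^m}(u_*)\|\to\infty$. Thus $\overline{\mathscr C}$ is unbounded in $\br^2_+\times\mathscr E$, and hence so is $\mathscr C$, the closure of a bounded subset of a metric space being bounded.

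I do not expect any genuine obstacle here; the only point requiring care is the topological bookkeeping — the ``connected set meeting a component is contained in it'' argument in (i) together with its iteration, and keeping the closure of $\mathscr C$ in $\br^2_+\times\mathscr E$ (where $\beta>0$) carefully distinct from its closure in $\br\times\br\times\mathscr E$, which is where the limit point $(\alpha_o,0,0)$ of (ii) actually lives.
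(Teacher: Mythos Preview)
Your proof is correct and follows essentially the same route as the paper's: both argue (i) by using that $\Bcyr_k(\overline{\mathscr C})$ is a connected subset of $\overline{\mathscr P}$ meeting the component $\overline{\mathscr C}$, then iterate to obtain the filtration $\Bcyr_{k^m}(\overline{\mathscr C})\subset\cdots\subset\Bcyr_k(\overline{\mathscr C})\subset\overline{\mathscr C}$; (ii) by reading off the trivial points $(\alpha_o,\beta_o/k^m,0)$ from this filtration; and (iii) by invoking Lemma~\ref{lem:fold} on a non-constant point. Your write-up is in fact more explicit than the paper's (which simply says ``iterating'' for (i) and ``follows from \eqref{eq:filtration}'' and ``follows from Lemma~\ref{lem:fold}'' for (ii) and (iii)), and your care in distinguishing the closure in $(\br^2_+,d)\times\mathscr E$ from that in $\br\times\br\times\mathscr E$ is a useful clarification.
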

\begin{proof}
(i) Assume that 
$\Bcyr_k(\overline{\mathscr C})\cap \overline{\mathscr C}\not=\emptyset$. Then, iterating
\eqref{eq:Bcyr-map}, one obtains for any $m \geq 1$:
\begin{equation}\label{eq:filtration}
\Bcyr_{k^m}(\overline{\mathscr C}) \subset \Bcyr_{k^{m-1}}(\overline{\mathscr C}) \subset \dots \subset \Bcyr_k(\overline{\mathscr C})\subset \overline{\mathscr C}.
\end{equation}  

(ii) Follows from \eqref{eq:filtration}.

(iii) Follows from Lemma \ref{lem:fold}.
\end{proof}
\vs

\begin{remark}\rm Notice that condition \eqref{eq:reson-branch} implies that  for two critical resonant frequencies $\beta$, $\beta'$ for a fixed  $\alpha_o$, the  points $(\alpha_o,\beta,0)$ and $(\alpha_o, \beta',0)$ belong to the branch $\mathscr C$.
\end{remark}
\vs

\begin{figure}[H]
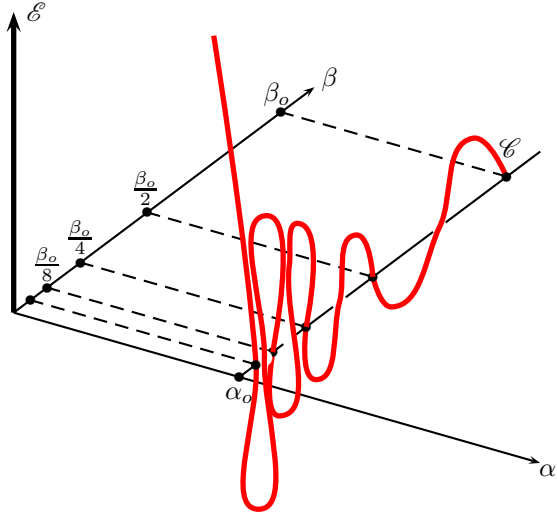

\vskip4cm\hskip4cm
\psline{->}(0,0)(7,-2)
\psline{->}(0,0)(4,3)
\psline[linewidth=2pt]{->}(0,0)(0,4)
\rput{180}(6.550,1.8114){\psline[linestyle=dashed](0,0)(3,-.85714)\psdot(3,-.85714)}
\rput{180}(4.775,0.47619){\psline[linestyle=dashed](0,0)(3,-.85714)\psdot(3,-.85714)}
\rput{180}(3.8875,-.190475){\psline[linestyle=dashed](0,0)(3,-.85714)\psdot(3,-.85714)}
\rput{180}(3.44375,-0.5238075){\psline[linestyle=dashed](0,0)(3,-.85714)\psdot(3,-.85714)}
\rput{180}(3.22187,-0.690505){\psline[linestyle=dashed](0,0)(3,-.85714)\psdot(3,-.85714)}
\rput(3,-.85714){\psline(0,0)(4,3)}
\rput(4.775,0.47619){\psdots[dotsize=6pt,linecolor=white](0.888,0.6666)\pscurve[linecolor=red,linewidth=2pt](0,0)(0.244,-0.4)(0.888,0.6666)(1.332,1.84465)(1.775,1.3333)}
\psdots(6.55,1.80952)(4.775,0.47619)
\rput(3.8875,-.190475){\scalebox{.5}{\psdots[dotsize=12pt,linecolor=white](0.888,0.6666)\pscurve[linecolor=red,linewidth=4pt](0,0)(0.244,-1.4)(0.888,0.6666)(1.332,2.4465)(1.775,1.3333)}}
\psdot(3.8875,-.190475)
\rput(3.44375,-0.5238075){\scalebox{.25}{\psdots[dotsize=24pt,linecolor=white](0.888,0.6666)\pscurve[linecolor=red,linewidth=8pt](0,0)(0.344,-3.4)(0.888,4.6666)(1.332,6.84465)(1.775,1.3333)}}
\psdot(3.44375,-0.5238075)
\rput(3.22187,-0.690505){\scalebox{.125}{\psdots[dotsize=48pt,linecolor=white](0.888,0.6666)\pscurve[linecolor=red,linewidth=16pt](-4.5,35)(0,0)(0.244,-15.4)(0.888,0.6666)(1.132,15.84465)(1.775,1.3333)}}
\psdots(3.22187,-0.690505)(3,-.85714)
\rput(0.3,4){$\mathscr E$}
\rput(7.1,-2.1){$\alpha$}
\rput(4.2,3.1){$\beta$}
\rput(3.0,-1.1){$\alpha_o$}
\rput(3.5,2.9){$\beta_o$}
\rput(1.75,1.65){$\frac{\beta_o}2$}
\rput(.88,1){$\frac{\beta_o}4$}
\rput(.44,.65){$\frac{\beta_o}8$}
\rput(6.6,2.2){$\mathscr C$}
\vskip2.3cm
\caption{Development of a ``folding'' branch $\mathscr C$.}\label{fig:fold}
\end{figure}

In what follows, we are interested in studying properties of  branches $\mathscr C\subset {\mathscr P}$ bifurcating from a point $(\alpha_o,\beta_o,0)$, where $\beta_o\in \boldsymbol{\bm b }(\alpha_o)$ (cf. Lemma \ref{lem:co-finite}). To this end, 
let us introduce the following additional condition on the map $F$:
\vs

\begin{enumerate}[label=($\mathscr L$)]
\item\label{lip} The map $F:\br\times W\to H$ is {\it locally Lipschitz}  with respect to $v$, i.e for every $(\alpha_o,v_o)\in \br\times W$, there exist $\ve>0$ and $L>0$ such that for all $\alpha$ and $v$, $v'\in W$ with $ |\alpha-\alpha_o|<\ve$, $ \|v-v_o\|<\ve $, $\|v'-v_o\|<\ve$, one has: 
\begin{equation}\label{eq:Vid1}
 \|F(\alpha,v)-F(\alpha,v')\|\le L\|v-v'\|.
\end{equation}
\end{enumerate}
\vs 

Lemmas \ref{lem:Vidossich}--\ref{lem:Vid2} following below play an important role in our considerations. In particular, they provide a link between condition 
\ref{c4} and estimates for the period of a periodic solution to \eqref{eq:Hopf}. 
\vs
\begin{lemma}\label{lem:Vidossich}{\rm (G. Vidossich \cite{Vidossich})} Let $\mathbb E$ be a Banach space and $v:\br\to \mathbb E$ a $p$-periodic function ($p>0$) such that:
\begin{itemize}
\item[(a)] the function $v$ is integrable and $\int_0^p v(t)dt=0$;
\item[(b)] there is $U\in L^1([0,\tfrac p2];\br)$ such that
\[
\|v(t)-v(s)\|\le U(t-s) \quad \text{ for  a.e. }\;\; t- s\in [0,\tfrac p2],\; 0\le s\le t\le p.
\]
\end{itemize}
Then,
\begin{equation}\label{eq:Vid}
p\sup_{t\in \br} \|v(t)\|\le 2\int_0^{\frac p2} U(t)dt.
\end{equation}
\end{lemma}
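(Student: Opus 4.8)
The plan is to use the zero-mean hypothesis (a) to represent $v(t_0)$ at an arbitrary point as an average of increments, and then control those increments via the modulus bound (b). Fix $t_0\in\br$. Since $v$ is $p$-periodic, $\int_0^p v(t_0+s)\,ds=\int_0^p v(\sigma)\,d\sigma=0$ by (a), and hence
\[
p\,v(t_0)=\int_0^{p}\big(v(t_0)-v(t_0+s)\big)\,ds .
\]
This identity is the crux: it reduces the estimate to bounding the increments $v(t_0)-v(t_0+s)$ over one full period, and it is this step that turns the ``integral of $v$ vanishes'' information into a pointwise bound.

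The second step is a symmetrization. Splitting the integral at $p/2$ and, in the piece over $[p/2,p]$, substituting $s\mapsto p-s$ together with $p$-periodicity ($v(t_0+p-s)=v(t_0-s)$), one obtains
\[
p\,v(t_0)=\int_0^{p/2}\big(v(t_0)-v(t_0+s)\big)\,ds+\int_0^{p/2}\big(v(t_0)-v(t_0-s)\big)\,ds .
\]
Now every increment spans a time gap of length $s\le p/2$, which is precisely the range covered by (b). Passing to norms (triangle inequality for the Bochner integral) and applying (b) to the pairs $t_0\le t_0+s$ and $t_0-s\le t_0$ gives $\|v(t_0)-v(t_0\pm s)\|\le U(s)$ for a.e.\ $s\in[0,p/2]$, so that
\[
p\,\|v(t_0)\|\le\int_0^{p/2}U(s)\,ds+\int_0^{p/2}U(s)\,ds=2\int_0^{p/2}U(s)\,ds .
\]
Since $t_0\in\br$ was arbitrary, taking the supremum over $t_0$ yields \eqref{eq:Vid}.

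The only delicate point — and the one I expect to be the main (though minor) obstacle — is the last application of (b): the pairs $(t_0,t_0+s)$ and $(t_0-s,t_0)$ need not lie inside $[0,p]$, as literally demanded in the statement of (b). This is resolved by the $p$-periodicity of $v$, which makes the bound in (b) valid around \emph{any} base point with the same kernel $U$: after translating the time variable one may assume $t_0=0$, in which case the forward pair $(0,s)$ already lies in $[0,p]$, while for the backward increment one writes $v(0)-v(p-s)=v(p)-v(p-s)$ to produce the admissible pair $(p-s,p)$ with gap $s\le p/2$. A further trivial remark: for a merely integrable $v$ the pointwise estimate holds only for a.e.\ $t_0$, so ``$\sup$'' in \eqref{eq:Vid} is to be read as the essential supremum; in the applications below $v$ will be continuous, so this distinction is immaterial.
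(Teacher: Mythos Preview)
The paper does not prove this lemma; it merely attributes it to Vidossich, so there is no in-paper argument to compare against. Your proof --- write $p\,v(t_0)=\int_0^{p}(v(t_0)-v(t_0+s))\,ds$, symmetrize at $p/2$, and bound each increment by $U$ --- is the standard one and is correct once hypothesis (b) is read properly.

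The only issue is the one you yourself flag, and your resolution of it is not quite right. You assert that ``$p$-periodicity of $v$ makes the bound in (b) valid around \emph{any} base point'', so that one may translate to $t_0=0$. That is false if (b) is read literally with the clause $0\le s\le t\le p$: for $t_0\in(0,p/2)$ and $r\in(t_0,p/2]$ the backward increment forces the pair $(t_0,\,t_0-r+p)$ inside $[0,p]$, whose gap is $p-r\ge p/2$, outside the scope of (b); periodicity alone does not close this gap, and one can in fact build piecewise-constant periodic $v$ for which \eqref{eq:Vid} fails under that restricted reading. What you actually need --- and what appears in Vidossich's original formulation --- is $\|v(t)-v(s)\|\le U(t-s)$ for all real $s\le t$ with $t-s\le p/2$, with no confinement to a single period; the extra clause in the paper's transcription should be regarded as redundant. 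Under that hypothesis your translation step is trivially justified and the proof is complete. For the paper's application (Lemma~\ref{lem:Vid1}) one has $U(h)=Lh$, and then the seam can alternatively be bridged by a single triangle inequality through the point $0\equiv p$, since a linear $U$ is additive.
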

\vs

\begin{lemma} \label{lem:Vid1}  Suppose that conditions  \ref{i}, ($C$), \ref{c0}, \ref{c1}---\ref{c3} and \ref{lip} are satisfied. Let   $u:\br\to V$ be a $p$-periodic solution ($p>0$) to \eqref{eq:Hopf} for some $\alpha\in \br$.  Put $r:=\max_{t\in \br} |u(t)|$ and assume that the following condition is satisfied:
\[
\exists_{L>0} \; \forall_{v,v'\in V} \; \|v\|,\, \|v'\|\le r \; \Rightarrow\;\;  \|F(\alpha,\ii(v))-F(\alpha,\ii(v'))\|\le L\|\ii\|\|v-v'\|.
\]
Then,
\begin{equation}\label{eq:Vid2}
p\ge \frac{4}{\|A\|+L\|\ii\|}.
\end{equation}
\end{lemma}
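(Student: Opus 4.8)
The plan is to apply Vidossich's Lemma (Lemma~\ref{lem:Vidossich}) to the $p$-periodic function $v:=\dot u$. First I would verify hypothesis (a): since $u(t)=u(t+p)$, one has $\int_0^p\dot u(t)\,dt=u(p)-u(0)=0$, and $\dot u$ is integrable on $[0,p]$ because $u\in H^1_{2\pi}(\br;H)$. For hypothesis (b), I would use equation~\eqref{eq:Hopf} to write $\dot u(t)=F(\alpha,\ii(u(t)))-Au(t)$ for a.e.\ $t\in\br$, whence for a.e.\ $0\le s\le t$,
\[
\dot u(t)-\dot u(s)=\big(F(\alpha,\ii(u(t)))-F(\alpha,\ii(u(s)))\big)-A\big(u(t)-u(s)\big).
\]
Combining the Lipschitz estimate assumed in the statement (applicable along the trajectory since $\max_{t}|u(t)|=r$) with $\|A(u(t)-u(s))\|\le\|A\|\,\|u(t)-u(s)\|$, I obtain
\[
\|\dot u(t)-\dot u(s)\|\le\big(\|A\|+L\|\ii\|\big)\,\|u(t)-u(s)\|.
\]

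Next I would bound $\|u(t)-u(s)\|=\big\|\int_s^t\dot u(\tau)\,d\tau\big\|\le(t-s)\,M$, where $M:=\sup_{\tau\in\br}\|\dot u(\tau)\|$; thus condition (b) of Lemma~\ref{lem:Vidossich} holds with $U(\sigma):=\big(\|A\|+L\|\ii\|\big)M\,\sigma$, which clearly lies in $L^1([0,\tfrac p2];\br)$. Lemma~\ref{lem:Vidossich} then yields
\[
pM=p\sup_{t\in\br}\|\dot u(t)\|\le 2\int_0^{p/2}U(\sigma)\,d\sigma=2\big(\|A\|+L\|\ii\|\big)M\int_0^{p/2}\sigma\,d\sigma=\big(\|A\|+L\|\ii\|\big)\,M\,\frac{p^2}{4}.
\]
Since the periodic solution under consideration is non-constant, $M>0$, so dividing by $pM$ gives $1\le\big(\|A\|+L\|\ii\|\big)\tfrac p4$, which is precisely~\eqref{eq:Vid2}.

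The step I expect to require the most care is the justification of the a.e.\ pointwise manipulations — namely that $\dot u(t)$, $Au(t)$ and $F(\alpha,\ii(u(t)))$ are well-defined elements of $H$ for a.e.\ $t$, so that the identity for $\dot u(t)-\dot u(s)$ and the subsequent estimate are legitimate — together with keeping track of the various norms ($H$, $V$, $W$, and the operator norms of $A$ and $\ii$) that enter the constant $\|A\|+L\|\ii\|$. It is also worth recording explicitly that $u$ must be non-constant: otherwise $M=0$, the Vidossich inequality is vacuous and the claimed lower bound on $p$ fails, so non-constancy is exactly what makes the concluding division by $M$ valid.
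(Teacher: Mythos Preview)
Your proposal is correct and follows essentially the same route as the paper: apply Vidossich's Lemma to $v=\dot u$, use the equation to estimate $\|\dot u(t)-\dot u(s)\|\le(\|A\|+L\|\ii\|)\|u(t)-u(s)\|$, bound the latter by $(t-s)\sup_\tau\|\dot u(\tau)\|$, and conclude. Your explicit remark that non-constancy of $u$ is needed to divide by $M$ is a worthwhile addition that the paper leaves implicit.
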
 
\begin{proof} Put $v(t):=\dot u(t)$. Since $u$ satisfies \eqref{eq:Hopf}, by condition \ref{c0},  $v$ is a continuous $p$-periodic function. Moreover, $\int_0^pv(t)dt=u(p)-u(0)=0$, so the condition (a) of Lemma \ref{lem:Vidossich} is satisfied.
Fix two values $0\le s\le t\le p$ such that $t-s\in [0,\frac p2]$. Then, 
\begin{align*}
\|v(t)-v(s)\|&=\|-A\big(u(t)-u(s)\big)+ F(\alpha,\ii(u(t)))-F(\alpha,\ii(u(s)))\|\\
&\le (\|A\|+L\|\ii\|) \|u(t)-u(s)\|\\
&\le (\|A\|+L\|\ii\|)\sup_{\tau\in \br} \|v(\tau)\|(t-s).
\end{align*}
Therefore, the function $U:[0,\tfrac p2]\to \br$ defined by $U(t):= (\|A\|+L\|\ii\|)\sup\limits_{\tau\in \br} \|v(\tau)\|t$ satisfies condition (b) of Lemma \ref{lem:Vidossich}.
Hence, from Lemma \ref{lem:Vidossich}, we obtain 
\begin{align*}
p\sup_{\tau\in \br}  \|v(\tau)\|&\le 2\int_0^{\frac p2} U(\tau)d\tau\\
&=2  (\|A\|+L\|\ii\|)\sup_{\tau\in \br} \|v(\tau)\|\int_0^{\frac p2}\tau d\tau\\
&=2  (\|A\|+L\|\ii\|)\sup_{\tau\in \br}\|v(\tau)\| \cdot \frac{p^2}{8},
\end{align*}
which implies \eqref{eq:Vid2}.
\end{proof}
\vs

\begin{lemma}\label{lem:Vid2}  Under the assumptions  \ref{i}, ($C$), \ref{c0}, \ref{c1}---\ref{c3} and \ref{lip}, assume that $X\subset \br\times V$ is a bounded set. Then, 
\begin{equation}\label{eq:Lip1}
\exists_{L>0} \forall_{(\alpha,u),(\alpha,u')\in X}\;\; \|F(\alpha,\ii(u))-F(\alpha,\ii(u'))\|
\le L\|\ii\| \, \|u-u'\|, 
\end{equation}
i.e. the map $(\alpha,u)\mapsto F(\alpha, \ii(u))$ is globally  Lipschitz continuous on $X$ with respect to $u$ (with the Lipschitz constant $L\|\ii\|$).
\end{lemma}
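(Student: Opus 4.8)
The plan is to deduce the claimed \emph{global} Lipschitz bound on the bounded set $X$ from the purely local Lipschitz hypothesis \ref{lip} by a compactness-and-chaining argument, the required compactness being supplied by condition $(C)$.

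First I would replace $X$ by a compact set in $\br\times W$ on which $F$ can be controlled. Let $I_0\subset\br$ be the closure of the projection of $X$ onto the $\alpha$-axis (a compact interval), and let $X_V\subset V$ be the projection of $X$ onto $V$ (a bounded set). Since $\ii$ is compact by $(C)$, the set $C:=\overline{\ii(X_V)}$ is compact in $W$; moreover the set $S$ of all points $(1-t)v+tv'$ with $v,v'\in C$, $t\in[0,1]$ — being the continuous image of the compact set $C\times C\times[0,1]$ — is again compact in $W$ and contains every line segment joining two points of $C$. Hence $K:=I_0\times S$ is compact in $\br\times W$.

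Next I would extract a uniform local Lipschitz constant over $K$. For each $p=(\alpha_p,v_p)\in K$, condition \ref{lip} provides $\varepsilon_p>0$ and $L_p>0$; the open balls $U_p:=\{(\alpha,v):|\alpha-\alpha_p|<\varepsilon_p,\ \|v-v_p\|<\varepsilon_p\}$ cover $K$, so there is a finite subcover $U_{p_1},\dots,U_{p_n}$. Put $L:=\max_{1\le i\le n}L_{p_i}$ and let $\delta>0$ be a Lebesgue number of this cover for the metric $\max(|\alpha-\alpha'|,\|v-v'\|)$ on $\br\times W$. If $\alpha\in I_0$ and $v,v'\in S$ satisfy $\|v-v'\|<\delta$, then the two-point set $\{(\alpha,v),(\alpha,v')\}$ has diameter $<\delta$, hence lies in some $U_{p_i}$; applying \ref{lip} at $p_i$ with this fixed $\alpha$ gives $\|F(\alpha,v)-F(\alpha,v')\|\le L_{p_i}\|v-v'\|\le L\|v-v'\|$.

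Finally I would chain this estimate along a segment. Given $(\alpha,u),(\alpha,u')\in X$, both $\ii(u)$ and $\ii(u')$ lie in $C$, so the straight segment between them lies in $S$; write $w_j:=\ii(u)+\tfrac jm\bigl(\ii(u')-\ii(u)\bigr)$ for $j=0,\dots,m$ with $m$ chosen so large that each $\|w_{j+1}-w_j\|<\delta$. Since the $w_j$ are collinear, $\sum_{j=0}^{m-1}\|w_{j+1}-w_j\|=\|\ii(u)-\ii(u')\|$, and by the previous step
\[
\|F(\alpha,\ii(u))-F(\alpha,\ii(u'))\|\le\sum_{j=0}^{m-1}\|F(\alpha,w_{j+1})-F(\alpha,w_j)\|\le L\sum_{j=0}^{m-1}\|w_{j+1}-w_j\|=L\,\|\ii(u)-\ii(u')\|,
\]
and combining with $\|\ii(u)-\ii(u')\|\le\|\ii\|\,\|u-u'\|$ yields \eqref{eq:Lip1} with this $L$. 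The single point where infinite-dimensionality really matters — and hence the \emph{main obstacle} — is the passage to the compact set $K$: mere boundedness of $X$ would not make $\ii(X_V)$ (nor the segments through it) precompact, so the finite subcover, the uniform constant $L$, and the Lebesgue number $\delta$ would all be unavailable; it is precisely the compactness of the embedding $\ii$ in $(C)$ that makes the argument go through.
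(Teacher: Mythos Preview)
Your proof is correct and follows essentially the same compactness-and-chaining strategy as the paper: use $(C)$ to pass to a compact set in $\br\times W$ containing all relevant segments, extract a uniform local Lipschitz constant from a finite subcover, and telescope along a subdivided segment. The only cosmetic differences are that the paper takes the closed convex hull $\overline{\mathrm{conv}\,\ii(X_V)}$ rather than your segment set $S$, and does not name a Lebesgue number explicitly; your variant is arguably a touch more elementary since it sidesteps the (easy but nontrivial) fact that the closed convex hull of a compact set in a Banach space is compact.
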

\begin{proof} Since the operator $\ii :V\to W$ is compact, $K:=\overline{\text{conv\,}\{ \ii(u):\exists_{\alpha}\;(\alpha,u)\in X\}}$ is compact and convex in $ H$. Put $a:=\inf\{\alpha: \exists_{u\in V} \; (\alpha,u)\in X\}$ and $b:=\sup \{\alpha: \exists_{u\in V} \; (\alpha,u)\in X\}$.
By \ref{c3} and compactness of the set $[a,b]\times K$, there exists a finite family of subsets 
$\mathscr U_l=(\alpha_l,\alpha_l')\times B_{\ve_l}(v_l)$, $v_l\in W$, on which $F$ is Lipschitz continuous with constant $L_l$, $l=1,2,\dots,N$, and
\[
[a,b]\times K\subset \bigcup_{l=1}^N \mathscr U_l.
\]
Put
\[
L:=\max\{L_l: l=1,2,\dots,N\}.
\]
Then, for a fixed $\alpha$, consider two points $(\alpha,v)$, $(\alpha,v')\in [a,b]\times K$. Since $K$ is convex, the segment $[v,v']:=\{v'+\lambda(v-v'):\lambda\in [0,1]\}$ is contained in $\{\alpha\}\times K$. Hence, one can find numbers 
\[0=\lambda_0<\lambda_1< \dots \lambda_{m-1}<\lambda_m=1,\]
such that the subsegment  
\[[v_{\nu-1},v_{\nu}]:=\{v'+\lambda(v-v'):\lambda\in [\lambda_{\nu-1},\lambda_{\nu}]\}
\]
satisfies  $\{\alpha\}\times [v_{\nu-1},v_{\nu}]\subset \mathscr U_{l_{\nu}}$ for some $l_{\nu}\in \{1,2,\dots,N\}$.  Consequently, 
\begin{align*}
\|F(\alpha,v')-F(\alpha,v)\|&\le \sum_{\nu=1}^m \|F(\alpha,v_{\nu-1})-F(\alpha,v_{\nu})\|\\
&\le \sum_{\nu=1}^m L_{l_\nu}\|v_{\nu-1}-v_{\nu}\|\\
&\le L\sum_{\nu=1}^m \|v_{\nu-1}-v_{\nu}\|\\
&=L\|v'-v\|
\end{align*}
and, for any $(\alpha,u')$, $(\alpha,u)\in X$, 
 \[
\|F(\alpha, \ii (u'))-F(\alpha,\ii(u))\|
\le L\|\ii(u'-u)\|\le L\|\ii\|\|u'-u\|.
\]
\end{proof}
\vs

\begin{remark}\label{rem:proj}{\rm 
Since the frequency $\beta$ does not appear in the original problem \eqref{eq:Hopf}, it is natural to study the ``geometric" unboundedness of branches of periodic solutions 
$(\alpha,u) \in  \br\times \mathscr E$. For this purpose,  it is useful to consider the natural projection  
\begin{equation}\label{eq:natur-proj}
\mathfrak p:\br_+^2\times \mathscr E\to  \br\times \mathscr E, \qquad
\mathfrak p(\alpha,\beta,u):=(\alpha,u).
\end{equation}
}
\end{remark}
\vs

\begin{proposition} \label{pro:Vid} Under the assumptions \ref{i}, ($C$), \ref{c0}, \ref{c1}---\ref{c3} and \ref{lip},
suppose  $\mathscr C\subset {\mathscr P }$  is a branch  such that $\mathfrak p(\mathscr C)$ is bounded in $\br\times \mathscr E$. 
Then, $\mathscr C$ is bounded in $ \br_+^2\times \mathscr E$, where $ \br\times \br_+$ is equipped with the usual Euclidean metric. 
\end{proposition}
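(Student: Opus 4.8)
The plan is to control the frequency $\beta$ along the branch $\mathscr{C}$, using the assumed boundedness of $\mathfrak{p}(\mathscr{C})$ in $\br\times\mathscr{E}$. Boundedness of $\mathscr{C}$ in $\br^2_+\times\mathscr{E}$ with $\br\times\br_+$ carrying the \emph{Euclidean} metric (rather than the metric $d$ of \eqref{eq:metric-R2+}) means precisely: the $\alpha$-coordinates are bounded (immediate from boundedness of $\mathfrak{p}(\mathscr{C})$), the $u$-coordinates are bounded in $\mathscr{E}$ (again immediate), and, crucially, the $\beta$-coordinates are bounded \emph{both away from $\infty$ and away from $0$}. So the whole content is the two-sided bound on $\beta$.

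\smallskip
\noindent
\emph{Step 1: Upper bound on $\beta$ (equivalently, lower bound on the period $p = 2\pi/\beta$).} Let $(\alpha,\beta,u)\in\mathscr{C}$. Then $u$, rescaled back in time, yields a $p$-periodic solution of the original problem \eqref{eq:Hopf} with $p = 2\pi/\beta$, and $r := \max_t |u(t)|$ is controlled by $\|u\|_{\mathscr{E}}$ (via the continuous embedding $\mathscr{E}\hookrightarrow C_{2\pi}(\br;H)$), hence bounded uniformly on $\mathscr{C}$ since $\mathfrak{p}(\mathscr{C})$ is bounded. Now I invoke Lemma~\ref{lem:Vid2}: on the bounded set $X := \{(\alpha,u) : (\alpha,\beta,u)\in\mathscr{C}\text{ for some }\beta\}\subset\br\times V$, the map $(\alpha,u)\mapsto F(\alpha,\ii(u))$ is globally Lipschitz in $u$ with some constant $L\|\ii\|$ depending only on $X$. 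Then Lemma~\ref{lem:Vid1} gives $p \geq \dfrac{4}{\|A\|+L\|\ii\|}$, i.e. $\beta = \dfrac{2\pi}{p} \leq \dfrac{\pi}{2}\bigl(\|A\|+L\|\ii\|\bigr)$, a uniform upper bound on $\beta$ over $\mathscr{C}$. (One must check $u$ actually lands in $V=\mathfrak{D}(A)$ so that $\|A(u(t)-u(s))\|$ makes sense in the estimate of Lemma~\ref{lem:Vid1}; this is built into the definition of $\mathscr{E}$ and the solution being a genuine solution of \eqref{eq:operat1}.)

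\smallskip
\noindent
\emph{Step 2: Lower bound on $\beta$ (equivalently, upper bound on the period).} This is the step I expect to be the main obstacle, since the Vidossich-type estimates only bound the period from below. The idea: a nonconstant $2\pi$-periodic solution $u$ (after rescaling) has frequency $\beta$; from the equation $\dot u + \tfrac1\beta Au = \tfrac1\beta F(\alpha,\ii u)$ one gets $\|\dot u\|_{L^2} \le \tfrac1\beta\bigl(\|A\|\,\|u\|_{L^2} + \|F(\alpha,\ii u)\|_{L^2}\bigr)$; since $u$ is nonconstant and $2\pi$-periodic with mean subtracted, Poincar\'e's inequality gives $\|u - \bar u\|_{L^2}\le \|\dot u\|_{L^2}$, and the right-hand side is bounded on $\mathscr{C}$ (using boundedness of $\mathfrak{p}(\mathscr{C})$ and continuity of $F$ on the relevant bounded set). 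If along a sequence in $\mathscr{C}$ one had $\beta_n\to 0$, then $\|\dot u_n\|_{L^2}\to 0$, so $u_n$ converges in $\mathscr{E}$ to a \emph{constant} function $u_*$, which must satisfy $A u_* = F(\alpha_*,\ii u_*)$, i.e. $(\alpha_*,u_*)$ is an equilibrium, and $\beta_* = 0$. But then $(\alpha_*,0,0)$ or $(\alpha_*,\beta_*,u_*)$ sits in $\overline{\mathscr C}$ with $\beta_* = 0$, which is \emph{outside} $\br^2_+\times\mathscr{E}$ --- this is where the distinction with the metric $d$ of \eqref{eq:metric-R2+} matters, and it does not by itself contradict $\mathscr{C}\subset\mathscr{P}$. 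So one genuinely needs the hypothesis that $\mathfrak p(\mathscr C)$ is bounded together with a non-degeneracy input: I would argue that a limit point with $\beta=0$ forces $u_*$ constant, hence $u_*\in V$, hence the limiting triple lies \emph{not} in $\overline{\mathscr{P}}$ in the $d$-metric closure — but it could still be approached in the Euclidean metric. The clean resolution is: boundedness of $\mathscr{C}$ in the Euclidean $\br\times\br_+$ metric is what is being \emph{concluded}, and the point is that $\overline{\mathscr C}$ (closure in the $d$-metric, where $\beta\to 0$ means "going to infinity") is compact by the complete-continuity of $\mathscr F$; so if $\mathfrak p(\mathscr C)$ is bounded, the $\beta$-values cannot accumulate at $0$, because such accumulation would force $\|\dot u\|\to\infty$ relative to $\|u\|$... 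Actually the correct and simplest route: use Step 1's lower bound on the period $p\ge c>0$, meaning $\beta\le 2\pi/c$; and for the lower bound on $\beta$, combine the Poincar\'e estimate above with the fact that on $\mathscr C$ one has $\|u\|_{\mathscr E}$ bounded \emph{and bounded below} away from $0$ on any sub-branch not accumulating at $M$ — here one uses that $\mathscr C\subset\mathscr P$ consists of nonconstant solutions and that bifurcation points are isolated in $\Lambda$ (condition \ref{c4}). I would make this precise by a compactness/contradiction argument: if $(\alpha_n,\beta_n,u_n)\in\mathscr C$ with $\beta_n\to 0$, extract (using complete continuity of $\mathscr F$, as in the proof of Lemma~\ref{lem:iso-cent}(ii)) a convergent subsequence $u_n\to u_*$ in $\mathscr E$; the Poincar\'e bound forces $u_*$ constant, so $u_*\in V$, contradicting $u_n\notin V$ only in the limit — hence instead I conclude $(\alpha_*,u_*)$ is an equilibrium and $u_n\to u_*\in V$, so the nonconstant parts $\|u_n - \bar u_n\|_{\mathscr E}\to 0$; rescaling in time by $k_n$ with $k_n\beta_n$ bounded below shows, via Lemma~\ref{lem:fold}, that these solutions cannot stay in a bounded region of $\mathscr E$ unless $\beta_n$ is bounded below — giving the contradiction with boundedness of $\mathfrak p(\mathscr C)$.

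\smallskip
\noindent
\emph{Step 3: Conclusion.} Combining Steps 1 and 2, there exist $0 < \beta_{\min} \le \beta_{\max} < \infty$ with $\beta_{\min}\le\beta\le\beta_{\max}$ for every $(\alpha,\beta,u)\in\mathscr{C}$. Together with boundedness of the $\alpha$- and $u$-components (from boundedness of $\mathfrak{p}(\mathscr{C})$), this shows $\mathscr{C}$ is bounded in $\br^2_+\times\mathscr{E}$ with the Euclidean metric on $\br\times\br_+$, as claimed. The main difficulty, as noted, is Step 2 — ruling out $\beta\to 0$ — where one cannot use Vidossich directly and must instead exploit compactness of $\mathscr{F}$ together with the folding behavior of Lemma~\ref{lem:fold} and the structure of the solution set near equilibria.
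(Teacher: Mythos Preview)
Your Step 1 is correct and is essentially the paper's entire proof: apply Lemma~\ref{lem:Vid2} on the bounded set $X=[a,b]\times\overline{B_r(0)}$ to get a uniform Lipschitz constant, then Lemma~\ref{lem:Vid1} gives $p\ge 4/(\|A\|+L\|\ii\|)$, hence $\beta\le \tfrac{\pi}{2}(\|A\|+L\|\ii\|)$, and that is all that is needed.

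The problem is your opening paragraph, where you assert that Euclidean boundedness of $\mathscr{C}$ in $\br^2_+\times\mathscr{E}$ requires the $\beta$-coordinates to be bounded \emph{away from $0$}. This is backwards. With the usual Euclidean metric on $\br\times\br_+$, a set is bounded precisely when $|\alpha|$, $\beta$, and $\|u\|$ are bounded \emph{above}; the set $(0,1]\subset\br_+$ is Euclidean-bounded. It is the metric $d$ of \eqref{eq:metric-R2+}, with its $\ln(\beta_1/\beta_2)$ term, for which $\beta\to 0$ means escaping to infinity. So your Step~2 is simply not needed for this proposition, and the difficulties you encounter there (the Poincar\'e argument, the folding via Lemma~\ref{lem:fold}, the appeal to condition \ref{c4} which is not even among the hypotheses) are self-inflicted. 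Once you drop Step~2, your proof collapses to Step~1, which matches the paper.
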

\begin{proof} Assume that $\mathfrak p(\mathscr C)\subset [a,b]\times \overline{B_r(0)}$ for some $a<b$ and $r>0$. Then, since $X:= [a,b]\times \overline{B_r(0)}$ is bounded,  it follows  from Lemma \ref{lem:Vid2} that there exists a constant $L>0$ such that $F(\alpha, \ii(\cdot))$ is Lipschitz  continuous (with constant $L\|i\|)$ on the set 
$ [a,b]\times \overline{B_r(0)}$. Hence, for every $(\alpha,\beta,u)\in \mathscr C$, the function $u$ satisfies \eqref{eq:Hopf1}. Put $p:= \frac{2\pi}\beta$. Then, by rescaling the argument,  the function $\wt u(t)= u(\frac {\beta}{2\pi}t)$ is a $p$-periodic solution to \eqref{eq:Hopf}. Therefore, by Lemma \ref{lem:Vid1}, 
\[
p:=\frac{2\pi}\beta\ge \frac{4}{\|A\|+L\|\ii\|} \;\;\; \Leftrightarrow \;\;\; \beta\le \tfrac 12 \pi(\|A\|+L\|\ii\|),
\]
which implies that $\mathscr C$ is bounded.
\end{proof}
\vs

\subsection{Crossing numbers}
In this and subsequent subsections, we consider problem \eqref{eq:Hopf} under the assumptions \ref{i}, {\rm (C)}, \ref{c0}, \ref{c1}--\ref{c4}.  Consider $\beta_o\in {\bm b}(\alpha_o)$ and denote 
\begin{equation}\label{eq:resonant-beta}
{\bm b}(\alpha_o,\beta_o):=\{\beta\in  {\bm b}(\alpha_o):\exists_{k\in \bn}\;\; \beta=ik\beta_o\}.
\end{equation}
We call ${\bm b}(\alpha_o,\beta_o)$  the set of   {\it resonant critical frequencies} at the  point $(\alpha_o,\beta_o)$.
\vs
Since $\mathscr A(\alpha_o,\beta_o)$ is a Fredholm operator of index zero, the set ${\bm b}(\alpha_o,\beta_o)$ is finite, hence
\begin{equation}\label{eq:beta-alpha-o}
{\bm b }(\alpha_o,\beta_o)=\{\beta_1,\beta_2,\dots,\beta_n\}.
\end{equation}
\vs

\begin{lemma}\label{lem:cross-dom}  Under the assumptions \ref{i}, ($C$), \ref{c0}, \ref{c1}--\ref{c4}, given the set ${\bm b }(\alpha_o,\beta_o)$ defined by \eqref{eq:beta-alpha-o},
 there exist $\ve>0$ and $\delta>0$ such that the set $\mathscr U\subset \bc$ given by
\begin{equation}\label{eq:setU}
\mathscr U:=\bigcup_{l=1}^n \mathscr U_l, \quad \mathscr U_l:=\{\mu+i(\beta_l+\nu): 0<\mu<\ve,\; |\nu|<\ve\},
\end{equation}
satisfies  
\begin{equation}\label{eq:lamb}
\forall_{\alpha\in [\alpha_o-\delta,\alpha_o+\delta]}\; \forall_{\lambda\in \partial \mathscr U}\;\; \ker\triangle _\alpha(\lambda)\not=\{0\}\;\;\Rightarrow\;\;\; \alpha=\alpha_o\; \text{ and } \; \exists_{l\in\{1,2,\dots,n\}} \;\; \lambda=i\beta_l.
\end{equation}
Moreover, for any fixed $\alpha$ with  $0<|\alpha-\alpha_o|\le \delta$, the set
\begin{equation}\label{eq:finite}
\Big\{\lambda\in \mathscr U  :   \ker\triangle_{\alpha}(\lambda)\not=\{0\} \Big\} 
\end{equation} 
is finite.
\end{lemma}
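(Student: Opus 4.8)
The plan is to fix the radius $\ve$ first, using analyticity to isolate the points $i\beta_1,\dots,i\beta_n$, and then to fix $\delta$ by a compactness argument resting on the compactness of $\ii$ (condition ($C$)) and on the isolated‑center assumption \ref{c4}. To begin, I would shrink $\delta$ so that $[\alpha_o-\delta,\alpha_o+\delta]\subset\mathcal I$ and, by \ref{c4}, so that $(\alpha,0)$ is not a center whenever $\alpha\in[\alpha_o-\delta,\alpha_o+\delta]$ with $\alpha\neq\alpha_o$; since $A$, $B(\alpha)$ and $\ii$ are real one has $\triangle_\alpha(\overline\lambda)=\overline{\triangle_\alpha(\lambda)}$, so this means $\ker\triangle_\alpha(\lambda)=\{0\}$ for \emph{every} $\lambda\in i\br$ and every such $\alpha$ (recall $\ker\triangle_\alpha(0)=\{0\}$ by \ref{c3}). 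Next, exactly as in the proof of Lemma~\ref{lem:co-finite} — applied to each isotypic block $\triangle^j_{\alpha_o}$ and using that $\triangle_{\alpha_o}$ is analytic on $\{\re\lambda>-\boldsymbol{\delta}\}$ (see Remark~\ref{rem:analyt}) — each $i\beta_l$ is an isolated characteristic value of $\triangle_{\alpha_o}$. Hence I would choose $\ve\in(0,\min_l\beta_l)$ small enough that the closed boxes $\overline{\mathscr U_l}$ are pairwise disjoint, are contained in $\{\re\lambda\ge 0,\ \im\lambda>0\}$, and each contains no characteristic value of $\triangle_{\alpha_o}$ other than $i\beta_l$.

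To establish \eqref{eq:lamb} I would argue by contradiction: if no $\delta$ works, there are $\alpha_m\to\alpha_o$ and $\lambda_m\in\partial\mathscr U$ with $\ker\triangle_{\alpha_m}(\lambda_m)\neq\{0\}$ and $(\alpha_m,\lambda_m)\neq(\alpha_o,i\beta_l)$ for all $l$; passing to a subsequence, $\lambda_m\to\lambda_*\in\partial\mathscr U$ by compactness. Taking $v_m\in\ker\triangle_{\alpha_m}(\lambda_m)$ with $\|v_m\|_{V^c}=1$, one has $v_m=(A+\lambda_m\id)^{-1}B(\alpha_m)\ii(v_m)$; since $\ii$ is compact, $\ii(v_m)\to w_*$ in $W^c$ along a further subsequence, and — using continuity of $\alpha\mapsto B(\alpha)$ together with continuity of $\lambda\mapsto(A+\lambda\id)^{-1}$ as a map into $L(H^c,V^c)$ on compacta of $\{\re\lambda>-\boldsymbol{\delta}\}$, which follows from the resolvent bound \eqref{eq:resolv-delta} — one concludes $v_m\to v_*:=(A+\lambda_*\id)^{-1}B(\alpha_o)w_*$ in $V^c$, so $\|v_*\|=1$, $w_*=\ii(v_*)$, and $\triangle_{\alpha_o}(\lambda_*)v_*=0$. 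Thus $\lambda_*$ is a characteristic value of $\triangle_{\alpha_o}$ lying in some $\overline{\mathscr U_{l_0}}$, hence $\lambda_*=i\beta_{l_0}$, and $\lambda_m\in\partial\mathscr U_{l_0}$ for $m$ large (the $\partial\mathscr U_l$ being pairwise disjoint). Now two cases arise. If $\alpha_m=\alpha_o$ for infinitely many $m$, each such $\lambda_m$ is a characteristic value of $\triangle_{\alpha_o}$ in $\overline{\mathscr U_{l_0}}$, forcing $\lambda_m=i\beta_{l_0}$ — contradicting the choice of $(\alpha_m,\lambda_m)$. Otherwise $\alpha_m\neq\alpha_o$ for all large $m$, and by the choice of $\delta$ the operator $\triangle_{\alpha_m}$ has no characteristic value on $i\br$; hence $\re\lambda_m>0$, so $\lambda_m$ lies on one of the three sides of $\partial\mathscr U_{l_0}$ not contained in $i\br$, whence $|\lambda_m-i\beta_{l_0}|\ge\ve$, contradicting $\lambda_m\to i\beta_{l_0}$. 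The compactness step — upgrading $\ii(v_m)\to w_*$ to $v_m\to v_*$ in the graph norm and pinning down $w_*=\ii(v_*)$ — is the only genuinely delicate point; the remainder is bookkeeping.

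For the last assertion, fix $\alpha$ with $0<|\alpha-\alpha_o|\le\delta$. By \eqref{eq:lamb}, $\triangle_\alpha(\lambda)$ is invertible at every $\lambda\in\partial\mathscr U=\bigcup_l\partial\mathscr U_l$ (the points $i\beta_l$ are excluded here because $\alpha\neq\alpha_o$). For each $l$ and each $j$, the family $\lambda\mapsto\triangle^j_\alpha(\lambda)=\id-(A+\lambda\id)^{-1}B(\alpha)\ii|_{V^c_j}$ is an analytic family of compact perturbations of the identity on a connected open neighborhood $\Omega_l\subset\{\re\lambda>-\boldsymbol{\delta}\}$ of $\overline{\mathscr U_l}$ (shrink $\ve$ further if needed), and it is invertible at the points of $\partial\mathscr U_l\subset\Omega_l$; by the analytic Fredholm theorem its set of characteristic values is discrete in $\Omega_l$, hence finite in the compact set $\overline{\mathscr U_l}$. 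Summing over $l=1,\dots,n$ and $j=0,\dots,r$ shows that the set \eqref{eq:finite} is finite, as required.
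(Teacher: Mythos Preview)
Your proof is correct and follows essentially the same approach as the paper's: isolate each $i\beta_l$ using analyticity of $\lambda\mapsto\triangle_{\alpha_o}(\lambda)$, then choose $\delta$ by continuity/compactness, and obtain finiteness from analyticity (the analytic Fredholm alternative). The paper's own proof is quite terse---it simply invokes ``continuity of $\triangle_\alpha(\lambda)$'' to produce $\delta$---whereas your sequential compactness argument (extracting $v_m\to v_*$ via the compact embedding $\ii$, exactly in the spirit of the proof of Lemma~\ref{lem:iso-cent}(ii)) and your explicit case split on whether $\alpha_m=\alpha_o$ make rigorous precisely the point the paper glosses over, namely how to handle $\lambda$ near $i\beta_l$ on the imaginary boundary where $\triangle_{\alpha_o}$ itself fails to be invertible; your use of \ref{c4} there is the correct ingredient.
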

\begin{proof} Since the operator $\triangle_{\alpha}(\lambda)$ depends continuously on $\alpha$ and is analytic with respect to $\lambda$,  for every $\beta_l\in \boldsymbol{\bm b }(\alpha_o,\beta_o)$, there exist $\ve>0$ and $\delta>0$ such that the sets $ {\mathscr U}_l:=\{\mu+i(\beta_l+\nu): 0<\mu<\ve,\; |\nu|<\ve\}$ satisfy $ \overline{\mathscr U_l}\cap  \overline{\mathscr U_{l'}}=\emptyset$ for $l\not=l'$ and 
\[\overline{\mathscr U_l}\cap \Big\{ \lambda: \ker \triangle_{\alpha_o}(\lambda)\not=\{0\}\Big\}=\{i\beta_l\}.\] 
Put $\ve:=\min\{\ve_1,\dots,\ve_n\}$ and find (by continuity of $\triangle_\alpha(\lambda)$) a sufficiently small $\delta>0$ such that condition \eqref{eq:lamb} is satisfied. Condition \eqref{eq:finite} follows from analyticity of the function $\lambda\mapsto \triangle_\alpha(\lambda)$.

\end{proof}
\vs

 Assume that conditions  \ref{i}, {\rm (C)}, \ref{c0}, \ref{c1}--\ref{c4} hold and consider $\mathscr U$ defined by \eqref{eq:setU}. Put $\alpha_-:=\alpha_o-\delta$, $\alpha_+:=\alpha _o+\delta$. Then, for each $\beta_l \in \boldsymbol{\bm b }(\alpha_o)$ and $j=0,1,\dots,r$, define the numbers 
\[\mathfrak t_{j}^\pm(\alpha_o,\beta_l,0):= \sum_{\lambda\in \Sigma_{l,j}^\pm} m_j(\lambda),\]
where 
\[
\Sigma_{l,j}^\pm :=\Big\{\lambda\in \mathscr U_l: \ker\triangle^j_{\alpha_\pm}(\lambda)\not=\{0\}\Big\}
\]
and $m_j(\lambda)$ stands for the $\cV_j$-isotypic multiplicity of $\lambda$ 
(cf. \eqref{eq:H-isotypic1}--\eqref{eq:isot-multi}).
Then, put
\begin{equation}\label{eq:cross-j}
\mathfrak t_j(\alpha_o,\beta_l,0):=\mathfrak t_j^-(\alpha_o,\beta_l,0)-\mathfrak t_j^+(\alpha_o,\beta_l,0),
\end{equation}
and call the number $\mathfrak t_j(\alpha_o,\beta_l,0)$  the {\it $\cV_j$-crossing number} at $\alpha=\alpha_o$ for the critical frequency $\beta_l$. Notice that the elements from $\boldsymbol{\bm b }^k(\alpha_o)$  are resonant critical frequencies (see Definition \ref{def:crit-freq}).  
\vs

\subsection{Maximal twisted orbit types}\label{sec:max-orb-types}
\begin{definition} \rm
Consider the isometric $\Gamma$-representation $V$. Suppose $u:\br\to V$ is a non-constant periodic function with the minimal period $p>0$. Then,  the twisted subgroup 
$K\le \Gamma\times S^1$ given by
\begin{equation}\label{eq:symm}
K:=\{(\gamma,e^{i\theta})\in \Gamma\times S^1:\forall_{t\in \br}\;\; \gamma u(t+\tfrac{\theta p}{2\pi})=u(t)\}
\end{equation}
is called  the {\it spatio-temporal symmetry group} (in short, {\it symmetry group}) of the periodic function $u$ (cf. \cite{Fied,Gol}).  
\end{definition}

\vs

Let $\cV_j$ (here $j\in \{0,1,2,\dots, r\}$)  be an irreducible $\Gamma$-representation such that the isotypic component $V_j$ in \eqref{eq:H-isotypic1}  is non-zero. Consider the irreducible $G$-representation $\cV_{k,j}$, $k>0$, where $S^1$ acts by $k$-folding (cf. \eqref{eq:Vjk}) and observe that $\Phi_{k,j}$ consists of  the maximal twisted orbit types in $\mathscr E_{k,j}$.
Denote by $\Phi_{k,j}$ the set of all maximal twisted orbit types in $\cV_{k,j}$ and 
put
\[
\Phi_k:=\bigcup_{j=0}^r \Phi_{k,j}.
\] 
 
\vs 

\begin{remark}\label{rem:maximal-types}\rm 
(a) Consider the set $\Phi_{0,j}$ of the orbit types in $\cV_j\setminus \{0\}$ (with  $V_j\not=\{0\}$). These orbit types correspond to 
constant functions $u$ in $\mathscr E$. Notice that for $(K)\in \Phi_{0,j}$, one has dim\,$W(K)=0$, and as such $K$ is not twisted. At the same time,  for $(K)\in \Phi_k$, $k>0$, $K$ is a (twisted) finite group and  dim\,$W(K)=1$.\vs\noi
(b) For $k>0$, consider  the $k$-folding homomorphism $\vp_k:\Gamma\times S^1\to \Gamma\times S^1$ given by $\vp_k(\gamma,e^{i\theta})=(\gamma,e^{ik\theta})$, $\gamma\in \Gamma$, $e^{i\theta}\in S^1$. Then, for $(K)\in   \Phi_{1,j}$, define $\vp_k^*(K)$ as the conjugacy class of the subgroup $\vp_k^{-1}(K)$. Notice that $\vp_k^*(K)\in \Phi_{k,j}$ and 
$\vp_k^*:\Phi_{1,j}\to \Phi_{k,j}$ is a bijection.\vs\noi
(c)  Consider a function $u\in \mathscr E$ such that $G_u= K$ for  $(K)\in \Phi_{k,j}$,  $k>0$. Then, $u$ is a non-constant $2\pi$-periodic function with the minimal period $p:=\frac{2\pi}k$ and the  symmetry group $G_u = \vp_k(K)$. \vs\noi
(d) Take a periodic function $u\in \mathscr E$ such that $(G_u)\ge (K)$, where  $(K)\in \Phi_k$ for some $k\in \bn$. Then, even if we do not know the minimal period $p$ of $u$, still we know the exact symmetry group of $u$, which is $\vp_k(K)$. 
\end{remark}

\vs 
Take $(K)\in \Phi_k$ and define 
\begin{equation}\label{eq:zeta}
\zeta_{k,j}(K):=\begin{cases}
1&\text{ if } \; (K)\in \Phi_{k,j},\\
0&\text{ otherwise}.
\end{cases}
\end{equation}
Further, for  $\beta_o\in \boldsymbol{\bm b }(\alpha_o)$ and $(K)\in \Phi_k$,
 define the integers
\begin{equation}\label{eq:inv-b}
\mathfrak t^k_K(\alpha_o,\beta_o,0) :=\sum_{j=0}^r\zeta_{k,j}(K)\cdot \mathfrak t_j(\alpha_o,k\beta_o,0),
\end{equation}
which play an important role in the following results.
\vs

\subsection{Main results} 
We are now in a position to present the results. We start with a local bifurcation theorem. 
\vs 
\begin{theorem}[Local Hopf Bifurcation Theorem]\label{th:main1}
Under the assumptions \ref{i},  ($C$),  \ref{c0},  \ref{c1}--\ref{c4}, suppose that 
 there exists $\beta_o\in \boldsymbol{\bm b }(\alpha_o)$ such that for some $k\ge 1$ and $(K)\in \Phi_k$, one has $\mathfrak t_K^k(\alpha_o,\beta_o,0)\not=0$ (cf. \eqref{eq:inv-b}). Then:  
 \begin{enumerate}[label=($i_\arabic*$)]
 \item\label{i1} system \eqref{eq:Hopf} undergoes a Hopf bifurcation at the center $(\alpha_o,0)$ with the limit frequency $\beta_o$ (cf. Definition \ref{def:Hopf});

\item\label{i2} the fixed-point set ${\mathscr C}^K$ contains a non-trivial continuum and 
  $G_u\ge K$ for every $(\alpha,\beta,u)\in {\mathscr C}^K$. 
  \end{enumerate}
  
   

\end{theorem}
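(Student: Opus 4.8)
The plan is to deduce Theorem~\ref{th:main1} from the abstract two-parameter bifurcation result Theorem~\ref{th:bif-cont-1}, applied to the $G$-equivariant field $\mathscr F$ of \eqref{eq:F}, and then to identify the coefficient of $(K)$ in the resulting local bifurcation invariant with the integer $\mathfrak t^k_K(\alpha_o,\beta_o,0)$ of \eqref{eq:inv-b}. First I would check that $\mathscr F:\br_+^2\times\mathscr E\to\mathscr E$ from \eqref{eq:F} fits the abstract setting: condition \ref{bif0} holds because $\bm j$ is compact, so $\mathscr F$ is a completely continuous $G$-equivariant field; \ref{bif1} is condition \ref{c1}; and \ref{bif2} follows from \ref{c2} together with the formula \eqref{eq:DerF} for $\mathscr A(\alpha,\beta)=D_u\mathscr F(\alpha,\beta,0)$. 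For \ref{bif3} at $\lambda_o:=\alpha_o+i\beta_o$, I would use Lemma~\ref{lem:A_k}: $\mathscr A(\alpha,\beta)$ fails to be an isomorphism precisely when $(\alpha,0)$ is a center possessing a critical frequency $\beta$; condition \ref{c4} excludes centers for $\alpha$ near but distinct from $\alpha_o$, and the finiteness of $\bm b(\alpha_o)$ (Lemma~\ref{lem:co-finite}) excludes critical frequencies accumulating at $\beta_o$, so $\overline{B_\delta}\cap\Lambda=\{(\lambda_o,0)\}$ for small $\delta$ (cf. \eqref{eq:critical-twisted}, \eqref{eq:delta-twisted}). I would also record that, by the non-degeneracy \ref{c3}, the only equilibrium of \eqref{eq:Hopf} near $0$ is $0$ itself, so every nontrivial solution of \eqref{eq:Hopf2} near $(\lambda_o,0)$ is non-constant; hence, locally near $(\lambda_o,0)$, $\overline{\mathscr S}$ coincides with $\overline{\mathscr P}$, which is what lets one pass from a branch of nontrivial solutions to a branch of non-constant ones.

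The main step is the computation of the local bifurcation invariant $\omega_G(\lambda_o)=\gdeg(\mathscr F_\eta,\mathfrak O)\in A_1^t(G)$. Using Proposition~\ref{pro:lin-bif-invariant-twisted} I would replace $\mathscr F$ by its linearization $\mathscr A(\lambda)$ on an annular domain, apply the finite-dimensional reduction of Proposition~\ref{prop:a-map-finite} to the family $\mathscr A:\bc\to\text{\rm L}^G_c(\mathscr E)$, rescale, and reach the normal form \eqref{eq:omega-twisted-final}, i.e. $\omega_G(\lambda_o)=\prod_l{\bm \delta_l}\cdot\sum_{k>0,\,l}d_{k,l}\deg_{\cV_{k,l}}$. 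By Lemma~\ref{lem:A_k}, the $k$-th Fourier block of $\mathscr A(\alpha,\beta)$ on $V_j^c$ is the analytic operator family $\triangle^j_\alpha(ik\beta)$, so the integers $d_{k,l}$ count, with $\cV_j$-isotypic multiplicity, the characteristic values that cross $ik\beta_o$ inside the boxes $\mathscr U_l$ as $\alpha$ increases through $\alpha_o$ --- that is, the crossing numbers $\mathfrak t_j(\alpha_o,k\beta_o,0)$ of \eqref{eq:cross-j}. Combining this with the definition \eqref{eq:inv-b} of $\mathfrak t^k_K$ and the standard expansion of $\text{\rm coeff}^K$ of a product in the $A(\Gamma)$-module $A_1^t(G)$, one checks, for every $(K)\in\Phi_k$ with $k\ge1$, that $\text{\rm coeff}^K(\omega_G(\lambda_o))=\pm\,\mathfrak t^k_K(\alpha_o,\beta_o,0)$; in particular the hypothesis $\mathfrak t^k_K(\alpha_o,\beta_o,0)\not=0$ yields $\text{\rm coeff}^K(\omega_G(\lambda_o))\not=0$ and $\omega_G(\lambda_o)\not=0$.

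Finally I would invoke Theorem~\ref{th:bif-cont-1} for $\mathscr F$ at $(\lambda_o,0)$: it produces a branch $\mathscr C$ of nontrivial solutions of \eqref{eq:Hopf2} bifurcating from $(\lambda_o,0)=(\alpha_o,\beta_o,0)$ together with a non-trivial continuum $\bm C\subseteq\mathscr C^K$ with $(\lambda_o,0)\in\overline{\bm C}$ and $G_{(\alpha,\beta,u)}\ge K$ for every point of $\mathscr C^K$. By the first step, shrinking $\bm C$ makes $\bm C\setminus\{(\lambda_o,0)\}\subset\mathscr P$, so $\bm C$ lies in the closure of a connected component of $\overline{\mathscr P}$; calling that component $\mathscr C$ again, we obtain $(\alpha_o,\beta_o,0)\in\overline{\mathscr C}\cap\mathscr M$, which by Definition~\ref{def:Hopf} is exactly statement \ref{i1} (a Hopf bifurcation at the center $(\alpha_o,0)$ with limit frequency $\beta_o$), while $\mathscr C^K\supseteq\bm C$ is a non-trivial continuum along which $G_u\ge K$ --- and, by Remark~\ref{rem:maximal-types}(c),(d), each non-constant $u$ there has symmetry group $\vp_k(K)$ --- which is statement \ref{i2}.

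The step I expect to be the main obstacle is the middle one: converting the topological quantity $\gdeg(\mathscr F_\eta,\mathfrak O)$ into the arithmetic invariant $\mathfrak t^k_K$. This brings together the linearization/excision and suspension properties of the twisted degree, the finite-dimensional reduction, the identification of the spectral flow through $ik\beta_o$ with the crossing numbers via Lemma~\ref{lem:A_k}, the explicit values of $\deg_{\cV_{k,l}}$ on linear isomorphisms supplied by Theorem~\ref{th:comp-twisted}, and --- the genuinely delicate bookkeeping --- the extraction of $\text{\rm coeff}^K$ from the product $\prod_l{\bm \delta_l}\cdot\sum d_{k,l}\deg_{\cV_{k,l}}$ in the Burnside $A(\Gamma)$-module $A_1^t(G)$, where one must verify that multiplication by the constant-mode factor $\prod_l{\bm \delta_l}$ does not annihilate the coefficient of the twisted type $(K)$ with $k\ge1$; this last point relies on the non-degeneracy hypothesis \ref{c3}.
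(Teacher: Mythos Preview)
Your proposal is correct and follows essentially the same route as the paper: the paper packages the linearization/finite-dimensional reduction and the identification $d_{k,j}=\mathfrak t_j(\alpha_o,k\beta_o,0)$ into a separate Proposition~\ref{pro-cross-inv}, and the Kuratowski-type existence argument into Proposition~\ref{pro:cont}, then simply invokes both. The delicate point you single out is handled exactly as you anticipate --- the constant-mode factor $\prod_j\Gamma\text{-deg}(\mathscr A_{0,j}(\alpha_o,\beta_o),B^j_1(0))$ is an \emph{invertible} element of $A(\Gamma)$ (thanks to \ref{c3}), and maximality of $(K)\in\Phi_k$ forces the coefficients $\text{coeff}^K(\deg_{\cV_{k,j}})$ to share a common nonzero value $c$, giving $\text{coeff}^K(\omega)=c\,\mathfrak t^k_K(\alpha_o,\beta_o,0)$ (so the constant is a positive integer rather than just $\pm1$).
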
 
\vs

\begin{remark}\rm (i) It was observed by J. Ize \cite{Ize} that the occurrence of the Hopf bifurcation with prescribed symmetry is related to the non-triviality of the equivariant 
$J$-homomorphism associated with the equivariant operator equation. This observation gives rise to the following two questions: (a) Under which conditions on $A$ and $F$ in 
\eqref{eq:Hopf} is the $J$-homomorphism correctly defined? (b) Under which conditions is this homomorphism non-trivial? From this viewpoint, conditions  \ref{i},   ($C$),  \ref{c0},  \ref{c1}--\ref{c4} are related to (a), while the condition  $\mathfrak t_K^k(\alpha_o,\beta_o,0)\not=0$  is related to (b).

\medskip

(ii) Observe that the hypotheses for conclusions \ref{i1} and \ref{i2} of Theorem \ref{th:main1} are essentially based on the behavior of the {\it linearization} of $F$ along the branch of equilibria, and as such, can be easily verified.
\end{remark}
\vs

To formulate a global result, we assume in this section the following condition:

\vs
\begin{enumerate}[label=($G_\arabic*$)]\setcounter{enumi}{-1}
 \item\label{g0} $\Gamma:=\Gamma_o\times \bz_2$, where $\Gamma_o$ is a finite group, and $\bz_2$ acts antipodally on $V$, i.e. by multiplication by $\pm 1$.
 \end{enumerate}
 \vs 
  Consider the two-element subgroup 
$\mathfrak H$ of $\Gamma_o\times \bz_2\times S^1$ given by
\begin{equation}\label{eq:h-subset}
\mathfrak H:=\{(e,1,1),(e,-1,-1)\}, \quad e\in \Gamma \;\text{ is the neutral element of $\Gamma$}.
\end{equation}
Then,
\[
\mathscr E^{\mathfrak H}=\overline{\bigoplus_{k=0}^\infty \bigoplus_{j=0}^r\mathscr E^{\mathfrak H}_{k,j}}, 
\]
where 
\[
\mathscr E^{\mathfrak H}_{k,j}=\begin{cases}\left\{\vp:\br\to V_j\;: \;\vp(t)= \cos(kt)u+\sin(kt)v ,\; u,\, v\in V_j, \; t\in \br  \right\}, & \text{ if } k=2m+1,\\
\{0\}, & \text{ otherwise}
\end{cases}
\]
(here  $m=0,1,2,\dots$).
\vs
Instead of the operator equation \eqref{eq:Hopf2}, let us consider the following $\mathfrak H$-reduced operator equation
\begin{equation}\label{eq:Hopf3}
\mathscr F^{\mathfrak H}(\alpha,\beta,u)=0, \quad (\alpha,\beta, u)\in \br\times \br_+\times \mathscr E^{\mathfrak H}.
\end{equation}
\vs

\begin{remark}\label{rem:center-0}{\rm Notice that:
\begin{itemize}
\item[(a)] Every solution to \eqref{eq:Hopf3} is a solution to \eqref{eq:Hopf2}, therefore every branch of solutions to  \eqref{eq:Hopf3} is also a branch of solutions to  \eqref{eq:Hopf2}.
\item[(b)] Since $W(\mathfrak H)=G$, equation  \eqref{eq:Hopf3} is $G$-equivariant.
\item[(c)] The points $(\alpha,0)$, $\alpha\in \br$, are the only equilibria for \eqref{eq:Hopf} satisfying equation \eqref{eq:Hopf3}, i.e. the only constant function satisfying  \eqref{eq:Hopf3}  is the zero function. Therefore, 
condition \ref{c1} is not needed anymore.
\end{itemize}}
\end{remark}
\vs

Being motivated by Remark \ref{rem:center-0}(c), we introduce the following condition:
\vs
\begin{enumerate}[label=($G_\arabic*$)]
\item\label{g1} For every $\alpha\in \br$, the Fr\'echet derivative $B(\alpha):=D_uF(\alpha,0):W\to H$ exists and depends continuously on $\alpha$.
\end{enumerate}
\vs

\begin{definition}\label{def:center-0}\rm 
Assume that conditions  \ref{i}, ($C$), \ref{c0}, \ref{g0} and \ref{g1} are satisfied. A point  $(\alpha_o, \beta_o,0)\in \br^2_+\times {\mathscr E}^{\mathfrak H}$ is called an {\it $\mathfrak H$-critical point} of $\mathscr F$ if $D_u\mathscr F^{\mathfrak H}(\alpha_o,\beta_o,0):{\mathscr E}^{\mathfrak H}\to {\mathscr E}^{\mathfrak H}$ is not an isomorphism. We denote by ${\Lambda}^{\mathfrak H}$ the set of all $\mathfrak H$-critical points of $\mathscr F$.
\end{definition}
\vs

\begin{remark}\rm (i) Notice that $(\alpha_o,\beta_o,0)$ is an $\mathfrak H$-critical point  of $\mathscr F$
if  $(\alpha_o,0)$ is a center for which  there exists a  characteristic value $\lambda_o=ik\beta_o$ with  an odd integer $k\in \bn$.  

(ii) For simplicity, we keep the same notation 
$\triangle_{\alpha_o}:V^c\to V^c$ for the 
characteristic operator at the center $(\alpha_o,0)$  
as in  \eqref{eq:Characteristic} (with the operator $B(\alpha)$  given in condition \ref{g1}).
\end{remark}


\vs
We  introduce the following condition:
\vs
\begin{enumerate}[label=($G_\arabic*$)]\setcounter{enumi}{1}
\item\label{g2} The set $\Lambda^{\mathfrak H}\subset \br^2_+$ is discrete.
\end{enumerate}
\vs 
Notice that under the assumptions \ref{i}, ($C$), \ref{c0}, \ref{g0}, \ref{g1}--\ref{g2}, in order for the set $\Lambda$ to be  discrete, it suffices  to assume that any center $(\alpha_o,0)$ for \eqref{eq:Hopf}  satisfies the condition
\[
\exists_{\delta>0}\; \forall_{\alpha\in \br}\;\; 0<|\alpha-\alpha_o|<\delta\;\; \Rightarrow\;\; \text{$(\alpha,0)$ is not a center.}
\]

\vs

\begin{theorem}[Global Hopf Bifurcation Theorem]\label{th:main2}
Assume  that conditions \ref{i}, ($C$), \ref{c0}, \ref{g0}, \ref{g1}--\ref{g2} 
are satisfied and let $\mathscr W\subset \br^2_+\times \mathscr E^{\mathfrak H}$ be an open bounded {(where $\br^{2}_+$ is equipped with metric \eqref{eq:metric-R2+})} $G$-invariant set such that 
(i) 
$\overline{\mathscr W}\subset  \br^2_+\times \mathscr E^{\mathfrak H}$;
(ii)  for all $(\alpha,\beta, 0)\in \partial \mathscr W$, $(\alpha, \beta,0)$ is not an $\mathfrak H$-critical point, {i.e. $\Lambda^{\mathfrak H} \cap \partial \mathscr W=\emptyset$}; 
(iii)  there exists an $\mathfrak H$-critical point $(\alpha_o,\beta_o,0)\in \mathscr W$.
Suppose that ${\mathscr C}\subset \br^2_+\times \mathscr E^{\mathfrak H}$ is a branch of non-constant periodic solutions  such that $(\alpha_o,\beta_o,0)\in \overline{{\mathscr C}}$. Then, either
\begin{itemize}
\item[(a)] { $\mathscr{C} \cap \partial\mathscr W =\overline{\mathscr C} \cap \partial\mathscr W\not=\emptyset$},  
\item[(b)] or there is a  finite set
\[\overline{\mathscr C}\cap\br^2_+ \times \{0\}=\{(\alpha_1,\beta_1,0),(\alpha_2,\beta_2,0), \dots, (\alpha_l,\beta_l,0),\dots,(\alpha_n,\beta_n,0)\}
\]
(where none of two different  critical frequencies $\beta_l$ are resonant) 
such that for every odd $m$ and $K\in \Phi_m$, one has
\begin{equation}\label{eq:global}
\sum_{l=1}^n \mathfrak t_K^m(\alpha_l,\beta_l,0)=0.
\end{equation}
\end{itemize}
\end{theorem}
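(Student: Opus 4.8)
The plan is to derive this global Hopf theorem as a direct application of the abstract global bifurcation machinery of Section \ref{sec:equiv-bif-Twisted}, specifically Theorem \ref{th:global-bif-twisted}, after translating the Hopf problem \eqref{eq:Hopf} into the operator form \eqref{eq:Hopf2} and then passing to the $\mathfrak H$-reduced equation \eqref{eq:Hopf3}. First I would check that the map $\mathscr F$ defined by \eqref{eq:F} satisfies the abstract hypotheses \ref{bif0}--\ref{bif2}: condition \ref{bif0} is the complete continuity of the $G$-equivariant field $\mathscr F$, which was already recorded right after \eqref{eq:F} (compactness of $\bm j$); \ref{bif1} follows from \ref{c1} (or, on the $\mathfrak H$-fixed subspace, from Remark \ref{rem:center-0}(c), which is why \ref{c1} can be dropped and \ref{g1} used instead); and \ref{bif2} — existence and continuity of the linearization $\mathscr A(\alpha,\beta)=D_u\mathscr F(\alpha,\beta,0)$ together with the uniform differentiability estimate \eqref{eq:A-twisted} — follows from \ref{c2}/\ref{g1} and the explicit formula \eqref{eq:DerF}. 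Here $\Gamma=\Gamma_o\times\bz_2$ with $\bz_2$ antipodal (condition \ref{g0}), so the group $G=\Gamma_o\times\bz_2\times S^1$ and the subgroup $\bm H=\mathfrak H$ of \eqref{eq:h-subset} match \eqref{eq:def-G-o-H}, and $\mathbb E^{\bm H}=\mathscr E^{\mathfrak H}$ with the isotypic decomposition displayed after \eqref{eq:h-subset} agreeing with \eqref{eq:Go-iso-Banach} (only odd Fourier modes survive).

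Next I would identify the critical sets. By Lemma \ref{lem:A_k}, $\mathscr A(\alpha,\beta)$ fails to be an isomorphism on $\mathscr E^{\mathfrak H}$ exactly when $(\alpha,0)$ is a center admitting a characteristic value $ik\beta$ with $k$ odd; hence $\Lambda^{\mathfrak H}$ as in Definition \ref{def:center-0} coincides with the set $\Lambda_o$ of \eqref{eq:Lambda-o-def}, and condition \ref{g2} is exactly \ref{bif4} for the reduced problem. The key computational point is to match the abstract local bifurcation invariant $\omega_{G_o}(\alpha_o,\beta_o)$ of \eqref{eq:omega-G-o-def} with the crossing-number data $\mathfrak t_K^m(\alpha_o,\beta_o,0)$ of \eqref{eq:inv-b}: via \eqref{eq:omega-twisted-final} the invariant is $\prod_l{\bm\delta_l}\cdot\sum_{m>0,\,l}d_{m,l}\deg_{\cV_{m,l}}$ with $m$ odd, and one reads off $\mathrm{coeff}^K$ of this element in terms of the isotypic multiplicities $m_j$ and the $\mathcal V_j$-crossing numbers $\mathfrak t_j(\alpha_o,m\beta_o,0)$ summed against the indicator $\zeta_{m,j}(K)$ of \eqref{eq:zeta} — this is precisely formula \eqref{eq:inv-b}. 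So $\mathrm{coeff}^K(\omega_{G_o}(\alpha_o,\beta_o))=\mathfrak t_K^m(\alpha_o,\beta_o,0)$ up to the (nonzero, hence irrelevant for vanishing) factors $\prod_l{\bm\delta_l}$, $|W(K)/S^1|$, etc.

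With these identifications in place, the proof is an alternative-splitting argument. Given the branch $\mathscr C\subset\br^2_+\times\mathscr E^{\mathfrak H}$ through the $\mathfrak H$-critical point $(\alpha_o,\beta_o,0)\in\mathscr W$, either $\mathscr C$ meets $\partial\mathscr W$ — giving alternative (a), after noting that $\mathscr C\cap\partial\mathscr W=\overline{\mathscr C}\cap\partial\mathscr W$ because $\partial\mathscr W$ contains no critical points by (ii) and hence no trivial-solution limit of $\mathscr C$ — or $\mathscr C$ stays in $\mathscr W$, in which case $\overline{\mathscr C}$ is a bounded branch of solutions to \eqref{eq:operator-bif-H} in the sense of Theorem \ref{th:global-bif-twisted}. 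That theorem then yields $\overline{\mathscr C}\cap\Lambda_o=\{(\lambda_1,0),\dots,(\lambda_N,0)\}$ finite with $\sum_s\omega_{G_o}(\lambda_s)=0$; writing $\lambda_s=\alpha_s+i\beta_s$ and taking $\mathrm{coeff}^K$ of both sides gives $\sum_l\mathfrak t_K^m(\alpha_l,\beta_l,0)=0$ for every odd $m$ and $K\in\Phi_m$, which is \eqref{eq:global}. The non-resonance claim for the $\beta_l$ is the observation (see the Proposition and Remark around \eqref{eq:reson-branch} and \eqref{eq:filtration}) that if two critical frequencies at the same $\alpha_o$ on $\mathscr C$ were resonant, the folding maps $\Bcyr_k$ would force $\mathscr C$ to be unbounded, contradicting boundedness in alternative (b); so one may enumerate $\overline{\mathscr C}\cap(\br^2_+\times\{0\})$ with pairwise non-resonant frequencies.

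The main obstacle I expect is the bookkeeping in the second paragraph — rigorously extracting $\mathrm{coeff}^K(\omega_{G_o}(\alpha_o,\beta_o))=\mathfrak t_K^m(\alpha_o,\beta_o,0)$ from the finite-dimensional reduction and the product formula \eqref{eq:omega-twisted-final}, keeping careful track of which $\cV_{m,l}$ contribute (only odd $m$, via the $\bz_2^d$-fixed-point condition \eqref{eq:non-trivial-S1-action}), how the real/complex type of $\cV_j$ enters the definition of the $S^1$-action, and verifying that the auxiliary nonzero factors $\bm\delta_l$ and the Weyl-group orders never cause a spurious vanishing. Everything else is a routine transcription of the abstract results, plus the already-established Vidossich-type a priori period bound (Proposition \ref{pro:Vid}) if one wants to record that geometric unboundedness of $\mathfrak p(\mathscr C)$ is equivalent to unboundedness of $\mathscr C$; under \ref{lip} one could phrase alternative (a) in terms of $\mathfrak p(\mathscr C)$ leaving a bounded set, but in the stated form no Lipschitz hypothesis is needed.
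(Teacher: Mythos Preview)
Your proposal is correct and follows essentially the same approach as the paper: reduce to the $\mathfrak H$-fixed subspace, verify the abstract hypotheses for $\mathscr F^{\mathfrak H}$, and invoke the Rabinowitz-type alternative. The paper phrases its proof as a direct corollary of Theorem~\ref{th:Rab} (the Rabinowitz Alternative, whose detailed proof in Section~\ref{sec:main} carries the Kuratowski--auxiliary-function argument) rather than of Theorem~\ref{th:global-bif-twisted}, but since the paper itself notes that Theorem~\ref{th:global-bif-twisted} is proved by the same scheme as Theorem~\ref{th:Rab}, this is only a difference of packaging; your identification $\mathrm{coeff}^K(\omega_{G_o})=\mathfrak t_K^m$ is exactly what the paper carries out in Proposition~\ref{pro-cross-inv} together with the computation in the proof of Theorem~\ref{th:main1}, and your worry about the factors $\bm\delta_l$ is in fact moot here because on $\mathscr E^{\mathfrak H}$ the $k=0$ isotypic components vanish.
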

\vs
As an immediate consequence of Theorem \ref{th:main2} and Proposition 
\ref{pro:Vid},
one obtains the following statement.
\vs

\begin{theorem}\label{th:main3}
Assume  that conditions \ref{i}, ($C$), \ref{c0}, \ref{g0}, \ref{g1}--\ref{g2} and the Lipschitz condition \ref{lip}  
are satisfied. Assume, in addition, that  for some odd $m\in \bn$, there exists $(K) \in \Phi_m$ 
such that:
\begin{itemize}
 \item[(i)] $\mathfrak t^j_K(\alpha,\beta,0) \cdot \mathfrak t^j_K(\alpha',\beta',0) \geq 0$  for all 
 $(\alpha,\beta,0), (\alpha',\beta',0) \in \Xi^{\mathfrak H}_0$, $j=0,1,2,\dots,r$;
 \item[(ii)] for some $(\alpha_o,\beta_o,0)\in \Xi^{\mathfrak H}_0$, one has $\mathfrak t^j_K(\alpha_o,\beta_o,0) \not=0$ 
 \end{itemize}
 {(where $\Xi_0$ is given by \eqref{eq:equi-set}, $\mathfrak H$ by \eqref{eq:h-subset}, and $\Xi^{\mathfrak H}_0 $ denotes the $\mathfrak H$-fixed points set in $\Xi_0$).}
 Then, there exists an unbounded  branch $\mathscr C$ with  $(\alpha_o,\beta_o,0)\in \overline{\mathscr C}$.
\end{theorem}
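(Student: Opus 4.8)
The plan is to combine the local Hopf bifurcation theorem with the global alternative of Theorem~\ref{th:main2}, using Proposition~\ref{pro:Vid} to control the frequency coordinate. The construction is: first produce a branch from $(\alpha_o,\beta_o,0)$, then show it cannot be bounded.

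\emph{Producing the branch.} First I would note that hypothesis~(ii) forces the $K$-coefficient of the local bifurcation invariant of the $\mathfrak H$-reduced equation \eqref{eq:Hopf3} at $(\alpha_o,\beta_o,0)$ to be nonzero. Indeed, by the computational formula \eqref{eq:omega-G-o-def} together with the definition \eqref{eq:inv-b} of the twisted crossing numbers and \eqref{eq:cross-j}, this coefficient coincides, up to a nonzero multiplicative constant, with $\mathfrak t^m_K(\alpha_o,\beta_o,0)\ne 0$ (note that since $m$ is odd one has $\mathfrak H\le K$, so the $K$-isotropy stratum is automatically contained in $\mathscr E^{\mathfrak H}$, and the $m$-folded representations $\mathcal V_{m,j}$ survive in the reduced space). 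Hence Theorem~\ref{th:main1}, adapted to equation \eqref{eq:Hopf3} (equivalently, the abstract local result Theorem~\ref{th:bif-cont-1} applied to $\mathscr F^{\mathfrak H}$, which is legitimate because $\Lambda^{\mathfrak H}$ is discrete by \ref{g2}), produces a branch $\mathscr C$ of non-constant periodic solutions of \eqref{eq:Hopf3} with $(\alpha_o,\beta_o,0)\in\overline{\mathscr C}$ and $G_u\ge K$ along $\mathscr C^K$; in particular $(\alpha_o,\beta_o,0)$ is an $\mathfrak H$-critical point.

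\emph{Unboundedness.} Suppose, for contradiction, that $\mathscr C$ is bounded. Since $F$ satisfies the Lipschitz condition \ref{lip}, Proposition~\ref{pro:Vid} shows that along $\mathscr C$ the frequency $\beta$ is bounded above; being a positive frequency of a genuine $2\pi$-periodic solution, it also stays in a compact subinterval of $\br_+$, so $\overline{\mathscr C}$ (closure for the metric $d$ of \eqref{eq:metric-R2+}) is a compact subset of $\br^2_+\times\mathscr E^{\mathfrak H}$, disjoint from the wall $\beta=0$. Because $\Lambda^{\mathfrak H}$ is discrete, $\overline{\mathscr C}\cap\Lambda^{\mathfrak H}$ is finite, and one may choose an open, bounded, $G$-invariant set $\mathscr W$ with $\overline{\mathscr C}\subset\mathscr W$, $\overline{\mathscr W}\subset\br^2_+\times\mathscr E^{\mathfrak H}$ and $\Lambda^{\mathfrak H}\cap\partial\mathscr W=\emptyset$. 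Theorem~\ref{th:main2} now applies to $\mathscr C$ and $\mathscr W$: alternative~(a) is impossible, since $\mathscr C\subset\mathscr W$ forces $\overline{\mathscr C}\cap\partial\mathscr W=\emptyset$; hence alternative~(b) holds, yielding a finite set $\overline{\mathscr C}\cap(\br^2_+\times\{0\})=\{(\alpha_1,\beta_1,0),\dots,(\alpha_n,\beta_n,0)\}$ (no two $\beta_l$ resonant) with $\sum_{l=1}^n\mathfrak t^m_K(\alpha_l,\beta_l,0)=0$ for our fixed odd $m$ and $(K)\in\Phi_m$.

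\emph{Closing the argument.} Each $(\alpha_l,\beta_l,0)$ lies in $\Xi^{\mathfrak H}_0$, and since $(\alpha_o,\beta_o,0)\in\overline{\mathscr C}$ is an $\mathfrak H$-critical point it occurs among them, say $(\alpha_1,\beta_1,0)=(\alpha_o,\beta_o,0)$. By hypothesis~(i) the numbers $\mathfrak t^m_K(\alpha_l,\beta_l,0)$, which inherit their sign from the isotypic crossing numbers $\mathfrak t_j$ via \eqref{eq:inv-b}, all have the same sign, and by hypothesis~(ii) $\mathfrak t^m_K(\alpha_1,\beta_1,0)\ne0$; hence their sum cannot vanish, contradicting the identity from the previous paragraph. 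Therefore $\mathscr C$ is unbounded, which proves the theorem. The step I expect to be the main obstacle is the a~priori control of the frequency coordinate: one must rule out that a branch bounded in amplitude and in $\alpha$ escapes to $\beta=0$ (period $p=\infty$), so that a legitimate isolating neighborhood $\mathscr W$ with $\overline{\mathscr W}\subset\br^2_+\times\mathscr E^{\mathfrak H}$ exists; this is exactly what the Lipschitz assumption \ref{lip} and the period estimate behind Proposition~\ref{pro:Vid} secure. Everything else is bookkeeping: transferring the local invariant to a nonzero $K$-coefficient, verifying the hypotheses of Theorem~\ref{th:main2}, and matching signs.
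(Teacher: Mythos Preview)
Your contradiction argument via Theorem~\ref{th:main2} is correct and is exactly the paper's route. The one genuine error is the direction of the period estimate. Lemma~\ref{lem:Vid1} gives $p\ge 4/(\|A\|+L\|\ii\|)$, a \emph{lower} bound on the period and hence an \emph{upper} bound on $\beta=2\pi/p$; Proposition~\ref{pro:Vid} therefore rules out $\beta\to\infty$ along a branch with bounded $(\alpha,u)$, not $\beta\to 0$. Your final paragraph has this backwards, and the sentence ``being a positive frequency of a genuine $2\pi$-periodic solution, it also stays in a compact subinterval of $\br_+$'' does not supply any uniform lower bound on~$\beta$.

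This does not actually break the proof. If ``$\mathscr C$ bounded'' is read in the metric $d$ of~\eqref{eq:metric-R2+} (the metric used throughout), then $\beta$ is already confined to a compact subinterval of $\br_+$ by hypothesis; compactness of $\overline{\mathscr C}$ and the construction of $\mathscr W$ follow directly, and your appeal to Proposition~\ref{pro:Vid} inside the contradiction step is simply superfluous. The role of \ref{lip} and Proposition~\ref{pro:Vid} is rather to refine the \emph{conclusion}: once the branch is known to be unbounded in the $d$-metric, the Vidossich bound excludes $\beta\to\infty$ as the sole source, so the unboundedness must manifest as $|\alpha|\to\infty$, $\|u\|\to\infty$, or $\beta\to 0$ (i.e.\ period tending to infinity). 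Reorganize accordingly and drop the incorrect attribution of the lower $\beta$-bound to Proposition~\ref{pro:Vid}.
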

\vs

\section{Example} As a follow-up to Example \ref{ex:parab}, assume that  $\Om:=\{z\in \bc: |z|<1\}\subset \br^2$ and take $\Gamma_1:=O(2)$. Clearly, $\Gamma_1$ represents the  symmetry group of $\Om$. We  consider the parabolic system 
\begin{equation}\label{eq:prototype}\begin{cases}
\dot u(t)-\bm\triangle u(t)=\mathfrak f(\alpha,u(t)), \quad u(t):=u(t,\cdot):\Om\to\br^2,\\
u(t)|_{\partial \Om}\equiv 0\;\; \text{for all }\; t\ge 0,
\end{cases}
\end{equation}
where $\mathfrak f:\br\times \br^2\to \br^2$ is a function satisfying the following conditions: 
\begin{itemize}
\item[(e0)] $\mathfrak f$ is continuous and $\mathfrak f(\alpha,-u)=-\mathfrak f(\alpha,u)$ for all $\alpha$ and $u\in \br^2$;
\item[(e1)] under the identification of  $\br^2$ with $\bc$,  one has  for some $a$, $b$ $\in \br\setminus \{0\}$:
\[\mathfrak f(\alpha,u):= \alpha \bm \eta u+r(u),\quad \text{ where  }\;  \bm \eta:=a+ib,\quad
\lim_{\|u\|\to 0} \frac {|r(u)|}{|u|} = 0;\]
\item[(e2)] there exists $\mathfrak p>1$ such that 
\[\exists _{\bm c,\bm d>0}\; \;\forall_{u\in \br^2}\; \;  \;|r(u)|\le \bm c+\bm d|u|^{\frac{\mathfrak p}2}.\]
\end{itemize}
\vs

We have the following   functional spaces related to \eqref{eq:prototype}:
\begin{align*}
H&:=L^2(\Om;\br^2),\\
{\mathfrak{D}}(A)&:= H_o^1(\Om;\br^2)\cap H^2(\Om; \br^2),\\
W&:=L^p(\Om;\br^2).
\end{align*}
We also take $V$ to be the space ${\mathfrak{D}}(A)$ equipped with the Sobolev norm.
All the above spaces are natural Banach $O(2)\times \bz_2$-representations (with the antipodal  $\bz_2$-action on $\Om$).
Then, conditions \ref{i}, \ref{c0}, \ref{c1} and \ref{c2} are satisfied (see Example 
 \ref{ex:parab}). Moreover, $B(\alpha)v:= \alpha \bm\eta v$, $v\in W$, and the characteristic equation for \eqref{eq:prototype} at zero is given by 
 \begin{equation}\label{eq:ex-char-op}
 \triangle _\alpha(\lambda)v:=v-(A+\lambda\id)^{-1}(\alpha \bm\eta i(v)) =0, \quad v\in V:={\mathfrak{D}}(A).
 \end{equation}
 Notice that  the characteristic operator $\triangle _\alpha(\lambda)$ is $O(2)\times \bz_2$-invariant.
The characteristic  equation can be written as the following boundaru-value problem: 
\begin{equation}\label{eq:Lapace-eigen}
\begin{cases}
-\bm \triangle v+(\lambda-\alpha \bm\eta)v=0, \quad v\in V,\\
v|_{\partial \Om}\equiv 0.
\end{cases}
\end{equation}
Observe that $\lambda_o\in \bc$ is a characteristic  value for \eqref{eq:prototype} if and only if $\alpha \bm\eta -\lambda_o \in \sigma(-\bm \triangle)$. The spectrum $\sigma(-\bm \triangle)$ can be effectively described (by using the standard polar coordinates $(r,\theta)$). Namely, denote by $\bm s_{n,j}$ the $n$-th positive zero of the Bessel function $\bm J_{j}$ of the first kind.
Define 
\[
\sigma(-\bm \triangle)=\{\bm s_{n,j}: n\in \bn, \; j=0,1,2,\dots\},
\]
where corresponding to  each eigenvalue $\bm s_{n,j}$, the eigenspace  can be described as follows: 
\[
\mathbb E_{n,j}:=\left\{\bm J_{j}(\bm s_{n,j}r)\Big(\cos(j\theta)\bm a+\sin(j\theta)\bm b\Big): \bm a,\, \bm b\in \br^2\right\}.
\]
Clearly, $\mathbb E_{n,j}$ are $O(2)\times \bz_2$-invariant. 
Notice that  the space $\mathbb E_{n,0}$ is equivalent to $\br^2$ and for  $j>0$,  $\mathbb E_{n,j}$ is equivalent to $(\br^2)^c=\br^2\oplus i\br^2:=\{\bm a+i\bm b: \bm a,\, \bm b\in \br^2\}$ (cf. \eqref{eq:complex-str}). The $O(2)$-action on $\mathbb E_{n,j}=\br^2\oplus i\br^2$ is given by 
\[e^{i\psi}(\bm a+i\bm b)=(\cos (j\psi)+i\sin(j\psi))\cdot(\bm a+i\bm b),\quad \kappa(\bm a+i\bm b)=\bm a-i\bm b,
\]
where $\bm a$, $\bm b\in \br^2$ and ``$\cdot$'' denotes the complex multiplication in $\mathbb E_{n,j}$. 
\vs
 Put   $\Gamma:=O(2)\times \bz_2$ and $G:=O(2)\times \bz_2\times S^1$. The $\Gamma$-isotypic decomposition of $V$ is given by
 \[
 V=\overline{\bigoplus_{j=1}^\infty V_j}, \quad V_j:=\overline{\bigoplus_{n=1}^\infty \mathbb E_{n,j}},
 \]
 where the component $V_j$ is modeled on the irreducible $O(2)\times \bz_2$-representation $\cW_j^-$ (here if $j>0$ then $\cW_j^-\simeq \bc$ and $(e^{i\vp},\pm1)z:=\pm e^{ij\vp}\cdot z$, 
 $\kappa z=\overline z$, while $\cW_0\simeq \br$   is a trivial $O(2)$-representation with  the antipodal $\bz_2$-action). 


\vs

Notice that,  the characteristic operator $\triangle _\alpha(\lambda)$ (given by \eqref{eq:ex-char-op})  is $O(2)\times \bz_2$-equivariant. We put 
\[
\triangle^j_\alpha(\lambda):=\triangle _\alpha(\lambda)|_{V_j^c}:V_j^c\to V_j^c
\]
(see subsection \ref{sec:char-eq}).
\vs

A number $i\beta$, $\beta>0$ is a characteristic root for   \eqref{eq:prototype}  if  $\alpha=\frac{\bm s_{n,j}}a$ and 
$\beta =\frac{\bm s_{n,j}b}a$ for some $n\in \bn$ and $ j=0,1,2,\dots$\
Consequently, we have the critical set 
\[
\bm \Lambda_o:=\left\{ (\alpha,\beta)\in \br^2_+:\alpha=\frac{\bm s_{n,j}}a,\;  \beta =\frac{\bm s_{n,j}b}{ka},\;\;n, k\in \bn, \; j=0,1,2,\dots\right\}.
\]
Since the monotone sequence $\bm s_{n,j}$, $n\in \bn$, increases to infinity for every fixed $j$ (as a matter of fact, $\bm s_{n+1,j}-\bm s_{n,j}>\pi$ for all $n\ge 0$ and $j\ge 1$) and $\bm s_{1,j}>2\sqrt{j+1}$
for all $j\ge 0$
(see \cite{ismail, ismail1}), the set $\bm \Lambda_o$ is discrete,
i.e. every critical value $(\alpha_o,\beta_o)\in \bm \Lambda_o$ is isolated. Hence, one can associate with it the $\cW_j^-$-isotypic crossing numbers  $\mathfrak t_j(\alpha_o,\beta_o)$ (see \eqref{eq:cross-j}).
Observe that for the critical points $(\alpha_o,\beta_o)$ where $\alpha_0=\frac{\bm s_{n,j}}a$ and $\beta_o=\frac{\bm s_{n,j}b}{ka}$, by inspection, we have the crossing numbers 
\[\mathfrak t_{j}(\alpha_o,\beta_o)=-1.\] 
\vs
Let us describe the orbit types in $V_j$ for $j=0,1,2,\dots$. There is only one orbit type  $(O(2))$ in $V_0\setminus \{0\} $ and also one orbit type $(D_{2j}^d)$  in $V_j\setminus \{0\} $ for $j>0$. 
\vs
Now, problem \eqref{eq:Lapace-eigen} can be reformulated in the space $\mathscr E$ (see \eqref{eq:spaces2}) as the two-parameter $G$-equivariant  equation (see \eqref{eq:Hopf2}), with 
$N_{\mathfrak f} (\alpha,v)(t):=\mathfrak f(\alpha,v(t))$. One can verify that the $G$-isotypic decomposition of $\mathscr E$ 
is
\[
\mathscr E=\overline{\bigoplus _{k=0}^\infty\bigoplus_{j=0}^\infty \mathscr E_{k,j}},
\]
where the $G$-isotypic component $\mathscr E_{k,j}$, provided by \eqref{eq:iso-E-kj} is modeled on the $G$-irreducible representation $\cV_{k,j}$. The $G$-representation $\cV_{k,j}$, as a real vector space, is   the complexification of $\cW_j^-$ on which   the $S^1$-action is defined by $k$-folding (see \eqref{eq:Vjk}). Now, we can identify the maximal twisted orbit types in $\mathscr E_{k,j}$ (see subsection \ref{sec:max-orb-types}). For this purpose we use the following amalgamated notation:
\begin{align*}
\bullet\;\text{ for $j=0$ and $k>0$  the maximal twisted orbit type is } \;\; &H_k:= \big(O(2)\times \bz_2\big)^{O(2)\times \bz_1}\times _{\bz_2}^{\bz_k}\bz_{2k};\\
\bullet\; \text{ for $j>0$ and $k>0$  the maximal twisted orbit type is } \;\; & K_k:=\big(D_{2j}\times \bz_2\big)^{D_{2j}^d}\times_{\bz_2}^{\bz_k}\bz_{2k}.
\end{align*}
Therefore, for $k>0$, 
\[
\Phi_k=\{(H_k),(K_k)\}.
\]
\vs
   Now, as an immediate consequence of  Theorem \ref{th:main1}, we obtain the following result.
   \vs
   \begin{theorem}\label{th1:example1}
   Assume that the function  $\mathfrak f:\br\times \br^2\to \br^2$  satisfies conditions {\rm (e0)--(e2)}.
   Then for every  $\alpha_o=\frac{\bm s_{nj}}a$ and $\beta_o=\frac{\bm s_{nj}b}{a}$  for some  $n\ge 1$ there exists a branch of non-constant periodic solutions  to \eqref{eq:prototype}  bifurcating from $(\alpha_o,\beta_o,0)$ with the twisted orbit type $(\mathcal H)$ which is 
   \begin{itemize}
   \item[(i)] $(H_k)$ for some $k\in \bn$, if $j=0$, and
   \item[(ii)] $(K_k) $ for some  $k\in \bn$, if $j>0$.
   \end{itemize}
   \end{theorem}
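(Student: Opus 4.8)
The plan is to read off the statement from Theorem~\ref{th:main1} applied to \eqref{eq:prototype} in the functional formulation \eqref{eq:Hopf2} of Example~\ref{ex:parab} (with $\Gamma=O(2)\times\bz_2$, the $\bz_2$-factor acting antipodally on the target $\br^2$). By that example together with (e0)--(e2), the structural hypotheses \ref{i}, ($C$), \ref{c0}, \ref{c1}, \ref{c2} already hold: (e0) yields continuity of $\mathfrak f$ and, via oddness, $\mathfrak f(\alpha,0)=0$; (e1) yields $B(\alpha)v=\alpha\bm\eta v$ (complex multiplication on $\br^2\simeq\bc$) together with \eqref{eq:Edik}; (e2) gives the growth bound putting the Nemytskii operator into $L^2(\Om;\br^2)$; and $\ii:V\hookrightarrow W$ is compact by the compact Sobolev embedding on $\Om\subset\br^2$. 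It therefore remains to verify \ref{c3} and \ref{c4} at $\alpha_o=\bm s_{n,j}/a$, and to compute the twisted invariant $\mathfrak t^k_K(\alpha_o,\beta_o,0)$.

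Since $B(\alpha)$ is multiplication by $\alpha\bm\eta$, which has eigenvalues $\alpha\bm\eta$ and $\alpha\overline{\bm\eta}$ on $(\br^2)^c$, the characteristic problem \eqref{eq:Lapace-eigen} shows that $\lambda\in\bc$ is a characteristic value at $(\alpha,0)$ precisely when $\alpha\bm\eta-\lambda\in\sigma(-\bm\triangle)$ or $\alpha\overline{\bm\eta}-\lambda\in\sigma(-\bm\triangle)\subset\br_{>0}$. Taking $\lambda=0$ would force $\alpha\bm\eta\in\br_{>0}$, which is impossible for $\alpha\ne0$ (as $b\ne0$), and $0\notin\sigma(-\bm\triangle)$; hence $\ker(A-B(\alpha)\circ\ii)=\{0\}$ for every $\alpha$, which is \ref{c3}. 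For \ref{c4}: $(\alpha,0)$ is a center iff $\alpha a\in\sigma(-\bm\triangle)$ and $\alpha b>0$, i.e.\ iff $\alpha=\bm s_{n',j'}/a$ for some positive Bessel zero; so the $\alpha$-projection of the critical set $\bm\Lambda_o$ is $\{\bm s_{n',j'}/a\}$, which is discrete by the Bessel-zero estimates recalled above ($\bm s_{m+1,j'}-\bm s_{m,j'}>\pi$ and $\bm s_{1,j'}>2\sqrt{j'+1}$, see \cite{ismail, ismail1}). Thus $\alpha_o$ is isolated among centers, and since $i\beta_o=i\bm s_{n,j}b/a$ ($\beta_o>0$ as $ab>0$) is a characteristic value at $\alpha_o$, the point $(\alpha_o,0)$ is an isolated center.

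Finally, $\bm b(\alpha_o)=\{\beta_o\}$: a purely imaginary characteristic value at $\alpha_o$ forces $\re(\alpha_o\bm\eta)=\bm s_{n,j}$ and $\beta=\bm s_{n,j}b/a=\beta_o$, so $\bm b(\alpha_o,\beta_o)=\{\beta_o\}$ carries no resonances and only $k=1$ enters \eqref{eq:inv-b}. As $\alpha$ crosses $\alpha_o$ (take $a>0$), the unique characteristic value near $i\beta_o$, namely $\lambda(\alpha)=\alpha\bm\eta-\bm s_{n,j}$, moves from $\re\lambda<0$ to $\re\lambda>0$; by the Bourget--Siegel theorem $\bm s_{n,j}$ is a simple eigenvalue of $-\bm\triangle$ lying only in the component $V_j$, and multiplication by $\alpha_o\bm\eta$ selects a one-dimensional (over $\bc$) part of its eigenspace, so $\lambda(\alpha)$ contributes $\cW_j^-$-isotypic multiplicity $1$. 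Hence, in the notation of \eqref{eq:cross-j}, $\mathfrak t^-_j=0$, $\mathfrak t^+_j=1$ and $\mathfrak t^\pm_{j'}=0$ for $j'\ne j$, whence $\mathfrak t_j(\alpha_o,\beta_o,0)=-1$ and $\mathfrak t_{j'}(\alpha_o,\beta_o,0)=0$ for $j'\ne j$ (the sign is $+1$ if $a<0$; in any case nonzero). Choosing $(K)=(H_1)$ if $j=0$ and $(K)=(K_1)$ if $j>0$, so that $(K)\in\Phi_{1,j}$, formula \eqref{eq:inv-b} gives $\mathfrak t^1_K(\alpha_o,\beta_o,0)=\mathfrak t_j(\alpha_o,\beta_o,0)\ne0$. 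Theorem~\ref{th:main1} then produces a branch $\mathscr C\subset\mathscr P$ of non-constant periodic solutions of \eqref{eq:prototype} bifurcating from $(\alpha_o,\beta_o,0)$ with ${\mathscr C}^K$ containing a nontrivial continuum and $G_u\ge K$ on it; since $(H_1)$ (resp.\ $(K_1)$) has the form $(H_k)$ (resp.\ $(K_k)$) with $k=1\in\bn$, the claim follows.

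The bifurcation mechanism itself is a black box once the twisted invariant is shown to be nonzero, so the substance of the argument is the two spectral inputs --- the isolation of the center (\ref{c4}) and the evaluation of the crossing number. Both rest on classical but nontrivial arithmetic of the positive zeros of Bessel functions of the first kind: their monotonicity and spacing, the bound $\bm s_{1,j}>2\sqrt{j+1}$ ensuring finiteness of the zero set in bounded ranges, and the Bourget--Siegel theorem on the absence of common zeros of $\bm J_j$ and $\bm J_{j'}$ for $j\ne j'$, which is precisely what makes each crossing simple and attributes it to a single isotypic component. (If two Bessel zeros of distinct orders were to coincide, the bifurcation conclusion would persist, but the determination of the twisted orbit type would require extra bookkeeping.)
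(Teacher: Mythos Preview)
Your proof is correct and takes the same route as the paper, which simply declares the result an immediate consequence of Theorem~\ref{th:main1} after the preceding computations of the crossing numbers $\mathfrak t_j(\alpha_o,\beta_o)=-1$ and the identification $\Phi_k=\{(H_k),(K_k)\}$. Your version is more careful in explicitly verifying \ref{c3} and \ref{c4} and in invoking Bourget--Siegel for the simplicity of $\bm s_{n,j}$, though as you note parenthetically the last point is not strictly needed: the sets $\Phi_{1,j}$ are pairwise disjoint, so $\zeta_{1,j}(K)$ in \eqref{eq:inv-b} already isolates the single relevant term regardless of possible coincidences among Bessel zeros.
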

   \vs
   Notice that 
one can apply  Theorem \ref{th:main3} to  obtain a global Hopf bifurcation result for \eqref{eq:prototype}. To this end, observe that 
\[
\bm \Lambda_o^{\mathfrak H}:=\left\{ (\alpha,\beta)\in \br^2_+:\alpha=\frac{\bm s_{n,j}}a,\;  \beta =\frac{\bm s_{n,j}b}{(2k-1)a},\;\;n, k\in \bn, \; j=0,1,2,\dots\right\}
\]
   and for every $(\alpha_o,\beta_o)\in \bm \Lambda_o^{\mathfrak H}$, one has $\mathfrak t_{k,j}(\alpha_o,\beta_o)=-1$, thus we have the property 
   \[
   \mathfrak t_j(\alpha,\beta) \cdot \mathfrak t_j(\alpha',\beta') \geq 0, \; \; j=0,1,2,\dots,
   \]
   is satisfied for all $(\alpha,\beta)$, $(\alpha',\beta')\in \bm \Lambda_o^{\mathfrak H}$. Since one gets the sum of non-positive numbers in \eqref{eq:inv-b} for any $K$, the conditions (i) and (ii) in Theorem \ref{th:main3} are satisfied, which leads us to the following result: 
   \vs
   
  \begin{theorem}\label{th:example2}
Assume that the function  $\mathfrak f:\br\times \br^2\to \br^2$  satisfies conditions {\rm (e0)--(e2)}.
    Assume, in addition, that  for some odd $m\in \bn$, there exists $(K) \in \Phi_m$. 
 Then, there exists an unbounded  branch $\mathscr C$ with  $(\alpha_o,\beta_o,0)\in \overline{\mathscr C}$ for every $(\alpha_o,\beta_o)\in \bm \Lambda^{\mathfrak H}$.
\end{theorem}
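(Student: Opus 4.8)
The plan is to deduce the statement from the abstract global Hopf theorem, Theorem~\ref{th:main3}, so that the work splits into (a) checking that the concrete operator attached to \eqref{eq:prototype} satisfies all hypotheses of that theorem and (b) computing the relevant crossing numbers. First I would fix the functional-analytic data exactly as in the discussion following Example~\ref{ex:parab}: $\Gamma:=O(2)\times\bz_2$, $G:=\Gamma\times S^1$, $H:=L^2(\Om;\br^2)$, $V:={\mathfrak D}(A)$ with the Sobolev norm, $W:=L^{\mathfrak p}(\Om;\br^2)$, $A:=-\bm\triangle$ with Dirichlet data, and $F(\alpha,v)(x):=\mathfrak f(\alpha,v(x))$. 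Conditions \ref{i}, \ref{c0} and $(C)$ are then supplied by Example~\ref{ex:parab}; the $\bz_2$ factor acts on the target $\br^2$ by $\pm1$, hence antipodally on $V$, so hypothesis \ref{g0} is in force (the only twisted orbit types entering the computation, namely $(H_k)$ and $(K_k)$, have finite Weyl groups, which is all the twisted degree actually needs here). By (e1) the Fr\'echet derivative at $0$ is $B(\alpha)v=\alpha\bm\eta v$, which is linear, hence continuous, in $\alpha$, so \ref{g1} holds (and \ref{c1}--\ref{c2} in the non-reduced picture as well), while \ref{lip} is part of the standing hypotheses on $\mathfrak f$ (automatic, e.g., if $r$ is of class $C^1$). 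Finally, \ref{g2}, discreteness of $\Lambda^{\mathfrak H}$, reduces to discreteness of $\{\bm s_{n,j}:n\in\bn,\ j\ge0\}$ in $(0,\infty)$, which follows from the separation estimates $\bm s_{n+1,j}-\bm s_{n,j}>\pi$ and $\bm s_{1,j}>2\sqrt{j+1}$, so that every bounded interval meets only finitely many Bessel zeros. Note also that $\Phi_m\neq\emptyset$ for every $m\ge1$ (e.g.\ $(H_m)\in\Phi_m$), so the ``in addition'' hypothesis is automatically met.

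Next I would identify the critical set and the crossing numbers. From \eqref{eq:ex-char-op}, $\triangle_\alpha(\lambda)v=v-(A+\lambda\id)^{-1}(\alpha\bm\eta\,\ii(v))$, and since $\sigma(-\bm\triangle)=\{\bm s_{n,j}\}$ with eigenspaces $\mathbb E_{n,j}$, the number $\lambda$ is characteristic exactly when $\alpha\bm\eta-\lambda$ lies in $\sigma(-\bm\triangle)$ on the corresponding component. Writing $\bm\eta=a+ib$, a purely imaginary $\lambda=i\beta$ ($\beta>0$) forces $\alpha a=\bm s_{n,j}$ and $\beta=\alpha b$; using Lemma~\ref{lem:A_k}, the $S^1$-fold $m$ replaces the critical frequency by $\bm s_{n,j}b/(ma)$, and the $\mathfrak H$-reduction keeps only odd $m$, which recovers precisely $\bm\Lambda_o^{\mathfrak H}$. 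Because $a,b\neq0$, as $\alpha$ increases through $\bm s_{n,j}/a$ the associated eigenvalue of the linearization crosses the imaginary axis transversally and always in the same direction on each $\cW_j^-$-component; inspecting the localized determinant ${\det}_\bc\triangle^j_{\alpha_\pm}$ on the boxes $\mathscr U_l$ (the ``by inspection'' step) then gives the $\cW_j^-$-isotypic crossing number
\[
\mathfrak t_j(\alpha_o,\beta_o,0)=-1
\]
at every $(\alpha_o,\beta_o)\in\bm\Lambda_o^{\mathfrak H}$ on the component $j$ that produced it (and $0$ on the others). Hence, by \eqref{eq:inv-b}, for each odd $m$ and each $(K)\in\Phi_m$ the integer $\mathfrak t^m_K(\alpha_o,\beta_o,0)=\sum_{j}\zeta_{m,j}(K)\,\mathfrak t_j(\alpha_o,m\beta_o,0)$ is a sum of terms in $\{-1,0\}$, so it is non-positive and vanishes off $\bm\Lambda_o^{\mathfrak H}$.

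With this, conditions (i)--(ii) of Theorem~\ref{th:main3} are immediate. For (i): every crossing number $\mathfrak t_j(\cdot)$, and hence every $\mathfrak t^m_K(\cdot)$, lies in $\{-1,0\}$, so all pairwise products over the $\mathfrak H$-critical points are $\ge0$. For (ii): given $(\alpha_o,\beta_o)\in\bm\Lambda_o^{\mathfrak H}$, say arising from $(n,j,m)$ with $m$ odd, take $(K):=(H_m)$ if $j=0$ and $(K):=(K_m)$ (the maximal twisted orbit type attached to the $j$-th component) if $j>0$, so $\zeta_{m,j}(K)=1$ and $\mathfrak t^m_K(\alpha_o,\beta_o,0)=\mathfrak t_j(\alpha_o,m\beta_o,0)=-1\neq0$. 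Theorem~\ref{th:main3} then yields an unbounded branch $\mathscr C$ of non-constant periodic solutions of \eqref{eq:prototype} with $(\alpha_o,\beta_o,0)\in\overline{\mathscr C}$. Running this argument once for each point of $\bm\Lambda_o^{\mathfrak H}$ gives the assertion.

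The step I expect to be the main obstacle is the crossing-number computation $\mathfrak t_j(\alpha_o,\beta_o,0)=-1$: it requires tracking, as $\alpha$ passes $\bm s_{n,j}/a$, how the real parts of the eigenvalues of the linearization move inside the boxes $\mathscr U_l$, which in turn rests on the multiplicity of the Bessel zero $\bm s_{n,j}$, the $O(2)\times\bz_2$-structure of $\mathbb E_{n,j}$ (one copy of $\cW_0^-$ for $j=0$, of $\cW_j^-$ for $j>0$, together with the complex structure induced by $\bm\eta$), and the behaviour under $k$-folding; in general the sign is $-\operatorname{sgn}(a)$, which is the same at all points of $\bm\Lambda_o^{\mathfrak H}$ and is what condition (i) actually uses. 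The only other ingredient needing care, though routine, is the pair of Bessel-zero separation estimates invoked for \ref{g2}.
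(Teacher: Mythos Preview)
Your proposal is correct and follows essentially the same route as the paper: verify the hypotheses of Theorem~\ref{th:main3} for the concrete operator attached to \eqref{eq:prototype}, compute the isotypic crossing numbers to be $-1$ at every $\mathfrak H$-critical point (so that all products in condition (i) are non-negative and condition (ii) is witnessed by the appropriate maximal twisted orbit type), and invoke Theorem~\ref{th:main3}. Your added remarks on the $\Gamma_o=O(2)$ versus finiteness issue in \ref{g0} and on the Lipschitz hypothesis \ref{lip} (which is not literally contained in (e0)--(e2)) are well taken; the paper applies Theorem~\ref{th:main3} here without commenting on either point.
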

 \vs 
    
\begin{figure}[H]
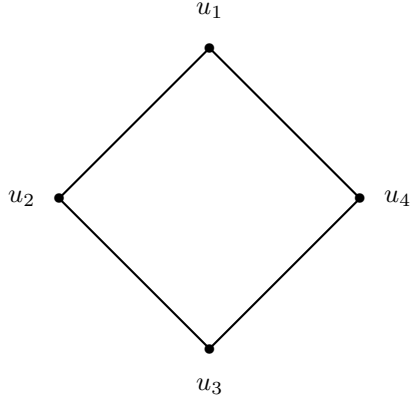

\vskip2.5cm\hskip5.6cm
\rput(-0.5,0){$u_2$}\psline{}(0,0)(2,2)\psdot(0,0)
\rput(2,2.5){$u_1$}\psline{}(2,2)(4,0)\psdot(2,2)
\rput(4.5,0){$u_4$}\psline{}(4,0)(2,-2)\psdot(4,0)
\rput(2,-2.5){$u_3$}\psline{}(2,-2)(0,0)\psdot(2,-2)
\vskip2.3cm
\caption{System with $D_4$ symmetries.}\label{fig:d4}
\end{figure}
\vskip0.1cm

\noindent
Let us consider four identical systems of type \eqref{eq:prototype} coupled into a dihedrally symmetric configuration as shown in Figure \ref{fig:d4}. The corresponding differential system reads
\begin{equation} \label{eq:exD4}
  \begin{cases} 
  & \frac{\partial u_1}{\partial t}-\bm\triangle u_1=f(\alpha, u_1(t))-\alpha(cg(u_1)+dg(u_2)+dg(u_4)),\\
  & \frac{\partial u_2}{\partial t}-\bm\triangle u_2=f(\alpha, u_2(t))-\alpha(dg(u_1)+cg(u_2)+dg(u_3)),\\
  & \frac{\partial u_3}{\partial t}-\bm\triangle u_3=f(\alpha, u_3(t))-\alpha(dg(u_2)+cg(u_3)+dg(u_4)),\\
  & \frac{\partial u_4}{\partial t}-\bm\triangle u_4=f(\alpha, u_4(t))-\alpha(dg(u_1)+dg(u_3)+cg(u_4)),\\
  &u_1(t)|_{\partial \Om}=u_2(t)|_{\partial \Om}=u_3(t)|_{\partial \Om}=u_4(t)|_{\partial \Om}=0 \;\; \text{for all }\; t\ge 0,
  \end{cases}
\end{equation}
where $u:=(u_1, u_2, u_3, u_4)^T \in \bc^4=(\br^2)^4$, $c,d \in \br$ (values of $c$ and $d$ will be specified later), $g: \bc \to \bc$ is an analytic odd function such that $g(0)=0$ and $g'(0)=1$. Notice that 
 \eqref{eq:exD4} has  symmetry group $\Gamma_2:=D_4\times \bz_2$. 
 \vs
 Consider the $\Gamma_2$-isotypic decomposition of $\bc^4=(\br^2)^4$:
 \[
 \bc^4=W_0 \oplus W_1 \oplus W_3,
 \]
 where the $\Gamma_2$-isotypic component $W_j (j = 0, 1, 3)$ is modeled on the irreducible $\Gamma_2$-representation, namely $\mathcal{U}^-_j (j = 0, 1, 3)$ (with the antipodal $\bz_2$-action).\footnote{The irreducible $\Gamma_2$-representations can be easily identified using the GAP package {\tt EquiDeg} (see Appendix \ref{app:D}  and Example \ref{pro:twisted-orbit-types}), where they are indexed by the order of their characters provided by GAP, i.e.  $\chi_4:=\chi_{\cU_0^-}$, $\chi_{10}=\chi_{\cU_1^-}$ and $\chi_2:=\chi_{\cU_3}^-$.}
The linearized system of \eqref {eq:exD4} at $(\alpha, \bm 0),\; \bm 0=(0, 0, 0, 0)$ is given by
\begin{equation} \label{exlinear}
\frac{\partial u}{\partial t}-\Delta u = D_{u} F(\alpha, \bm 0)u - \alpha C u
\end{equation}
where 
\begin{equation} F(\alpha, u)=\left[ \begin{array}{c} 
f(\alpha, u_1)\\
f(\alpha, u_2)\\
f(\alpha, u_3)\\
f(\alpha, u_4)
\end{array} \right], \quad C=\left[ \begin{array}{cccc}
c & d & 0 & d \\
d & c & d & 0 \\
0 & d & c & d \\
d & 0 & d & c 
\end{array} \right].
\end{equation}
\vs
Substituting $u(t)=e^{\lambda t}v(t)$ in equation \eqref{exlinear}, where $v(t) \in \bc^4, \lambda \in \bc$, one has for $A:=-\Delta$:
\begin{align*}
(\lambda \id + A) e^{\lambda t}v(t) &= D_{u} F(\alpha, \bm 0) e^{\lambda t}v(t) - \alpha C e^{\lambda t}v(t) \nonumber \\
\Rightarrow (\lambda \id + A) v(t) &= D_{u} F(\alpha, \bm 0) v(t) - \alpha C v(t). \label{eq:excharacter}
\end{align*}
\vs
Now, one can compute the spectrum of $C$ as 
\[
\sigma(C)=\{\mu_{0}=c+2d, \mu_{1}=c, \mu_{2}=c-2d\}.
\]
In particular, consider
\begin{equation} F(\alpha, u)=\left[ \begin{array}{c} 
-\alpha u_1 i+r(u_1)\\
 -\alpha u_2 i+r(u_2)\\
-\alpha u_3 i+r(u_3)\\
-\alpha u_4 i+r(u_4)
\end{array} \right].
\end{equation}
Put $\Gamma:=O(2)\times D_4\times \bz_2$ and notice that system \eqref{eq:exD4} is $\Gamma$-symmetric. Put
\[
G:=O(2)\times D_4\times \bz_2\times S^1
\]
and introduce the spaces
\begin{align*}
H&:=L^2(\Om;\bc^4),\\
D(A)&:= H_o^1(\Om;\bc^4)\cap H^2(\Om; \bc^4),\\
W&:=L^p(\Om;\bc^4).
\end{align*}
The operator $A: {\mathfrak{D}}(A)\subset H\to H$ is the negative Laplacian  $-\triangle$, and we define 
 the space $\mathscr E$ by \eqref{eq:spaces2}. 
 The space $V$  (i.e. the domain ${\mathfrak{D}}(A)$ equipped with the Sobolev norm) is a Hilbert $\Gamma$-representation with the $\Gamma$-isotypic decomposition
 \[
 V=\overline{ \bigoplus_{j=0}^\infty V_{j,0}}\oplus \overline{ \bigoplus_{j=0}^\infty V_{j,1}}\oplus \overline{ \bigoplus_{j=0}^\infty V_{j,3}}, 
 \]
 where, for $l=0,1,3$, 
 \begin{align*}
 V_{j,l}&:=\overline{\bigoplus_{n=1}^\infty \mathbb E_{n,j,l}},\\
 \mathbb E_{n,j,l}&:=\left\{\bm J_{j}(\bm s_{n,j}r)\Big(\cos(j\theta)\bm a+\sin(j\theta)\bm b\Big): \bm a,\, \bm b\in W_l\right\}.
 \end{align*}
 Clearly, $\mathscr E$ is a Hilbert $G$-representation 
with $G$-isotypic decomposition
 \begin{equation}\label{eq:G-iso-D4}
 \mathscr E=\overline{\bigoplus _{k=0}^\infty\bigoplus_{j=0}^\infty \mathscr E_{k,j,0}}\oplus  \overline{\bigoplus _{k=0}^\infty\bigoplus_{j=0}^\infty \mathscr E_{k,j,1}}\oplus \overline{\bigoplus _{k=0}^\infty\bigoplus_{j=0}^\infty \mathscr E_{k,j,3}},
 \end{equation}
where the $G$-isotypic component $\mathscr E_{k,j,l}$ given by
\[
\mathscr E_{k,j,l}:=\{\cos (kt)\bold a +\sin(kt)\bold b: \bold a,\, \bold b\in V_{j,l}\}
\]
is modeled on the irreducible $G$-representation $\cV_{k,j,l}$ being (for $k>0$) the complexification of
$\cW_j\otimes \cU_l^-$ with the $k$-folded $S^1$-action. By taking advantage of the $\Gamma$-isotypic decomposition  of $V$, one gets the following critical set for \eqref{eq:exD4}:
\[
\Lambda=\left\{(\alpha, \beta):  \alpha=\frac{\bm s_{n,j}}{a-\mu_l},\; \beta=\frac{\bm s_{n,j}b}{k(a-\mu_l)}, \; j=0,1,\dots,\, \text{and} \; k\in \bn,  l=0,1,3 \right\}.
\]
Moreover, one can verify that $\mathfrak t_{k,j,l}(\alpha_o,\beta_o)=-1$ for $k>0$ and   $ \alpha_o=\frac{\bm s_{n,j}}{a-\mu_l}$, $\beta_o=\frac{\bm s_{n,j}b}{k(a-\mu_l)}$, $l=0,1,3$ (here we assume that $c$, $d>0$ are such that $a-\mu_k\not=0$, $l=0,1,3$). 
\vs
The following local bifurcation result for system \eqref{eq:exD4} is a direct consequence of  Theorem \ref{th:main1}.
\vs
\begin{theorem}\label{th:main-local-D4}
   Assume that the function  $\mathfrak f:\br\times \br^2\to \br^2$  satisfies conditions {\rm (e0)--(e2)}, $g:\bc\to \bc$ is an analytic odd function, and $c$, $d$ are positive constants such that $a-\mu_l\not=0$, $l=0,1,3$. Then, for every $(\alpha_o,\beta_o)\in  \Lambda$, there exists a branch of non-constant periodic solutions to \eqref{eq:exD4} bifurcating from $(\alpha_o,\beta_o,0)$. Moreover, if \
   \[
  \alpha_o=\frac{\bm s_{n,j}}{a-\mu_l},\;\; \beta_o=\frac{\bm s_{n,j}b}{k(a-\mu_l)}
   \]
   for some $k>0$, $j>0$, then (see Example \ref{ex:o2xd4xz2}):
   \begin{itemize}
   \item[(a)] If $l=0$, then there exists  a branch of non-constant periodic solutions to \eqref{eq:exD4} bifurcating from $(\alpha_o,\beta_o,0)$ with symmetries at least
   \[
   \mathscr H_o:=(D_{2j}\times D_4^p)^{(D^{D_j}_{2j}\times _{\bz_2}^{D^d_4}D_4^p)}\times ^{\bz_k}\bz_{2k}.
   \]
   
   \item[(b)] If $l=1$, then there exists  a branch of non-constant periodic solutions to \eqref{eq:exD4} bifurcating from $(\alpha_o,\beta_o,0)$ with symmetries at least
   \begin{align*}
 \mathscr H_1:=(D_{2j}\times D_2^p)^{(D_{2j}^{D_j}\times _{\bz_2}^{D_2^d}D_2^p)}\times ^{\bz_k}\bz_{2k};\\
\mathscr H_2:=(D_{2j}\times D_2^p)^{(D_{2j}^{D_j}\times _{\bz_2}^{\wt D_2^d}\wt D_2^p)}\times ^{\bz_k}\bz_{2k};\\
\mathscr H_3:=(D_{2j}\times D_2^p)^{(D_{4j}^{\bz_j}\times _{D_4}^{\bz_2^-}D_4^p)}\times ^{\bz_k}\bz_{2k}.
   \end{align*}
   \item[(c)] If $l=3$, then there exists  a branch of non-constant periodic solutions to \eqref{eq:exD4} bifurcating from $(\alpha_o,\beta_o,0)$ with symmetries at least
   \[
   \mathscr K_o:= (D_{2j}\times D_2^p)^{(D_{2j}^{D_j}\times_{\bz_2}^{D_4^{ z}}D_4^p)}\times ^{\bz_k}\bz_{2k}.
   \]
   \end{itemize}
\end{theorem}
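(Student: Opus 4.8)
The plan is to derive Theorem~\ref{th:main-local-D4} directly from the Local Hopf Bifurcation Theorem~\ref{th:main1}, applied to system \eqref{eq:exD4} in its operator reformulation \eqref{eq:Hopf2} with $\Gamma=O(2)\times D_4\times\bz_2$ and $G=\Gamma\times S^1$. First I would check that the standing hypotheses \ref{i}, $(C)$, \ref{c0}, \ref{c1}--\ref{c4} are in force. Conditions \ref{i}, \ref{c0}, \ref{c1}, \ref{c2} and $(C)$ follow from Example~\ref{ex:parab} applied to the diagonal operator $A=-\bm\triangle$ on $H=L^2(\Om;\bc^4)$ and to the nonlinearity $(\alpha,u)\mapsto F(\alpha,u)-\alpha Cu$: the coupling term $-\alpha Cu$ is linear and bounded, so it changes neither the growth bound \eqref{eq:growth-condition}, nor the Carath\'eodory property, equivariance, or oddness, and it contributes only the bounded correction $-\alpha C$ to $B(\alpha)=\alpha(\bm\eta I-C)$. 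For \ref{c3} note that, on the isotypic block $V_{j,l}$ on which $C$ acts by the eigenvalue $\mu_l$, the operator $A-B(\alpha)\circ\ii$ is $-\bm\triangle-\alpha(\bm\eta-\mu_l)$, which is injective for every $\alpha$ since $\im\bm\eta=b\neq 0$ forces $\alpha(\bm\eta-\mu_l)\notin\sigma(-\bm\triangle)\subset(0,\infty)$; and for \ref{c4} it suffices that the set of $\alpha$'s admitting a center is $\{\bm s_{n,j}/(a-\mu_l)\}$, which is discrete because $\bm s_{n+1,j}-\bm s_{n,j}>\pi$ and $\bm s_{1,j}>2\sqrt{j+1}$ (the estimates already used for \eqref{eq:prototype}) and $a-\mu_l\neq 0$; by Lemma~\ref{lem:co-finite} this also gives finiteness of $\bm b(\alpha_o)$.

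Next I would compute the isotypic crossing numbers at a fixed $(\alpha_o,\beta_o)\in\Lambda$, say $\alpha_o=\bm s_{n,j}/(a-\mu_l)$ and $\beta_o=\bm s_{n,j}b/(k(a-\mu_l))$ for some $k\in\bn$, $j\ge 0$, $l\in\{0,1,3\}$. By the analogue of Lemma~\ref{lem:A_k}, the linearization $\mathscr A(\alpha,\beta)$ respects the $G$--isotypic decomposition \eqref{eq:G-iso-D4} and acts on $\mathscr E_{k',j',l'}$ as the characteristic operator $\triangle^{j',l'}_\alpha(ik'\beta)$, which on the eigenspace $\mathbb E_{n,j',l'}$ is multiplication by a scalar vanishing exactly when $\alpha(\bm\eta-\mu_{l'})-ik'\beta\in\sigma(-\bm\triangle)$. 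A one-dimensional inspection, exactly as for \eqref{eq:prototype} (where $b\neq 0$ forces a transversal crossing), then yields the $\cW_j\otimes\cU_l^-$--isotypic crossing number $\mathfrak t_{k,j,l}(\alpha_o,\beta_o)=-1$, all other isotypic crossing numbers at that point being zero.

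Then I would feed this into Theorem~\ref{th:main1}. Since $k\beta_o$ is a characteristic frequency, $\beta_o\in\bm b(\alpha_o)$ through $k$--folding. For any maximal twisted orbit type $(K)\in\Phi_{k,j,l}\subset\Phi_k$ supported in $\mathscr E_{k,j,l}$, the computational formula \eqref{eq:inv-b} gives $\mathfrak t^k_K(\alpha_o,\beta_o,0)=\zeta_{k,j,l}(K)\cdot\mathfrak t_{k,j,l}(\alpha_o,\beta_o)=-1\neq 0$, because each basic degree $\deg_{\cV_{k,j,l}}$ carries a nonzero coefficient on every one of its maximal orbit types. Hence Theorem~\ref{th:main1} produces, for each such $(K)$, a branch $\mathscr C\subset\mathscr P$ bifurcating from $(\alpha_o,\beta_o,0)$ with $\mathscr C^K$ containing a nontrivial continuum and $G_u\ge K$ along it; for $j=0$ this is already the full existence statement. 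For $j>0$ the remaining task is to enumerate $\Phi_{k,j,l}$, the maximal twisted isotropy subgroups of the irreducible $G=O(2)\times D_4\times\bz_2\times S^1$--representation $\cV_{k,j,l}$ (the $k$--folded complexification of $\cW_j\otimes\cU_l^-$). I would carry this out with the GAP package \texttt{EquiDeg} exactly as in Example~\ref{ex:o2xd4xz2}, obtaining the single maximal type $\mathscr H_o$ when $l=0$, the three types $\mathscr H_1,\mathscr H_2,\mathscr H_3$ when $l=1$ (one per reflection class of $D_4$ compatible with the two-dimensional $\cU_1^-$), and the single type $\mathscr K_o$ when $l=3$, controlled by the index-two kernel of the sign character $\cU_3^-$; substituting these into the previous sentence gives (a), (b), (c).

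The hard part will be this last step: correctly identifying the maximal twisted subgroups and verifying their amalgamated-product descriptions of the form $(\,\cdot\,)^{(\,\cdot\,)}\times^{\bz_k}\bz_{2k}$. The representation $\cV_{k,j,l}$ has a genuinely intricate lattice of twisted isotropy groups, intertwining the $O(2)$--rotational isotropy $D_{2j}$, the $D_4$--isotropy of $\cU_l^-$, the antipodal $\bz_2$, and the $\bz_k$--folding, and keeping the $O(2)$--, $D_4$-- and $S^1$--pieces mutually consistent in the amalgamated notation is the delicate, computation-heavy point that the cited GAP computation is designed to certify. Everything else — the verification of hypotheses in Step~1 and the crossing-number count in Step~2 — is routine, and the passage from nonzero bifurcation invariant to a branch with the prescribed symmetry is supplied verbatim by Theorem~\ref{th:main1}.
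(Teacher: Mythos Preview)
Your proposal is correct and follows essentially the same route as the paper: the paper states Theorem~\ref{th:main-local-D4} as a direct consequence of Theorem~\ref{th:main1}, using the crossing numbers $\mathfrak t_{k,j,l}(\alpha_o,\beta_o)=-1$ computed in the text immediately preceding the theorem and the maximal twisted orbit types worked out in Example~\ref{ex:o2xd4xz2}. Your write-up simply makes explicit the hypothesis verification and the logical flow that the paper leaves to the reader.
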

\vs
For the 
orbit types $\mathscr K_o$, 
$\mathscr H_o$, $\mathscr H_1$, $\mathscr H_2$ and $\mathscr H_3$,    one can formulate, based on Theorem \ref{th:main3}, a global result showing that there exist unbounded branches of non-constant solutions to \eqref{eq:exD4} (where  $k$ is assume to be odd) with these minimal orbit types. 
\vs

\section{Proofs of Main Results} \label{sec:main}
\subsection{Preparatory results}
In this subsection we assume that conditions \ref{i}, ($C$), \ref{c1}--\ref{c2} are satisfied. Suppose that $\beta_o\in \bm b(\alpha_o)$. Then by Lemma \ref{lem:cross-dom}, there exists $\ve>0$ such that the set $\mathscr U$ defined by \eqref{eq:setU} satisfies  \eqref{eq:lamb} and \eqref{eq:finite}. We can assume that $\ve<\delta$. Then, from Lemma \ref{lem:A_k}, it follows that for all $(\alpha,\beta)\in \br\times \br_+$ such that $0<\max\{|\alpha-\alpha_o|,|\beta-\beta_o)|\}<\ve$ one has
$\ker \mathscr A(\alpha,\beta)=\{0\}$. 
\vs
We introduce the following notation:
\begin{align}
\mathscr P_o&:=\{(\alpha,\beta)\in  \br\times \br_+: \max\{|\alpha-\alpha_o|,|\beta-\beta_o)|\}<\ve\},\label{eq:mathscr So}\\
\mathscr D_o&:=\{(\alpha,\beta,0)\in \br\times \br_+\times V: \max\{|\alpha-\alpha_o|,|\beta-\beta_o)|\}<\ve\},\label{eq:Do}
\end{align}
and for a fixed $\eta>0$ define
\begin{equation}\label{eq:setD}
\mathscr D_\eta:=\{(\alpha,\beta,v)\in  \br\times \br_+\times \mathscr E: (\alpha,\beta,0)\in \mathscr D_o,\; \|v\|<\eta\}.
\end{equation}
The set $\mathscr D_\eta$ is illustrated on Figure \ref{fig:setD}.

\begin{figure}[H]
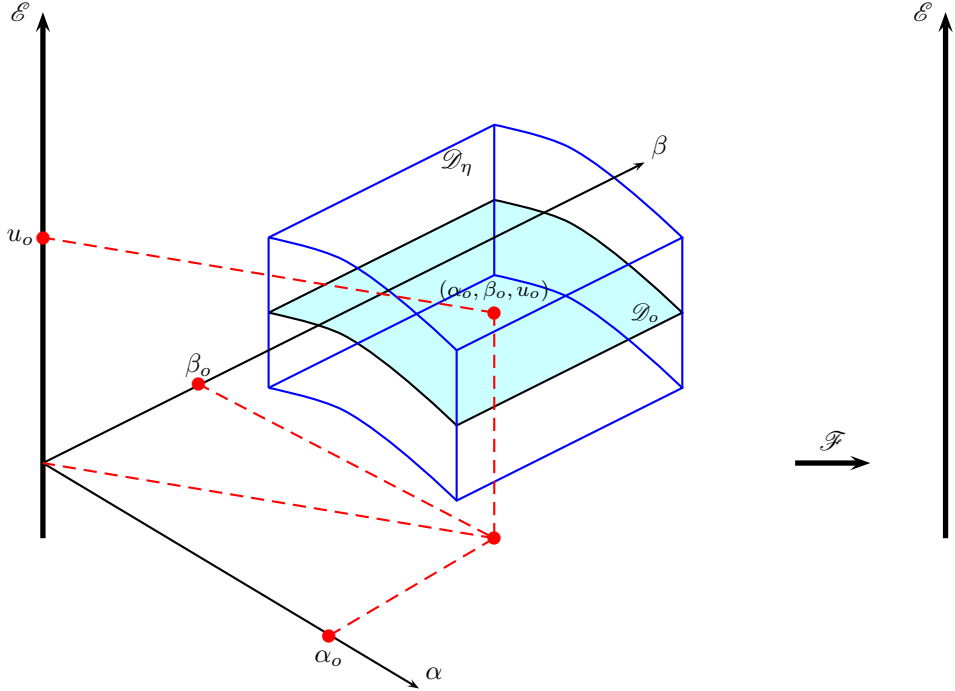

\vskip 5cm\hskip4.5cm
\pscurve[fillstyle=solid,fillcolor=lightblue](0,0)(1,-.3)(2.5,-1.5)(2.5,-1.5)(5.5,0.)(5.5,0.)(4,1.2)(3,1.5)(3,1.5)(0,0)
\rput(0,1){\pscurve[linecolor=blue](0,0)(1,-.3)(2.5,-1.5)(2.5,-1.5)(5.5,0.)(5.5,0.)(4,1.2)(3,1.5)(3,1.5)(0,0)
\psline[linecolor=blue](0,-2)(0,0)
\psline[linecolor=blue](2.5,-1.5)(2.5,-3.5)
\psline[linecolor=blue](3,-0.5)(3,1.5)
\psline[linecolor=blue](5.5,-2)(5.5,0)
}
\rput(0,-1){\pscurve[linecolor=blue](0,0)(1,-.3)(2.5,-1.5)(2.5,-1.5)(5.5,0.)(5.5,0.)(4,1.2)(3,1.5)(3,1.5)(0,0)}
\psline[linewidth=2pt]{->}(-3,-3)(-3,4)
\psline{->}(-3,-2)(5,2)
\psline{->}(-3,-2)(2,-5)
\psdots[dotsize=5pt,linecolor=red](3,0)(3,-3)(-.93,-.95)(.8,-4.3)(-3,1)
\psline[linecolor=red,linestyle=dashed](3,0)(3,-3)(-.93,-.95)
\psline[linecolor=red,linestyle=dashed](3,-3)(.8,-4.3)
\psline[linecolor=red,linestyle=dashed](3,-3)(-3,-2)
\rput(0,3){\psline[linecolor=red,linestyle=dashed](3,-3)(-3,-2)}
\rput(12,0){\psline[linewidth=2pt]{->}(-3,-3)(-3,4)\rput(-3.3,4){$\mathscr E$}}
\psline[linewidth=2pt]{->}(7,-2)(8,-2)
\rput(-3.3,4){$\mathscr E$}
\rput(7.5,-1.7){$\mathscr F$}
\rput(2.2,-4.8){$\alpha$}
\rput(5.2,2.2){$\beta$}
\rput(3.,0.3){\footnotesize$(\alpha_o,\beta_o,u_o)$}
\rput(-3.3,1){$u_o$}
\rput(-.93,-.7){$\beta_o$}
\rput(.8,-4.6){$\alpha_o$}
\rput(2.5,2.){$\mathscr D_\eta$}
\rput(5,0){\small$\mathscr D_o$}
\vskip5cm
\caption{The set $\mathscr D_\eta$.} \label{fig:setD}
\end{figure}

\vs
Put
\begin{equation}\label{eq:whD}
\wh{\mathscr D_\eta}:=\{(\alpha,\beta,v)\in  \br\times \br_+\times \mathscr E: (\alpha,\beta,0)\in \overline{\mathscr D_o},\; \|v\|=\eta\}.
\end{equation}
By an {\it auxiliary function} (with respect to $\mathscr D_\eta$) we mean a continuous $G$-invariant function $\theta:  \br\times \br_+\times \mathscr E\to \br$ such that
\begin{equation}\label{eq:auxiliary}
\begin{cases}
\theta(\alpha,\beta,u) >0 & \text{ for }\;\; (\alpha,\beta,u)\in \wh{\mathscr D_\eta},\\
\theta(\alpha,\beta,u) <0  & \text{ for }\;\; (\alpha,\beta,u)\in \overline{\mathscr D_o}\cap \partial {\mathscr D_\eta}.
\end{cases}
\end{equation}
Then for a given map $\mathfrak F: \br\times \br_+\times \mathscr E\to \mathscr E$ we will write $\mathfrak F^\theta$ to denote the map $\mathfrak F^\theta: \br\times \br_+\times \mathscr E\to \br\times \mathscr E$ given by 
\[
\mathfrak F^\theta(\alpha,\beta,u):=(\theta(\alpha,\beta,u),\mathfrak F(\alpha,\beta,u)).
\]
Notice that if  $\mathfrak F$ is a $G$-equivariant completely continuous field, then $\mathfrak F^\theta$ is also a $G$-equivariant completely continuous field.
\vs
Define by $\mathscr F_o:\overline{\mathscr D_\eta}\to \mathscr E$ the map
\begin{equation}\label{eq:F_o}
\mathscr F_o(\alpha,\beta,v):=\mathscr A(\alpha,\beta)v, \quad (\alpha,\beta,v)\in \overline{\mathscr D _\eta},
\end{equation}
where $\mathscr F$ is given by \eqref{eq:F}. Clearly, $\mathscr F_o$ is a $G$-equivariant compact field of $\overline{\mathscr D_\eta}$.

\vs
\begin{lemma} \label{lem:admiss} Under the assumptions  \ref{i},  ($C$), \ref{c0}--\ref{c2}, there exists  a sufficiently small $\eta>0$ such that for every auxiliary function $\theta$ with respect to  $\mathscr D_\eta$,  the homotopy $\mathfrak H^\theta:[0,1]\times \overline{\mathscr D_\eta} \to \br\times \mathscr E$ given by 
\[
\mathfrak H^\theta(\lambda,\alpha,\beta,u):=\Big(\theta(\alpha,\beta,u),\lambda \mathscr F(\alpha,\beta,u)+(1-\lambda) \mathscr F_o (\alpha,\beta,u)\Big),
\]
where  $(\lambda,\alpha,\beta,u)\in [0,1]\times \overline{\mathscr D_\eta}$, is a $\mathscr D_\eta$-admissible homotopy of $G$-equivariant compact fields.
\end{lemma}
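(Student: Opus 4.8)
The plan is to follow, almost verbatim, the argument used in the proof of Proposition~\ref{pro:lin-bif-invariant-twisted}, transported from the annulus-like domain there to the box-shaped neighbourhood $\mathscr D_\eta$ here. The structural part is immediate: $\theta$ is continuous and $G$-invariant, while $\mathscr F$ and $\mathscr F_o(\alpha,\beta,v)=\mathscr A(\alpha,\beta)v$ are $G$-equivariant completely continuous fields (cf.\ \eqref{eq:F}, \eqref{eq:F_o}, \eqref{eq:DerF}); hence, for every fixed $\lambda\in[0,1]$, the slice $\mathfrak H^\theta(\lambda,\cdot)=\bigl(\theta(\cdot),\lambda\mathscr F(\cdot)+(1-\lambda)\mathscr F_o(\cdot)\bigr)$ is a $G$-equivariant completely continuous field on $\overline{\mathscr D_\eta}$, and it depends continuously on $\lambda$. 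So the whole content of the lemma is $\mathscr D_\eta$-admissibility: I must produce $\eta_0>0$, independent of the choice of the auxiliary function $\theta$, such that for all $0<\eta\le\eta_0$ the homotopy has no zero on $[0,1]\times\partial\mathscr D_\eta$.

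I would begin by decomposing $\partial\mathscr D_\eta$ into the ``lid'' $\wh{\mathscr D_\eta}$ (where $\|u\|=\eta$) and the ``lateral collar'' $L_\eta:=\{(\alpha,\beta,u):\max\{|\alpha-\alpha_o|,|\beta-\beta_o|\}=\varepsilon,\ \|u\|\le\eta\}$. On $\wh{\mathscr D_\eta}$ the defining property \eqref{eq:auxiliary} gives $\theta>0$, and on $L_\eta\cap\{u=0\}$, which coincides with $\overline{\mathscr D_o}\cap\partial\mathscr D_\eta$, it gives $\theta<0$; in either case the first coordinate of $\mathfrak H^\theta$ is nonzero, so no zero can occur there. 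The only part of the boundary left is $L_\eta\cap\{0<\|u\|<\eta\}$, and on it $\theta$ is not controlled, so I have to show directly that the $\mathscr E$-component $\lambda\mathscr F(\alpha,\beta,u)+(1-\lambda)\mathscr A(\alpha,\beta)u$ cannot vanish on $[0,1]\times\bigl(L_\eta\cap\{0<\|u\|<\eta\}\bigr)$ once $\eta$ is small. Before doing this I shrink $\varepsilon$ in the definition of $\mathscr D_o$ if necessary so that $(\alpha_o,\beta_o)$ is the only critical point of $\mathscr A(\cdot,\cdot)$ in $\overline{\mathscr D_o}$; this is legitimate because $(\alpha_o,0)$ is an isolated center (condition \ref{c4}) and $\mathscr A(\alpha,\beta)$ fails to be an isomorphism exactly when $\ker\triangle^j_\alpha(ik\beta)\ne\{0\}$ for some $j,k$ (Lemma~\ref{lem:A_k}, together with Lemma~\ref{lem:cross-dom} and the boundedness of the set of critical frequencies). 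In particular, after this shrinking, $\ker\mathscr A(\alpha,\beta)=\{0\}$ for every $(\alpha,\beta)\in\partial\mathscr D_o$.

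The non-vanishing on the collar is then proved by contradiction, exactly as in Proposition~\ref{pro:lin-bif-invariant-twisted}. Suppose it fails; then there are $\eta_n\downarrow 0$ and $(\lambda_n,\alpha_n,\beta_n,v_n)$ with $\lambda_n\in[0,1]$, $\max\{|\alpha_n-\alpha_o|,|\beta_n-\beta_o|\}=\varepsilon$, $0<\|v_n\|\le\eta_n$, and $\lambda_n\mathscr F(\alpha_n,\beta_n,v_n)+(1-\lambda_n)\mathscr A(\alpha_n,\beta_n)v_n=0$. Setting $r(\alpha,\beta,v):=\mathscr F(\alpha,\beta,v)-\mathscr A(\alpha,\beta)v$ and $w_n:=v_n/\|v_n\|$, this relation rewrites as $\mathscr A(\alpha_n,\beta_n)w_n=-\lambda_n\,r(\alpha_n,\beta_n,v_n)/\|v_n\|$, whose right-hand side tends to $0$ by the differentiability of $\mathscr F$ at the trivial solutions in the uniform form $\|r(\alpha_n,\beta_n,v_n)\|/\|v_n\|\to0$. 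Passing to a subsequence, $\lambda_n\to\lambda_*$ and $(\alpha_n,\beta_n)\to(\alpha_*,\beta_*)\in\partial\mathscr D_o$; writing $\mathscr A(\alpha,\beta)=\id-\mathscr K(\alpha,\beta)$ with $\mathscr K(\alpha,\beta)v=\tfrac1\beta\L_\beta^{-1}N_{B(\alpha)}\bm{j}(v)$ compact (since $\bm{j}$ is compact) and norm-continuous in $(\alpha,\beta)$, the identity $w_n=\mathscr K(\alpha_n,\beta_n)w_n+\mathscr A(\alpha_n,\beta_n)w_n$ together with the relative compactness of $\{\mathscr K(\alpha_n,\beta_n)w_n\}$ (because $\mathscr K(\alpha_n,\beta_n)\to\mathscr K(\alpha_*,\beta_*)$ in operator norm with $\mathscr K(\alpha_*,\beta_*)$ compact and $\|w_n\|=1$) yields, along a further subsequence, $w_{n_k}\to w_o$ with $\|w_o\|=1$; continuity then gives $\mathscr A(\alpha_*,\beta_*)w_o=0$, contradicting $\ker\mathscr A(\alpha_*,\beta_*)=\{0\}$ on $\partial\mathscr D_o$. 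This produces the required $\eta_0$, and any $0<\eta\le\eta_0$ works, uniformly in $\theta$.

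I expect the main obstacle to be the uniform differentiability estimate $\|r(\alpha_n,\beta_n,v_n)\|/\|v_n\|\to0$ when $(\alpha_n,\beta_n)$ is only constrained to the compact set $\partial\mathscr D_o$ rather than converging to a single point: the pointwise hypothesis \eqref{eq:Edik} must be upgraded by unwinding $\mathscr F(\alpha,\beta,u)=u-\tfrac1\beta\L_\beta^{-1}N_F(\alpha,\bm{j}(u))$, using the uniform bound on $\|\L_\beta^{-1}\|$ for $\beta$ near $\beta_o$ (cf.\ \eqref{eq:resolv-delta}) and the compactness of $\bm{j}$, and estimating $\|N_F(\alpha,w)-N_{B(\alpha)}w\|_{\mathscr H}$ via \eqref{eq:Edik}. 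Everything else --- the splitting of the boundary, the sign analysis from \eqref{eq:auxiliary}, and the Riesz--Schauder extraction --- is routine and mirrors arguments already carried out above.
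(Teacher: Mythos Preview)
Your proposal is correct and follows essentially the same route as the paper: split $\partial\mathscr D_\eta$ into the lid and the lateral collar, use the sign conditions \eqref{eq:auxiliary} to dispose of the lid and the zero-section, and then run the contradiction/compactness argument on the collar to force a nonzero kernel of $\mathscr A(\alpha_*,\beta_*)$ at some $(\alpha_*,\beta_*)\in\partial\mathscr D_o$. The ``obstacle'' you flag is a non-issue in the order the paper uses (and that you can also adopt): extract a convergent subsequence $(\alpha_n,\beta_n)\to(\alpha_*,\beta_*)$ \emph{first} via compactness of $\partial\mathscr D_o$, and then apply the pointwise differentiability hypothesis at $(\alpha_*,\beta_*,0)$ to get $r(\alpha_n,\beta_n,v_n)/\|v_n\|\to 0$ --- no uniform estimate over $\partial\mathscr D_o$ is needed.
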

\begin{proof}
Notice that 
\[
\partial \mathscr D_\eta:= \left\{ (\alpha,\beta,u)\in \overline{\mathscr D_\eta}: (\alpha,\beta)\in \partial \mathscr D_o     \right\}\cup \wh{\mathscr D_\eta}.
\]
By definition of the auxiliary function, if $\mathfrak H^\theta (\lambda,\alpha,\beta,u)=0$, then 
$(\alpha,\beta)\in \partial \mathscr D_o$ and $u=v$, $0<\|v\|<\eta$. Assume for contradiction that the homotopy $\mathfrak H^\theta$ is not $\mathscr D_\eta$-admissible for all $\eta>0$. Then, there exists a sequence $(\lambda_n,\alpha_n,\beta_n,v_n) \in [0,1]\times \partial \mathscr D_o\times B_\eta(0)$ such that $\lambda_n\to \lambda'$, $(\alpha_n,\beta_n)\to (\alpha',\beta')$ and $v_n\to 0$ as $n\to\infty$, and 
\[
\mathscr A(\alpha_n,\beta_n)v_n+\lambda_n \left( \mathscr F(\alpha_n,\beta_n,v_n)-\mathscr A(\alpha_m,\beta_n)v_n\right)=0.\] 
Since
\[\lim_{n\to\infty} \frac{\mathscr F(\alpha_n,\beta_n,v_n)-\mathscr A(\alpha_m,\beta_n)v_n}{\|v_n\|}=0,
\]
we obtain
\begin{equation}\label{eq:lim}
\lim_{n\to\infty} \mathscr A(\alpha_n,\beta_n)\left(\frac{v_n}{\|v_n\|}\right)=0.
\end{equation}
Then, by a standard argument, there exists a subsequence  $\frac{v_{n_k}}{\|v_{n_k}\|}$ convergent to $v'\in \mathscr E$. Since $\|v'\|=1$, \eqref{eq:lim} implies that
\[
\mathscr A(\alpha',\beta')v'=0\;\;\; \Leftrightarrow\;\;\; v'\in \ker \mathscr A(\alpha',\beta'),
\]
which contradicts the fact that $ \ker \mathscr A(\alpha',\beta')=\{0\}$.
\end{proof}
\vs
Suppose 
\[
\bm b(\alpha_o)=\{\beta_1,\beta_2,\dots, \beta_m\},
\]
then take $\beta_o\in \bm b(\alpha_o)$ and find $\eta>0$ such that the homotopy $\mathfrak H^\theta$ (given in Lemma \ref{lem:admiss}) is admissible. Put $\mathscr D(\alpha_o,\beta_o):=\mathscr D_\eta$ and define 
\begin{equation}\label{eq:inv}
\omega(\alpha_o,\beta_o,0):=\gdeg(\mathscr F^\theta,\mathscr D(\alpha_o,\beta_o)).
\end{equation}
The element $ \omega(\alpha_o,\beta_o,0)\in A_1^t(G)$ is called the {\it local bifurcation invariant}. One can easily notice that by standard properties of the twisted equivariant degree, $\omega(\alpha_o,\beta_o,0)$ does not depend on the choice of $\ve>0$, $\eta>0$ or the auxiliary function $\theta$. 
\vs
\begin{proposition}\label{pro:cont} Under the assumptions   \ref{i}, ($C$), \ref{c0}--\ref{c2}, if $(\alpha_o,0)$ is an isolated center, ${\beta_0}\in \bm b(\alpha_o)$ and   
\[
\omega(\alpha_o,\beta_o,0)=n_1(H_1)+n_2(H_2)+\dots + n_s(H_s)
\]
is non-zero, i.e. $n_k\not=0$ for some $k\in \{1,2,\dots,s\}$, then \eqref{eq:Hopf} undergoes a Hopf bifurcation 
at  $(\alpha_o,0)$ and there exists a branch $\mathscr C\subset\overline{\mathscr P }$ (cf. \eqref{eq:setS}) such that $(\alpha_o,\beta_o,0) \in \overline{\mathscr C}$ and {$\partial \mathscr D_\eta \cap \mathscr C^{H_k}\not=\emptyset$ ($\mathscr D_\eta$ is given by \eqref{eq:setD}, where $\eta >0$ is sufficiently small)}. 
\end{proposition}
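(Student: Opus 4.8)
Proposition~\ref{pro:cont} is the Hopf-equation incarnation of Theorem~\ref{th:bif-cont-1}, and the proof I propose is obtained by transcribing the proof of that theorem, with $\mathfrak O$, $\mathscr F_\eta$, $\eta$, $\omega_G(\lambda_o)$ and $(\lambda_o,0)$ replaced respectively by $\mathscr D_\eta$ (see \eqref{eq:setD}), $\mathscr F^\theta$, the auxiliary function $\theta$ of \eqref{eq:auxiliary}, the invariant $\omega(\alpha_o,\beta_o,0)$ of \eqref{eq:inv}, and $(\alpha_o,\beta_o,0)$. First I would note that all the ingredients are already in place: since $(\alpha_o,0)$ is an isolated center, Lemmas~\ref{lem:iso-cent}--\ref{lem:co-finite} make $\bm b(\alpha_o)$ finite, so by Lemma~\ref{lem:A_k} and the remark following it the point $(\alpha_o,\beta_o,0)$ is an isolated critical point of $\mathscr F$; Lemma~\ref{lem:admiss} then guarantees that, for $\ve$ and $\eta>0$ small, $\mathscr F^\theta$ is $\mathscr D_\eta$-admissible, so $\omega(\alpha_o,\beta_o,0)=\gdeg(\mathscr F^\theta,\mathscr D_\eta)$ is well defined and independent of $\ve$, $\eta$ and $\theta$.

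\textbf{Contradiction argument.} Suppose there is no branch $\mathscr C\subset\overline{\mathscr P}$ with $(\alpha_o,\beta_o,0)\in\overline{\mathscr C}$ and $\mathscr C^{H_k}\cap\partial\mathscr D_\eta\ne\emptyset$. Writing $\mathrm{coeff}^{H_k}(\omega(\alpha_o,\beta_o,0))=d_{H_k}(\mathscr F^\theta,\mathscr D_\eta)$ and expanding $d_{H_k}$ by the recursive formula for the twisted degree coefficients (as in the proof of Theorem~\ref{th:bif-cont-1}), the hypothesis $n_k\ne 0$ produces a twisted orbit type $(\mathcal K)\ge(H_k)$ with $\deg_{S^1}\!\big((\mathscr F^\theta)^{\mathcal K},\mathscr D_\eta^{\mathcal K}\big)\ne0$. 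Put $K^{\mathcal K}:=\mathscr F^{-1}(0)\cap\overline{\mathscr D_\eta^{\mathcal K}}$, $B_0:=\big(\br\times\br_+\times\{0\}\big)\cap\overline{\mathscr D_\eta^{\mathcal K}}$ and $B_1:=\{(\alpha,\beta,v)\in\overline{\mathscr D_\eta^{\mathcal K}}:\|v\|=\eta\}$. Since every trivial solution in $\overline{\mathscr D_o}$ other than $(\alpha_o,\beta_o,0)$ is non-critical, the implicit function theorem forbids bifurcation from it; hence a connected component of $K^{\mathcal K}$ joining $B_0$ and $B_1$ would necessarily pass through $(\alpha_o,\beta_o,0)$ and would be (part of) a branch contradicting the hypothesis, so no such component exists. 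By the equivariant Kuratowski Lemma (Theorem~\ref{th:G-Kur}) there are disjoint open $S^1$-invariant sets $\mathscr U_0\supset B_0$, $\mathscr U_1\supset B_1$ with $B_0\cup B_1\cup K^{\mathcal K}\subset\mathscr U_0\cup\mathscr U_1$. Then, exactly as in the proof of Theorem~\ref{th:bif-cont-1}, one picks an $S^1$-invariant Urysohn function $\mu$ separating $(K^{\mathcal K}\cap\mathscr U_0)\cup B_0$ from $(K^{\mathcal K}\cap\mathscr U_1)\cup B_1$, sets $\tau(\alpha,\beta,v):=\|v\|-\mu(\alpha,\beta,v)\,\eta$ (an invariant auxiliary function for $\mathscr F$ on $\mathscr D_\eta^{\mathcal K}$), invokes independence of the cumulative $S^1$-degree of the auxiliary function to get $\deg_{S^1}\!\big((\mathscr F^{\tau})^{\mathcal K},\mathscr D_\eta^{\mathcal K}\big)=\deg_{S^1}\!\big((\mathscr F^{\theta})^{\mathcal K},\mathscr D_\eta^{\mathcal K}\big)\ne0$, and applies the existence property: a zero $(\alpha_*,\beta_*,v_*)$ of $(\mathscr F^{\tau})^{\mathcal K}$ satisfies $\|v_*\|=\mu(\alpha_*,\beta_*,v_*)\,\eta$, so if it lies in $K^{\mathcal K}\cap\mathscr U_0$ then $\|v_*\|=\eta$ (whence it also lies in $B_1\subset\mathscr U_1$), and if it lies in $K^{\mathcal K}\cap\mathscr U_1$ then $\|v_*\|=0$ (whence it also lies in $B_0\subset\mathscr U_0$) --- a contradiction since $\mathscr U_0\cap\mathscr U_1=\emptyset$.

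\textbf{Conclusion and main difficulty.} The argument produces a continuum in $\mathscr F^{-1}(0)^{\mathcal K}$ joining $(\alpha_o,\beta_o,0)$ to $\partial\mathscr D_\eta$ and consisting of functions with isotropy $\ge\mathcal K\ge H_k$; because $(\mathcal K)$ is carried by the $k$-folded isotypic component $\mathscr E_{k,j}$ with $k>0$ entering the finite-dimensional reduction behind \eqref{eq:inv} (the $k=0$ modes contribute trivially, as $\mathscr A_{0,j}(\alpha_o,\beta_o)$ is an isomorphism at the non-degenerate equilibrium), these functions are non-constant $2\pi$-periodic (cf.\ Remark~\ref{rem:maximal-types}); together with non-degeneracy of $(\alpha_o,0)$ ruling out nearby non-trivial equilibria, the continuum (minus its endpoint) lies in $\mathscr P$, so its connected component $\mathscr C$ in $\overline{\mathscr P}$ is a branch in the sense of Definition~\ref{def:Hopf} with $(\alpha_o,\beta_o,0)\in\overline{\mathscr C}$ and $\mathscr C^{H_k}\cap\partial\mathscr D_\eta\ne\emptyset$; in particular \eqref{eq:Hopf} undergoes a Hopf bifurcation at $(\alpha_o,0)$ with limit frequency $\beta_o$, which is the assertion. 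The degree-theoretic core is a mechanical transfer of Theorem~\ref{th:bif-cont-1}; the one place requiring genuine care is precisely this last step --- establishing that $\omega(\alpha_o,\beta_o,0)$ is supported on twisted orbit types with positive folding, so that every $H_k$ with $n_k\ne 0$ really detects a non-constant solution and the continuum produced lands in $\overline{\mathscr P}$ rather than merely in the full solution set of \eqref{eq:Hopf2}.
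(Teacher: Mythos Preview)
Your argument is correct and follows the same Kuratowski--Urysohn--existence template as the paper's proof, but you implement it slightly differently. The paper applies the separation argument directly to the full solution set $K=\mathscr F^{-1}(0)\cap\overline{\mathscr D_\eta}$, with $A_0=\wh{\mathscr D_\eta}$ and $A_1=\overline{\mathscr D_o}$, and derives the contradiction from non-vanishing of the full twisted $G$-degree $\gdeg(\mathscr F^{\theta'},\mathscr D_\eta)$; you instead transplant the proof of Theorem~\ref{th:bif-cont-1} verbatim, first passing to a twisted type $(\mathcal K)\ge(H_k)$ with non-zero cumulative $S^1$-degree on the $\mathcal K$-fixed-point space and running the Kuratowski argument there. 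Your route has the advantage that the $H_k$-specific conclusion $\partial\mathscr D_\eta\cap\mathscr C^{H_k}\ne\emptyset$ falls out immediately, whereas the paper's version yields a branch in the full space and leaves the fixed-point refinement implicit. You are also more explicit than the paper about why the resulting continuum lies in $\overline{\mathscr P}$ rather than merely in $\mathscr F^{-1}(0)$ (non-degeneracy excludes nearby non-trivial equilibria, and the twisted orbit types supporting $\omega(\alpha_o,\beta_o,0)$ have positive folding); this is a genuine point the paper glosses over.
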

\begin{proof}
We put $\beta_o:=\beta_k$ and consider the sets $\mathscr D_\eta=\mathscr D(\alpha_o,\beta_o)$ (see \eqref{eq:setD}), $\mathscr D_o$ given by \eqref{eq:Do} and $\wh {\mathscr D_\eta}$ given by \eqref{eq:whD}. We put $A_0:=\wh {\mathscr D_\eta}$, $A_1:=\overline{\mathscr D_o}$ and $K:=\mathscr F^{-1}(0)\cap\overline{\mathscr D_\eta}$, see Figure \ref{fig:setDn}.

\begin{figure}[H]
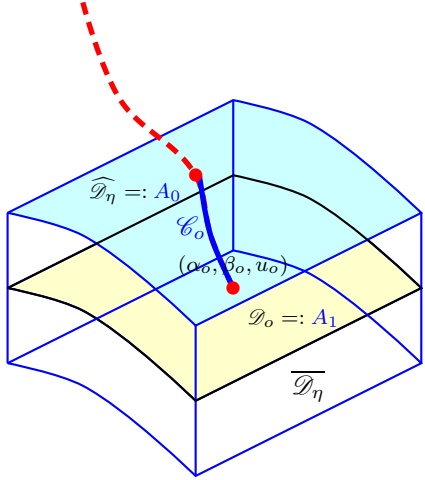

\vskip 3.7cm\hskip4.5cm
\pscurve[linecolor=blue,fillstyle=solid,fillcolor=lightyellow](0,0)(1,-.3)(2.5,-1.5)(2.5,-1.5)(5.5,0.)(5.5,0.)(4,1.2)(3,1.5)(3,1.5)(0,0)
\rput(0,1){\pscurve[linecolor=blue,fillstyle=solid,fillcolor=lightblue](0,0)(1,-.3)(2.5,-1.5)(2.5,-1.5)(5.5,0.)(5.5,0.)(4,1.2)(3,1.5)(3,1.5)(0,0)
\psline[linecolor=blue](0,-2)(0,0)
\psline[linecolor=blue](2.5,-1.5)(2.5,-3.5)
\psline[linecolor=blue](3,-0.5)(3,1.5)
\psline[linecolor=blue](5.5,-2)(5.5,0)
}
\pscurve(0,0)(1,-.3)(2.5,-1.5)(2.5,-1.5)(5.5,0.)(5.5,0.)(4,1.2)(3,1.5)(3,1.5)(0,0)
\rput(0,-1){\pscurve[linecolor=blue](0,0)(1,-.3)(2.5,-1.5)(2.5,-1.5)(5.5,0.)(5.5,0.)(4,1.2)(3,1.5)(3,1.5)(0,0)}
\pscurve[linewidth=2pt,linecolor=red,linestyle=dashed](3,0)(2.7,0.7)(2.5,1.5)(1.5,2.5)(1,3.8)
\pscurve[linewidth=2pt,linecolor=blue](3,0)(2.7,0.7)(2.55,1.5)
\psdots[dotsize=5pt,linecolor=red](3,0)(2.5,1.5)
\rput(3.,0.3){\footnotesize$(\alpha_o,\beta_o,u_o)$}
\rput(2.44,.8){\blue$\mathscr C_o$}
\rput(1.7,1.3){\footnotesize $\wh{\mathscr D_\eta}=:{\blue A_0}$}
\rput(4,-1.3){$\overline{\mathscr D_\eta}$}
\rput(3.8,-.4){\footnotesize $\mathscr D_o=:{\blue A_1}$}
\vskip2.8cm
\label{fig:setDn}
\caption{{The exit point of the branch $\mathscr C$.}}
\label{fig:setDn}
\end{figure}

\vs
Notice that $K\cap A_1\not=\emptyset$ and $\gdeg(\mathscr F^\theta, \mathfrak D_\eta)\not=0$, hence $K\cap A_0\not=\emptyset$. Assume for contradiction that {there is no compact connected set} $\mathscr C_o\subset K$ such that $\mathscr C_o\cap A_0\not=\emptyset\not=\mathscr C_o\cap A_1$ does not exist. Then, by Theorem \ref{th:G-Kur} (see Appendix \ref{appendix:Ref}), there exist two disjoint open sets $\mathscr U_0$ and $\mathscr U_1$ such that 
\[
A_0\subset \mathscr U_0, \quad A_1\subset \mathscr U_1,\quad  A_0\cup A_1\cup K\subset \mathscr U_0\cup\mathscr U_1.
\]
Put $K_0:=K\cap \mathscr U_0$ and $K_1:=K\cap \mathscr U_1$ and assume that $\mu:\br\times \br_+\times\mathscr E\to [0,1]$ is a continuous $G$-invariant function such that
\[
\mu(\alpha,\beta,u)=\begin{cases}
1 &\text{ if }\;\; (\alpha,\beta,u)\in K_0\cup A_0,\\
0 &\text{ if }\;\; (\alpha,\beta,u)\in K_1\cup A_1.\
\end{cases}
\]
Define the auxiliary function $\theta':\overline{\mathscr D_\eta}\to \br$ by 
\[
\theta'(\alpha,\beta,v):= \|v\|-\mu(\alpha,\beta,v)\eta.
\]
Since 
\[
\gdeg(\mathscr F^\theta,\mathscr D_\eta)=\gdeg(\mathscr F^{\theta'},\mathscr D_\eta)\not=0,
\]
the system 
\[
\begin{cases}
\mathscr F(\alpha,\beta,u)=0,\\
\theta'(\alpha,\beta, u)=0,
\end{cases} 
\]
has a solution $(\alpha',\beta', u')\in \mathscr D_\eta$. Therefore, either $(\alpha',\beta', u')\in K_0$ or $(\alpha',\beta', u')\in K_1$, which is impossible because in both cases $\theta'(\alpha',\beta', u')\not=0$. The contradiction proves the proposition.
\end{proof}
\vs

The local bifurcation invariant $\omega(\alpha_o, \beta_o,0)$ is closely related to the concept of crossing numbers $\mathfrak t_j(\alpha_o,\beta_k,0)$ (see \eqref{eq:cross-j} and \eqref{eq:inv-b}).
\vs
\begin{proposition}\label{pro-cross-inv} Under the assumptions  of Proposition \ref{pro:cont},  one has
\begin{equation}\label{eq:invar-cross}
\omega(\alpha_o,\beta_o,0) =\prod_{j=0}^r\text{\rm$\Gamma$-deg\,}(\mathscr A_{0,j}(\alpha_o,\beta_o),B^j_1(0))\cdot \sum_{k=1}^\infty \sum_{j=0}^r \mathfrak t_j(\alpha_o,k\beta_o,0)\deg_{\cV_{k,j}},
\end{equation}
where $B^j_1(0)$ stands for the unit ball in the $\Gamma$-isotypic component $V_j$, $\text{\rm$\Gamma$-deg\,}$ is the Leray-Schauder $\Gamma$-equivariant degree, $\deg_{\cV_{k,j}}$ is the basic twisted $G$-equivariant degree for the irreducible $G$-representation $\mathcal V_{k,j}$, and the (finite) summation is taken in the $A(\Gamma)$ module $A_1^t(G)$ (see Appendix for more details, definition and additional properties of the equivariant degree).
\end{proposition}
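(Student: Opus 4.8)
The plan is to recognize $\omega(\alpha_o,\beta_o,0)$ as a linearized two-parameter bifurcation invariant of the type treated in Section~\ref{sec:equiv-bif-Twisted}, run the reduction machinery developed there, and then read off the resulting formula \eqref{eq:omega-twisted-final} in the present Hopf setting by means of Lemma~\ref{lem:A_k}. First I would linearize: by \eqref{eq:inv} one has $\omega(\alpha_o,\beta_o,0)=\gdeg(\mathscr F^\theta,\mathscr D(\alpha_o,\beta_o))$, and Lemma~\ref{lem:admiss} provides, for $\eta>0$ small, a $\mathscr D_\eta$-admissible homotopy of $G$-equivariant compact fields between $\mathscr F^\theta$ and $\mathscr F_o^\theta$ with $\mathscr F_o(\alpha,\beta,v)=\mathscr A(\alpha,\beta)v$; hence $\omega(\alpha_o,\beta_o,0)=\gdeg(\mathscr F_o^\theta,\mathscr D_\eta)$. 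Since $\ker\mathscr A(\alpha,\beta)=\{0\}$ for $0<\max\{|\alpha-\alpha_o|,|\beta-\beta_o|\}<\ve$, the box $\mathscr D_o$ contains no critical point other than $(\alpha_o,\beta_o)$, so $\gdeg(\mathscr F_o^\theta,\mathscr D_\eta)$ is exactly a linearized bifurcation invariant as in Section~\ref{sec:equiv-bif-Twisted}, with the box $\mathscr D_o$ in the role of the disc $B_\delta$ there.

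Next I would carry out the chain of reductions of Section~\ref{sec:equiv-bif-Twisted}. The auxiliary-function and excision step from the proof of Proposition~\ref{pro:lin-bif-invariant-twisted} (using the excision property $(\mathfrak T_5)$ and that $\mathscr A(\alpha,\beta)$ is an isomorphism on $\overline{\mathscr D_o}$ away from the critical point) reduces the domain to an annulus-type neighborhood. The finite-dimensional reduction of Proposition~\ref{prop:a-map-finite}, applied over the compact parameter set $\overline{\mathscr D_o}$, together with the homotopy and suspension properties \ref{T2} and \ref{T9}, replaces $\mathscr A$ by a map equal to the identity off a finite-dimensional $G$-invariant subspace $W$; here $W$ may be taken finite-dimensional because the degenerate blocks have finite-dimensional kernels (Fredholmness, plus the fact that only finitely many resonant frequencies occur, Lemma~\ref{lem:co-finite}) and the relevant unstable spectral subspaces are finite-dimensional by compactness of $\ii$. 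Applying Theorem~\ref{th:comp-twisted} blockwise along the $G$-isotypic decomposition of $W$ then yields \eqref{eq:omega-twisted-final}:
\[
\omega(\alpha_o,\beta_o,0)=\prod_{j=0}^r{\bm\delta_j}\cdot\sum_{k\ge 1}\ \sum_{j=0}^r d_{k,j}\,\deg_{\cV_{k,j}},
\]
where ${\bm\delta_j}$ is the degree of the $k=0$ block and $d_{k,j}$ the winding-number coefficient of the $k$-th block.

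It remains to identify these coefficients in the Hopf setting. By Lemma~\ref{lem:A_k}, for $k=0$ the block is $\mathscr A_{0,j}(\alpha,\beta)=\id-A^{-1}B(\alpha)\ii|_{V_j}$, which is independent of $\beta$ and, by the non-degeneracy condition \ref{c3}, an isomorphism for $\alpha$ near $\alpha_o$; since $S^1$ acts trivially on $\mathscr E_{0,j}=V_j$, the twisted $G$-degree of this block reduces to the Leray--Schauder $\Gamma$-degree, so ${\bm\delta_j}=\text{\rm$\Gamma$-deg\,}(\mathscr A_{0,j}(\alpha_o,\beta_o),B^j_1(0))$. For $k>0$, under the $G$-equivariant identification $\mathscr E_{k,j}\simeq V_j^c$ with the $k$-folded $S^1$-action, Lemma~\ref{lem:A_k} gives $\mathscr A_{k,j}(\alpha,\beta)=\triangle^j_\alpha(ik\beta)$; the coefficient $d_{k,j}$ of Theorem~\ref{th:comp-twisted}, by the standard spectral-flow interpretation of the winding number of the loop $(\alpha,\beta)\mapsto\triangle^j_\alpha(ik\beta)$ around $(\alpha_o,\beta_o)$, equals the net number --- counted with $\cV_j$-isotypic multiplicity --- of characteristic values of $\triangle^j_\alpha$ that cross the imaginary axis inside the box $\mathscr U_l$ near $ik\beta_o$ as $\alpha$ increases through $\alpha_o$, i.e. $d_{k,j}=\mathfrak t_j(\alpha_o,k\beta_o,0)$ (with $\mathfrak t_j(\alpha_o,k\beta_o,0)=0$ whenever $k\beta_o\notin\bm b(\alpha_o)$, so only finitely many summands are nonzero). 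Substituting these into the displayed reduction formula gives \eqref{eq:invar-cross}.

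I expect the principal obstacle to be this last identification for $k>0$: one must carefully match the orientation conventions of the basic twisted degree $\deg_{\cV_{k,j}}$ and of the crossing number $\mathfrak t_j$ so that the winding number of $(\alpha,\beta)\mapsto\triangle^j_\alpha(ik\beta)$ about $(\alpha_o,\beta_o)$ comes out as precisely the $\alpha$-directional spectral flow with the correct sign, and one must make precise that, for the purpose of the degree, the box geometry of $\mathscr D_o$ is equivalent to the circular model underlying Theorem~\ref{th:comp-twisted}. A secondary, more bookkeeping-type difficulty is the finite-reduction/suspension argument showing that the (a priori infinite-dimensional) $\Gamma$-index ${\bm\delta_j}$ of $\mathscr A_{0,j}$ is legitimately represented by a finite-dimensional $\Gamma$-degree and contributes as a multiplicative constant.
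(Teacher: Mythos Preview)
Your outline is correct and aligns with the paper's proof: linearize via Lemma~\ref{lem:admiss}, reduce to finite dimensions blockwise along the $G$-isotypic decomposition \eqref{eq:iso-E}, split off the $k=0$ part as a $\Gamma$-degree factor, and identify the $k>0$ coefficients with crossing numbers using Lemma~\ref{lem:A_k}. The paper carries this out more directly than you propose: rather than routing through the abstract disc/annulus model of Section~\ref{sec:equiv-bif-Twisted} and Theorem~\ref{th:comp-twisted}, it freezes the parameters $(\alpha,\beta)$ to $(\alpha_o,\beta_o)$ on the $k=0$ blocks and on all $k>N$ blocks by explicit homotopies, then invokes the product property~\ref{T10} and the Splitting Lemma of \cite{AED} to obtain the factorization over $(k,j)$ in the box geometry as it stands.

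On the step you flag as the main obstacle---showing $d_{k,j}=\mathfrak t_j(\alpha_o,k\beta_o,0)$---the paper does not appeal to a generic ``spectral-flow interpretation'' but gives a concrete boundary-degree argument: it introduces the three-variable map
\[
\Upsilon_{k,j}(\alpha,\beta,\tau)={\det}_{\mathbb C}\bigl(\triangle_\alpha^j(\tau+ik\beta)\bigr)
\]
on the box $\Omega=\{|\alpha-\alpha_o|<\ve,\ |\beta-\beta_o|<\ve,\ 0<\tau<\ve\}$, observes that its zeros on $\partial\Omega$ lie only in the three faces $D_-$, $D_o$, $D_+$ (by \eqref{eq:lamb}), and uses the resulting additivity of boundary Brouwer degrees to get $\deg(\Upsilon_{k,j}|_{D_-})-\deg(\Upsilon_{k,j}|_{D_+})=\deg(\Upsilon_{k,j}|_{D_o})$. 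The left side is $\mathfrak t_j^-(\alpha_o,k\beta_o,0)-\mathfrak t_j^+(\alpha_o,k\beta_o,0)$ by definition, and the right side is $\deg({\det}_{\mathbb C}\circ\mathscr A_{k,j},D_o)=\mathfrak k_{k,j}$. This simultaneously handles the orientation/sign matching and the box-vs-circle issue you anticipated, so your proposal would be completed by supplying exactly this computation.
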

\begin{proof} The result presented in this proposition was proved for finite-dimensional $V$ in \cite{AED}, Proposition 9.34. In the setting relevant to our discussion, one needs to slightly modify the original arguments.   For the sake of clarity and completeness, we describe below the main ideas of the proof. \\

\noi {\it Step 1: Finite-dimensional reduction.} We start the computation of $\omega(\alpha_o,\beta_o,0)$ with constructing a finite-dimensional reduction of the map $\mathscr F^\theta$ (cf. \eqref{eq:inv}).
By Lemma \ref{lem:admiss}, one has 
\[
\omega(\alpha_o,\beta_o,0)=\gdeg(\mathscr F_1^\theta,\mathscr D(\alpha_o,\beta_o)),
\]
where
\[
\mathscr F_1^\theta(\alpha,\beta,u):=\Big((\theta(\alpha,\beta,u),\mathscr F_o (\alpha,\beta,u)\Big)
\]
(see \eqref{eq:F_o} for the definition of  $\mathscr F_o $). By using the $G$-isotypic decomposition of 
$\mathscr E$ (see \eqref{eq:iso-E}-\eqref{eq:iso-E-kj}), one has
\[
\mathscr F_o(\alpha,\beta,v_{k,j})= \mathscr A_{k,j}(\alpha,\beta)v_{k,j},
\]
where $v_{k,j}\in \mathscr E_{k,j}$. Since $(\alpha_o,0)$ is a non-degenerate isolated center,  $ \mathscr A_{0,j}(\alpha,\beta):V\to V$ is an isomorphism for all $(\alpha,\beta)\in \mathscr S_o$ (see \eqref{eq:mathscr So}). For $v_{k,j}\in \mathscr E_{k,j}$, put 
\[
\mathscr F'_o(\alpha,\beta,v_{k,j})=\begin{cases} \mathscr A_{0,j}(\alpha_o,\beta_o)v_{0,j} &\text{ if } j=0,1,\dots,r,\\
 \mathscr A_{k,j}(\alpha,\beta)v_{k,j} &\text{ if } k\ge 1, \; j=0,1,\dots,r,
 \end{cases}
\] 
and extend this map by continuity to the map 
\[
\mathscr F'_o(\alpha,\beta,v)=\sum_{j,k} \mathscr F'_o(\alpha,\beta, v_{k,j}), \qquad v=\sum_{k,j} v_{k,j}
\]
(we use the same letter for the extension).
Therefore, by homotopy property of the equivariant degree, one obtains 
\[
\omega(\alpha_o,\beta_o,0)=\gdeg(\mathscr F_2^\theta,\mathscr D(\alpha_o,\beta_o)),
\]
where 
\[
\mathscr F_2^\theta(\alpha,\beta,u):=\Big((\theta(\alpha,\beta,u),\mathscr F'_o (\alpha,\beta,u)\Big).
\]
Since the operator  $\mathscr F'_o(\alpha_o,\beta_o,\cdot ):\mathscr E\to \mathscr E$ is of the form $\id +\mathscr K$, where $\mathscr K$ is a compact operator, there exists  $N>0$ such that for all $k>N$, the operators  $\mathscr A_{k,j}(\alpha_o,\beta_o)$, $j=0,1,\dots, r$, are isomorphisms. Therefore, by using a deformation of $\mathscr S_o$ to the point $(\alpha_o,\beta_o)$, one obtain that the map $\mathscr F_2$ is $G$-equivariantly and admissibly homotopic to the map
\[
\mathscr F_3^\theta(\alpha,\beta,u):=\Big((\theta(\alpha,\beta,u),\mathscr F''_o (\alpha,\beta,u)\Big),
\] 
where 
\[
\mathscr F''_o(\alpha,\beta,v_{k,j})=\begin{cases}   \mathscr A_{0,j}(\alpha_o,\beta_o)v_{0,j} &\text{ if } j=0,1,\dots,r,\\
 \mathscr A_{k,j}(\alpha_o,\beta_o)v_{k,j}& \text{ if } k>N, \; j=0,1,\dots, r ,\\
 \mathscr A_{k,j}(\alpha,\beta)v_{k,j}& \text{  if } k\le N, \; j=0,1,\dots,r,
\end{cases}
\]
where $v_{k,j}\in \mathscr E_{k,j}$. We extend $\mathscr F''_o$ by continuity as above. 
Finally, since for $k>N$, the operators  $\mathscr A_{k,j}(\alpha_o,\beta_o)$ can be connected by a path in $GL^\Gamma(\cV_{k,j},\bc)$ with $\id_{|\cV_{k,j}} $, one obtains 
\[
\omega(\alpha_o,\beta_o,0)=\gdeg(\mathscr F_4^\theta,\mathscr D(\alpha_o,\beta_o)),\]
where 
\[\mathscr F_4^{\theta}(\alpha,\beta,\cdot)=\prod_{j=0}^r \mathscr A_{0,j}(\alpha_o,\beta_o)\times \Big(\theta(\alpha,\beta,\cdot),\mathscr F'''_o (\alpha,\beta,\cdot)\Big)
\]
and the map
 $\mathscr F'''_o(\alpha,\beta,\cdot ): \mathscr E\to \mathscr E$ is defined by 
\[
\mathscr F'''_o(\alpha,\beta,v_{k,j})= 
\begin{cases}
v_{k,j}&\text{ if } k\ge N ,\; j=0,1,\dots,r,\\
 \mathscr A_{k,j}(\alpha,\beta)v_{k,j}& \text{  if }\;1\le k\le N, \; j=0,1,\dots,r, 
\end{cases}
\]
($v_{k,j}\in \mathscr E_{k,j}$). Put 
\begin{equation}\label{eq:E-o}
\mathscr E_o:=\bigoplus_{k=1}^N\bigoplus_{j=0}^r \mathscr E_{k,j}.
\end{equation}
Then, for every $v\in \mathscr E_o$, one has  $\mathscr F'''_o(\alpha,\beta,v)\in \mathscr E_o$.
By the product property of the equivariant degree, one has 
\begin{align*}
\omega(\alpha_o,\beta_o,0)&=
\gdeg(\mathscr F_4^\theta, D(\alpha_o,\beta_o))\\
&=\prod_{j=0}^r\text{\rm$\Gamma$-deg\,}(\mathscr A_{0,j}(\alpha_o,\beta_o),B^j_1(0))\cdot \gdeg(\mathscr F_*^\theta, D_*(\alpha_o,\beta_o)),
\end{align*}
where 
\begin{equation}\label{eq:D-ast}
D_*(\alpha_o,\beta_o):=\{(\alpha,\beta,v)\in \mathscr S_o\times \mathscr E_o: \|v\|<\eta\}
\end{equation}
and 
\begin{equation}\label{eq:F-star}
\mathscr F_*^\theta(\alpha,\beta,v)= \left(\theta(\alpha,\beta,v),\sum_{j=0}^r\sum_{k=1}^N
 \mathscr A_{k,j}(\alpha,\beta)v_{k,j}\right)= \left(\theta(\alpha,\beta,v),\sum_{j=0}^r\sum_{k=1}^N
\triangle_\alpha^j(ik\beta)v_{k,j}\right)
\end{equation}
for $v\in \mathscr E_o$, $\|v\|\le \eta$,  $(\alpha,\beta)\in \mathscr S_o$. 
\vs
Notice that the characteristic operator $\triangle_{\alpha_o}(\lambda):V^c\to V^c$ is a compact perturbation of identity depending analytically on $\lambda$. Take an arbitrary $\zeta>0$ and consider the characteristic  decompositions of $V^c$ for $\triangle_{\alpha_o}$. Then, there exists a finite-dimensional $\Gamma$-invariant subspace $V_*$ of $V^c$ such that
\[\forall_{\beta_k\in \bm b(\alpha_o)}\;\;\; \ker \triangle_{\alpha_o}(i\beta_k)\subset V_*\]
and 
\[
\forall_{(\alpha,\beta)\in\mathscr S_o}\;\;\; \|\triangle_{\alpha}(i\beta)-P_o\triangle_{\alpha}(i\beta)\|<\zeta,
\]
where $P_o$ is the orthogonal projection of $V^c$ onto $V_*$. 
Define the  finite-dimensional space
\[
\wt{\mathscr E}_o:=\bigoplus_{k=1}^N\bigoplus_{j=0}^r \wt{\mathscr E}_{k,j}, \quad 
\wt{\mathscr E}_{k,j}:={{\mathscr E}_{k,j}}\cap V_*.
\]
Take a sufficiently small $\zeta> 0$  and put (cf.  \eqref{eq:E-o}, \eqref{eq:D-ast} and \eqref{eq:F-star})
\begin{align*}
\widetilde{D}_*(\alpha_o,\beta_o)&:=\{(\alpha,\beta,v)\in \mathscr S_o\times \wt{\mathscr E}_o \ : \, \|v\|<\eta\}, 
\quad 
\widetilde{\theta}:= \theta |_{\widetilde{D}_*(\alpha_o,\beta_o)},\\
\widetilde{\mathscr A}_{k,j}(\alpha_o,\beta_o)&:= P_o \mathscr A_{k,j}(\alpha_o,\beta_o)|_{\wt{\mathscr E}_{k,j}}, \quad k = 1,...,N, \quad j = 0,1,...,r,\\
\widetilde{\mathscr F}_*^\theta(\alpha,\beta,v) &:= \left(\widetilde{\theta}(\alpha,\beta,v),\sum_{j=0}^r\sum_{k=1}^N
\widetilde{\mathscr {A}}_{k,j}(\alpha,\beta)v_{k,j}\right), \quad v_{k,j}\in {\wt{\mathscr E}_{k,j}},
\end{align*}
for $v\in \wt{\mathscr E}_o$, $\|v\|\le \eta$,  $(\alpha,\beta)\in \mathscr S_o$.  
Then,
\begin{equation}\label{eq:fin-approks}
\omega(\alpha_o,\beta_o,0) = \prod_{j=0}^r\text{\rm$\Gamma$-deg\,}(\mathscr A_{0,j}(\alpha_o,\beta_o),B^j_1(0))\cdot \gdeg(\widetilde{\mathscr F}_*^\theta,\widetilde{D}_*(\alpha_o,\beta_o)).
\end{equation}

\vs
\noi{\it Step 2: Computation of $\gdeg(\widetilde{\mathscr F}_*^\theta,\widetilde{D}_*(\alpha_o,\beta_o))$. } By Splitting Lemma (cf. \cite{AED}, Lemma 4.21), one obtains
\[
 \gdeg(\widetilde{\mathscr F}_*^\theta,\widetilde{D}_*(\alpha_o,\beta_o))= \sum_{j=0}^r\sum_{k=1}^N\gdeg(\widetilde{\mathscr F}_{k,j}^\theta,\widetilde{D}_{k,j}(\alpha_o,\beta_o)),
\]
where 
\begin{align*}
\widetilde{\mathscr F}_{k,j}^\theta (\alpha,\beta,v) &:= \left(\widetilde{\theta}(\alpha,\beta,v),
\triangle_\alpha^j(ik\beta)v_{k,j}\right), \quad\quad  (\alpha,\beta)\in \mathscr S_o, \, v_{k,j}\in {\wt{\mathscr E}_{k,j}},\\
\widetilde{D}_{k,j}(\alpha_o,\beta_o)&:= \mathscr S_o\times {\wt{\mathscr E}_{k,j}}\cap 
\widetilde{D}_*(\alpha_o,\beta_o)).
\end{align*}
Since 
\[
\gdeg(\widetilde{\mathscr F}_{k,j}^\theta,\widetilde{D}_{k,j}(\alpha_o,\beta_o)) = 
\mathfrak k_{k,j} \cdot \deg_{\cV_{k,j}},
\] 
where 
\[
\mathfrak k_{k,j} = \deg\big({\det}_{\mathbb C} \circ \mathscr A_{k,j}, D_o\big), \qquad 
D_o = \text{int}(\mathscr S_o)
\]
(see \cite{AED}, Subsection 9.3.3), it suffices to show that
\[
\mathfrak k_{k,j} =  \mathfrak t_j(\alpha_o,k\beta_o,0).
\]
To this end, put  
\[
\Omega:=\{(\alpha,\beta,\tau)\in \br^3: |\alpha-\alpha_o|<\ve,\; |\beta-\beta_o|<\ve, \; 0<\tau< \ve\}
\]
(see \eqref{eq:setU} and \eqref{eq:lamb}), and
define $\Upsilon:\overline{\Omega}\to \bc$ by 
\[
\Upsilon_{k,j}(\alpha,\beta,\tau):= {\det}_\bc\Big(\triangle_\alpha^j(\tau + ik\beta )\Big), \quad (\alpha,\beta,\tau)\in \overline{\Omega}.
\]
Also, put
\[
D_\pm:=\{(\alpha_\pm,\beta,\tau): |\beta-\beta_o|<\ve,\; 0<\tau<\ve\}.
\]
By \eqref{eq:lamb}, for any $(\alpha,\beta,\tau)\in \partial \Omega$, if $\Upsilon_{k,j}(\alpha,\beta,\tau)=0$, then $(\alpha,\beta,\tau)\in D_-\cup D_o\cup D_+$.  Therefore, by standard argument (see \cite{AED}, Subsection 9.2.6), one obtains:
\begin{equation}\label{eq:crucial1}
\deg(\Upsilon_{k,j}|_{D_-}, D_-)-\deg(\Upsilon_{k,j}|_{D_+}, D_+)=\deg(\Upsilon_{k,j}|_{D_o}, D_o)
\end{equation}
(provided that $\partial \Omega$ and $D_\pm\subset \partial \Omega$ are oriented positively, so that the Brouwer degrees in \eqref{eq:crucial1} are well-defined).
On the other hand, 
\begin{align*}
\deg(\Upsilon_{k,j}|_{D_-}, D_-)&=\mathfrak t_j^-(\alpha_o,k\beta_o, 0),\\
\deg(\Upsilon_{k,j}|_{D_+}, D_+)&=\mathfrak t_j^+(\alpha_o,k\beta_o, 0),\\
\deg(\Upsilon_{k,j}|_{D_o}, D_o)&= \deg\big({\det}_{\mathbb C} \circ \mathscr A_{k,j}, D_o\big)=\mathfrak k_{k,j},
\end{align*}
and the conclusion of Proposition \ref{pro-cross-inv} follows.
\end{proof}

\vs

We will use the notation 
\[\zhong:=\{(\alpha,\beta,u)\in \mathscr \br^2_+\times V: (\alpha,u) \text{ is a center  for \eqref{eq:Hopf} with limit frequency $\beta$}\}\]

\vs
Following the classical result of P. Rabinowitz the subsequent theorem is  called the {\it Rabinowitz Alternative}.
\vs

\begin{theorem}\label{th:Rab}{\rm (Rabinowitz Alternative)} 
Suppose   \ref{i} and { ($C$)}  are satisfied and  assume the map $F:\br\times W\to H$  satisfies  condition \ref{c0}.  Let   $\mathscr U\subset \overline{\mathscr U}\subset \br^2_+\times \mathscr E$ be an open bounded  $G$-invariant set such that:
\begin{itemize}
\item[(i)] $\zhong\cap \partial \mathscr U=\emptyset$;
\item[(ii)] for every  $(\alpha_o,\beta_o,0)\in \zhong \cap \mathscr U$, $(\alpha_o,0)$ satisfies conditions \ref{c1}---\ref{c4}.
\end{itemize}
Let $\mathscr C$ be a branch of non-trivial periodic solutions to \eqref{eq:Hopf1} such that
$\zhong \cap \mathscr U\cap \overline{\mathscr C}\not=\emptyset$. Then, one has the following alternative:
\begin{itemize}
\item[(a)] either $\overline{\mathscr C} \cap \partial \mathscr U\not=\emptyset$, 
\item[(b)] or the set $\zhong \cap \mathscr U\cap \overline{\mathscr C}$ is finite, i.e.
\[
\zhong \cap \mathscr U\cap \overline{\mathscr C}=\{(\alpha_1,\beta_1,u_1),(\alpha_2,\beta_2,u_2),\dots,(\alpha_m,\beta_m,u_m)\},
\]
and
\begin{equation}\label{eq:Rab}
\sum_{k=1}^m \omega(\alpha_k,\beta_k,u_k)=0  
\end{equation}  
\end{itemize}
(cf. \eqref{eq:inv}).
\end{theorem}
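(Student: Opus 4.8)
The plan is to mimic P.~Rabinowitz's classical global-continuation argument in the $G$-equivariant, infinite-dimensional setting, so that the whole proof rests on the additivity, excision and homotopy properties of the twisted degree $\gdeg$ together with the equivariant Kuratowski Lemma (Theorem~\ref{th:G-Kur}).

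First I would assume that alternative (a) fails, i.e. $\overline{\mathscr C}\cap\partial\mathscr U=\emptyset$. Since $\overline{\mathscr C}$ is connected and contains a point of $\zhong\cap\mathscr U$, it cannot leave $\overline{\mathscr U}$, so $\overline{\mathscr C}\subset\mathscr U$; as $\mathscr F$ is a completely continuous field and $\mathscr U$ is bounded, the solution set $\mathscr F^{-1}(0)\cap\overline{\mathscr U}$ is compact, hence $\overline{\mathscr C}$ is compact and is a connected component of the compact metric space $\overline{\mathscr P}\cap\overline{\mathscr U}$ (Definition~\ref{def:Hopf}). Next I would prove that $\zhong\cap\mathscr U\cap\overline{\mathscr C}$ is finite: by Lemma~\ref{lem:co-finite} and hypothesis (ii) each of its points is an isolated center, whereas an accumulation point would, by the compactness argument of Lemmas~\ref{lem:iso-cent} and \ref{lem:cross-dom}, again be a center lying in $\mathscr U$ (because $\overline{\mathscr C}\subset\mathscr U$), contradicting isolatedness; a closed discrete subset of the compact set $\overline{\mathscr C}$ is finite, so $\zhong\cap\mathscr U\cap\overline{\mathscr C}=\{(\alpha_1,\beta_1,u_1),\dots,(\alpha_m,\beta_m,u_m)\}$, and by Proposition~\ref{pro:necessary} every point of $\overline{\mathscr C}\cap\mathscr M$ is a Hopf bifurcation point, hence one of these centers.

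Next I would isolate the branch. Around each $(\alpha_s,\beta_s,u_s)$ I place a box $\mathscr D(\alpha_s,\beta_s)$ of the form \eqref{eq:setD}, shrinking $\varepsilon$ and $\eta$ so that the boxes are pairwise disjoint, lie in $\mathscr U$, each contains exactly one center, and $\gdeg(\mathscr F^\theta,\mathscr D(\alpha_s,\beta_s))=\omega(\alpha_s,\beta_s,u_s)$ as in \eqref{eq:inv} (recall $\omega$ is independent of $\varepsilon$, $\eta$ and the auxiliary function). Applying Theorem~\ref{th:G-Kur} to $\overline{\mathscr C}\subset\overline{\mathscr P}\cap\overline{\mathscr U}$ --- a component of a compact metric space has arbitrarily small relatively clopen neighborhoods, and disjoint compacta lie at positive distance --- I obtain a bounded open $G$-invariant $\Omega$ with $\overline{\mathscr C}\subset\Omega$, $\overline\Omega\subset\mathscr U$, such that $\partial\Omega$ contains no non-constant solution and $\Omega\cap\mathscr M\subset\bigcup_s\mathscr D(\alpha_s,\beta_s)$ (a thin tube around $\overline{\mathscr C}$ glued to the boxes). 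The compact set $\overline{\mathscr C}\setminus\bigcup_s\mathscr D(\alpha_s,\beta_s)$ consists of non-constant solutions disjoint from the trivial sheet $\mathscr M$, hence $\|u\|$ is bounded below on it; using this together with an equivariant Tietze--Dugundji extension I build a global $G$-invariant auxiliary function $\theta$ that restricts on each box to a local auxiliary function, is negative on $\partial\Omega\cap\mathscr M$, and is positive on $\Omega$ away from the boxes. Then $\mathscr F^\theta$ is $\Omega$-admissible, the complement of the boxes in $\Omega$ carries no zero of $\mathscr F^\theta$, and additivity plus excision give
\[
\gdeg(\mathscr F^\theta,\Omega)=\sum_{s=1}^m\gdeg\big(\mathscr F^\theta,\mathscr D(\alpha_s,\beta_s)\big)=\sum_{s=1}^m\omega(\alpha_s,\beta_s,u_s).
\]

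The last --- and, I expect, hardest --- step is to show $\gdeg(\mathscr F^\theta,\Omega)=0$. This is precisely where the classical Rabinowitz mechanism enters: because $\overline{\mathscr C}$ is a bounded continuum touching the trivial solution set only at the finitely many centers, the completed field $\mathscr F^\theta$ on the isolating neighborhood $\Omega$ is $\Omega$-admissibly homotopic to a zero-free field (equivalently, the branch's contributions telescope and cancel), so $\gdeg(\mathscr F^\theta,\Omega)=0$; the topological core is the same connectedness lemma (Theorem~\ref{th:G-Kur}) used above, now combined with the homotopy, excision and suspension properties of $\gdeg$ and carried over verbatim to the $G$-equivariant, infinite-dimensional framework thanks to the complete continuity of $\mathscr F$. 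Comparing the two evaluations of $\gdeg(\mathscr F^\theta,\Omega)$ yields \eqref{eq:Rab}, which together with the finiteness established above is alternative (b); hence (a) or (b) holds. I anticipate the genuine difficulties to be this vanishing and the bookkeeping that makes $\theta$ simultaneously compatible with every local box and with $\partial\Omega$ --- everything else being the standard compactness and separation machinery.
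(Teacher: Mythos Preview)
Your plan is the paper's plan: negate (a), use compactness and hypothesis (ii) to get finitely many centers, separate $\overline{\mathscr C}$ from the rest of the solution set via the equivariant Kuratowski Lemma, place isolating boxes $\mathscr D_\eta^k$ around each center, assemble a single auxiliary function, and compute the twisted degree two ways (once by additivity over the boxes, once as zero). You have correctly located the only non-routine point.

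For the vanishing, however, the paper does \emph{not} invoke Kuratowski a second time, nor any ``telescoping'' or suspension; the mechanism is much simpler and fully explicit, and it is worth seeing because it replaces what you flagged as the hardest step by a one-line homotopy. The isolating neighborhood $\mathscr W$ is built by excising from the Kuratowski set $\mathscr U_1$ a $\delta$-collar $B_\delta(\Xi)$ of the \emph{entire} equilibrium set $\Xi$ (not only the zero sheet $\mathscr M$ you mention --- nonzero constant solutions must also be pushed off $\partial\mathscr W$) together with slightly enlarged boxes $\wt{\mathscr D}_\eta^k\supset\mathscr D_\eta^k$, and then re-adjoining the small boxes $\mathscr D_\eta^k$. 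With this done, the only zeros of $\mathscr F$ on $\partial\mathscr W$ sit at $u=0$ on the side walls of the small boxes. The auxiliary function is then chosen \emph{globally bounded above by} $\tfrac\eta2$:
\[
\vp(\alpha,\beta,u)=
\begin{cases}
\|u\|-\tfrac\eta2, & (\alpha,\beta,u)\in\overline{\mathscr D_\eta^k},\\[1mm]
\tfrac\eta2, & (\alpha,\beta,u)\in\overline{\mathscr W}\setminus\bigcup_k\mathscr D_\eta^k,
\end{cases}
\]
so that at every boundary zero of $\mathscr F$ one has $\vp=-\tfrac\eta2<0$. The linear homotopy
\[
\mathscr H(\lambda,\alpha,\beta,u):=\big((1-\lambda)\vp(\alpha,\beta,u)-\lambda,\ \mathscr F(\alpha,\beta,u)\big),\qquad \lambda\in[0,1],
\]
is therefore $\mathscr W$-admissible (its first component stays negative at every boundary zero of $\mathscr F$), and at $\lambda=1$ the first component is identically $-1$, so $\mathscr H(1,\cdot)$ is zero-free and $\gdeg(\mathscr F^\vp,\mathscr W)=0$. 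That is the whole argument: bound the auxiliary function above, then slide it below zero. No Tietze--Dugundji patching is needed, since $\vp$ is given piecewise and is automatically continuous across the interface $\|u\|=\eta$.
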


\begin{proof} Suppose that $\overline{\mathscr C}\cap\partial \mathscr U=\emptyset$. Since $\mathscr F$ is a completely continuous vector field and $\mathscr U$ is bounded, the set $\mathscr F^{-1}(0)\cap \overline{\mathscr U}$ is compact. By conditions (ii), 
the set $\zhong \cap \mathscr U\cap \overline{\mathscr C}$ is finite and for every 
$(\alpha_k,\beta_k, u_k)\in \zhong \cap \mathscr U\cap \overline{\mathscr C}$, the invariant $\omega(\alpha_k,\beta_k,u_k)$ is well-defined. 

\vs

Let us show that relation \eqref{eq:Rab} is satisfied.
To this end, consider the sets
\[
 K:=(\overline{\mathscr P}\setminus \overline{ \mathscr C})\cap\overline{\mathscr U}, 
 \quad  
 A_0:=\partial \mathscr U \cup K, 
 \quad 
A_1:=\{(\alpha_k,\beta_k,u_k): \; k=1,2,\dots,m\}=\zhong\cap \overline{\mathscr C}. 
\]
Notice that $K$ is compact (recall, $\overline{\mathscr C}$ is a connected component of 
$\overline{\mathscr P}$)  and $A_0\cap A_1=\emptyset$. 
 Then, by Kuratowski's Lemma \ref{th:G-Kur}, there exist two disjoint  $G$-invariant open sets $\mathscr U_0$ and $\mathscr U_1$ in $\overline{\mathscr U}$ such that $A_0\subset \mathscr U_0$, $A_1\subset \mathscr U_1$ and
\[
A_0\cup A_1\cup \overline{\mathscr C}\subset \mathscr U_0\cup \mathscr U_1.
\]

\begin{figure}
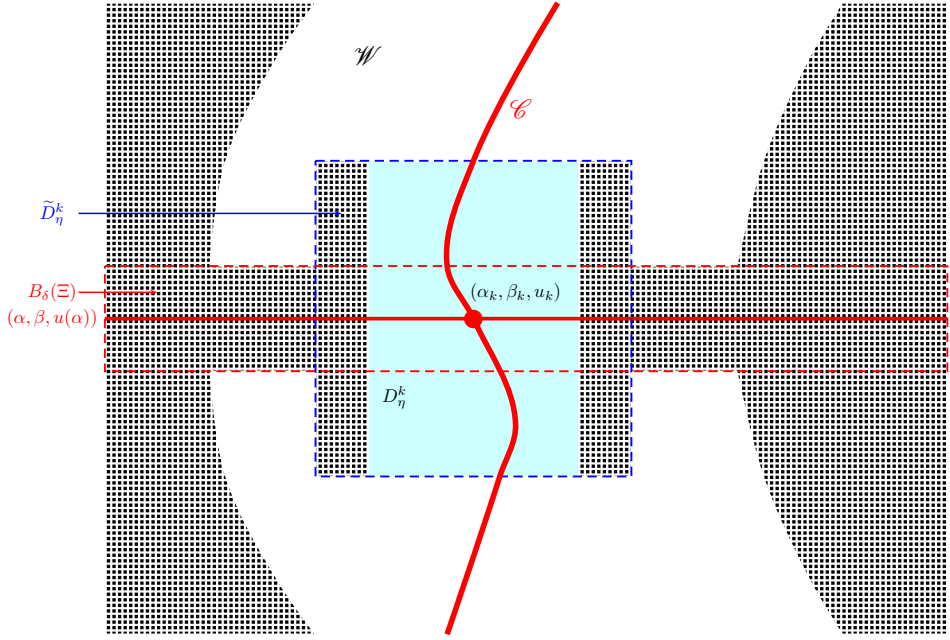

\vskip4.cm
\hskip7cm
\scalebox{.7}{\psline[linecolor=white,fillstyle=crosshatch*,fillcolor=black,
hatchcolor=white,hatchwidth=1.2pt,hatchsep=1.8pt, hatchangle=0](-7,6)(9,6)(9,-6)(-7,-6)(-7,6)
\pscurve[linecolor=white,fillstyle=solid,fillcolor=white](-3,6)(-3.7,5)(-4.7,3)(-5,1)(-5,1)(-3,1)(-3,1)(-3,3)(-3,3)(3,3)(3,3)(3,1)(3,1)(5,1)(5,1)(5.5,3)(6.4,5)(7,6)(7,6)(-3,6)
\pscurve[linecolor=white,fillstyle=solid,fillcolor=white](-3,-6)(-3.7,-5)(-4.7,-3)(-5,-1)(-5,-1)(-3,-1)(-3,-1)(-3,-3)(-3,-3)(3,-3)(3,-3)(3,-1)(3,-1)(5,-1)(5,-1)(5.5,-3)(6.4,-5)(7,-6)(7,-6)(-3,-6)
\psline[linecolor=white,fillstyle=solid,fillcolor=lightblue](-2,3)(2,3)(2,-3)(-2,-3)(-2,3)
\psline[linewidth=2pt,linecolor=red](-7,0)(9,0)
\psline[linewidth=1pt,linecolor=blue,linestyle=dashed](-3,3)(3,3)(3,-3)(-3,-3)(-3,3)
\pscurve[linewidth=3pt,linecolor=red](1.6,6)(0,3)(-.5,1)(0,0)(.8,-2)(.5,-3)(-.5,-6)
\psdots[linewidth=4pt,linecolor=red](0,0)
\psline[linewidth=1pt,linecolor=red,linestyle=dashed](-7,1)(9,1)(9,-1)(-7,-1)(-7,1)
\rput(.9,4){\Large\color{red}$\mathscr C$}
\rput(-2,5){\Large$\mathscr W$}
\rput(-8,.5){\red$B_\delta(\Xi)$}\psline[linecolor=red]{->}(-7.5,.5)(-6.,.5)
\rput(-8,2){\blue$\wt D_\eta^k$}\psline[linecolor=blue]{->}(-7.5,2)(-2.5,2)
\rput(-1.5,-1.5){$D_\eta^k$}
\rput(.8,.5){$(\alpha_k,\beta_k,u_k)$}
\rput(-8,0){\red$(\alpha,\beta,u(\alpha))$}
}
\vskip4cm
\caption{Construction of the set $\mathscr W$.}\label{fig:W-set}
\end{figure}

\vs
Choose sufficiently small numbers $\eta>0$, $\wt \ve>\ve>0$, and  for every center $(\alpha_k,\beta_k,u_k)$ consider the sets
\[
\mathscr D_\eta^k\subset \wt{\mathscr D}_\eta^k,
\]
where (see \eqref{eq:setD})
\begin{align*}
\wt{\mathscr D}_\eta^k&:=\{(\alpha,\beta,v): (\alpha,\beta)\in \wt{\mathscr S}_o^k, \; \|v\|<\eta\},\\
\mathscr D_\eta^k&:=\{(\alpha,\beta,v): (\alpha,\beta)\in \mathscr S_o^k, \; \|v\|<\eta\},
\end{align*}
and (see \eqref{eq:mathscr So})
\begin{align*}
\wt{\mathscr S}_o^k&:=\{(\alpha,\beta): \max\{|\alpha-\alpha_k|,|\beta-\beta_k|<\wt \ve \},\\
\mathscr S_o^k&:=\{(\alpha,\beta): \max\{|\alpha-\alpha_k|,|\beta-\beta_k|<\ve \}.
\end{align*}
Here, we assume that $ \wt{\mathscr D}_\eta^k$ are isolating neighborhoods, i.e. 
\begin{equation}\label{eq:isolating-neighborhood}
\overline{\wt{\mathscr D}_\eta^k}\cap \overline{\wt{\mathscr D}_\eta^{k'}}=\emptyset \quad \text{ for } \;\; k\not=k'.
\end{equation}
Moreover, one can choose the numbers $\wt\ve$ and $\eta$ to be so small that 
$ \wt{\mathscr D}_\eta^k\subset \mathscr U_1$ for all $k=1,2,\dots,m$.
Put (see \eqref{eq:equi-set}) 
\begin{equation}\label{eq:set-Xi-centres}
\Xi:=\{(\alpha,\beta, u)\in \overline{\mathscr U_1},\; (\alpha, u)\in \Xi_o\}
\end{equation}
and choose a sufficiently small $\delta>0$ (with $\eta/2>\delta$) such that 
\begin{equation}\label{eq:exclude-centres}
(\alpha,\beta,u)\in \overline{B_\delta(\Xi)}\cap \overline{\mathscr C}\;\;\Rightarrow \;\; (\alpha,\beta,u)\in \overline{\wt{\mathscr D}_\eta^k} \;\text{ for some } \; k\in \{1,2,\dots,m\},
\end{equation}
where $B_\delta(\Xi)$ stands for the $\delta$-neighborhood of $\Xi$.
Put
\begin{equation}\label{eq:set-W}
\mathscr W:=\text{int}\left(\left(\mathscr U_1\setminus\Big(\bigcup_{k=1}^m\overline{\wt{\mathscr D}_\eta^k}\cup  \overline{B_\delta(\Xi)}\Big)\right) \cup \bigcup_{k=1}^m \overline{\mathscr D_\eta^k}\right)
\end{equation}
(see Figure \ref{fig:W-set}). By construction (cf. \eqref{eq:set-Xi-centres}, \eqref{eq:exclude-centres} and \eqref{eq:set-W}), the map
$\mathscr F$ (see \eqref{eq:F}) has zeros only on $\partial \mathscr W \cap \bigcup_{k=1}^m \overline{\mathscr D_\eta^k}$. Take a $G$-invariant function $\vp: \overline{\mathscr W}\to \br$ given by
\begin{equation}\label{eq:auxil-eta}
\vp(\alpha,\beta,u)=\begin{cases}
\|v\|-\frac\eta 2, &\text{ if }\; (\alpha,\beta,u)\in \overline{\mathscr D_\eta^k},\; u=v,\\
\frac\eta 2, &\text{ if } \;\; (\alpha,\beta,u)\in\overline{ \mathscr W} \setminus  \bigcup_{k=1}^m \mathscr D_\eta^k,
\end{cases}
\end{equation}
and define $\mathscr F^\vp:\overline{\mathscr W} \to \br\times \mathscr E$ by
\begin{equation}\label{eq:last-F-vp}
\mathscr F^\vp(\alpha,\beta,u):=\big(\vp(\alpha,\beta,u), \mathscr F(\alpha,\beta,u)\big).
\end{equation}
Then, $\mathscr F^\vp$ is a $G$-equivariant $\mathscr W$-admissible compact field 
on $\overline{\mathscr W}$. Therefore, the $G$-equivariant degree $\gdeg(\mathscr F^\vp,\mathscr W)$ is well-defined. Also, by construction and definition of the local invariant,  
\begin{align*}
\gdeg(\mathscr F^\vp,\mathscr W)&=\gdeg\left(\mathscr F^\vp, \bigcup_{k=1}^m\mathscr D_\eta^k\right)\\
&=\sum_{k=1}^m\gdeg(\mathscr F^\vp, \mathscr D_\eta^k)=\sum_{k=1}^m\omega(\alpha_k,\beta_k,u_k).
\end{align*} 

\begin{figure}[H]
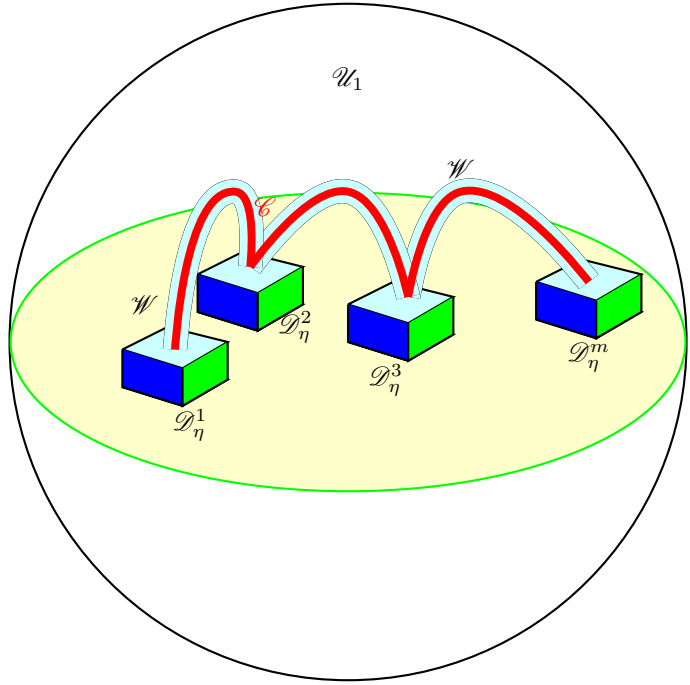

\vglue5cm\hskip7.5cm
\pscircle(0,0){4.5}
\psellipse[linecolor=green,fillstyle=solid,fillcolor=lightyellow](0,0)(4.5,2)
\psline(0,0)(.8,-.25)(1.4,0.1)(.6,0.3)(0,0)
\rput(0,.5){\psline[fillcolor=lightblue,fillstyle=solid](0,0)(.8,-.25)(1.4,0.1)(.6,0.3)(0,0)}
\psline[fillcolor=blue,fillstyle=solid](.8,.25)(.8,-.25)(0,0)(0,.5)
\psline[fillcolor=green,fillstyle=solid](.8,.25)(.8,-.25)(1.4,0.1)(1.4,0.6)
\rput(-2,.4){\psline(0,0)(.8,-.25)(1.4,0.1)(.6,0.3)(0,0)
\rput(0,.5){\psline[fillcolor=lightblue,fillstyle=solid](0,0)(.8,-.25)(1.4,0.1)(.6,0.3)(0,0)}
\psline[fillcolor=blue,fillstyle=solid](.8,.25)(.8,-.25)(0,0)(0,.5)
\psline[fillcolor=green,fillstyle=solid](.8,.25)(.8,-.25)(1.4,0.1)(1.4,0.6)
}
\rput(-3,-.6){\psline(0,0)(.8,-.25)(1.4,0.1)(.6,0.3)(0,0)
\rput(0,.5){\psline[fillcolor=lightblue,fillstyle=solid](0,0)(.8,-.25)(1.4,0.1)(.6,0.3)(0,0)}
\psline[fillcolor=blue,fillstyle=solid](.8,.25)(.8,-.25)(0,0)(0,.5)
\psline[fillcolor=green,fillstyle=solid](.8,.25)(.8,-.25)(1.4,0.1)(1.4,0.6)
}
\rput(2.5,.3){\psline(0,0)(.8,-.25)(1.4,0.1)(.6,0.3)(0,0)
\rput(0,.5){\psline[fillcolor=lightblue,fillstyle=solid](0,0)(.8,-.25)(1.4,0.1)(.6,0.3)(0,0)}
\psline[fillcolor=blue,fillstyle=solid](.8,.25)(.8,-.25)(0,0)(0,.5)
\psline[fillcolor=green,fillstyle=solid](.8,.25)(.8,-.25)(1.4,0.1)(1.4,0.6)
}
\pscurve[linewidth=9.2pt,linecolor=black](.8,.6)(1.5,2)(3.2,.8)
\pscurve[linewidth=9pt,linecolor=lightblue](.8,.6)(1.5,2)(3.2,.8)
\pscurve[linewidth=9.2pt,linecolor=black](.8,.6)(0,2)(-1.3,1)
\pscurve[linewidth=9pt,linecolor=lightblue](.8,.6)(0,2)(-1.3,1)
\pscurve[linewidth=9.2pt,linecolor=black](-1.3,1)(-1.5,2)(-2.3,-.1)
\pscurve[linewidth=9pt,linecolor=lightblue](-1.3,1)(-1.5,2)(-2.3,-.1)
\pscurve[linewidth=3pt,linecolor=red](.8,.6)(0,2)(-1.3,1)
\pscurve[linewidth=3pt,linecolor=red](-1.3,1)(-1.5,2)(-2.3,-.1)
\pscurve[linewidth=3pt,linecolor=red](.8,.6)(1.5,2)(3.2,.8)
\rput(3.2,-.2){$\mathscr D_\eta^m$}
\rput(-2.1,-1.1){$\mathscr D_\eta^1$}
\rput(0.5,-.5){$\mathscr D_\eta^3$}
\rput(-0.7,.2){$\mathscr D_\eta^2$}
\rput(0,3.5){$\mathscr U_1$}
\rput(-1.1,1.8){\red$\mathscr C$}
\rput(1.5,2.3){$\mathscr W$}
\rput(-2.7,.5){$\mathscr W$}
\vskip5cm
\caption{The branch $\mathscr C$ and its neighborhood $\mathscr W$.}\label{fig:K}
\end{figure}

\vs
Finally, due to \eqref{eq:auxil-eta},  one has  the following $\mathscr W$-admissible homotopy
$\mathscr H:[0,1]\times \overline{\mathscr W}\to \br\times \mathscr E$:
\[
\mathscr H(\lambda,\alpha,\beta,u):=\big((1-\lambda)\vp(\alpha,\beta,u)-\lambda, \mathscr F(\alpha,\beta,u)\big), \quad \lambda\in [0,1].
\]
Clearly, $\mathscr F^\vp(\cdot,\cdot,\cdot) =  \mathscr H(0,\cdot,\cdot,\cdot)$ and    $\mathscr H(1,\cdot,\cdot,\cdot)$ does not admit zeros in $\overline{\mathscr W}$. Hence, 
\[
\gdeg(\mathscr F^\vp,\mathscr W)=\gdeg(\mathscr H_1,\mathscr W)=0,
\]
and the proof is complete.
\end{proof}
\vs
Before giving the proofs of the main results, let us introduce the following notation: for a twisted orbit type $(K)$ in $\mathscr E$, define the function $\text{\rm coeff}^K:A_1^t(G)\to \bz$ by 
\begin{equation}\label{eq:coeff}
\text{\rm coeff}^K(b)=n_K, \quad \text{ where }\; b=\sum_{(L)}n_L(L)
\end{equation}
(see Appendix \ref{appendix:Twist}).
\vs
\subsection{Proof of the Local Hopf Bifurcation Theorem}
In this subsection, we present the proof of Theorem \ref{th:main1}.
\vs
\begin{proof} ($i_1$) and ($i_2$): By Proposition \ref{pro-cross-inv}, one has 
\[
\omega(\alpha_o,\beta_o,0)=a\cdot \sum_{k=1}^\infty \sum_{j=0}^r \mathfrak t_j(\alpha_o,k\beta_o,0)\deg_{\cV_{k,j}},
\]
where $a$ is an invertible element in the Burnside ring $A(\Gamma)$ (see Appendix \ref{appendix:Twist} for more details).
Put
\[
b:=\sum_{k=1}^\infty \sum_{j=0}^r \mathfrak t_j(\alpha_o,k\beta_o,0)\deg_{\cV_{k,j}},
\]
and observe that, due to maximality of $(K)$, one has
\begin{align*}
\text{\rm coeff}^K(b)&=  \sum_{j=0}^r \mathfrak t_j(\alpha_o,k\beta_o,0)\text{\rm coeff}^K(\deg_{\cV_{k,j}}).
\end{align*}
Notice that if $j\not=j'$ are such that $\text{\rm coeff}^K(\deg_{\cV_{k,j}})$, $\text{\rm coeff}^K(\deg_{\cV_{k,j'}})\not=0$, then $\text{\rm coeff}^K(\deg_{\cV_{k,j}})=\text{\rm coeff}^K(\deg_{\cV_{k,j'}})=: c$ (see Appendix \ref{appendix:Twist}).
Therefore, 
\[
\text{\rm coeff}^K(\omega(\alpha_o,\beta_o,0))=\sum_{j=0}^r c \, \mathfrak t_j(\alpha_o, k\beta_o, 0) \zeta_{k,j}(K)= c \, \mathfrak t_K^k(\alpha_o,\beta_o,0)\not=0,
\]
and the statements ($i_1$) and ($i_2)$ follow from Proposition \ref{pro:cont}. 
\end{proof}
\vs

\subsection{Proof of the Global Hopf Bifurcation Theorem}
In this subsection, we present the proof of Theorem \ref{th:main2}, which essentially is a corollary of 
Theorem  \ref{th:Rab}. 
\vs

\begin{proof} Observe that the results formulated in Theorem \ref{th:Rab} rely only on the properties of the map $\mathscr F$ and the set $\mathscr U$. This implies that one can apply the scheme of the proof of Theorem \ref{th:Rab} in a slightly different setting. More precisely, replace the space $\mathscr E$ by $\mathscr E^{\mathfrak H}$, the map $\mathscr F$ by $\mathscr F^{\mathfrak H}$ and the set $\mathscr U$ by the set $\mathscr W\subset \br^2_+\times \mathscr E^{\mathfrak H}$. In this setting, the set $\Xi$ reduces to 
\[
\Xi^{\mathfrak H} := \{(\alpha,\beta,0): \alpha\in \br, \; \beta>0\}.
\]
In this way, the proof of statements (a) and (b) follows immediately from the proof of Theorem \ref{th:Rab}. 
\end{proof}
\vs

\appendix
\section{Accretive Operators and Isomorphism Theorem}\label{appendix:Acc-Iso}
\subsection{Accretive operators}\label{subsec:abst-spaces} 
Let $H$ be a real Hilbert space and let $|\cdot|$ stand for the norm in $H$ and $\langle\cdot,\cdot\rangle$ 
for the inner product in $H$. The complexification of $H$ 
(denoted $H^c:=H\oplus i H$) is equipped with the complex inner product $[\cdot,\cdot]$ given by 
\begin{equation}\label{eq:complex-inner-product}
[u+iv,x+iy]:=\langle u,x\rangle+\langle v,y\rangle+i(\langle v,x\rangle-\langle  u,y\rangle), \quad u,\, v,\, x,\, y\in H.
\end{equation}
Clearly, the formula 
\begin{equation}\label{eq:real-direct-product}
\langle z,w\rangle := \re [z,w], \qquad (z,w\in H^c)
\end{equation}
defines a {\it real} inner product on $H\oplus H$.\\

Assume that $A:{\mathfrak{D}}(A)\subset  H\to  H$ is a real unbounded linear operator. The operator $A$ is said to be {\it accretive}\footnote{$A$ is also called a {\it monotone} operator according to the terminology used by Brezis \cite{Brezis}} if
\begin{equation}\label{eq:accretive}
\forall_{v\in {\mathfrak{D}}(A)} \;\;\; \langle Av,v\rangle \ge 0.
\end{equation} 
\vs

An  {\it accretive} operator  $A:{\mathfrak{D}}(A)\subset  H\to  H$ is called {\it $m$-accretive} if for every $\lambda>0$ one has $\im(A+\lambda \id)=H$. Actually, for an accretive operator, it is sufficient to assume that $\im(A+\id)=H$ to deduce that ${\mathfrak{D}}(A)$ is dense, $A$ is closed and $\im(A+\lambda \id)=H$ for all $\lambda >0$ (see Brezis \cite{Brezis}, Proposition 7.1). In such a case, one can easily deduce that $(A+\lambda \id)^{-1}:H\to H$ exists and 
$\|(A+\lambda\id)^{-1}\|\le \frac 1\lambda$. Also, if $A :{\mathfrak{D}}(A)\subset  H\to  H$ is $m$-accretive and $B : H \to H$ is $m$-accretive and bounded, then $A+B$ defined on ${\mathfrak{D}}(A)$ is $m$-accretive as well. Finally, 
if an accretive operator  $A:{\mathfrak{D}}(A)\subset  H\to  H$  is self-adjoint, then it is automatically $m$-accretive. Moreover, the following statement is true.
\vs

\begin{proposition}\label{pro:accretive} Let $A:{\mathfrak{D}}(A)\subset  H\to  H$ be an unbounded self-adjoint operator satisfying \eqref{eq:accretive}. Then, for every $\lambda\in \bc$ with $\re \lambda>0$, the complexification $A:=A^c:{\mathfrak{D}}(A^c)\subset H^c\to H^c$ satisfies the following properties (see \eqref{eq:real-direct-product}):
\begin{itemize}
\item[(a)] for all $z\in {\mathfrak{D}}(A^c)$,  one has $[ (A+\lambda \id)z, z] \ge \re \lambda |z|^2$ (in particular,  $A+\lambda\id$ is injective);
\item[(b)] $\im(A+\lambda \id)=H^c$;
\item[(c)] $\left\|(A+\lambda \id)^{-1}\right\|\le \frac 1{\re \lambda}$.
\end{itemize}
\end{proposition}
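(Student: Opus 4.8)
The plan is to prove the three items (a)--(c) essentially at once, deriving (b) and (c) as consequences of the a priori estimate in (a). First I would fix $\lambda\in\bc$ with $\re\lambda>0$ and work in the complexified space $H^c$ with the real inner product $\langle z,w\rangle:=\re[z,w]$ from \eqref{eq:real-direct-product}. The key computation for (a) is to expand $[(A+\lambda\id)z,z]=[Az,z]+\lambda|z|^2$. Since $A=A^c$ is the complexification of a real self-adjoint operator satisfying \eqref{eq:accretive}, one checks that $[Az,z]$ is real and nonnegative: writing $z=u+iv$ with $u,v\in{\mathfrak D}(A)$, one has $[Az,z]=\langle Au,u\rangle+\langle Av,v\rangle+i\big(\langle Av,u\rangle-\langle Au,v\rangle\big)$, and the imaginary part vanishes by self-adjointness ($\langle Av,u\rangle=\langle v,Au\rangle=\langle Au,v\rangle$), while the real part is $\ge 0$ by \eqref{eq:accretive}. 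Hence $\re[(A+\lambda\id)z,z]=[Az,z]+\re\lambda\,|z|^2\ge\re\lambda\,|z|^2$. This gives (a), and injectivity of $A+\lambda\id$ is immediate since $(A+\lambda\id)z=0$ forces $|z|^2\le 0$.

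Next I would extract (c) from (a) on the range: for $z\in{\mathfrak D}(A^c)$, Cauchy--Schwarz gives $\re\lambda\,|z|^2\le\re[(A+\lambda\id)z,z]\le |(A+\lambda\id)z|\,|z|$, so $|(A+\lambda\id)z|\ge\re\lambda\,|z|$; thus whenever $(A+\lambda\id)^{-1}$ is defined it satisfies $\|(A+\lambda\id)^{-1}\|\le 1/\re\lambda$. For (b) I would use the known theory of $m$-accretive/self-adjoint operators recalled just before the proposition: a self-adjoint accretive operator $A$ is $m$-accretive, meaning $\im(A+t\id)=H$ for all real $t>0$. To pass from real shifts to complex ones, I would first handle the real part: for $\lambda=a+ib$ with $a=\re\lambda>0$, the operator $A+a\id$ is self-adjoint, and the estimate $\|(A+a\id)^{-1}\|\le 1/a$ together with self-adjointness puts its spectrum in $[a,\infty)\subset\br$, so in particular $\sigma(A+a\id)$ misses the imaginary number $-ib$; equivalently $\sigma(A)\subset[0,\infty)$ after complexification, hence $-\lambda=-a-ib\notin\sigma(A)$ because $\re(-\lambda)<0$. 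Therefore $A+\lambda\id=(A-(-\lambda)\id)$ is invertible on all of $H^c$, which is (b); and then the bound from the previous step gives (c) in full.

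The main obstacle I anticipate is the bookkeeping around complexification and self-adjointness: one must be careful that the complexified operator $A^c$ is again self-adjoint with real spectrum, that the real inner product \eqref{eq:real-direct-product} interacts correctly with the complex inner product \eqref{eq:complex-inner-product}, and that the imaginary part of $[Az,z]$ genuinely cancels. An alternative, perhaps cleaner, route for surjectivity avoids spectral theory entirely: show $\im(A+\lambda\id)$ is closed (using the lower bound $|(A+\lambda\id)z|\ge\re\lambda\,|z|$ together with closedness of $A$), and then show it is dense by a standard orthogonality argument — if $w\perp\im(A+\lambda\id)$ then $[(A+\lambda\id)z,w]=0$ for all $z\in{\mathfrak D}(A)$, which forces $w\in{\mathfrak D}(A^*)={\mathfrak D}(A)$ with $(A+\bar\lambda\id)w=0$, and applying (a) with $\bar\lambda$ (whose real part is still positive) gives $w=0$. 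Either way, once (b) is established, (c) follows from the estimate already in hand, and (a) has been proved directly. I would present the direct $m$-accretivity route as the primary argument and mention the range-closed-plus-dense argument as the self-contained alternative.
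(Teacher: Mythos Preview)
Your proposal is correct and your ``alternative'' route for (b) is essentially the paper's own argument: the paper shows $\im(A+\lambda\id)$ is closed (via the lower bound from (a) together with closedness of $A$, argued through a weak-convergence/closed-graph step) and then uses $\im(A+\lambda\id)=\ker(A+\bar\lambda\id)^\perp=\{0\}^\perp=H^c$, which is exactly your orthogonality-plus-(a)-with-$\bar\lambda$ idea. Your ``primary'' route via the spectrum ($\sigma(A)\subset[0,\infty)$, hence $-\lambda\notin\sigma(A)$) is a legitimate and slightly quicker alternative that the paper does not take; it trades the explicit closed-range computation for an appeal to the spectral characterization of self-adjoint operators. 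Parts (a) and (c) match the paper's proof, and in fact your treatment of (a) is a bit more careful than the paper's in separating the real and imaginary parts of $[(A+\lambda\id)z,z]$ and verifying that $[Az,z]\in\br$ by self-adjointness.
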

\begin{proof} For the unbounded operator $A+\lambda \id:{\mathfrak{D}}(A^c)\subset H^c\to H^c$, if the inverse $(A+\lambda \id)^{-1}$ exists, then it will be considered as an operator from $H^c$ into itself. 

(a) From \eqref{eq:complex-inner-product} and \eqref{eq:real-direct-product} it follows immediately that for every $\lambda\in \bc$ and $z\in {\mathfrak{D}}(A^c)$, one has: 
\begin{equation}\label{eq:eq}
[(A+\lambda \id)z,z]=\langle Az+\re \lambda z,z \rangle+i\text{Im} \lambda |z|^2.
\end{equation}
Hence, 
\begin{equation}\label{eq:re-acc}
[ (A+\lambda \id)z, z ] = \langle Az,z\rangle+(\re \lambda) |z|^2\ge \re \lambda |z|^2,
\end{equation}
where for $z=x+iy\in H^c$, $|z|^2=|x|^2+|y|^2$.
\\
(b) First, let us show that $\im (A+\lambda \id)$ is closed. To this end, suppose $z_n\in {\mathfrak{D}}(A^c)$ is a sequence such that $Az_n+\lambda z_n\to z_o$. We claim that $\{z_n\}$ is bounded. Indeed, if $|z_n|\to \infty$ then
\[
A\left(\frac{z_n}{|z_n|}\right)+\lambda \frac{z_n}{|z_n|}\to 0 \;\;\; \text{ as }\;\;\; n\to\infty,
\]
but on the other hand, by \eqref{eq:re-acc} one has
\[
\left[A\left(\frac{z_n}{|z_n|}\right)+\lambda \frac{z_n}{|z_n|},\frac{z_n}{|z_n|}\right] \ge \re \lambda>0,
\]
which leads to a contradiction. Next, since $\{z_n\}$ is bounded, one can assume without loss of generality  that $z_n\rightharpoonup z_o$ (i.e. $z_n$ converges weakly to $z_o$). By assumption, $A$ is self-adjoint, and as such is closed. Since the graph of $A$ is a closed linear subspace, it is also weakly closed. Since  $Az_n+\lambda z_n\to z_o$, one has
\[
z_n\rightharpoonup z_o, \;\; Az_n+\lambda z_n\rightharpoonup  y_o, \;\;\; \text{ as }\;\; n\to \infty,
\]
it follows that  $z_o\in \mathfrak{D}(A^c)$ and $(A+\lambda \id)z_o=y_o$. 
Since $(A+\lambda \id)^*=A+\overline\lambda\id$, 
\[
\im(A+\lambda\id)=\ker(A+\overline \lambda\id)^\perp =\{0\}^\perp =H^c.
\]
(c) By inequality \eqref{eq:re-acc}, one has
\[
|(A+\lambda\id)z|\,|z|\ge \langle(A+\lambda \id)z,z \rangle\ge( \re \lambda) |z|^2,
\]
which implies 
\begin{equation}\label{eq:ineq}
|(A+\lambda\id)z|\ge (\re \lambda) |z|.
\end{equation}
By putting $u=(A+\lambda \id)z$, we get $z=(A+\lambda\id)^{-1}u$, so (by \eqref{eq:ineq})
\[
\forall_{0\not=u\in H^c}\;\;\; |u|\ge (\re \lambda) |(A+\lambda\id)^{-1}u|\;\; \Leftrightarrow\;\;\;
\frac 1{\re\lambda}\ge \frac{|(A+\lambda \id)^{-1}u|}{|u|},
\]
which leads to 
\[
\frac 1{\re\lambda}\ge \|(A+\lambda \id)^{-1}\|.
\]
\end{proof}
\vs

In what follows, for a self-adjoint accretive operator  $A:{\mathfrak{D}}(A)\subset  H\to  H$, we denote by  $V$ the Hilbert space ${\mathfrak{D}}(A)$ equipped with the {\it graph inner product}, i.e.
\[ \langle u,v\rangle_A:=\langle u,v\rangle +\langle Au,Av\rangle, \quad u,\, v\in {\mathfrak{D}}(A).\]
 We denote by $|\cdot|_A$  the graph norm induced by  $\langle \cdot,\cdot\rangle_A$  and by   $\jj : V \to  H$ the natural embedding of  $V$ into $H$. 
Next, define   the following  two spaces of $p$-periodic functions: 
\begin{equation}\label{eq:per-spaces}
\mathscr E_p := H^1_p(\mathbb R; H)\cap  L^2_{p}(\mathbb R; V),  \quad \mathscr H_p := L^2_{p}(\mathbb R; H),
\end{equation}
where the space $\mathscr E_p$
 is equipped with the  inner product  
 \begin{equation}\label{eq:Ep-inner}
 \bm{\langle} u,v\bm{\rangle}_{\mathscr E}:=\int_0^p\left(\langle \dot u(t),\dot v(t)\rangle+\langle u(t),v(t)\rangle +\langle Au(t),Av(t)\rangle\right)dt, \quad u,\, v\in \mathscr E_p,
 \end{equation}
 where $\dot u:=\frac{du}{dt}$.
  For an elementary exposition of the Sobolev spaces for vector-valued functions, we refer to \cite{MarcelKreuter}. Notice that the spaces $ L^2_{p}(\mathbb R; H)$ and $L^2([0,p];H)$ can be identified. 
\vs

\subsection{Isomorphism Theorem}\label{sec:isya}
We will refer to the following theorem as the so-called {\it Isomorphism Theorem}:
\vs

\begin{theorem}{\rm (Isomorphism Theorem)}\label{th:iso-thm} 
Let  $A:{\mathfrak{D}}(A)\subset  H\to  H$ be 
an accretive self-adjoint 
linear operator such that the operator $\mathfrak j:V\to H$ is compact. Then, for every $p>0$,  the operator 
\[
\frac{d}{dt}+A:\mathscr E_p\to \mathscr H_p
\]
is an isomorphism.
\end{theorem}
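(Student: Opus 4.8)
The plan is to verify the three ingredients needed for the operator $\tfrac{d}{dt}+A:\mathscr E_p\to\mathscr H_p$ to be an isomorphism: it is bounded, injective, and surjective; boundedness of the inverse then follows from the open mapping theorem (both spaces are Banach). Boundedness of $\tfrac{d}{dt}+A$ is immediate from the definition of the $\mathscr E_p$-norm in \eqref{eq:Ep-inner}, since $\|\dot u+Au\|_{\mathscr H_p}^2\le 2\|\dot u\|_{L^2(H)}^2+2\|Au\|_{L^2(H)}^2\le 2\,\bm{\langle}u,u\bm{\rangle}_{\mathscr E}$.

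First I would handle the problem by Fourier series in $t$: write $u(t)=\sum_{n\in\bz}e^{2\pi i n t/p}u_n$ with $u_n\in H^c$ (work in the complexification, which is harmless), and $f(t)=\sum_{n\in\bz}e^{2\pi i n t/p}f_n$. On each Fourier mode the equation $\dot u+Au=f$ becomes $\bigl(\tfrac{2\pi i n}{p}\id+A\bigr)u_n=f_n$. Since $A$ is self-adjoint and accretive, Proposition \ref{pro:accretive} applies with $\lambda=\tfrac{2\pi i n}{p}$ when $n\ne 0$: there $\re\lambda=0$, which is the borderline case not covered by Proposition \ref{pro:accretive}(c), so I would instead use that $A$ is self-adjoint with spectrum in $[0,\infty)$, hence $A+i\mu\,\id$ is invertible for every real $\mu\ne 0$ with $\|(A+i\mu\id)^{-1}\|\le 1/|\mu|$ (spectral theorem / the resolvent estimate for normal operators). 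For $n=0$, invertibility of $A$ on $H^c$ is exactly the content of Remark \ref{rem:analyt} combined with compactness of $\jj$: $A^{-1}\in L(H)$ is bounded (this is where condition (C), the compactness of $\mathfrak j:V\to H$, enters — it forces $0\notin\sigma(A)$). Thus for every $n$ the mode equation has a unique solution $u_n=(A+\tfrac{2\pi i n}{p}\id)^{-1}f_n$, giving injectivity, and a candidate solution formula, giving surjectivity provided I can show the resulting series defines an element of $\mathscr E_p$.

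The main obstacle is precisely this last convergence/regularity estimate: I must show that if $\sum_n\|f_n\|_H^2<\infty$ then $u=\sum_n e^{2\pi i nt/p}(A+\tfrac{2\pi in}{p}\id)^{-1}f_n$ lies in $\mathscr E_p=H^1_p(\br;H)\cap L^2_p(\br;V)$, i.e. that $\sum_n\|u_n\|_H^2$, $\sum_n n^2\|u_n\|_H^2$, and $\sum_n\|Au_n\|_H^2$ are all finite, controlled by $\sum_n\|f_n\|_H^2$. The key elliptic-type estimate, obtained by pairing $(A+\tfrac{2\pi in}{p}\id)u_n=f_n$ with $u_n$ and taking real and imaginary parts (as in \eqref{eq:re-acc}–\eqref{eq:eq}), is
\[
\langle Au_n,u_n\rangle \le \|f_n\|_H\,\|u_n\|_H,\qquad \tfrac{2\pi|n|}{p}\|u_n\|_H\le \|f_n\|_H ,
\]
so that $\tfrac{2\pi|n|}{p}\|u_n\|_H\le\|f_n\|_H$ and, using $A\ge 0$ self-adjoint, $\|Au_n\|_H^2=\|f_n-\tfrac{2\pi in}{p}u_n\|_H^2\le 2\|f_n\|_H^2+2(\tfrac{2\pi n}{p})^2\|u_n\|_H^2\le 4\|f_n\|_H^2$; the mode $n=0$ is absorbed into the constant $\|A^{-1}\|$. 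Summing over $n$ yields $\|u\|_{\mathscr E_p}\le C_p\|f\|_{\mathscr H_p}$, which simultaneously establishes surjectivity and continuity of the inverse, completing the proof. (One should note that Parseval in $H$-valued $L^2$ and the identification $L^2_p(\br;H)\cong L^2([0,p];H)$ from Subsection \ref{subsec:abst-spaces} legitimize the Fourier computation; compactness of $\jj$ is used only to guarantee $0\notin\sigma(A)$, equivalently the $n=0$ mode is invertible, and is not otherwise needed for the isomorphism.)
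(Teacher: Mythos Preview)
Your Fourier-mode approach is genuinely different from the paper's and more elementary. The paper invokes Hille--Yoshida to produce the contraction semigroup $\{T_A(t)\}$ generated by $A$, then cites a result of Kielh\"ofer \cite{Kiel} to conclude that $\tfrac{d}{dt}+A$ is Fredholm of index zero, and from this (without further comment) that it is an isomorphism. Your route avoids semigroup theory entirely, produces the inverse bound $\|u\|_{\mathscr E_p}\le C_p\|f\|_{\mathscr H_p}$ by explicit resolvent estimates on each mode, and---as you correctly observe at the end---shows that compactness of $\jj$ plays no role in the isomorphism itself beyond the spectral condition at $n=0$.

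There is, however, a genuine gap at the $n=0$ mode. You claim that compactness of $\jj:V\to H$ ``forces $0\notin\sigma(A)$'', but this is false: compactness of $\jj$ yields only that $(A+\id)^{-1}$ is compact, hence that $\sigma(A)$ is discrete; nothing excludes $0$ as an eigenvalue (the Neumann Laplacian on a bounded domain is self-adjoint, accretive, has compact resolvent, and has $0$ in its spectrum). If $Av_0=0$ for some $v_0\ne0$, the constant function $u\equiv v_0$ lies in $\ker(\tfrac{d}{dt}+A)$ and the theorem as literally stated is false. Your appeal to Remark~\ref{rem:analyt} does not rescue this, since that remark is made under condition~\ref{i}, i.e.\ the \emph{strict} accretivity $\langle Ax,x\rangle\ge\boldsymbol{\delta}|x|^2$ with $\boldsymbol{\delta}>0$, which is exactly the extra hypothesis that gives $0\notin\sigma(A)$. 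The honest fix is to add that hypothesis (or at least $\ker A=\{0\}$) explicitly; once you do, your argument is complete and in fact never uses compactness of $\jj$. The paper's own proof shares the same lacuna---Fredholm of index zero does not by itself give injectivity---so the defect lies in the statement of the theorem rather than in your method.
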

\begin{proof}  
Then, by Hille-Yoshida theorem (cf. \cite{Brezis}, Thm 7.4), for every $u_o\in {\mathfrak{D}}(A)$, there exists a unique 
continuously differentiable function $u:[0,\infty)\to H$ satisfying $u(t)\in {\mathfrak{D}}(A) $ for all $t\ge 0$, and 
\begin{equation}\label{eq:lin-hom}
\begin{cases}
\frac{du}{dt}+Au=0 \;\; & \text{ on $[0,\infty)$},\\
u(0)=u_o. 
\end{cases}
\end{equation}
Moreover, the function $u(t)$ satisfies $|u(t)|\le |u_o|$ and $\left|\frac{du}{dt}\right|=|Au(t)|\le |Au_o|$ for all $t\ge 0$.  Then, for a fixed $t\ge 0$, one can define the linear operator $T_A(t):H\to H$  by $T_A(t)u_o=u(t)$, where $u(t)$ is the solution to \eqref{eq:lin-hom}. Since $T_A(t)$ is bounded on the dense subset ${\mathfrak{D}}(A)$, it has the unique  continuous extension to $H$, which we denote by the same symbol $T_A(t)$. In this way, one obtains a family $\{T_A(t)\}_{t\ge 0}\subset L(H)$  (here $L(H)$ stands for the space of bounded linear operators from $H$ to $H$) satisfying the following properties:
\begin{itemize}
\item[(a)] for each $t\ge 0$,  $\|T_A(t)\|\leq 1$; 
\item[(b)] for all $t_1$, $t_2\in [0,\infty)$,  one has 
\begin{equation}\label{eq:semigroup}
\begin{cases}
T_A(t_1+t_2)=T_A(t_1)\circ T_A(t_2),\\
T_A(0)=\id,
\end{cases}
\end{equation}
\item[(c)] for every $u_o\in H$, one has
\begin{equation}\label{eq:limnit}
\lim_{t\to 0^+} \left| T_A(t)u_o-u_o\right|=0.
\end{equation}
\end{itemize}
Then, by applying the result of Kielh\"ofer (cf. \cite{Kiel}, Lemma 1.2 and formula (1.10)) we obtain that $\frac{d}{dt}+A:\mathscr E_p\to \mathscr H_p$ is a Fredholm operator of index zero, and consequently it is an isomorphism.
\end{proof}
\vs
\begin{remark}{\rm The family $\{T_A(t)\}_{t\ge 0}$ mentioned in the proof of Theorem \ref{th:iso-thm} is called a {\it continuous semigroup of contractions} generated by $A$. Its existence, being crucial for the proof of  Lemma 1.2 from \cite{Kiel}, is well-documented for more general classes of operators $A$ than the ones considered in Theorem \ref{th:iso-thm}.} 
\end{remark}
\vs 
\begin{remark}\label{rem:analyt} {\rm 
It follows immediately from Proposition \ref{pro:accretive} that under the condition \ref{i}, provided that  $\re \lambda>-\boldsymbol{\delta}$, the operator $R(A,\lambda):=(A +\lambda\id)^{-1}:H^c\to V^c $ is a well-defined bounded $\Gamma$-equivariant  linear  operator, and since $V^c\subset H^c$,  $R(A,\lambda)$ is  also  bounded as an operator from $H^c$ into $H^c$. Moreover, keeping in mind  that  a resolvent operator of a closed unbounded operator is analytic on the resolvent set (see Kato \cite{Kato}, Ch. IV, Thm 3.11), one concludes that $R(A,\cdot)$  is analytic.}
\end{remark} 
\vs

\section{Reference Results}\label{appendix:Ref}
\subsection{$G$-Equivariant Kuratowski Lemma}

Let us recall the (non-equivariant) background  from the monograph \cite{Kur}. 
\vs

\begin{definition}\rm\label{def:continuum}
 Let  $\mathscr E$ be a Banach space and suppose  $K\subset \mathscr E$. Then, the set $K$ is called a {\it continuum} if $K$ is compact and connected. Moreover, a continuum $K$ is said to be {\it nontrivial}   if it contains at least two points. 
 \end{definition}
\vs

One can easily notice that any nontrivial continuum is uncountable. Also,  the following concept generalizes   the property of connectedness. 
\vs

\begin{definition}[\rm \cite{Kur}, p.142] \label{def:between}\rm A compact space $K$  is said to be {\it connected between} its  subsets $A_0$ and $A_1$, if there is no closed-open set $F \subset K$ such that $A_0\subset F$ and $F\cap A_1=\emptyset$. 
\end{definition}

\smallskip
The {\it connectedness between} relation is symmetric and it is easy to notice  that if $K$ is not connected between $A_0$ and $A_1$, then there exist two disjoint  open-closed subsets $F_0, F_1 \subset K$ such that $A_0\subset F_0$,  $A_1\subset F_1$ and $F_0\cup F_1=K$. Using this fact, one can deduce the following statement (cf. \cite{Kur}, Theorem  5, p. 144).
\vs

\begin{proposition}\label{pro:between}  Assume that a compact  space $K$ is not connected between its two non-empty subsets $A_0$ and $A_1$. Then, there exists a continuous function $\vp:K\to \{0,1\}$ such that $\vp^{-1}(0)\supset A_0$, $\vp^{-1}(1)\supset A_1$.
\end{proposition}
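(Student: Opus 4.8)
The final statement to prove is Proposition~\ref{pro:between}, which asserts: if a compact space $K$ is not connected between two non-empty subsets $A_0$ and $A_1$, then there is a continuous map $\vp : K \to \{0,1\}$ with $\vp^{-1}(0) \supset A_0$ and $\vp^{-1}(1) \supset A_1$.

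\medskip

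\textbf{Plan of the proof.} The plan is to unpack the two definitions that precede the statement and observe that the conclusion is essentially a restatement of the structural consequence noted just before the proposition. First I would recall Definition~\ref{def:between}: $K$ is \emph{not} connected between $A_0$ and $A_1$ precisely when there exists a closed-open (clopen) subset $F \subset K$ with $A_0 \subset F$ and $F \cap A_1 = \emptyset$. Next, I would invoke the elementary topological fact, already stated in the excerpt immediately after Definition~\ref{def:between}, that in this situation one can produce two \emph{disjoint} clopen subsets $F_0, F_1 \subset K$ with $A_0 \subset F_0$, $A_1 \subset F_1$, and $F_0 \cup F_1 = K$. Concretely, given the clopen $F$ above, one sets $F_0 := F$ and $F_1 := K \setminus F$; since $F$ is clopen, so is $F_1$, the two are disjoint by construction, they cover $K$, and $A_1 \subset F_1$ because $F \cap A_1 = \emptyset$ forces $A_1 \subset K \setminus F$.

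\medskip

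With the decomposition $K = F_0 \sqcup F_1$ into disjoint clopen pieces in hand, I would define $\vp : K \to \{0,1\}$ by $\vp(x) = 0$ for $x \in F_0$ and $\vp(x) = 1$ for $x \in F_1$. This is well-defined since $F_0$ and $F_1$ partition $K$. Continuity is immediate: the preimages of the four open subsets $\emptyset$, $\{0\}$, $\{1\}$, $\{0,1\}$ of the discrete space $\{0,1\}$ are, respectively, $\emptyset$, $F_0$, $F_1$, $K$, all of which are open in $K$ (the first and last trivially, the middle two because $F_0, F_1$ are clopen). Finally $\vp^{-1}(0) = F_0 \supset A_0$ and $\vp^{-1}(1) = F_1 \supset A_1$, which is exactly the asserted conclusion. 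This completes the argument; for the reference citation one simply notes this is \cite{Kur}, Theorem~5, p.~144.

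\medskip

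\textbf{Main obstacle.} There is no substantial obstacle here: the proposition is a bookkeeping reformulation, and the only point requiring a moment's care is verifying that the complement of a clopen set is clopen (so that $F_1 := K \setminus F$ is genuinely clopen and not merely open or merely closed) and that the discreteness of the target $\{0,1\}$ is what makes the characteristic-function-type map continuous. Both are standard. If one wanted a fully self-contained treatment avoiding the quoted intermediate fact, the proof above in fact \emph{is} that self-contained treatment, since it derives the disjoint clopen cover directly from a single clopen $F$; so the ``hard part'' is really just making sure the definitions are threaded together in the right order.
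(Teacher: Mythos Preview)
Your proof is correct and follows exactly the approach indicated in the paper: the paper itself does not give a detailed proof but simply cites \cite{Kur} and remarks that the result follows from the existence of the disjoint clopen decomposition $K = F_0 \cup F_1$ with $A_0 \subset F_0$, $A_1 \subset F_1$, which is precisely what you construct and then turn into the characteristic-type function $\vp$. There is nothing to add.
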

\vs

The following well-known result (cf. \cite{Kur}, Theorem 3, p. 170) is a starting point for our discussion.
\vs
\begin{theorem}\label{thm:orig-Kurat}
Assume that a compact space $K$ is connected between its two closed non-empty subsets $A_0$ and $A_1$. Then, there exists a connected component  $C$ of $K$ such that 
\[
C\cap A_0\not=\emptyset\not=C\cap A_1.
\]
\end{theorem}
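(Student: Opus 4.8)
The plan is to argue by contradiction, reducing the statement to the classical identification of connected components with quasi-components in a compact Hausdorff space. Recall that $K$, being a compact subset of a Banach space, is a compact metric space, hence Hausdorff. Suppose, contrary to the claim, that \emph{no} connected component $C$ of $K$ satisfies $C\cap A_0\neq\emptyset\neq C\cap A_1$. I will manufacture a closed-open set $F\subseteq K$ with $A_0\subseteq F$ and $F\cap A_1=\emptyset$, which contradicts the hypothesis that $K$ is connected between $A_0$ and $A_1$ (Definition \ref{def:between}).

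The tool is the fact that, for each $x\in K$, the connected component $C_x$ of $x$ equals its \emph{quasi-component} $Q_x:=\bigcap\{F:\ F\ \text{closed-open in}\ K,\ x\in F\}$. One may cite \cite{Kur} for this, or argue briefly: the inclusion $C_x\subseteq Q_x$ is automatic, since every connected set through $x$ lies in each clopen set containing $x$; for the reverse, if $Q_x=P\sqcup R$ with $P,R$ closed, $x\in P$ and $R\neq\emptyset$, separate the disjoint compacta $P,R$ by disjoint open sets $U\supseteq P$ and $V\supseteq R$, cover the compact set $K\setminus(U\cup V)$ (which misses $Q_x$) by finitely many complements $K\setminus F_i$ of clopen sets $F_i\ni x$, and put $F:=F_1\cap\dots\cap F_n$; then $F\cap U=F\setminus V$ is clopen and contains $x$, forcing $Q_x\subseteq U$ and hence $R\subseteq Q_x\cap V=\emptyset$, a contradiction. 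Thus $C_x=Q_x$, so $Q_x$ is connected.

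Granting this, fix $x\in A_0$. Since $x\in C_x\cap A_0$, the contradiction hypothesis gives $C_x\cap A_1=\emptyset$, i.e. the intersection $\bigcap\{F:\ F\ \text{clopen},\ x\in F\}$ is disjoint from the compact set $A_1$. Therefore $\{K\setminus F:\ F\ \text{clopen},\ x\in F\}$ is an open cover of $A_1$; choosing a finite subcover yields clopen sets $F_1,\dots,F_n$ containing $x$ such that $F_x:=F_1\cap\dots\cap F_n$ is clopen, $x\in F_x$ and $F_x\cap A_1=\emptyset$. Now $\{F_x:\ x\in A_0\}$ is an open cover of the compact set $A_0$; a finite subcover $F_{x_1},\dots,F_{x_m}$ gives the clopen set $F:=F_{x_1}\cup\dots\cup F_{x_m}$, which satisfies $A_0\subseteq F$ and $F\cap A_1=\emptyset$, contradicting the hypothesis. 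This proves the theorem.

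The only step that is not a routine compactness argument is the component/quasi-component identification; I would either invoke it as the classical result from \cite{Kur} or include the short separation argument sketched above. Once it is in hand, the proof is simply a two-fold use of compactness: first for $A_1$, to collapse each quasi-component meeting $A_0$ to a single clopen set avoiding $A_1$, and then for $A_0$, to amalgamate finitely many of these into the desired clopen separator.
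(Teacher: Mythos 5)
Your proof is correct. Note that the paper does not actually prove Theorem~\ref{thm:orig-Kurat}: it is quoted from Kuratowski (\cite{Kur}, Theorem 3, p.~170) and used as a black box, so there is no internal argument to compare against; your derivation --- contradiction, the identification of components with quasi-components (the \v{S}ura-Bura type lemma, whose separation proof you sketch correctly), one compactness argument to produce for each $x\in A_0$ a clopen $F_x\ni x$ disjoint from $A_1$, and a second to amalgamate finitely many $F_x$ into a clopen separator contradicting Definition~\ref{def:between} --- is precisely the standard proof underlying the citation. The only caveat is that your argument uses the Hausdorff (normality) property of $K$, which is not literally in the statement ``compact space'' but is implicit in Kuratowski's metric setting and in the paper's applications (Theorems~\ref{th:Kuratowski} and \ref{th:G-Kur}, where $K$ is a compact subset of a metric space), so this is harmless.
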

\vs

In what follows, we will use a slight modification of Theorem \ref{thm:orig-Kurat} following below. 
\vs
\begin{theorem}\label{th:Kuratowski} Let $X$ be a metric space, $B_0$, $B_1\subset X$ two disjoint closed subsets, and $K \subset X$ a compact subset  such that $K\cap B_0\not=\emptyset\not= K\cap B_1$.  Assume that the set $K$ does not contain a connected component $C$ such that $C \cap B_0\not=\emptyset\not= B_1\cap C$. Then, there exists a continuous function $\psi:X\to [0,1]$ such that  $B_0 \subset \psi^{-1}(0)$, $B_1\subset \psi^{-1}(1)$ and $ B_0 \cup B_1\cup K\subset \psi^{-1}([0,\delta)\cup(1-\delta,1])$ for some $0< \delta<\frac 12$.
\end{theorem}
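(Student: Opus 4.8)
The plan is to deduce this from the classical Kuratowski result, Theorem \ref{thm:orig-Kurat}, by first passing from the metric space $X$ to a suitable compact space to which that theorem applies. First I would set $\widetilde K := K \cup B_0' \cup B_1'$, but a cleaner route is to work directly with $K$ and the traces of $B_0, B_1$ on $K$: put $A_0 := K \cap B_0$ and $A_1 := K \cap B_1$, which by hypothesis are non-empty; they are closed in $K$ (being intersections of $K$ with closed subsets of $X$) and disjoint, and $K$ is compact. The hypothesis ``$K$ contains no connected component $C$ with $C\cap B_0 \ne \emptyset \ne C \cap B_1$'' is exactly the statement that $K$ contains no connected component meeting both $A_0$ and $A_1$. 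By the contrapositive of Theorem \ref{thm:orig-Kurat}, $K$ is \emph{not} connected between $A_0$ and $A_1$.

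Next I would invoke the remark following Definition \ref{def:between}: since $K$ is not connected between $A_0$ and $A_1$, there exist two disjoint open-closed (in $K$) subsets $F_0, F_1 \subset K$ with $A_0 \subset F_0$, $A_1 \subset F_1$, and $F_0 \cup F_1 = K$. Now $F_0$ and $F_1$ are compact (closed in the compact set $K$) and disjoint, hence they have positive distance; moreover $F_0$ is disjoint from $B_1$ and $F_1$ is disjoint from $B_0$ — indeed $F_0 \cap B_1 = F_0 \cap A_1 \subset F_0 \cap F_1 = \emptyset$, and symmetrically. Therefore the four sets $B_0$, $F_1$ on one side and $B_1$, $F_0$ on the other can be separated: the closed sets $B_0 \cup F_1$ and $B_1 \cup F_0$ are disjoint (check: $B_0 \cap B_1 = \emptyset$ by hypothesis, $B_0 \cap F_0 = A_0 \subset F_0$ is fine but we need $B_0 \cap (B_1 \cup F_0)$... here one must be slightly careful, so instead separate $B_0$ from $B_1 \cup F_1$ and $B_1 \cup F_1$... ). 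The correct disjoint pair to feed into Urysohn is $P_0 := B_0 \cup F_0$ and $P_1 := B_1 \cup F_1$ only if these are disjoint, which fails in general; so the actual pair I would use is obtained by shrinking: pick disjoint open sets $U_0 \supset B_0 \cup F_0$ and $U_1 \supset B_1 \cup F_1$, which exist since a metric space is normal once we verify $\overline{B_0 \cup F_0} \cap \overline{B_1 \cup F_1} = \emptyset$.

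The main obstacle, then, is precisely this disjointness bookkeeping: $B_0$ and $F_1$ need not be disjoint as subsets of $X$ (a point of $B_0$ lying in $K$ but outside $A_0$ could lie in $F_1$ — wait, no: if $x \in B_0 \cap K$ then $x \in A_0 \subset F_0$, so $x \notin F_1$; hence $B_0 \cap F_1 = \emptyset$ after all, and symmetrically $B_1 \cap F_0 = \emptyset$). With this observation the sets $P_0 := B_0 \cup F_0$ and $P_1 := B_1 \cup F_1$ are genuinely disjoint closed subsets of $X$ with $P_0 \cup P_1 \supset B_0 \cup B_1 \cup K$. By normality of the metric space $X$ (Urysohn's lemma), there is a continuous $\psi : X \to [0,1]$ with $P_0 \subset \psi^{-1}(0)$ and $P_1 \subset \psi^{-1}(1)$. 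Then $B_0 \subset \psi^{-1}(0)$, $B_1 \subset \psi^{-1}(1)$, and since $K \subset P_0 \cup P_1$ we get $K \subset \psi^{-1}(\{0,1\})$; choosing any $0 < \delta < \tfrac12$ gives $B_0 \cup B_1 \cup K \subset \psi^{-1}([0,\delta) \cup (1-\delta,1])$, completing the proof. The only real content is the reduction to Theorem \ref{thm:orig-Kurat} via the contrapositive and the open-closed decomposition; the rest is routine point-set topology, and the single delicate point — that the ``wrong'' intersections $B_0 \cap F_1$ and $B_1 \cap F_0$ are empty — follows because $B_i \cap K = A_i \subset F_i$.
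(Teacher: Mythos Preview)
Your proof is correct and follows essentially the same route as the paper's: reduce to $K$ not being connected between $A_0=K\cap B_0$ and $A_1=K\cap B_1$ via the contrapositive of Theorem~\ref{thm:orig-Kurat}, take the resulting clopen decomposition $K=F_0\cup F_1$, and extend to all of $X$. The only cosmetic difference is that the paper packages the clopen split as a $\{0,1\}$-valued function on $K$ (Proposition~\ref{pro:between}), extends it to $K\cup B_0\cup B_1$, and then applies Tietze, whereas you form the disjoint closed sets $P_i=B_i\cup F_i$ directly and apply Urysohn; your explicit verification that $B_0\cap F_1=\emptyset=B_1\cap F_0$ is precisely the content that makes the paper's extension step legitimate.
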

\begin{proof} Put $A_0:=B_0\cap K$ and $A_1:=B_1\cap K$. Then, it follows from assumptions and Theorem \ref{thm:orig-Kurat} that  $K$ is not connected between $A_0$ and $A_1$. Then, by Proposition \ref{pro:between}, there exists a continuous function $\vp:K\to \{0,1\}$ such that $\vp^{-1}(0)\supset A_0$, $\vp^{-1}(1)\supset A_1$. Since the sets $B_0$ and $B_1$ are disjoined, one can extend continuously the function $\vp$ to $\wt \vp:K\cup B_0\cup B_1\to \{0,1\}$ so  $\wt \vp^{-1}(0)\supset B_0$, $\vp^{-1}(1)\supset B_1$. Next, by Tietze extension  theorem, there exists an extension $\psi :X\to [0,1]$ of $\wt \vp$. Since $\wt \vp(K\cup B_0\cup B_1)\subset \{0,1\}$, for every $0<\delta<\frac 12$, one has  $ B_0 \cup B_1\cup K\subset \psi^{-1}([0,\delta)\cup(1-\delta,1])$.

\end{proof}
\vs

The following statement will be referred to as {\it equivariant Kuratowski's Lemma}.
\vs
\begin{theorem}\label{th:G-Kur}   Let $\mathscr E$ be an isometric Banach  $G$-representation, $B_0$, $B_1\subset \mathscr E$ two $G$-invariant disjoint closed sets in $\mathscr E$, and $K$ a $G$-invariant compact set in $\mathscr E $ such that $K\cap B_0\not=\emptyset\not= K\cap B_1$.  If the set $K$ does not contain a connected component $C$ such that $C \cap B_0\not=\emptyset\not= C\cap B_1$, then there exist two $G$-invariant disjoint open sets $U_0$, $U_1$ such that $B_0 \subset U_0$, $B_1\subset U_1$ and $ B_0 \cup B_1\cup K\subset U_0\cup U_1$.
\end{theorem}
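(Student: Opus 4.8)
\emph{Proof plan.} The plan is to deduce the conclusion from the non-equivariant version already established in Theorem~\ref{th:Kuratowski} by symmetrizing the separating function it produces. First I would apply Theorem~\ref{th:Kuratowski} with $X=\mathscr E$ and with $B_0,B_1,K$ as given; all of its hypotheses hold verbatim (in particular $\mathscr E$ is a metric space, $K$ is compact, $K\cap B_i\neq\emptyset$, and $K$ has no connected component meeting both $B_0$ and $B_1$). This yields a continuous function $\psi:\mathscr E\to[0,1]$ and a number $\delta\in(0,\tfrac12)$ such that $B_0\subset\psi^{-1}(0)$, $B_1\subset\psi^{-1}(1)$ and
\[
B_0\cup B_1\cup K\ \subset\ \psi^{-1}\!\big([0,\delta)\cup(1-\delta,1]\big).
\]

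The heart of the argument — and the step I expect to require care — is producing a \emph{$G$-invariant} function with these same three properties. The naive choice, the Haar average $x\mapsto\int_G\psi(gx)\,dg$, does \emph{not} work: on an orbit contained in $K$ along which $\psi$ takes values near both $0$ and $1$, the average can fall anywhere in $[\delta,1-\delta]$, destroying the third property. Instead I would symmetrize by taking the pointwise minimum over the orbit,
\[
\widetilde\psi(x):=\min_{g\in G}\psi(gx).
\]
Since $G$ is compact and both the action and $\psi$ are continuous, the minimum is attained and $\widetilde\psi$ is a well-defined, continuous, $G$-invariant function $\mathscr E\to[0,1]$; the crucial feature is that for each $x$ there is $g_x\in G$ with $\widetilde\psi(x)=\psi(g_xx)$, i.e. $\widetilde\psi(x)$ is a genuine value of $\psi$ somewhere on the orbit of $x$.

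Next I would check that $\widetilde\psi$ inherits the three properties, using that $B_0$, $B_1$ and $K$ are $G$-invariant. For $x\in B_0$ every $gx$ lies in $B_0\subset\psi^{-1}(0)$, so $\widetilde\psi\equiv0$ on $B_0$; symmetrically $\widetilde\psi\equiv1$ on $B_1$. For $x\in K$ the point $g_xx$ again belongs to $K$, hence $\widetilde\psi(x)=\psi(g_xx)\in[0,\delta)\cup(1-\delta,1]$. Finally, because $\delta<\tfrac12$ one has $[0,\delta)\cup(1-\delta,1]\subset[0,\tfrac12)\cup(\tfrac12,1]$, so the sets
\[
U_0:=\widetilde\psi^{-1}\big([0,\tfrac12)\big),\qquad U_1:=\widetilde\psi^{-1}\big((\tfrac12,1]\big)
\]
are open, disjoint and $G$-invariant, satisfy $B_0\subset U_0$, $B_1\subset U_1$, and $B_0\cup B_1\cup K\subset\widetilde\psi^{-1}\big([0,\delta)\cup(1-\delta,1]\big)\subset U_0\cup U_1$, which is exactly the assertion.

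The only technical loose end is the continuity of $\widetilde\psi$: upper semicontinuity is immediate from $\widetilde\psi(x')\le\psi(g_xx')$ for all $x'$, and lower semicontinuity follows by extracting, along any sequence $x_n\to x$, a convergent subsequence of the minimizers $g_n$ and passing to the limit — this uses only compactness of $G$ and continuity of the action. I would state this and not belabor it; no information about the components of $K$ is needed beyond what Theorem~\ref{th:Kuratowski} already encapsulates.
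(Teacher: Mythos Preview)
Your proof is correct, and it follows the same two-step scheme as the paper—apply Theorem~\ref{th:Kuratowski}, then symmetrize the resulting $\psi$—but you symmetrize differently, and for a good reason. The paper takes the Haar average $\widetilde\psi(x)=\int_G\psi(gx)\,d\mu(g)$ and sets $U_0=\widetilde\psi^{-1}([0,\delta))$, $U_1=\widetilde\psi^{-1}((1-\delta,1])$. Your objection to this is well taken: the clopen partition $K=\vp^{-1}(0)\cup\vp^{-1}(1)$ supplied by Theorem~\ref{th:Kuratowski} is \emph{not} a priori $G$-invariant, so an orbit in $K$ can straddle both pieces and the averaged value can fall into $[\delta,1-\delta]$, spoiling $K\subset U_0\cup U_1$. (Concretely: let $G=\bz_2$ act by a reflection and put into $K$ a two-point orbit disjoint from $B_0\cup B_1$; the non-equivariant $\vp$ is free to assign the two points opposite values, and the average is then $\tfrac12$.) Your choice $\widetilde\psi(x)=\min_{g\in G}\psi(gx)$ sidesteps this because the minimum is an \emph{actual} value of $\psi$ on the orbit and hence stays in $[0,\delta)\cup(1-\delta,1]$; the continuity argument you sketch is standard and correct. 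The paper's averaging approach can be repaired by first making the separating function on $K$ itself $G$-invariant—replace $F_0=\vp^{-1}(0)$ by its saturation $GF_0$, which is still clopen in $K$ by compactness of $G$, contains $A_0$, and misses the $G$-invariant set $A_1$—after which averaging is harmless; but as written the paper glosses over exactly the point you identify.
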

\begin{proof}
Take a function $\psi$ provided by Theorem \ref{th:Kuratowski} and put
\[
\wt{\psi}(x):= \int_G \psi(gx)d\mu (g), \quad x \in \mathscr E.
\]
Clearly, $U_0 : = \wt{\psi}^{-1}([0,\delta))$ and $U_1:=  \wt{\psi}^{-1}((1-\delta,1])$ are as required.
\end{proof}
\vs

\subsection{Equivariant finite-dimensional reduction}
The following Proposition provides technical means for the equivariant finite-dimensional reductions which are needed for effective computation of the related $G$-equivariant bifurcation invariants:
\vs

\begin{proposition}\label{prop:a-map-finite} Assume that $K$ is a compact metric space and  $\mathscr A : K \to \text{\rm GL}_c^G(\mathbb E)$ is a continuous map. Then, there exists a $G$-invariant decomposition $\mathbb E:=\mathbb E_o\oplus\mathbb E^o$ with $\text{\rm dim}(\mathbb E_o) < \infty$ and a homotopy $h : [0,1]\times K\to  \text{\rm GL}_c^G(\mathbb E)$ such that for all $\lambda \in K$, one has 
\begin{itemize}
\item[(i)] $h(0,\lambda) = \mathscr A(\lambda)$;
\item[(ii)] $h(1,\lambda)(\mathbb E_o)\subset \mathbb E_o$, $h(1,\lambda)(\mathbb E^o)\subset \mathbb E^o$, and  $h(1,\lambda)|_{\mathbb E^o}=\id_{\mathbb E^o}$,
\end{itemize}
where $\lambda\in K$.
\end{proposition}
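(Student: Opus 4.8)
The plan is to argue that $\mathscr A$, being a continuous family of compact perturbations of the identity on $\mathbb E$, is uniformly well-approximated on the compact parameter space $K$ by a family $\mathscr A_F$ whose non-trivial part lives on a fixed finite-dimensional $G$-invariant subspace; then one uses the standard ``straight-line'' homotopy to pass from $\mathscr A$ to $\mathscr A_F$ (admissibility being guaranteed by the approximation estimate), and a second homotopy to normalize $\mathscr A_F$ to the identity on the complementary subspace. First I would write each $\mathscr A(\lambda) = \id - \mathscr K(\lambda)$ with $\mathscr K : K \to \mathrm{L}_c^G(\mathbb E)$ continuous, and observe that $\{\mathscr K(\lambda) : \lambda \in K\}$ is a compact subset of $\mathrm{L}_c^G(\mathbb E)$; since compact operators are norm-limits of finite-rank operators and the averaging operator $P \mapsto \int_G g P g^{-1}\,d\mu(g)$ preserves finite rank and continuity, one obtains, for any prescribed $\varepsilon>0$, a $G$-equivariant finite-rank operator-valued map $\mathscr K_F : K \to \mathrm{L}_c^G(\mathbb E)$ with $\sup_{\lambda\in K}\|\mathscr K(\lambda)-\mathscr K_F(\lambda)\| < \varepsilon$. (The uniformity over $K$ comes from compactness of $K$ together with a partition-of-unity gluing; here one must take a little care that gluing finite-rank maps keeps the total rank finite, which it does since finitely many patches suffice.)

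Next I would choose $\mathbb E_o$ to be a finite-dimensional $G$-invariant subspace containing $\bigcup_{\lambda\in K}\operatorname{im}\mathscr K_F(\lambda)$ together with (a $G$-invariant subspace containing) all the images under $\mathscr K_F(\lambda)^{*}$-type corrections needed to make $\mathbb E_o$ invariant under the whole family $\mathscr A_F(\lambda):=\id-\mathscr K_F(\lambda)$; concretely, since $\operatorname{im}\mathscr K_F(\lambda)\subseteq \mathbb E_o$ already forces $\mathscr A_F(\lambda)$ to act as the identity on any complement of $\mathbb E_o$ disjoint from $\mathbb E_o$, it is enough to take $\mathbb E_o$ finite-dimensional, $G$-invariant, with $\operatorname{im}\mathscr K_F(\lambda)\subseteq\mathbb E_o$ for all $\lambda$, and then pick a $G$-invariant complement $\mathbb E^o$ (using isometry of the representation and the Peter--Weyl/averaging argument to split). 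Then $\mathscr A_F(\lambda)$ preserves both $\mathbb E_o$ and $\mathbb E^o$ and restricts to $\id$ on $\mathbb E^o$. The linear homotopy
\[
h(t,\lambda) := (1-t)\mathscr A(\lambda) + t\,\mathscr A_F(\lambda) = \id - \big((1-t)\mathscr K(\lambda) + t\,\mathscr K_F(\lambda)\big)
\]
is a homotopy of $G$-equivariant completely continuous fields, and it stays inside $\mathrm{GL}_c^G(\mathbb E)$ provided $\varepsilon$ was chosen smaller than $\big(\sup_{\lambda\in K}\|\mathscr A(\lambda)^{-1}\|\big)^{-1}$ — which is finite because $\lambda\mapsto\mathscr A(\lambda)^{-1}$ is continuous on the compact $K$ — by the standard Neumann-series invertibility perturbation bound. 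This gives $h(0,\lambda)=\mathscr A(\lambda)$ and $h(1,\lambda)=\mathscr A_F(\lambda)$ satisfying (ii).

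The step I expect to be the main obstacle is the uniform equivariant finite-rank approximation: getting a \emph{single} finite-dimensional $G$-invariant $\mathbb E_o$ that works for \emph{all} $\lambda\in K$ simultaneously, while keeping the approximating family continuous in $\lambda$ and genuinely $G$-equivariant. The non-equivariant, non-parametrized statement (a compact operator is a finite-rank perturbation up to $\varepsilon$) is classical; promoting it to a continuous family over compact $K$ uses compactness of $K$ and a partition of unity subordinate to finitely many balls on each of which one $\varepsilon$-approximant works, and promoting it to the equivariant setting uses the Haar-averaging projection $P\mapsto\int_G gPg^{-1}d\mu(g)$, which is norm-continuous, maps $\mathrm{L}_c(\mathbb E)$ into $\mathrm{L}_c^G(\mathbb E)$, sends finite-rank operators to finite-rank operators, and is norm-nonincreasing so the $\varepsilon$-estimate survives averaging (here one uses that $\mathscr A$ and hence $\mathscr K$ are already $G$-equivariant, so averaging $\mathscr K_F$ moves it \emph{toward} $\mathscr K$, not away). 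Once that approximation is in hand, everything else is a routine homotopy/excision bookkeeping argument, and the conclusion follows.
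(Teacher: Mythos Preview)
Your overall strategy---uniform equivariant finite-rank approximation followed by a straight-line homotopy---is sound in spirit and, once repaired, would give a cleaner route than the paper's. But as written there are two genuine gaps.

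First, Haar averaging does \emph{not} preserve finite rank. For instance, with $G=S^1$ acting on $\ell^2(\mathbb Z)$ by $(e^{i\theta}x)_n=e^{in\theta}x_n$ and $T=\langle\,\cdot\,,v\rangle v$ where every coordinate of $v$ is nonzero, the average $\int_G gTg^{-1}\,d\mu(g)$ is the diagonal multiplication $x_n\mapsto|v_n|^2 x_n$, which has infinite rank. The repair (in a Hilbert $G$-representation) is to bypass averaging altogether: the $G$-isotypic decomposition of $\mathbb E$ yields a net of $G$-equivariant finite-rank orthogonal projections $P_n\to\id$ strongly, and then $\mathscr K_F(\lambda):=P_n\mathscr K(\lambda)P_n$ is automatically equivariant, finite-rank, and uniformly close to $\mathscr K(\lambda)$ over the compact $K$.

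Second, the implication ``$\operatorname{im}\mathscr K_F(\lambda)\subseteq\mathbb E_o$ hence $\mathscr A_F(\lambda)|_{\mathbb E^o}=\id$'' is false: for $v\in\mathbb E^o$ one has $\mathscr A_F(\lambda)v=v-\mathscr K_F(\lambda)v$ with $\mathscr K_F(\lambda)v\in\mathbb E_o$ in general nonzero, so $\mathscr A_F(\lambda)$ is only block \emph{triangular} with respect to $\mathbb E_o\oplus\mathbb E^o$ (with $\id$ on the $\mathbb E^o$-diagonal block), not block-diagonal. An additional shearing homotopy scaling the off-diagonal block to zero fixes this and stays in $\mathrm{GL}_c^G(\mathbb E)$; alternatively, the $P_n\mathscr K P_n$ construction above already annihilates $\mathbb E^o:=\ker P_n$ and avoids the issue.

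For comparison, the paper takes a different route and does not invoke finite-rank approximation. It parametrizes $b(t,\lambda)=\id-t(\id-\mathscr A(\lambda))$ over $X:=[0,1]\times K$, uses the Fredholm alternative at each point of $X$ to produce local finite-dimensional corrections and cofinite-codimension subspaces $\mathbb E(x_i)$, covers $X$ by finitely many such patches, sets $\mathbb E^o:=\bigcap_i\mathbb E(x_i)$, and then builds a continuous family of $G$-equivariant projections onto $b(x)\mathbb E^o$ via a partition of unity; a final block-triangular homotopy (analogous to the shearing step above) completes the construction. This is considerably more elaborate but works in a bare isometric Banach $G$-representation without any appeal to an approximation property; your approach, once the two gaps are closed, is shorter and more transparent but appears to need the Hilbert structure (or an equivariant approximation property) to get started.
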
 
\begin{proof} Since this result in the non-equivariant case was proved in \cite{KW} (see Theorem 4.4.5), we only present the main steps of the construction of the homotopy $h$. Also,
in contrast to  \cite{KW}, we avoid the usage of (infinite dimensional) fiber bundles, and make the construction more explicit.
\vs
Put $\bm A(\lambda):=\id-\mathscr A(\lambda)$, $\lambda\in K$, and define $b : [0,1]\times K\to \text{L}_c^G(\mathbb E)$ by 
\[
b(t,\lambda):=\id-t\bm A(\lambda), \quad t\in [0,1],\; \lambda\in K. 
\]
Also, put $X:=[0,1]\times K$
and $x:=(t, \lambda)\in [0,1]\times K$. Finally, denote by $\text{Pr}^G(\mathbb E)$ the set of all 
linear bounded equivariant projections in $\mathbb E$.
\vs
The construction of $h : X\to \text{GL}^G_c(\mathbb E)$ is conducted in several steps. 
\vs

{\bf Step \#1:} For every  fixed $x_o\in X$, the operator $b(x_o):\mathbb E\to \mathbb E$ is a Fredholm operator of index zero, thus there exists a $G$-invariant closed subspace $\mathbb E(x_o)$ such that 
\[
\mathbb E=\Ker b(x_o)\oplus \mathbb E(x_o). 
\]
Denote by $P_o$ the $G$-equivariant projection of $\mathbb E(x_o)$ with $\Ker P_o=\Ker b(x_o)$.  Similarly, $\Im(b(x_o))$ is a closed $G$-invariant subspace of finite co-dimension, thus there exists a finite-dimensional $G$-invariant subspace $\mathbb F(x_o)$ such that 
\[
\mathbb E=\Im(b(x_o))\oplus \mathbb F(x_o),
\]
and $\dim \mathbb F(x_o)=\dim \Ker b(x_o)$. 
Notice that, since $b$ is a continuous map, there exists a neighborhood $V_o$ of $x_o$ in $X$ such that for every $x\in V_o$, one has
\begin{equation*}
 \mathbb E = \mathbb E(x_o)\oplus \Ker\, b(x) = b(x) \mathbb E(x_o)\oplus \mathbb F (x_o).
\end{equation*}
Indeed, since $\mathbb F(x_o)$ and $ \Ker b(x_o)$ are $G$-equivalent sub-representations of $\mathbb E$, there exists a $G$-equivariant isomorphism $B_o : \Ker(b(x_o))\to \mathbb F(x_o)$. Define the map $\psi_o : X \to \text{\rm L}_c ^G(\mathbb E)$ by 
\[
\psi_o(x)v:=b(x)v + B_o {(\id-P_o)}v,\quad x\in X, \; v\in \mathbb E.
\]
Since $\psi_o(x_o)\in \text{\rm GL}_c^G(\mathbb E)$, by continuity of $\psi_o$, there exists an open neighborhood  $V_o$ of $x_o$ in $X$  such that for all $x\in V_o$ one has $\psi_o(x)\in \text{\rm GL}_c^G(\mathbb E)$.
\vs

{\bf Step \#2:}  Combining the argument from Step 1 with the compactness of $X$, one can find 
finitely many points $x_1,\dots,x_n \in X$ and their corresponding neighborhoods 
$V_1,\dots,V_n$ (open in $X$) such that:
\begin{itemize}
\item[(a)] $X =  \bigcup \limits_{i=1}^n V_i$;
\item[(b)] for any $i = 1,\dots,n$, there exist $G$-invariant closed subspaces $\mathbb E(x_i)$ and $\mathbb F(x_i)$ satisfying 
\[
\mathbb E=\mathbb E(x_i)\oplus \Ker b(x_i) = \Im(b(x_i))\oplus \mathbb F(x_i),
\]
as well as a $G$-equivariant isomorphism $B_i :  \Ker(b(x_i))\to \mathbb F(x_i)$;
\item[(c)] for any  $i = 1,\dots,n$ and $x \in V_i$, one has
\[
\mathbb E=\mathbb E(x_i)\oplus \Ker b(x) = \Im(b(x))\oplus \mathbb F(x_i).
\]
\end{itemize}
\vs

{\bf Step \#3:} For each $i = 1,\dots, n$, take $P_i\in \text{Pr}^G(\mathbb E)$ projecting 
$\mathbb E$ onto $\mathbb E(x_i)$ (such a projection can be easily constructed by averaging an arbitrary (bounded) projection onto $\mathbb E(x_i)$). Then, for each $i = 1,\dots, n$, and  $x\in V_i$, define
\[
c_i(x):\mathbb E(x_i)\oplus \Ker b(x_i)\to b(x)\mathbb E(x_i)\oplus \mathbb F(x_i)
\]
by
\[
c_i(x)v:=b(x)P_i(v)+B_i(\id-P_i)v, \quad v\in \mathbb E.
\]
Clearly, $c_i(x)\in \text{GL}_c^G(\mathbb E)$, $x\in V_i$. 

Next, put
\[
\mathbb E^o:=\bigcap_{i=1}^n \mathbb E(x_i)
\]
and observe that $\mathbb E^o \subset \mathbb E$ is a closed $G$-invariant subspace of finite 
co-dimension. Take $Q\in \text{Pr}^G(\mathbb E)$ projecting $\mathbb E$ onto $\mathbb E^o$ and put $\mathbb E_o:=\Ker Q$.   
Define $p_i:V_i\to \text{Pr}^G(\mathbb E)$ by
\begin{equation*}
p_i(x):=c_i(x)\circ Q \circ P_i\circ (c_i(x))\one, \quad x\in V_i,
\end{equation*}
and notice that
\[
p_i(x)\mathbb E=b(x)\mathbb E^o, \quad x\in V_i.
\]
\vs

{\bf Step \#4:} Take a partition of unity $\{\vp_i\}_{i=1}^n$ subordinate to the cover $\{V_i\}_{i=1}^n$ and put
\[
q(x):=\sum_{i=1}^n \vp_i(x)p_i(x), \quad x\in X.
\]
Then $q(x)\in \text{Pr}^G(\mathbb E)$ for $x\in X$ and $q(x)\mathbb E=b(x)\mathbb E^o$.
Choose $0<\ve<1$ be such that for every $x=(t,\lambda)\in (\ve,1]\times K=:\wt X$ one has  $b(x)\in \text{GL}_c^G(\mathbb E)$. Therefore,  for every $x\in \wt X$, the linear operator $q(x):\mathbb E\to \mathbb E$, given by
\[
q(x):= b(x)\circ Q\circ b(x)\one, \quad x\in \wt X,
\]
is a well define $G$-equivariant projection on $b(x)\mathbb E^o$. Then,
take   a continuous function $\alpha:[0,1]\to [0,1]$ such that $\text{supp}(\alpha)\in (\ve,1]$ and $\alpha(1)=1$, and  define the projection $p(x)\in \text{Pr}^G(\mathbb E)$ for $x=(t,\lambda)\in [0,1]\times K$ by
\[
p(t,\lambda):=(1-\alpha(t))q(t,\lambda)+\alpha(t)\wt q(t,\lambda).
\]
Clearly, $p(x)$ is a $G$-equivariant projection of $\mathbb E$ onto $b(x)\mathbb E^o$ such that
$p(x)|_{\mathbb E^o} :\mathbb E^o\to b(x)\mathbb E^0$ is a $G$-equivariant isomorphism.  Moreover,
for every $x\in X$
\[
\mathbb E=\Ker p(x)\oplus b(x)\mathbb E^o,
\]
and $\id-p(x)|_{\mathbb E_o}:\mathbb E_o\to \Ker p(x)$ is also a $G$-equivariant isomorphism.
\vs

{\bf Step \#5:} The projection $\id-p(x)\in \text{Pr}^G(\mathbb E)$, $x\in X$, satisfies $\Im(\id-p(x))=\Ker(p(x))$. Since for $x=(1,\lambda)\in X$  the operator
\[d(\lambda):=\id-p(1,\lambda)|_{\mathbb E_o}=b(1,\lambda)(\id-Q)b(1,\lambda)^{-1}|_{\mathbb E_o}\]
restricted to the subspace $\mathbb E_o$ is an isomorphism from $\mathbb E_o$ to $b(1,\lambda)\mathbb E_o$,  thus $d(\lambda)^{-1}b(1,\lambda)|_{\mathbb E_o}$ is a $G$-equivariant endomorphism of $\mathbb E_o$ and  (since $b(1,\lambda)$ is an isomorphism) satisfies
\[
b(1,\lambda)d(\lambda)^{-1}b(1,\lambda)|_{\mathbb E_o}=b(1,\lambda)|_{\mathbb E_o}.
\]
Define $\wt h:X\to \text{GL}_c^G(\mathbb E)$ by
\[
\wt h(t,\lambda):=b(t,\lambda)\circ Q+(\id-p(t,\lambda))\circ d(\lambda)^{-1}\circ   b(1,\lambda)(\id -Q), \quad (t,\lambda)\in X.
\]
Notice that since $\id-p(t,\lambda)|_{\mathbb E_o}:\mathbb E_o\to \ker p(x)$ is a $G$-equivariant isomorphism, thus $\wt h(x)\in \text{GL}_c^G(\mathbb E)$ for all $x\in X$. Moreover, for $(1,\lambda)\in X$ one has
\begin{align*}
\wt h(1,\lambda)&=b(1,\lambda)\circ Q+(\id-p(1,\lambda))\circ d(\lambda)^{-1}  \circ b(1,\lambda)(\id -Q)\\
&=b(1,\lambda)\circ Q+b(1,\lambda)\circ (\id-Q)\circ b(1,\lambda)^{-1}\circ b(1,\lambda)(\id -Q)\\
&=b(1,\lambda)\circ Q+b(1,\lambda)\circ (\id -Q)\\
&=b(1,\lambda)=\mathscr A(\lambda).
\end{align*}
On the other hand, one has the following block-matrix
\begin{align*}
\wt h(0,\lambda)
&=
\mathop{\begin{bmatrix}
\id &0\\
M_1(\lambda) &M(\lambda)
  \end{bmatrix}}\limits^{\mathbb E^o\quad\;\;\;\;\;\;\mathbb E_o}
  \begin{array}{c}\text{\small $\mathbb E^o$} \\ \text{\small $\mathbb E_o$}
  \end{array}
\end{align*}
where $M(\lambda):\mathbb E_o\to \mathbb E_o$ is a $G$-equivariant isomorphism.
Notice that
in order to conclude the construction, one needs to connect the homotopy $\wt h$ with the homotopy $\wh h$ given by
\[
\wh h(s,\lambda)=\mathop{\begin{bmatrix}
\id &0\\
(1-s)M_1(\lambda) &M(\lambda)
  \end{bmatrix}}\limits^{\mathbb E^o\quad\;\;\;\;\;\;\mathbb E_o}
  \begin{array}{c}\text{\small $\mathbb E^o$} \\ \text{\small $\mathbb E_o$}
  \end{array}, \quad s\in [0,1],
\]
and the conclusion follows.
\end{proof}
\vs

\section{Twisted Equivariant Degree}\label{appendix:Twist} 
{ Before passing to the $S^1$-equivariant degree, we recall some basic equivariant jargon and related standard notations.
\vs
For $H$ and $K$ two closed subgroups of a compact Lie group $G$, we say that $H$ is {\it conjugate} to $K$ in $G$ if and only if $H=gKg^{-1}$ for some $g \in G$. The conjugacy relation is an equivalence relation. We will write $H \sim K$ to denote that $H$ is conjugate to $K$ and will write $(H)$ for the equivalence class of $H$, called the {\it conjugacy class} of $H$ in $G$.
\vs
Denote by $\Phi(G):= \{(H) : H \leq G\}$, the set of all conjugacy classes. Notice that there is a natural {\it partial order relation} ``$\leq$" in $\Phi(G)$ defined as follows:
\[
(H) \leq (K) \;  \Leftrightarrow \; \exists_{g \in G} \; gHg^{-1} \subseteq K \; \text{for} (H), (K) \in \Phi(G).
\]
For $H \leq G$, denote by $N(H):=\{ g \in G : gHg^{-1} =H\}$ the {\it normalizer} of $H$ in $G$ and by $W(H):=N(H)/H$ the {\it Weyl group} of $H$ in $G$.
\vs
For a G-space $X$ and $x\in X$, denote by $G_x:=\{g\in G: gx=x\}$ the {\it isotropy} group of x, by $G(x):=\{gx:g\in G\}$ the {\it orbit} of x, and call $(Gx)$ the {\it orbit type} of $x\in X$. Put $\Phi_{n}(G) := \{ (H) \in \Phi(G) : \dim W(H) =n, n \in \bn \cup \{0\} \}$, $\Phi(G;X) := \{(H) \in \Phi(G) : H = G_x, x \in X\}$, and $\Phi_{n}(G;X) := \Phi_{n}(G) \cap \Phi(G;X) $. 
\vs
For a subgroup $ H \leq G$, the subspace $ X^H := \{ x \in X : G_x \geq H\}$ is called the {\it $H$-fixed-point subspace} of $X$. We also put $X_H := \{ x \in X : G_x = H\}$ and $X_{(H)} := \{ x \in X : (G_x)=(H)\}$. The Weyl group $W(H)$ acts freely on $X_H$. 
\vs
Given two $G$-spaces $X$ and $Y$, a continuous map $f: X \to Y$ is called {\it $G$-equivariant} (in short, {\it $G$-map}) if for all $x \in X, g \in G, f(gx)=gf(x)$ holds. 
If $X$ is a smooth manifold on which $G$ acts by diffeomorphisms, then $X$ is called a {\it $G$-manifold}.
}

\vs

The $S^1$-equivariant degree, which was introduced in 1980s is presently known and widely used as an effective  tool to study the existence and bifurcation of periodic solutions in the autonomous first order differential systems (cf. \cite{DGJM}, see also \cite{AED,Wu}). In short, the $S^1$-degree (as well as the general twisted degree) can be characterized by its property. For this end, we need some notation. 
\vs

By $A_1(S^1)$ we denote the free $\bz$-module generated by $\Phi_1(S^1):=\{(\bz_n): n=1,2,3,\dots\}$. For $G:=\Gamma \times S^1$, where $\Gamma$ stands for  a compact Lie group, and a subgroup $H\le G$ is called {\it twisted}, if there exists a subgroup $H\le \Gamma$ and continuous homomorphism $\theta: H\to S^1$ such that for some $l \in \{0,1,2,\dots\}$ one has
\[
H=\{(\gamma,z)\in K \times S^1: \theta(\gamma)=z^l\}=:K^{\theta,l}.
\]
We restrict our attention to such twisted subgroups $H$ for which the Weyl's group $W_G(H)$ of $H$  in $G$ satisfies  $\dim W_G(\mathcal H) = 1$. Then, $H=K^{\theta,l}$ will be called a {\it $\theta$-twisted $l$-folded subgroup}. We denote by $\Phi_1^t(G)$ the set of all $\theta$-twisted $l$-folded conjugacy classes 
$(K^{\theta,l})$ such that  $\dim W_G(K^{\theta,l}) = 1$, and define $A_1^t(G)$ to be  the $\bz$-module $A_1^t(G)$ generated by $\Phi_1^t(G)$, i.e. 
\[
A_1^t(G):=\bz[\Phi_t^1(G)].
\]
We also define $A(\Gamma)$ as the free $\bz$-module generated by $\Phi_0(\Gamma):=\{(K):\dim\, W_\Gamma (K)=0\}$. Notice that
 $A(\Gamma)$ has a ring structure with the multiplication defined by the generators $(H)$, $(K)\in \Phi_0(\Gamma)$ by
 \begin{align*}
 (H)\cdot (K)&=:\sum_{(L)\in \Phi_0(\Gamma)} n_L\;(L)\\
 n_L&:= \text{number of $(L)$-orbits in }\; \frac\Gamma H\times \frac \Gamma K,
 \end{align*}
 which defines a ring structure on $A(\Gamma)$ called  the {\it Burnside ring} of $\Gamma$ (see \cite{AED}). Similarly, $A_1^t(G)$ has a structure of $A(\Gamma)$-module with the multiplication 
 defined on generators $(K)\in \Phi_0(\Gamma)$ and $(\mathscr H)\in \Phi_1^t(G)$ by 
\begin{equation}\label{eq:multi}
 (K)\cdot (\mathscr H) := \sum_{(\mathscr L)\in \Phi_1^t(G)} n_{\mathscr L}\;(\mathscr L)
\end{equation}
 where
\begin{equation}\label{eq:nl}
 n_{\mathscr L} := \text{number of $(\mathscr L)$-orbits in }\; \frac {\Gamma\times S^1} K\times \frac {\Gamma\times S^1} {\mathscr H}.
\end{equation}
 \vs
 
 There exists a unique function $\s1deg: \mathscr M_1^{S^1}\to  A_1(S^1)$ satisfying the following properties satisfying the standard {\it additivity, homotopy} and {\it multiplicativity properties} including, in particular,  the {\it recurrence property}, which allows effective computations of $S^1$-degree and formal proofs of all other properties. The $S^1$-degree was introduced in \cite{DGJM} and we refer to \cite{AED} for all the details.
 \vs
Assume that $V$ is an orthogonal $G$-representation,  $f:\br\times V\to V$ an $G$-equivariant map and $
\Omega\subset \br\times V$ is an open bounded $G$-invariant set. Then the pair $(f,\Omega)$ is called an {\it admissible $G$-pair} in $\br\times V$  if $f(t,v)\not=0$ for all $(t,v) \in \partial \Omega$. We denote by $\mathscr M_1^G(\br\times V)$ the set of all admissible $G$-pairs in $\br\times V$.
\vs

We define for an admissible $S^1$-pair $(f,\Omega)$ the integer
\begin{equation}\label{eq:s1-cumul-deg}
\deg_{S^1}(f,\Om):=\sum_{(H)\in \Phi_1(S^1)} \text{\rm coeff\,}^H\Big(\s1deg(f,\Om) \Big)
\end{equation}
and call it 
{\it cumulative} $S^1$-equivariant degree of $f$ in $\Om$.
\vs

Let $(f,\Omega)\in \mathscr M_1^G(\br\times V)$ and $\mathcal H\in \Phi_1(G)$. Then $f^{\mathcal H}:\br\times V^{\mathcal H}\to V^{\mathcal H}$ is $N(\mathcal H)$ equivariant. Since $S^1\le N(\mathcal H)$, $(f^{\mathcal H},\Omega^{\mathcal H})$ is an admissible $S^1$-pair, one can   define the numbers 
$\{d_{\mathcal H}(f,\Omega)\,: \,(\mathcal H)\in \Phi_1^t(G;\Omega)\}$ by the recurrence formula:
\begin{equation}\label{eq:H-twisted}
d_{\mathcal H}(f,\Omega):=\frac{\displaystyle \deg_{S^1}(f^{\mathcal H},\Om^{\mathcal H})-\sum_{(\mathcal K)>(\mathcal  H)}d_{\mathcal K}(f,\Omega)\, n(\mathcal H,\mathcal K)\, 
|W(\mathcal K)/S^1|}{|W(\mathcal H)/S^1|},
\end{equation}
where $ n(\mathcal H,\mathcal K)$ stands for the cardinality of the set $\{\mathcal K'\in (\mathcal K): \mathcal H\le \mathcal K'\}$.
We are now in a position to introduce a concept which is crucial.
 
\begin{definition}\label{def:twisted-degree} For a given $(f,\Om)\in \mathscr M_1^G$, the element $\gdeg(f, \Om)\in A_1^t(G)$ defined by 
\begin{equation}\label{eq:twisted-def}
\gdeg(f,\Om)=\sum_{(\mathcal H)\in \Phi^t_1(G)} d_{\mathcal H}(f,\Om) \,\,(\mathcal H),
\end{equation}
where $ d_{\mathcal H}(f,\Om) \in \bz$ is given by \eqref{eq:H-twisted}, is called the {\it twisted $G$-equivariant degree} of $f$ in $\Om$.
\end{definition} 
Before listing the properties satisfied by the twisted degree, we need to adopt an additional notation. Given an orthogonal  $G$-representation $V$ with the $G$-action 
$\varphi: G \times V \to V$, the homomorphism  $\psi_m:\Gamma\times S^1\to \Gamma\times S^1$ defined by  $\psi_m(\gamma,z)=(\gamma,z^m)$, induces the $G$-action $\varphi_m: G \times V \to V$ defined by $\varphi_m \big((\gamma,z), v\big):= \psi_m(\gamma,z)v$. Denote by
 $^mV$ the $G$-representation related to  the $G$-action $\varphi_m$.
Also, for a $G$-invariant  subset $\Om\subset \br\times V$, we will write $^m\Om$ to point out that $\Om$ is considered with the $G$-action $\varphi_m$. 
\vs

The following statement holds.
\vs

\begin{proposition}\label{pro:twisted-deg} The function $\gdeg: \mathscr M_1^{G}\to  A^t_1(G)$ defined by \eqref{eq:twisted-def} satisfies the following properties:
\begin{enumerate}[label=($\mathfrak T_{\arabic*}$)]\itemc

\item\label{T1} {\bf (Additivity)}  For two disjoint open $G$-invariant subsets  $\Omega_1$ and $\Omega_2$ of $\Omega$ such that $f^{-1}(0)\cap \Omega\subset \Omega_1\cup\Omega_2$, one has
\begin{equation}\label{eq:add-twisted}
\gdeg(f,\Omega)=\gdeg(f,\Omega_1)+\gdeg(f,\Omega_2);
\end{equation}
\item\label{T2} {\bf (Homotopy)} If $(F_s, \Omega) \in \mathscr M_1^G$, $s\in [0,1]$, 
is an $\Omega$-admissible $G$-homotopy, then $\gdeg(F_s,\Omega)$ is independent of $s$;

\item\label{T3} {\bf (Normalization)}   Let  $(f,\Om) \in  \mathscr M_1^{G}$ be such that $f$ is regular normal in $\Om$. Then:
\begin{itemize}
\item[(i)] If $f^{-1}(0)\cap \Om = G(w_o)$ for some  $w_o \in \Om$ with
$G_{w_o}={\mathcal H}$ such that $(\mathcal H) \in \Phi_1^t(G; \Om)$,  then,
\[
\gdeg(f,\Om)=\text{\rm sign\,} \det(Df(w_o)|_{S_{w_o}}),
\]
where $S_{w_o}$ is the positively oriented slice to $W(\mathcal H)(w_o)$  at $w_o$ in 
$\Om_{\mathcal H}$;
\item[(ii)] If $\Phi_1^t(G; f^{-1}(0)\cap \Om) = \emptyset$, then $\gdeg(f,\Om) = 0$;
\end{itemize}


\item\label{T4} {\bf (Existence)} If $(f,\Om)\in  \mathscr M_1^{G}$ and 
{\rm coeff$^{\mathcal H}\big(\gdeg(f,\Om)\big)\not=0$} for  some $(\mathcal H) \in \Phi_1^t(G;\Om)$, then there exists $(t,x)\in \Om$ such that $G_{(t,x)}\ge {\mathcal H}$ and $f(t,x)=0$;


\item\label{T5} {\bf (Excision)} Suppose that $(f,\Omega)\in  \mathscr M_1^{G}$ and $\Omega_o$ is an open $G$-invariant subset of $\Omega$ such that $f\one(0)\cap\Omega\subset \Omega_o$. Then, $\gdeg(f,\Omega)=\gdeg(f,\Omega_o)$;

\item\label{T6}  {\bf (Rouch\'e's Property)} If $(f,\Omega)\in  \mathscr M_1^{G}(\br\times V)$ and $g:\br\times V\to V$ is a $G$-equivariant map such that
\begin{equation}\label{eq:Rouche-prperty}
\sup_{(t,x)\in \partial \Omega}|f(t,x)-g(t,x)|<\inf_{(t,x)\in \partial \Omega}|f(t,x)|,
\end{equation}
then $(g,\Omega)\in \mathscr M_1^{G}(\br\times V)$ and $\gdeg(f,\Omega)=\gdeg(g,\Omega)$;

\item\label{T7} {\bf (Boundary)} If $(f,\Omega)\in  \mathscr M_1^{G}(\br\times V)$  and $g:\br\times V\to V$ is an $S^1$-equivariant  map such that $f(t,x)=g(t,x)$ for all $(t,x)\in \partial \Omega$, then $\gdeg(f,\Omega)=\gdeg(g,\Omega)$;

\item\label{T8} {\bf (Folding}) If $m\in \bn$ and $(f,\Om)\in  \mathscr M_1^{G}$, then 
\[
\gdeg(f,{^m\Om})=\Psi_m\big(\gdeg(f,\Om) \big);
\]

\item\label{T9} {\bf (Suspension)} If   $(f,\Om)$, $(\id, U)\in  \mathscr M_1^{G}$, where $U$ is a neighborhood of zero, then 
\[
\gdeg(f\times \id,\Om\times U)=\gdeg(f,\Om);
\]

\item\label{T10} {\bf (Product)} Let $(f,\Om)\in  \mathscr M_1^{G}$,   $W$ be an orthogonal $\Gamma$-representation  and $U \subset W$  be an open bounded neighborhood of zero and  $(g,U) \in \mathcal M^\Gamma(W)$. Then,
\[
\gdeg(g \times f,U \times \Omega) = \Gamma\text{\rm -deg}(g,U) \cdot \gdeg(f,\Omega),
\]
where the multiplication `$\cdot$' is taken in the $A(\Gamma)$-module $A_1^t(G)$ (cf. 
\eqref{eq:multi}--\eqref{eq:nl}).
\end{enumerate}
\end{proposition}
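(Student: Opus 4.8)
The final statement to be proved is Proposition~\ref{pro:twisted-deg}, which lists the ten standard properties ($\mathfrak T_1$)--($\mathfrak T_{10}$) of the twisted $G$-equivariant degree. My plan is to derive each property from the corresponding property of the $S^1$-equivariant degree $\s1deg$ (whose existence and basic properties are taken as known from \cite{DGJM,AED}), exploiting the recurrence formula \eqref{eq:H-twisted} and the definition \eqref{eq:twisted-def}. The key observation is that for $(\mathcal H)\in\Phi_1^t(G)$, restriction to the fixed-point space $V^{\mathcal H}$ turns a $G$-equivariant map $f$ into an $N(\mathcal H)$-equivariant map $f^{\mathcal H}$, and since $S^1\le N(\mathcal H)$ with $\dim W(\mathcal H)=1$, the pair $(f^{\mathcal H},\Omega^{\mathcal H})$ is an admissible $S^1$-pair; thus every coefficient $d_{\mathcal H}(f,\Omega)$ is built, via a finite upward recursion over the (finite) lattice of twisted orbit types in $\overline\Omega$, out of cumulative $S^1$-degrees $\deg_{S^1}(f^{\mathcal K},\Omega^{\mathcal K})$ with $(\mathcal K)\ge(\mathcal H)$.

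The proof proceeds property by property. For \ref{T1} (Additivity) and \ref{T2} (Homotopy): apply the additivity/homotopy property of $\deg_{S^1}$ to each $f^{\mathcal H}$ on $\Omega^{\mathcal H}$, then argue by downward induction on the partial order in $\Phi_1^t(G;\Omega)$ (starting from maximal orbit types, for which $d_{\mathcal H}=\deg_{S^1}(f^{\mathcal H},\Omega^{\mathcal H})/|W(\mathcal H)/S^1|$) that each $d_{\mathcal H}$ inherits the property from \eqref{eq:H-twisted}; for homotopy one also needs that the admissibility of the homotopy on $\partial\Omega$ passes to each $\partial\Omega^{\mathcal H}$, which is immediate. \ref{T5} (Excision) follows the same way since $f^{-1}(0)\cap\Omega\subset\Omega_o$ implies $(f^{\mathcal H})^{-1}(0)\cap\Omega^{\mathcal H}\subset\Omega_o^{\mathcal H}$. \ref{T6} (Rouch\'e) and \ref{T7} (Boundary) are consequences of \ref{T2}: in the Rouch\'e case the linear homotopy $(1-s)f+sg$ is $\Omega$-admissible by the strict inequality \eqref{eq:Rouche-prperty}, and the boundary case follows because $f$ and $g$ agree on $\partial\Omega$, so the same linear homotopy works. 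For \ref{T4} (Existence): if $\mathrm{coeff}^{\mathcal H}(\gdeg(f,\Omega))=d_{\mathcal H}(f,\Omega)\ne0$, then by \eqref{eq:H-twisted} there is a twisted orbit type $(\mathcal K)\ge(\mathcal H)$ with $\deg_{S^1}(f^{\mathcal K},\Omega^{\mathcal K})\ne0$, hence by the existence property of the cumulative $S^1$-degree there is a zero $(t,x)\in\Omega^{\mathcal K}$, i.e. $f(t,x)=0$ with $G_{(t,x)}\ge\mathcal K\ge\mathcal H$ (this is essentially the argument already used in the proof of Theorem~\ref{th:bif-cont-1}).

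For \ref{T3} (Normalization): when $f^{-1}(0)\cap\Omega$ is a single orbit $G(w_o)$ of type $(\mathcal H)$, the only nonzero fixed-point restriction is $f^{\mathcal H}$ (and those for $(\mathcal K)\le(\mathcal H)$, but the recursion kills them), so $\deg_{S^1}(f^{\mathcal H},\Omega^{\mathcal H})=|W(\mathcal H)/S^1|\cdot\mathrm{sign}\det(Df(w_o)|_{S_{w_o}})$ by the $S^1$-normalization, giving $d_{\mathcal H}=\mathrm{sign}\det(Df(w_o)|_{S_{w_o}})$ and $d_{\mathcal K}=0$ for $\mathcal K\ne\mathcal H$; part (ii) is immediate since all relevant $\deg_{S^1}$ vanish. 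For \ref{T8} (Folding): this reduces, orbit type by orbit type, to the folding property of the $S^1$-degree together with the bookkeeping of how $\psi_m$ acts on twisted subgroups — one verifies that $(^m f)^{\mathcal H}$ on $^m\Omega^{\mathcal H}$ corresponds to the $m$-folded $S^1$-pair, and that the induced map $\Psi_m$ on $A_1^t(G)$ is exactly the transfer of the $S^1$-folding operator $\Psi_m$ through the recurrence. \ref{T9} (Suspension) follows from the $S^1$-suspension applied to $(f\times\id)^{\mathcal H}=f^{\mathcal H}\times\id$ on $\Omega^{\mathcal H}\times U^{\mathcal H}$, noting $U^{\mathcal H}$ is still a neighborhood of zero. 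Finally \ref{T10} (Product) combines the product/multiplicativity property of $\deg_{S^1}$ on fixed-point spaces with the combinatorial identity \eqref{eq:multi}--\eqref{eq:nl} defining the $A(\Gamma)$-module structure: restricting $g\times f$ to an $(\mathcal L)$-fixed space and counting orbits reproduces the coefficient $n_{\mathcal L}$ in $(K)\cdot(\mathcal H)$, after which the recurrence is applied on both sides.

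The main obstacle will be \ref{T8} and \ref{T10}: these are the only properties where one cannot simply ``push everything through the recurrence'' coordinatewise but must track how the operations (folding of the $S^1$-action, and the external product with a $\Gamma$-map) interact with the lattice of twisted conjugacy classes and with the passage to fixed-point subspaces. In particular, for \ref{T10} the delicate point is to match the ``number of $(\mathcal L)$-orbits in $\frac{\Gamma\times S^1}{K}\times\frac{\Gamma\times S^1}{\mathcal H}$'' appearing in \eqref{eq:nl} against the fixed-point-space computation of the $S^1$-degree of $g^{?}\times f^{?}$, which requires a careful analysis of how $(\mathcal L)$-orbits in the product decompose; I would handle this by first proving the identity for the generators (single-orbit maps via \ref{T3}) and then extending by additivity and homotopy. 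Everything else is routine once the reduction-to-$S^1$ principle and the finiteness of $\Phi_1^t(G;\overline\Omega)$ (ensuring the recurrence \eqref{eq:H-twisted} terminates) are in place.
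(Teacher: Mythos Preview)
The paper does not actually prove Proposition~\ref{pro:twisted-deg}: it is stated in the Appendix as a summary of known properties, with the line ``The following statement holds'' and no proof, the reader being referred to \cite{AED} (and, for the underlying $S^1$-degree, to \cite{DGJM}). So there is nothing in the paper to compare your argument against directly.

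That said, your strategy --- reduce each property to the corresponding property of the cumulative $S^1$-degree via the recurrence \eqref{eq:H-twisted}, using downward induction on the finite poset $\Phi_1^t(G;\overline\Omega)$ --- is exactly the standard route taken in \cite{AED}, and your outline is sound. A couple of small points of care: in your treatment of \ref{T3}(i), the claim that $d_{\mathcal K}=0$ for $(\mathcal K)\neq(\mathcal H)$ is not quite ``immediate'' for $(\mathcal K)<(\mathcal H)$, since $\Omega^{\mathcal K}\supset\Omega^{\mathcal H}$ does contain the zero orbit and hence $\deg_{S^1}(f^{\mathcal K},\Omega^{\mathcal K})$ need not vanish; one must check that the recurrence exactly cancels these contributions, which is where the combinatorial factors $n(\mathcal H,\mathcal K)\,|W(\mathcal K)/S^1|$ earn their keep. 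Your assessment that \ref{T8} and especially \ref{T10} are the substantive cases is correct, and your plan to establish \ref{T10} first on single-orbit generators via \ref{T3} and then extend by \ref{T1}--\ref{T2} is the right one.
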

\vs
The irreducible $G$-representations can be easily described. Consider a complete list of irreducible  $\Gamma$-representations $\{\cV_j\}$. Take an irreducible $\Gamma$-representation $\cV_j$, $j\in \bn$. If $\cV_j$ is of real type than one can consider its complexification $W:=V^c_j$, and if  $\cV_j$ is not of real type, then it already has a complex structure, so we can put $W:=\cV_j$.  Next, for a given $k\in \bn$,  we define the $S^1$-action on $W$ by the $k$-folding, i.e. $zw:=z^k\cdot w$, $z\in S^1$, $w\in W$, where $`\cdot '$ denotes the complex multiplication. Then, the space $W$, equipped with the above $\Gamma\times S^1$-action, becomes an irreducible  $G$-representation which is denoted by  $\cV_{k,j}$. 
\vs

\
\begin{definition}\label{def:basic-twiste-d} 
Assume that $k>0$ and consider the irreducible 
$G = \Gamma \times S^1$-representation $\cV_{k,j}$. Put
\begin{equation}\label{eq:basic-twisted}
\deg_{\cV_{k,j}}:=\gdeg(b,\Omega),
\end{equation}
where $b:\br\times \cV_{k,j}\to \cV_{k,j}$ is given by 
\begin{equation}\label{eq:basic-real}
b(t,v)=(1-|\lambda|+it)\cdot v, \quad (t,v)\in \br\times \cV_{k,j},
\end{equation} 
and
\begin{equation}\label{eq:Om-basic-twisted}
\Omega:=\left\{(t,v)\in \br\times \cV_{k,j}: \tfrac 12<|\lambda|<2,\; |t|<1\right\}.
\end{equation}
Then, the element $\deg_{\cV_{k,j}}\in A_1^t(G)$ is called the {\it twisted $\cV_{k,j}$-basic degree} (or simply the {\it $\cV_{k,j}$-basic degree}).
\end{definition}
\vs
By recurrence formula \eqref{eq:H-twisted}, one has that
\[
\deg_{\cV_{k,l}}=\sum_{(H)\in \Phi_1^t(G;\cV_{k,j})} n_H(H)
\]
where 
\[
n_H=\frac{\frac 12 \dim \cV_{k,j}^H-\sum_{(K)>(H)} n_K\,n(H,K)\,|W(K)/S^1|}{|W(H)/S^1|}.
\]
Clearly, if $(H)$ is a maximal twisted orbit type in $\Phi_1^t(G;\cV_{k,j})$, then
\begin{equation}\label{eq:max-orb-basic}
n_H=\frac{\frac 12 \dim \cV_{k,j}^H}{|W(H)/S^1|}.
\end{equation}
\vs

Let $W$ is an orthogonal $G$-representation and consider its $G$-isotypic decomposition 
\begin{equation}\label{eq:Wiso}
W=\bigoplus\limits_{k,l} W_{k,l}, \quad l=0,1,2,\ldots,r, \; k\geq 0,
\end{equation}
where the $G$-isotypic component $W_{k,l}$ of $W$ is modeled on $\cV_{k,l}$. Put
\begin{equation}
m_{k,l}:=\dim W_{k,l}/ \dim \cV_{k,l}.
\end{equation}
For a map $a : S^1\to \text{\rm GL}^{G}(W)$, denote by $a_{k,l} : S^1\to \text{\rm GL}^{G}(W_{k,l})$ the restriction of $a$ to the isotypic component $W_{k,l}$ and take the complex $m_{k,l} \times m_{k,l}$-matrix $\widetilde{a}_{k,l}(\lambda) : \bc^{m_{k,l}} \to \bc^{m_{k,l}}, \lambda \in S^1$, representing $a_{k,l}(\lambda)$. Define, 
\begin{equation}\label{eq:dkl}
d_{k,l} := \deg(\det_{\bc}(\widetilde{a}_{k,l})), \quad l=0,1,2,\ldots,r, \; k > 0,
\end{equation}
and 
\begin{equation}\label{eq:rhol}
\rho_l := \text{\rm sign} \det (a_{0,l}(\lambda)), \quad a_{0,l}(\lambda) := a(\lambda)\arrowvert_{W_{0,l}} \quad (\lambda \in S^1).
\end{equation}
Based on these notations one has the following important computational result. 

\vs
\begin{theorem}\label{th:comp-twisted}
Suppose that $W$ is an orthogonal $G$-representation admitting the isotypic decomposition \eqref{eq:Wiso}, and $\mathfrak O\subset \bc\times W$ is given by 
\[
\mathfrak O:=\left\{(\lambda,v)\in \bc\times W: |v|<2,\; \frac 12 <|\lambda|<2 \right\}.
\]
Take a continuous map  $a : S^1\to \text{\rm GL}^{G}(W)$ and let $F_a : \bc \times W \to W$ be a $G$-equivariant map, which  on $\mathfrak O$ is given by 
\[
F_a(\lambda,v) = a\left(\tfrac \lambda{|\lambda|}\right)v, \quad (\lambda,v)\in \mathfrak O.
\]
Let $\theta : \bc \times W\to \br $ be a $G$-invariant (continuous) auxiliary function
in $\mathfrak O$  for $F_a$ satisfying the condition $\theta (\lambda,0) > 0$ 
\big(resp. $\theta (\lambda,0) < 0$\big) for 
$| \lambda | = 2$  \big(resp. $| \lambda | = {1 \over 2}$\big),
 and let $f_{\theta,a} : \bc\times W\to \br\times W$ be given by
\[
f_{\theta,a}(\lambda,v)=\big(\theta(\lambda,v), F_a(\lambda,v)\big).
\]
Then,
\begin{equation}\label{eq:twisted-fa}
\gdeg(f_{\theta,a},\mathfrak O) = \prod_{l} {\bm \delta_l}\cdot \mathop{\sum_{k,l}}\limits_{k>0} d_{k,l}\deg_{\cV_{k,l}},
\end{equation}
where $\deg_{\cV_{k,l}}$ is described in Definition \ref{def:basic-twiste-d}, $d_{k,l}$ is given by \eqref{eq:dkl} and 
\begin{equation}\label{eq:factor-j-twisted}
{\bm \delta_l}:=\begin{cases}
\big(\deg_{\cV_l}\big)^{m_{l,0}} & \text{ if }\; \rho_l=-1,\\
(\Gamma) & \text{ if }\; \rho_l=1,
\end{cases}
\end{equation}
(here $\rho_l$ is given by \eqref{eq:rhol}).
\end{theorem}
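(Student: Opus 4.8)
\textbf{Plan for the proof of Theorem \ref{th:comp-twisted}.}

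The plan is to reduce the computation of $\gdeg(f_{\theta,a},\mathfrak O)$ to a product of basic degrees by exploiting the $G$-isotypic decomposition \eqref{eq:Wiso} and the multiplicativity properties of the twisted degree. First I would observe that, since $\theta(\lambda,0)>0$ for $|\lambda|=2$ and $\theta(\lambda,0)<0$ for $|\lambda|=1/2$, the pair $(f_{\theta,a},\mathfrak O)$ is admissible; indeed zeros of $F_a$ can only occur at $v\neq 0$ (as $a(\lambda/|\lambda|)\in\text{\rm GL}^G(W)$), and there $\theta>0$, whereas at $v=0$ one has $\theta\neq 0$ on the two boundary circles, and $\theta$ restricted to $|\lambda|\in\{1/2,2\}$ at $v=0$ has no zeros. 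Next, using the linear homotopy $a_s(\lambda):=(1-s)a(\lambda)+s\,\mathrm{id}$ is \emph{not} available in general, so instead I would use the homotopy that first deforms $a$ on each isotypic block; more precisely, since $S^1$ is one-dimensional, $a:S^1\to\text{\rm GL}^G(W)$ is homotopic (through maps $S^1\to\text{\rm GL}^G(W)$) to a map $\widehat a$ which on each isotypic component $W_{k,l}$, $k>0$, is a ``rotation'' diagonalized with winding number $d_{k,l}=\deg(\det_{\bc}\widetilde a_{k,l})$, and which on each $W_{0,l}$ is the constant $\pm\mathrm{id}$ according to the sign $\rho_l$ of $\det(a_{0,l})$ — this is a standard fact about the $\pi_1$ of the relevant general linear groups combined with the $A(\Gamma)$-equivariant structure, and by homotopy invariance \ref{T2} the degree is unchanged.

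With $a$ replaced by $\widehat a$, the map $F_{\widehat a}$ splits as a direct sum over the (finitely many nontrivial) isotypic components $W_{k,l}$ appearing in $\mathfrak O$. I would then apply the product property \ref{T10} repeatedly: the components with $k=0$ contribute factors $\Gamma\text{\rm -deg}$ of $\pm\mathrm{id}$ on $W_{0,l}$, which by the normalization of the Brouwer/Leray–Schauder $\Gamma$-degree equal $(\Gamma)$ if $\rho_l=1$ and $\big(\deg_{\cV_l}\big)^{m_{l,0}}$ if $\rho_l=-1$ — this is exactly \eqref{eq:factor-j-twisted} and gives the scalar factor $\prod_l{\bm\delta_l}$. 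For the components with $k>0$, after the diagonalization $F_{\widehat a}$ on $W_{k,l}\cong(\cV_{k,l})^{m_{k,l}}$ is a direct sum of $m_{k,l}$ maps each of which, on the annular region $1/2<|\lambda|<2$, is of the form $(1-|\lambda|+it)^{\varepsilon}\cdot v$ up to a homotopy, and the total winding contributes $d_{k,l}$ copies (with sign) of the basic map $b$ of Definition \ref{def:basic-twiste-d}. Using again \ref{T10} together with additivity and the rescaling argument (deforming $\mathfrak O$ to the product of standard annuli and balls), each such block yields $d_{k,l}\deg_{\cV_{k,l}}$, and summing over $k>0,l$ produces $\sum_{k>0,l}d_{k,l}\deg_{\cV_{k,l}}$. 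Multiplying the scalar factor from the $k=0$ part by this sum in the $A(\Gamma)$-module $A_1^t(G)$ gives \eqref{eq:twisted-fa}.

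\textbf{Main obstacle.} The delicate point is the reduction of a general $G$-equivariant map $a:S^1\to\text{\rm GL}^G(W)$ to the normal form $\widehat a$ (block-diagonal with prescribed winding numbers on the $k>0$ blocks and constant $\pm\mathrm{id}$ on the $k=0$ blocks) \emph{through $G$-equivariant homotopies respecting all isotypic components simultaneously}, and keeping track of the fact that $d_{k,l}$ is defined via the complex determinant of the matrix $\widetilde a_{k,l}$ representing $a_{k,l}$ on the complex structure of $W_{k,l}$. One must verify that the isotypic splitting is preserved along the homotopy, that the invariants $d_{k,l}$ and $\rho_l$ are the complete homotopy invariants on each block (i.e. that $\text{\rm GL}^G(W_{k,l})$ has the expected homotopy type, which for $k>0$ is controlled by $\mathrm{GL}(m_{k,l},\bc)$ and for $k=0$ by the real general linear group on a real/complex/quaternionic isotypic component), and that no extra $W(H)/S^1$ subtleties appear when passing to the twisted degree. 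I expect this normal-form step, rather than the bookkeeping with \ref{T10}, to be where the real work lies; once it is in place the rest is an application of additivity, homotopy, product, suspension, and the rescaling/excision argument already used in Proposition \ref{pro:lin-bif-invariant-twisted} and in \cite{AED}, Chapter 9.
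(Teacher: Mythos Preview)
The paper does not supply a proof of Theorem~\ref{th:comp-twisted}; it is stated in the Appendix as a computational tool, with the underlying reference being \cite{AED} (see in particular Chapter~9 there and the Splitting Lemma). So there is no ``paper's proof'' to compare against in the strict sense.

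That said, your plan is the standard one and matches what is done in \cite{AED}: reduce $a$ on each isotypic block to a normal form via a homotopy in $\text{\rm GL}^G(W)$, peel off the $k=0$ components using the product property \ref{T10} to obtain the scalar factor $\prod_l\bm\delta_l$, and then recognize each $k>0$ block as $d_{k,l}$ signed copies of the basic map $b$ from Definition~\ref{def:basic-twiste-d}, giving $\sum_{k>0,l}d_{k,l}\deg_{\cV_{k,l}}$. The ``main obstacle'' you single out --- the equivariant homotopy classification of $a:S^1\to\text{\rm GL}^G(W)$ block by block, and the verification that $d_{k,l}$ and $\rho_l$ are the complete invariants --- is indeed the substantive step, and it is exactly what the Splitting Lemma and the $\pi_1$-computations in \cite{AED} provide.

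One small slip in your admissibility paragraph: you write that ``zeros of $F_a$ can only occur at $v\neq 0$'', but since $a(\lambda/|\lambda|)$ is invertible, zeros of $F_a$ occur precisely at $v=0$. Admissibility on $\partial\mathfrak O$ then follows because on the piece $|v|=2$ one has $v\neq 0$ and hence $F_a\neq 0$, while on the pieces $|\lambda|\in\{1/2,2\}$ with $v=0$ the auxiliary function $\theta$ is nonzero by hypothesis. This is only a wording issue and does not affect the rest of your argument.
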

\vs
Consider the {\it $k$-folding homomorphism} $\psi_k:\Gamma\times S^1\to \Gamma\times S^1$, given by $\psi_k(\gamma,z)=(\gamma,z^k)$, $(\gamma\times z)\in \Gamma\times S^1$. Then $\psi_k$ induces a $A(\Gamma)$-module homomorphism $\Psi_k:A_1^t(G)\to A_1^t(G)$ which is given on the generators $(H)\in \Phi_1^t(G)$ by
\[
\Psi(H):=(\psi_k^{-1}(H)).
\]
Then, we have the following:
\vs
\begin{proposition}\label{pro:basic-k}
Let $\Gamma$ be a compact Lie group and $G:=\Gamma\times S^1$. Then for all $k\in \bn$ and $j=0,1,2,\dots$, one has
\begin{equation}\label{eq:basic-k}
\deg_{\cV_{k,j}}=\Psi_k\left(\deg_{\cV_{1,j}}\right).
\end{equation}
\end{proposition}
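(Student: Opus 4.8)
\textbf{Proof plan for Proposition \ref{pro:basic-k}.}
The statement asserts that the basic twisted degree $\deg_{\cV_{k,j}}$ of the $k$-folded irreducible representation $\cV_{k,j}$ equals the image of $\deg_{\cV_{1,j}}$ under the $A(\Gamma)$-module homomorphism $\Psi_k$ induced by the $k$-folding homomorphism $\psi_k:\Gamma\times S^1\to\Gamma\times S^1$. The natural approach is to reduce everything to the Folding property \ref{T8} of the twisted degree, which says precisely that $\gdeg(f,{}^m\Om)=\Psi_m(\gdeg(f,\Om))$ for any admissible pair $(f,\Om)$. So the plan is to recognize that $\cV_{k,j}$ is, up to $G$-isomorphism, nothing but the representation $\cV_{1,j}$ with the $S^1$-action re-coordinatized via $\psi_k$ --- i.e. $\cV_{k,j}={}^k(\cV_{1,j})$ in the notation introduced before Proposition \ref{pro:twisted-deg}. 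Indeed, by the definition of $\cV_{k,j}$ (complexification of $\cV_j$ if $\cV_j$ is of real type, or $\cV_j$ itself otherwise, with the $S^1$-action given by $zw:=z^k\cdot w$), the underlying complex vector space and the $\Gamma$-action are independent of $k$, and the $S^1$-action for $\cV_{k,j}$ is obtained from that of $\cV_{1,j}$ by precomposing with $z\mapsto z^k$. This is exactly the definition of the $G$-action $\varphi_k$ on ${}^k(\cV_{1,j})$.

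First I would carefully unwind Definition \ref{def:basic-twiste-d}: $\deg_{\cV_{k,j}}=\gdeg(b_k,\Omega_k)$ where $b_k(t,v)=(1-|\lambda|+it)\cdot v$ on $\Omega_k=\{(t,v)\in\br\times\cV_{k,j}:\tfrac12<|\lambda|<2,\ |t|<1\}$ (here the scalar factor uses the complex structure of $\cV_{k,j}$, which as a complex vector space is identical to that of $\cV_{1,j}$). The key observation is that the map $b_k$ and the set $\Omega_k$ do \emph{not} depend on $k$ as soon as we work on the fixed complex vector space $\cV_{1,j}$: the only role of $k$ is in the $S^1$-part of the $G$-action. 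Hence $(b_k,\Omega_k)$ is literally $({}^k b_1,{}^k\Omega_1)$ --- the same admissible pair $(b_1,\Omega_1)$ but regarded with the twisted $G$-action $\varphi_k$. Then I would invoke the Folding property \ref{T8} directly:
\[
\deg_{\cV_{k,j}}=\gdeg(b_1,{}^k\Omega_1)=\Psi_k\big(\gdeg(b_1,\Omega_1)\big)=\Psi_k\big(\deg_{\cV_{1,j}}\big),
\]
which is the claim.

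The main obstacle --- and the only real content beyond bookkeeping --- is verifying rigorously that $(b_k,\Omega_k)$ is genuinely the $k$-fold of $(b_1,\Omega_1)$ as $G$-pairs, i.e. that $b_1:\br\times\cV_{1,j}\to\cV_{1,j}$ is $\varphi_k$-equivariant and that this $\varphi_k$-equivariant map coincides, under the canonical identification of complex vector spaces $\cV_{1,j}\cong\cV_{k,j}$, with $b_k$. Equivariance is immediate since $b_1(t,v)$ is complex-scalar multiplication by $(1-|\lambda|+it)$, which commutes with any complex-linear $G$-action regardless of how $S^1$ acts; and the identification of the two maps is a tautology once one notes that complex multiplication on $\cV_{k,j}$ is the same operation as on $\cV_{1,j}$. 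One should also check that $\Omega_1$ is $\varphi_k$-invariant (it is, since $|\lambda|$ and $|t|$ are $S^1$-invariant) so that ${}^k\Omega_1$ makes sense and $({}^k b_1,{}^k\Omega_1)\in\mathscr M_1^G$. With these routine verifications in place, the Folding property \ref{T8} closes the argument; no recurrence-formula computation is needed, although as a sanity check one could alternatively compare the coefficients on both sides using formula \eqref{eq:H-twisted} together with the fact that $\psi_k^{-1}$ sends twisted orbit types in $\cV_{1,j}$ bijectively onto those in $\cV_{k,j}$ (cf. Remark \ref{rem:maximal-types}(b)).
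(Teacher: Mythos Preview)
Your argument is correct and is exactly the intended proof: the paper states Proposition~\ref{pro:basic-k} without proof, treating it as an immediate consequence of the Folding property~\ref{T8}, and your write-up supplies precisely that derivation by identifying $\cV_{k,j}$ with ${}^k(\cV_{1,j})$ and observing that the basic map $b$ and the set $\Omega$ in Definition~\ref{def:basic-twiste-d} depend only on the underlying complex structure, not on $k$.
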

\vs
Proposition \ref{pro:basic-k} reduces the computations of basic twisted degrees to the degrees $\deg_{\cV_{1,j}}$. In the next subsection we present some techniques allowing us to identify the non-zero coefficients of $\deg_{\cV_{1,j}}$ corresponding to maximal twisted orbit types in $\Phi_1^t(G;\cV_{1,j})$.
\vs

\section{Computational techniques  for twisted degree}\label{app:D}
In order to describe twisted orbit types in $\cV_{1,j}$ consider  an orbit type $(K)$ in $\cV_j$ such that $W_\Gamma(K)$ is finite, i.e. for some $w\in W$ (here we consider $W$ as a complex $\Gamma$-representation) $\Gamma_w=K$ and $(K)\in \Phi_0(\Gamma)$. 
Since the subspace $W_\Gamma^K$ is $N_\Gamma(K)$-invariant, it is also $W_\Gamma(K)$-invariant. Therefore, if for some $(g,z)\in \Gamma\times S^1$ one has 
\[
(g,z)w=w\;\;\; \Leftrightarrow\;\;\; z\cdot gw=w,
\]
then it follows that $gw\in W^K$ and consequently $g\in N_\Gamma(K)$.
\vs
Therefore, if $N_\Gamma(K)=K$, one has that $G_w=K$ and therefore $(K)$ (considered here as a twisted subgroup) is an orbit type in $\cV_{1,j}$.
\vs
On the other hand, if  $N_\Gamma(K)\supsetneq K$, then there exits $g\in N_\Gamma(K)\setminus K$. Suppose that for some $z\in S^1$ one has 
$(g,z)w=w$, i.e. $gw=z^{-1}w$. Put $\pi :N_\Gamma(K)\to W_\Gamma(K)$ is the natural projection and notice that (since $ W_\Gamma(K)$ is finite), the coset $gK$ has a finite order in $W_\Gamma(K)$, i.e. there exists $m\in\bn$ such that $g^mK=K$ (and $g^lK\not=K$ for $1\le l<m$), which implies that 
\[
w=g^{m}w=z^{-m}\cdot w \;\;\; \Rightarrow\;\;\; z^{-m}=1\;\;\; \Rightarrow \;\;\; z\in \bz_m.
\]
In particular, denote by $\langle gK\rangle\le W_\Gamma (K)$ the cyclic group generated by $gK$ (of order $m $), i.e.  $\langle gK\rangle\simeq \bz_m$ and define the homomorphism $h:\langle gK\rangle\to S^1$ by $h(g)=z^{-1}$.  Then, using the twisted notation, we have
\[
G_w\ge K^h:=\{(g,z)\in N_m\times S^1: h(gK)=z\}, \quad N_m=\pi^{-1}(\langle gK\rangle).
\]
Denote by $\pi_1:\Gamma\times S^1\to \Gamma$ the natural projection $\pi_1(g,z)=g$, and put $N_o:=\pi_1(G_w)\le N_\Gamma(K)$. Then, by definition, for every $g\in N_o$ there exists a unique $z\in S^1$ such that $(g,z)w=w$. Indeed, if 
\[
\begin{cases}
(g,z)w=w\;\;\Leftrightarrow\;\; gz=gw=z^{-1}w\\
(g,z')w=w\;\;\Leftrightarrow\;\; gz'=gw={z'}^{-1}w
\end{cases}
\;\; \Rightarrow\;\; z^{-1}={z'}^{-1}\;\; \text{ i.e. }\; \;z=z'.
\]
That means $G_w$ is a graph of the homomorphism $\vp:N_o\to S^1$ defined by $\vp(g)=z$ where  $(g,z)w=w$.  Put $\bz_n:=\vp(N_o)\subset S^1$, and notice that by isomorphism theorem $N_o/Ker(\vp)=\bz_n$ and we have
\[
(g,z)\in G_w\;\;\; \Leftrightarrow\;\;\; \vp(g)z^{-1}=1,
\]
which implies that 
\begin{equation}\label{eq:twisted-orbit-types}
(G_w)=(K^{\vp}).
\end{equation}
\vs
In order to describe the twisted orbit types in $\cV_{1,j}$, define the set 
\[
\mathscr X:=\{(N,K,\vp): K=\Gamma_w\;\text{ for some }w\in W,\; N\le N_\Gamma (K), \vp:N\to S^1\},
\]
where $\vp$ is a homomorphism from $N$ to $S^1$. We introduce the following equivalence relation in $\mathscr X$
\[
(N,K,\vp)\sim (N',K',\vp') \;\; \Leftrightarrow\;\; \exists_{g\in \Gamma} \;\; \begin{cases}
gNg^{-1}=N',\\
gKg^{-1} =K',\\
\vp'(h')=\vp(g^{-1}h'g) & \; \forall_{h'\in N'}.
\end{cases}
\]
\vs
Based on our considerations above, we have the following
\vs

\begin{proposition}\label{pro:twisted-orbit-types}
The twisted obit types in $\cV_{1,j}$ are in one-to-one correspondence with the elements of $\mathscr X/\sim$. To be more precise, for  a representative $(N,K,\vp)$ of an element in $\mathscr X/\sim$, the conjugacy class of $K^\vp$ is an orbit type in $\cV_{1,j}$.
\end{proposition}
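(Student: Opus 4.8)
\textbf{Proof proposal for Proposition \ref{pro:twisted-orbit-types}.}

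The plan is to turn the computation carried out in the paragraphs immediately preceding the statement into a clean bijection. The only thing that remains is to check that the assignment $(N,K,\varphi)\mapsto (K^{\varphi})$ is well defined on $\mathscr X/\!\!\sim$, that it is injective, and that it is surjective onto the set of twisted orbit types occurring in $\cV_{1,j}$. First I would record precisely what a ``twisted orbit type in $\cV_{1,j}$'' means here: the conjugacy class $(G_w)$ of the isotropy group of a vector $w\in W$ (where $W$ is $\cV_j$ regarded as a complex $\Gamma$-representation, with $S^1$ acting by $1$-folding, i.e. complex multiplication), ranging over all $w\neq 0$ — and implicitly we keep only those with $\dim W_G(G_w)=1$, which is exactly the twisted case since $\dim S^1=1$ forces $G_w\cap S^1$ finite.

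The core computation is already done in the text: given $w$, set $K:=\Gamma_w$, let $N_o:=\pi_1(G_w)\le N_\Gamma(K)$, and show that for each $g\in N_o$ there is a \emph{unique} $z\in S^1$ with $(g,z)w=w$ — uniqueness because if $(g,z)w=(g,z')w=w$ then $z^{-1}w=z'^{-1}w$, and $w\neq 0$ gives $z=z'$ (this uses that $1$-folding is genuinely complex multiplication, so the $S^1$-action on $\mathbb C w$ is free). Hence $G_w$ is the graph of a homomorphism $\varphi:N_o\to S^1$, $\varphi(g)=z$, and $(G_w)=(K^{\varphi})$ with $K=\Gamma_w$, $N_o\le N_\Gamma(K)$; so the triple $(N_o,K,\varphi)\in\mathscr X$. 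This gives a map from vectors $w$ to $\mathscr X$. To pass to $\mathscr X/\!\!\sim$ I would observe: replacing $w$ by $gw$ ($g\in\Gamma$) replaces $K$ by $gKg^{-1}$, $N_o$ by $gN_og^{-1}$, and $\varphi$ by the conjugated homomorphism $h'\mapsto\varphi(g^{-1}h'g)$ — which is precisely the relation $\sim$; and replacing $w$ by $zw$ ($z\in S^1$) leaves $K$, $N_o$, $\varphi$ unchanged. Thus the class in $\mathscr X/\!\!\sim$ depends only on the $G$-orbit of $w$, i.e. we get a well-defined map from $G$-orbits in $W\setminus\{0\}$, hence from orbit types in $\cV_{1,j}$, to $\mathscr X/\!\!\sim$. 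Injectivity on orbit types is immediate from the construction because the orbit type $(G_w)=(K^\varphi)$ is recovered from the triple.

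For surjectivity, I would start from an arbitrary $(N,K,\varphi)\in\mathscr X$, so $K=\Gamma_{w_0}$ for some $w_0\in W$ and $N\le N_\Gamma(K)$, and produce a vector $w$ realizing it. The natural candidate is to average $w_0$ over $K$ inside $W^K$ (already $w_0\in W^K$) and then use the $N$-action twisted by $\varphi$: concretely one wants $w\in W^K$ with $(g,\varphi(g))w=w$ for all $g\in N$, i.e. $gw=\varphi(g)^{-1}w$. Such a $w$ is a nonzero vector in the $\varphi^{-1}$-isotypic line for the action of the finite group $N/K$ (note $N/K$ is finite since $W_\Gamma(K)$ is finite) on the complex space $W^K$; I would extract it by applying the idempotent $\frac{1}{|N/K|}\sum_{gK\in N/K}\overline{\varphi(g)}\,g$ to $w_0$ and then argue this is nonzero for a suitable choice of $w_0$ in the relevant isotypic piece — the main technical point, and the step I expect to be the chief obstacle, is precisely guaranteeing non-vanishing of this projection. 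I would handle it by choosing $w_0$ from the start inside the sum of those isotypic components of $W^K$ under $N/K$ that are compatible with the constraint $\Gamma_{w_0}=K$; that this constraint does not kill the $\varphi^{-1}$-component is where one uses that $(N,K,\varphi)$ genuinely arose (in the admissible list $\mathscr X$) from the geometry of $W$ — equivalently, one restricts $\mathscr X$ to triples for which such a $w$ exists, which is how the text implicitly defines $\mathscr X$ via ``$K=\Gamma_w$ for some $w\in W$''. Once a nonzero such $w$ is found, reading off $G_w$ as in the computation above gives $(G_w)=(K^{\varphi})$, closing surjectivity. Finally I would note that two triples give the same orbit type iff they are $\sim$-equivalent: if $(K^{\varphi})=(K'^{\varphi'})$ then some $(g,z)\in G$ conjugates one twisted subgroup to the other, and projecting to $\Gamma$ the element $g$ realizes the relation $\sim$ on $(N,K,\varphi)$ and $(N',K',\varphi')$ (the $S^1$-coordinate $z$ drops out since it commutes with everything). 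This establishes the claimed one-to-one correspondence.
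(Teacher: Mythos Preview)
Your argument follows exactly the route the paper takes: the paper offers no separate proof, writing only ``Based on our considerations above'' and relying on the computation that precedes the proposition. You have organised that same computation into an explicit check of well-definedness on $\sim$-classes, injectivity, and surjectivity, which is more than the paper does.

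You are right to flag surjectivity as the delicate step, and your diagnosis is sharper than the paper's. With $\mathscr X$ defined literally as written --- $K$ any $\Gamma$-isotropy in $W$, $N$ \emph{any} subgroup of $N_\Gamma(K)$, and $\varphi$ \emph{any} homomorphism $N\to S^1$ --- there is no reason every triple should be realised by a nonzero $w$: the $\varphi^{-1}$-isotypic component of $W^K$ under $N/K$ may well be zero, or may fail to contain a vector with $\Gamma$-isotropy exactly $K$. Your fallback, reading $\mathscr X$ as the set of realisable triples, is the correct fix; but note that the clause ``$K=\Gamma_w$ for some $w\in W$'' in the paper's definition constrains only $K$, not the full triple, so it does not by itself give you what you need. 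With the implicit restriction to realisable triples (which is clearly the intended meaning in this computational appendix) your argument is complete and matches the paper's.
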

\vs

Suppose $(N^\vp)$ is an orbit type in $\cV_{1,j}$ corresponding to the representative $(N,K,\vp)$ in $\mathscr X/\sim$. Then $\vp:N\to SO(2)\le O(2)$ is a representation of $N$. If $\vp(N)=:\bz_m $ is such that $m>1$, this representation is irreducible, and if $m=2$, it is reducible (with two irreducible components). Denote this irreducible representation by $\cU_o$. Notice that $W^K$ is an $N$-representation, thus one has a $N$-isotopic decomposition of $W^K$ given by
\[
W^K=U_0\oplus U_1\oplus \dots\oplus U_{\lfloor \frac m2\rfloor}.
\]
One of these components, say $U_i$,  is modeled on  $\cU_o$. Then,  denote by $m_o$ the real dimension of $U_i$ and notice that 
\[
\dim (\cV_{1,j}^{N^\vp})=m_o.
\] 
\vs
\begin{example}\label{ex:o2xd4xz2} \rm Consider the group $\Gamma:=O(2)\times D_4\times \bz_2$. Let us first consider the group $\Gamma_1:=D_4\times \bz_2$.
In order to identify the conjugacy classes of subgroups in $\Gamma$, first we notice that that we have
\begin{align*}
\Phi(D_4\times \bz_2)&=\{(\bz_1),(\bz_2) , (\wt D_1^z),  (\bz_1^p), (\bz_2^m), (\wt D_1), (D_1), (D_1^z), (D_2^z),(\bz_4), (\bz_4^d),\\
&\quad (\wt D_2),(D_2),(\wt D_2^z), (\bz_2^p), (D_1^p),(D_2^d), (\wt D_1^p), (\wt D_2^d), (D_4),(\bz_4^p),(D_4^z),(D_2^p),\\
&\quad (D_4^d),(\wt  D_2^p,(D_4^{\hat d}),(D_4^p)\},
\end{align*}
where
\begin{align*}
D_4=\{(1,1),(\gamma,1),(\gamma^2,1),(\gamma^3,1),(\kappa,1),(\gamma\kappa,1),(\gamma^2\kappa,1),(\gamma^3\kappa,1)\},\quad
\gamma:=e^{i\frac \pi 2},\;\quad \kappa=\left[\begin{array}{cc} 1&0\\0&-1\end{array}\right].
\end{align*}
and 
\begin{align*}
\bz_4&=\{(1,1),(\gamma,1),(\gamma^2,1),(\gamma^3,1)\}, \quad \bz_2:=\{(1,1),(\gamma^2,1),\\
D_2&=\{(1,1),(\gamma^2,1),(\kappa,1),(\gamma^2\kappa,1)\},\; \wt D_2=\{(1,1),(\gamma^2,1),(\gamma\kappa,1),(\gamma^3\kappa,1)\},\\
D_1&=\{(1,1),(\kappa,1)\},\; \wt D_1=\{(1,1),(\gamma\kappa, 1)\}, \; \bz_1=\{(1,1)\}.
\end{align*}
The remaining subgroups are 
\begin{align*}
D_4^d&=\{(1,1),(\gamma,-1),(\gamma^2,1),(\gamma^3,-1),(\kappa,1),(\gamma\kappa,-1),(\gamma^2\kappa,1),(\gamma^3\kappa,-1)\},\\
D_4^{\hat d}&=\{(1,1),(\gamma,-1),(\gamma^2,1),(\gamma^3,-1),(\kappa,-1),(\gamma\kappa,1),(\gamma^2\kappa,-1),(\gamma^3\kappa,1)\},\\
D_4^z&=\{(1,1),(\gamma,1),(\gamma^2,1),(\gamma^3,1),(\kappa,-1),(\gamma\kappa,-1),(\gamma^2\kappa,-1),(\gamma^3\kappa,-1)\},\\
\bz_4^d&=\{(1,1),(\gamma,-1),(\gamma^2,1),(\gamma^3,-1)\}, \quad \bz_2^m:=\{(1,1),(\gamma^2,-1),\\
D_2^d&=\{(1,1),(\gamma^2,-1),(\kappa,1),(\gamma^2\kappa,-1)\},\; \wt D_2^d=\{(1,1),(\gamma^2,-1),(\gamma\kappa,1),(\gamma^3\kappa,-1)\},\\
D_2^z&=\{(1,1),(\gamma^2,1),(\kappa,-1),(\gamma^2\kappa,-1)\},\; \wt D_2^z=\{(1,1),(\gamma^2,1),(\gamma\kappa,-1),(\gamma^3\kappa,-1)\},\\
D_1^z&=\{(1,1),(\kappa,-1)\},\; \wt D_1^z=\{(1,1),(\gamma\kappa, -1)\},
\end{align*}
and the product sub groups $H^p=H\times \bz_2$, where $H\le D_4$.
In order to manage the conjugacy classes of subgroups in $\Gamma=O(2)\times D_4\times \bz_2$, we assist our computation by GAP systems. The following code is used together with  the package {\tt EquiDeg} (cf. \cite{Pin}): 
\vs
\begin{lstlisting}[language=GAP, frame=single]
LoadPackage( "EquiDeg" );
# Defining O_2, group1:= D_4, group2:= Z_2
o2 := OrthogonalGroupOverReal( 2 );
gr1 := pDihedralGroup( 4 );
gr2 := SymmetricGroup( 2 );
# Computing the product of D_4 and Z_2
g1:= DirectProduct( gr1, gr2);
# Conjugacy classes of subgroups of the product group g1 
# and corresponding name
ccs:=ConjugacyClassesSubgroups( g1 );
SetCCSsAbbrv(g1,["Z1", "Z2", "Dt1z", "D1z", "D1", 
		 "Z2m", "Z1p", "Dt1z", "D2d","Dt1p", 
   		"D1p","Z2p","D2","D2z", "Z4",
 		"Z4d","D2d","Dt2z", "Dt2d", "D4d",
		 "Dt2p","D2hd","D2p","D4z","Dt2p",
		 "D4","D4p"]);
# Defining the final product group
g := DirectProduct( o2, g1 );
# Computing and naming conjugacy classes subgroups of g
ccsg := ConjugacyClassesSubgroups( g );
\end{lstlisting}
\vs
In order to describe the conjugacy classes of subgroups in $G_1\times G_2$ (here $G_1=O(2)$ and $G_2=D_4\times \bz_2$) we use the so-called amalgamated notation
\[
\mathscr H=H^\phi \times _L^\psi K:=\{(h,k): \vp(h)=\psi(k)\}, \quad H\le G_1,\; K\le G_2,
\]
where $\vp:H\to L$ and $\psi:K\to L$ are surjective homomorphisms. In our problem one can identify $\vp$ and $\psi$ by their kernels $H_o:=\Ker(\vp)$ and $K_o:=\Ker(\psi)$ (here we are trying to describe the conjugacy classes $(\mathscr H)$ of $\mathscr H$), i.e. the group $\mathscr H$ is the amalgamated notation can be written as
\[
\mathscr H=H^{H_o} \times _L^{K_o} K.
\]
We represent the set  $\Phi_0(\Gamma)$ as the following  union 
\[
\Phi_0(\Gamma)=\bigcup_{m=0}^\infty \Phi^m,\quad \Phi^m\cap\Phi^{m'}=\emptyset, \;\; m\not=m',
\]
where $\Phi_m$ represents symmetries of the $2\pi$-periodic functions $x:\br\to V$ (here $V$ an arbitrary $D_4\times \bz_2$-representation) in $m$-th Fourier mode. For example, for $m=0$, the set $\Phi^0$  is composed of conjugacy classes of $\mathscr H$ of type
\[
\mathscr H=SO(2)\times  K, \quad \mathscr H=O(2)\times K,\quad \mathscr H=O(2)^{SO(2)}\times ^{K_o}_{\bz_2}K.
\]
Let us point out that $\Phi^0$ has exactly 118 elements. For example, one has
\begin{align*}
 H[0,65]&=O(2)^{SO(2)}\times_{\bz_2}^{\wt  D_2^z}D_4^z,\quad
 H[0,75]=O(2)^{SO(2)} \times_{\bz_2} ^{\wt D_2^z}D_4^d,\\
 H[0,83]&=O(2)^{SO(2)} \times_{\bz_2}^{ D_2}D_2^p,\quad H[0,97]=O(2) \times  D_2^d,\\
 H[0,107]&=O(2)^{SO(2)}\times ^{ D_2^p}_{\bz_2}D_4^p, \quad H[0,114]=O(2) \times D_4^d,\\
 H[0,117]&=O(2) \times  \wt D_2^p,\quad H[0,118]=O(2) \times D_4^p.
 \end{align*}
 On the other hand the set $\Phi^1$ is composed of 341  conjugacy classes of subgroups in $\Gamma$. Let us list some of the representatives of these classes using the amalgamated notation:
 \begin{align*}
 H[1,118]&=D_2^{D_1}\times_{\bz_2}^{\bz_1}D_1^z, \quad H[1,218]=\bz_2^{\bz_1}\times_{\bz_2}^{\wt D_2^z}\wt D_2^p,\\
 H[1,320]&=D_2^{D_1} \times_{\bz_2}^{ D_2}D_4^d,\quad H[1,337]=D_2^{D_1}\times_{\bz_2}^{ D_4^d}D_4^p,\\
 H[1,341]&=D_1\times D_4^p.
\end{align*}
 Then, to conclude one can observe that for every $m\in \bn$ the set $\Phi^m$ has also 341 elements and 
 \begin{align*}\Phi^m=\Psi_m(\Phi^1), 
 \end{align*}
 where $\Psi^m(\mathscr H):=\psi_m^{-1}(\mathscr H)$ and  $\psi_m:O(2)\times \Gamma_1\to O(2)\times \Gamma_1$ is the $m$-folding homomorphism.
 \vs
 Assume that $\Gamma:=O(2)\times D_4\times \bz_2$. The irreducible $\Gamma$-representations are described as $\cV_{k,j}:=\cW_k\otimes \cV_j$, where $\cV_j$ stands for the irreducible $D_4\times \bz_2$-representation (according the top listing provided by GAP) and $\cW_k$ is the irreducible $O(2)$-representation (with $SO(2)$-action by $k$-folding). Therefore, the irreducible $G$-representations are $\cV_{m,k,j}=\cW_m\otimes \cV_{k,j}$ (here $\cW_m$ is the $O(2)$-representation considered with the action of  $S^1=SO(2)$). Now, we can recognize the maximal twisted orbit types in $\cV_{m,k,j}$. We begin with the
 case of the one-dimensional  representation   $\cV_{k,2}$. The maximal orbit type in $\cV_{k,2}\setminus \{0\}$ is 
 \[
 (K):=(D_{2k}^{D_k}\times_{\bz_2}^{D_4^{ z}}D_4^p).
 \]
 and notice that 
 the normalizer $N(K)$ of $K$ in $\Gamma$ is $D_{2k}\times D_4^p$, i.e. $W(K)=\bz_2$.
 Thus the  maximal twisted orbit types in $\cV_{m,k,4}$ is $(\mathscr K)$, where 
 \begin{align*}
  (\mathscr K_o)&:=(N(K)^K\times_{\bz_2}^{\bz_m}\bz_{2m}).
  \end{align*}
  Next, consider $\cV_{k,4}$. The maximal orbit type in $\cV_{k,4}\setminus \{0\}$ is 
 \[
 (H):=(D_{2k}^{D_k}\times_{\bz_2}^{D_4^{d}}D_4^p).
 \]
 Notice that the normalizer of $H$ in $\Gamma$ is $D_{2k}\times D_4^p$, i.e. $W(H)=\bz_2$, therefore we obtain that the maximal twisted orbit type in $\cV_{m,k,4}$ is exactly
 \[
 (\mathscr H_o):=(N(H)^H\times_{\bz_2}^{\bz_m}\bz_{2m}).
 \]
 Similarly, we have the following maximal orbit types in the $\Gamma$-representation $\cV_{k,10}$
 \[
 (H_1):=(D_{2k}^{D_k}\times _{\bz_2}^{D_2^d}D_2^p),\quad  (H_2):=(D_{2k}^{D_k}\times _{\bz_2}^{\wt D_2^d}\wt D_2^p),\quad  (H_3):=(D_{4k}^{\bz_k}\times _{D_4}^{\bz_2^-}D_4^p).
 \]
 with the corresponding normalizers
 \[
 N(H_1)=D_{2k}\times D_2^p, \quad N(H_2)=D_{2k}\times \wt D_2^p, \quad N(H_3):=(D_{4k}^{\bz_k}\times_{D_4} D_4)\times \bz_2.
 \]
 Therefore, the maximal twisted orbit types in $\cV_{m,k,10}$ are $(\mathscr H_l)$, $l=1$, $2$, $3$, where 
 \[
 \mathscr H_1:=N(H_1)^{H_1}\times_{\bz_2}^{\bz_m}\bz_{2m}, \quad \mathscr H_2:=N(H_2)^{H_2}\times_{\bz_2}^{\bz_m}\bz_{2m},\quad \mathscr H_3:=N(H_3)^{H_3}\times_{\bz_2}^{\bz_m}\bz_{2m}.
 \]
 Consequently, by \eqref{eq:max-orb-basic}, we have
 \begin{gather}
 \text{coeff}^{\mathscr H_o}(\deg_{\cV_{m,k,4}})\not=0,\label{eq:max1}\\
  \text{coeff}^{\mathscr H_1}(\deg_{\cV_{m,k,10}})\not=0,\quad  \text{coeff}^{\mathscr H_2}(\deg_{\cV_{m,k,10}})\not=0, \quad  \text{coeff}^{\mathscr H_3}(\deg_{\cV_{m,k,10}})\not=0.\label{eq:max2}
 \end{gather}
\end{example}
\vs



\end{document}